\theoremstyle{plain}
\newtheorem{Thm}{Theorem}[section]
\newtheorem{Lem}[Thm]{Lemma}
\newtheorem{Cor}[Thm]{Corollary}
\newtheorem{Pro}[Thm]{Proposition}
\theoremstyle{definition}
\newtheorem{Def}[Thm]{Definition}
\newtheorem{Exm}[Thm]{Example}
\newtheorem{Prb}[Thm]{Problem}
\theoremstyle{remark}
\newtheorem{Rem}[Thm]{Remark}
\numberwithin{equation}{section}
\newcommand{\ITE}[3]{\ifthenelse{#1}{#2}{#3}}\newcommand{\ITEE}[4][]{\ITE{\equal{#2}{#3}}{#4}{#1}}
\newenvironment{cor}[2][]{\ITEE[{\begin{Cor}[#1]}]{#1}{}{\begin{Cor}}\label{cor:#2}}{\end{Cor}}
\newenvironment{dfn}[2][]{\ITEE[{\begin{Def}[#1]}]{#1}{}{\begin{Def}}\label{def:#2}}{\end{Def}}
\newenvironment{exm}[2][]{\ITEE[{\begin{Exm}[#1]}]{#1}{}{\begin{Exm}}\label{exm:#2}}{\end{Exm}}
\newenvironment{lem}[2][]{\ITEE[{\begin{Lem}[#1]}]{#1}{}{\begin{Lem}}\label{lem:#2}}{\end{Lem}}
\newenvironment{prb}[2][]{\ITEE[{\begin{Prb}[#1]}]{#1}{}{\begin{Prb}}\label{prb:#2}}{\end{Prb}}
\newenvironment{pro}[2][]{\ITEE[{\begin{Pro}[#1]}]{#1}{}{\begin{Pro}}\label{pro:#2}}{\end{Pro}}
\newenvironment{rem}[2][]{\ITEE[{\begin{Rem}[#1]}]{#1}{}{\begin{Rem}}\label{rem:#2}}{\end{Rem}}
\newenvironment{thm}[2][]{\ITEE[{\begin{Thm}[#1]}]{#1}{}{\begin{Thm}}\label{thm:#2}}{\end{Thm}}
\newcommand{\COR}[2][!]{\ITEE{#1}{!}{Corollary~}\ITEE{#1}{s}{Corollaries~}\textup{\ref{cor:#2}}}
\newcommand{\DEF}[2][!]{\ITEE{#1}{!}{Definition~}\ITEE{#1}{s}{Definitions~}\textup{\ref{def:#2}}}
\newcommand{\EXM}[2][!]{\ITEE{#1}{!}{Example~}\ITEE{#1}{s}{Examples~}\textup{\ref{exm:#2}}}
\newcommand{\LEM}[2][!]{\ITEE{#1}{!}{Lemma~}\ITEE{#1}{s}{Lemmas~}\textup{\ref{lem:#2}}}
\newcommand{\PRO}[2][!]{\ITEE{#1}{!}{Proposition~}\ITEE{#1}{s}{Propositions~}\textup{\ref{pro:#2}}}
\newcommand{\REM}[2][!]{\ITEE{#1}{!}{Remark~}\ITEE{#1}{s}{Remarks~}\textup{\ref{rem:#2}}}
\newcommand{\THM}[2][!]{\ITEE{#1}{!}{Theorem~}\ITEE{#1}{s}{Theorems~}\textup{\ref{thm:#2}}}
\newcommand{\CCC}{\mathbb{C}}
\newcommand{\NNN}{\mathbb{N}}
\newcommand{\RRR}{\mathbb{R}}
\newcommand{\EeE}{\EuScript{E}}
\newcommand{\FfF}{\EuScript{F}}
\newcommand{\JjJ}{\EuScript{J}}
\newcommand{\LlL}{\EuScript{L}}
\newcommand{\WwW}{\EuScript{W}}
\newcommand{\Aa}{\mathfrak{A}}
\newcommand{\Dd}{\mathfrak{D}}
\newcommand{\Ee}{\mathfrak{E}}
\newcommand{\Ff}{\mathfrak{F}}
\newcommand{\Hh}{\mathfrak{H}}
\newcommand{\Kk}{\mathfrak{K}}
\newcommand{\Ss}{\mathfrak{S}}
\newcommand{\Tt}{\mathfrak{T}}
\newcommand{\Uu}{\mathfrak{U}}
\newcommand{\Xx}{\mathfrak{X}}
\newcommand{\Zz}{\mathfrak{Z}}
\newcommand{\aA}{\mathfrak{a}}
\newcommand{\bB}{\mathfrak{b}}
\newcommand{\dD}{\mathfrak{d}}
\newcommand{\hH}{\mathfrak{h}}
\newcommand{\sS}{\mathfrak{s}}
\newcommand{\tT}{\mathfrak{t}}
\newcommand{\xX}{\mathfrak{x}}
\newcommand{\yY}{\mathfrak{y}}
\newcommand{\zZ}{\mathfrak{z}}
\newcommand{\aaA}{\mathscr{A}}
\newcommand{\bbB}{\mathscr{B}}
\newcommand{\ddD}{\mathscr{D}}
\newcommand{\jjJ}{\mathscr{J}}
\newcommand{\kkK}{\mathscr{K}}
\newcommand{\llL}{\mathscr{L}}
\newcommand{\mmM}{\mathscr{M}}
\newcommand{\qqQ}{\mathscr{Q}}
\newcommand{\rrR}{\mathscr{R}}
\newcommand{\ssS}{\mathscr{S}}
\newcommand{\ttT}{\mathscr{T}}
\newcommand{\uuU}{\mathscr{U}}
\newcommand{\vvV}{\mathscr{V}}
\newcommand{\wwW}{\mathscr{W}}
\newcommand{\ueA}{\textup{\textsf{A}}}
\newcommand{\ueB}{\textup{\textsf{B}}}
\newcommand{\ueT}{\textup{\textsf{T}}}
\newcommand{\ueW}{\textup{\textsf{W}}}
\newcommand{\ueX}{\textup{\textsf{X}}}
\newcommand{\ueY}{\textup{\textsf{Y}}}
\newcommand{\ueZ}{\textup{\textsf{Z}}}
\newcommand{\club}{{\scriptsize$\clubsuit$}}
\newcommand{\dd}{\colon}
\newcommand{\df}{\stackrel{\textup{def}}{=}}
\newcommand{\dint}[1]{\,\textup{d} #1}
\newcommand{\epsi}{\varepsilon}
\newcommand{\geqsl}{\geqslant}
\newcommand{\grp}[1]{\langle#1\rangle}
\newcommand{\leqsl}{\leqslant}
\newcommand{\spade}{{\scriptsize$\spadesuit$}}
\newcommand{\varempty}{\varnothing}
\newcommand{\card}{\operatorname{card}}
\newcommand{\conv}{\operatorname{conv}}
\newcommand{\core}{\operatorname{core}}
\newcommand{\dist}{\operatorname{dist}}
\newcommand{\hgt}{\operatorname{ht}}
\newcommand{\id}{\operatorname{id}}
\newcommand{\pr}{\operatorname{pr}}
\newcommand{\stab}{\operatorname{stab}}
\newcommand{\tfcae}{the following conditions are equivalent:}
\newcommand{\iaoi}{if and only if}
\begin{document}

\title{Models for subhomogeneous $C^*$-algebras}
\author[P. Niemiec]{Piotr Niemiec}
\address{Instytut Matematyki\\{}Wydzia\l{} Matematyki i~Informatyki\\{}
 Uniwersytet Jagiello\'{n}ski\\{}ul. \L{}ojasiewicza 6\\{}30-348 Krak\'{o}w\\{}Poland}
\email{piotr.niemiec@uj.edu.pl}
\thanks{The author gratefully acknowledges the assistance of the Polish Ministry of Sciences
 and Higher Education grant NN201~546438 for the years 2010--2013.}
\begin{abstract}
A new category of topological spaces with additional structures, called m-towers, is introduced.
It is shown that there is a covariant functor which establishes a one-to-one correspondences between
unital (resp.\ arbitrary) subhomogeneous $C^*$-algebras and \textit{proper} (resp.\ \textit{proper
pointed}) m-towers of finite \textit{height}, and between all $*$-homomorphisms between two such
algebras and morphisms between m-towers corresponding to these algebras.
\end{abstract}
\subjclass[2010]{Primary 46L35; Secondary 46L52.}
\keywords{Subhomogeneous $C^*$-algebra; finite-dimensional representation; residually
 finite-dimensional $C^*$-algebra; spectrum of a $C^*$-algebra.}
\maketitle

\section{Introduction}

Subhomogeneous $C^*$-algebras play a central role in operator algebras. For example, they appear
in the definition of ASH algebras, which have been widely investigated for a long time, and
in composition series of GCR (and CCR) algebras. Moreover, subhomogeneity is a property which well
behaves in almost all \textit{finite} operations on $C^*$-algebras, such as direct sums and
products, tensor products, quotients, extensions and passing to subalgebras (which is
in the opposite to \textit{homogeneity}; models for homogeneous $C^*$-algebras were described more
than 50 years ago by Fell \cite{fe3} and Tomiyama and Takesaki \cite{t-t}). This causes that
subhomogeneous $C^*$-algebras form a flexible and rich category. Taking into account all these
remarks, any information about the structure of this category is of significant importance. Besides,
a deep and long-term research on \textit{duals} or \textit{spectra} (or \textit{Jacobson spaces})
of $C^*$-algebras, carried on by Kaplansky \cite{kap}, Fell \cite{fe1,fe2,fe3}, Dixmier \cite{di1}
(consult also Chapter~3 in \cite{di2}), Effros \cite{eff}, Bunce and Deddens \cite{b-d} and others,
led to a conclusion that there is no reasonable way of describing \textit{all} subhomogeneous
$C^*$-algebras by means of their spectra. (There are known partial results in this direction, e.g.\
for $C^*$-algebras with continuous traces or Hausdorff spectra.) The main aim of the paper is
to describe all such algebras (as well as $*$-homomorphisms between them) by means of a certain
category of topological spaces, called by us \textit{m-towers}. Taking this into account, our work
may be seen as a solution of a long-standing problem in operator algebras. Our description resembles
the commutative Gelfand-Naimark theorem and is transparent and easy to use, which is in contrast
to the concept of Vasil'ev \cite{vas}, who proposed a totally different description
of subhomogeneous $C^*$-algebras. His description is complicated, artificial and hardly applicable,
and thus his paper is fairly unregarded. To formulate our main results, first we introduce necessary
notions and notations.\par
For each $n > 0$, let $\mmM_n$ and $\uuU_n$ denote, respectively, the $C^*$-algebra of all complex
$n \times n$ matrices and its unitary group. For $U \in \uuU_n$ and $X \in \mmM_n$, we denote
by $U . X$ and $d(X)$ the matrix $U X U^{-1}$ and the degree of $X$ (that is, $d(X) \df n$),
respectively. Finally, when $X \in \mmM_n$ and $Y \in \mmM_k$, $X \oplus Y$ stands for the block
matrix $\begin{pmatrix}X & 0\\0 & Y\end{pmatrix} \in \mmM_{n+k}$.\par
By an \textit{$n$-dimensional representation} of a $C^*$-algebra $\aaA$ we mean any (possibly zero)
$*$-homomorphism of $\aaA$ into $\mmM_n$.\par
For every unital $C^*$-algebra $\aaA$ and each $n > 0$, we denote by $\Xx_n(\aaA)$ the space of all
unital $n$-dimensional representations of $\aaA$, equipped with the pointwise convergence topology.
The \textit{concrete tower} of $\aaA$ is the topological disjoint union $\Xx(\aaA) \df
\bigsqcup_{n=1}^{\infty} \Xx_n(\aaA)$ of the collection $\{\Xx_n(\aaA)\dd\ n > 0\}$, equipped with
additional ingredients:
\begin{itemize}
\item \textit{degree} map $d\dd \Xx(\aaA) \to \NNN \df \{1,2,\ldots\}$; $d(\pi) \df n$ for any $\pi
 \in \Xx_n(\aaA)$;
\item \textit{addition} $\oplus\dd \Xx(\aaA) \times \Xx(\aaA) \to \Xx(\aaA)$ defined pointwise (that
 is, $(\pi_1 \oplus \pi_2)(a) \df \pi_1(a) \oplus \pi_2(a)$ for each $a \in \aaA$);
\item \textit{unitary action}; that is, a collection of functions $\uuU_n \times \Xx_n(\aaA) \ni
 (U,\pi) \mapsto U . \pi \in \Xx_n(\aaA)\ (n > 0)$ where, for $U \in \uuU_n$ and $\pi \in
 \Xx_n(\aaA)$, $U . \pi\dd \aaA \to \mmM_n$ is defined pointwise (that is, $(U . \pi)(a) \df U .
 \pi(a)$).
\end{itemize}
Based on the concept of concrete towers (introduced above), one may define \textit{abstract}
m-towers with analogous ingredients (they need to satisfy some additional axioms, which for concrete
towers are automatically fulfilled). For any m-tower $\Xx$, the space $C^*(\Xx)$ is defined
as the space of all matrix-valued functions $f$ defined on $\Xx$ such that:
\begin{itemize}
\item $f(\xX) \in \mmM_{d(\xX)}$ for any $\xX \in \Xx$;
\item for each $n > 0$, $f\bigr|_{\Xx_n}\dd \Xx_n \to \mmM_n$ is continuous (where $\Xx_n \df
 d^{-1}(\{n\})$);
\item $f(U . \xX) = U . f(\xX)$ for all $(U,\xX) \in \bigcup_{n=1}^{\infty} (\uuU_n \times \Xx_n)$;
\item $f(\xX_1 \oplus \xX_2) = f(\xX_1) \oplus f(\xX_2)$ for any $\xX_1, \xX_2 \in \Xx$;
\item $(\|f\| \df\,) \sup_{\xX\in\Xx} \|f(\xX)\| < \infty$.
\end{itemize}
It is readily seen that $C^*(\Xx)$ is a unital $C^*$-algebra when all algebraic operations are
defined pointwise. Further, the \textit{height} of $\Xx$, denoted by $\hgt(\Xx)$, is defined
as follows. An element $\xX$ of $\Xx$ \textit{irreducible} if each unitary operator $U \in
\uuU_{d(\xX)}$ for which $U . \xX = \xX$ is a scalar multiple of the unit matrix. Then:
\begin{equation*}
\hgt(\Xx) \df \sup\{d(\xX)\dd\ \xX \in \Xx \textup{ is irreducible}\} \in \{1,2,\ldots,\infty\}.
\end{equation*}
The m-tower $\Xx$ is called \textit{proper} if each of the subspaces $\Xx_n$ is compact.\par
Finally, if $\aaA$ is (again) an arbitrary unital $C^*$-algebra and $a \in \aaA$, $\hat{a}$ is
a matrix-valued function defined on $\Xx(\aaA)$ defined by $\hat{a}(\pi) = \pi(a)$. It is a typical
property that $J_{\aaA}\dd \aaA \ni a \mapsto \hat{a} \in C^*(\Xx(\aaA))$ is a correctly defined
unital $*$-homomorphism. It is also clear that $J_{\aaA}$ is one-to-one iff $\aaA$ is residually
finite-dimensional. Our main result on m-towers is

\begin{thm}{subh1}
For any unital subhomogeneous $C^*$-algebra $\aaA$, $\Xx(\aaA)$ is a proper m-tower,
$\hgt(\Xx(\aaA)) < \infty$ and $J_{\aaA}\dd \aaA \to C^*(\Xx(\aaA))$ is a $*$-isomorphism.\par
Conversely, if $\Tt$ is a proper m-tower of finite height, then $C^*(\Tt)$ is subhomogeneous and for
every its finite-dimensional unital representation $\pi$ there exists a unique point $\tT \in
C^*(\Tt)$ such that $\pi = \pi_{\tT}$ where
\begin{equation*}
\pi_{\tT}\dd C^*(\Tt) \ni u \mapsto u(\tT) \in \mmM_{d(\tT)}.
\end{equation*}
Moreover, the assignment $\tT \mapsto \pi_{\tT}$ correctly defines an isomorphism between the towers
$\Tt$ and $\Xx(C^*(\Tt))$.
\end{thm}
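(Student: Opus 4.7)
The forward direction begins with routine verifications. Each $\Xx_n(\aaA)$ sits inside the product $\prod_{a\in\aaA}\{X\in\mmM_n\dd \|X\|\leqsl\|a\|\}$ as the closed subset cut out by the linearity, multiplicativity and $*$-relations, hence is compact by Tychonoff; so $\Xx(\aaA)$ is proper. The m-tower axioms on $d$, $\oplus$, and the unitary action hold pointwise and are inherited from $\mmM_n$. An element $\pi\in\Xx_n(\aaA)$ is irreducible in the tower sense exactly when its image has trivial commutant in $\mmM_n$, i.e.\ when $\pi$ is an irreducible representation of $\aaA$; subhomogeneity bounds the degrees of such representations, so $\hgt(\Xx(\aaA))<\infty$.

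The heart of the forward direction is showing $J_{\aaA}$ is a $*$-iso\-mor\-phism. Injectivity is immediate from residual finite-dimensionality of subhomogeneous (hence type~I) $C^*$-algebras: each nonzero $a\in\aaA$ is detected by some irreducible finite-dimensional representation. For surjectivity the plan is an equivariant, matrix-valued Stone--Weierstrass argument. Fix $f\in C^*(\Xx(\aaA))$; on the compact stratum $\Xx_n(\aaA)$ the classical matricial Stone--Weierstrass theorem supplies uniform approximants of $f\bigr|_{\Xx_n(\aaA)}$ by polynomials in the entries of the coordinate functions $\hat{a}$. Averaging such approximants against the $\uuU_n$-action yields equivariant approximants, which by the equivariance and $\oplus$-compatibility of $f$ glue across the finitely many relevant strata (here $\hgt(\Xx(\aaA))<\infty$ is decisive, because it bounds how many degrees one has to handle independently before the $\oplus$-axiom forces the remainder). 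This stratified, equivariant approximation is the main technical obstacle of the theorem.

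For the converse, let $\Tt$ be a proper m-tower with $\hgt(\Tt)<\infty$. Subhomogeneity of $C^*(\Tt)$ follows once one shows that evaluation $\pi_{\tT}$ at an irreducible $\tT$ is an irreducible representation of $C^*(\Tt)$ (of degree at most $\hgt(\Tt)$) and that these evaluations exhaust all irreducibles of $C^*(\Tt)$—the latter by a separation argument using the very definition of $C^*(\Tt)$ together with properness. To show that every finite-dimensional unital representation $\pi\dd C^*(\Tt)\to\mmM_n$ has the form $\pi_{\tT}$, I would decompose $\pi$ into irreducibles, obtain $\pi\cong \bigoplus_i \pi_{\tT_i}$ after a unitary conjugation $U$, and then assemble $\tT\df U.(\tT_1\oplus\cdots\oplus\tT_r)$ using the $\oplus$ and unitary-action structure on $\Tt$; by construction $\pi=\pi_{\tT}$. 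Uniqueness of $\tT$ and compatibility of $\tT\mapsto\pi_{\tT}$ with $d$, $\oplus$ and the unitary action are by inspection.

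Finally, to promote $\tT\mapsto\pi_{\tT}$ to an isomorphism of m-towers $\Tt\to\Xx(C^*(\Tt))$, surjectivity was just established; injectivity uses that $C^*(\Tt)$ separates the points of $\Tt$ (again a consequence of the Stone--Weierstrass step applied to $\Tt$, which here guarantees that $C^*(\Tt)$ has enough elements to do so). Continuity of the inverse is a stratum-by-stratum compactness argument: $\Tt_n$ is compact by properness and $\Xx_n(C^*(\Tt))$ is Hausdorff, so a continuous bijection between them is a homeomorphism. The principal obstacle throughout is the equivariant, stratified approximation step underlying both the surjectivity of $J_{\aaA}$ and the separation properties of $C^*(\Tt)$; both uses rely crucially on properness together with finiteness of $\hgt$.
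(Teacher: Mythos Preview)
Your overall architecture matches the paper's: routine verifications for properness and finite height, injectivity via residual finite-dimensionality, and then the hard part is surjectivity of $J_{\aaA}$ (and, dually, point-separation and exhaustion of irreducibles for the converse). You correctly flag this as the principal obstacle. But your sketch of the surjectivity step does not work as written, and the paper proceeds quite differently.

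The gap is in your ``equivariant, matrix-valued Stone--Weierstrass'' step. You propose to approximate $f\bigr|_{\Xx_n(\aaA)}$ by polynomials in the \emph{entries} of the $\hat a$'s, then average over $\uuU_n$ to restore equivariance. But averaging a matrix-valued function whose entries are scalar polynomials in the entry coordinates does not produce an element of $J_{\aaA}(\aaA)$; the image of $J_{\aaA}$ is already equivariant and $\oplus$-compatible, so averaging is the wrong tool---what you actually need is a criterion guaranteeing that the \emph{$*$-subalgebra} $J_{\aaA}(\aaA)\subset C^*(\Xx(\aaA))$ is dense. Your gluing step is likewise under-specified: a single $a\in\aaA$ must approximate $f$ simultaneously on all strata, and compatibility under $\oplus$ does not by itself let you patch stratum-by-stratum approximants.

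The paper handles this via a dedicated Stone--Weierstrass theorem for elc m-towers (\THM{SW} and \COR{dense}), whose proof is nontrivial: it combines a matricial Stone--Weierstrass result from \cite{pn2} with a closure theorem for $*$-subalgebras of $C_0(\Omega,\aaA)$ from \cite{pn3}, applied through a Borel decomposition of $\overline{\core}(\Tt)$ (\LEM{Borel}). The upshot is that density reduces to two checkable conditions at irreducible points: (d1') each irreducible representation of $\aaA$ is surjective, and (d2') inequivalent irreducibles can be separated by some $a$ with $\pi_1(a)=I$, $\pi_2(a)=0$. Both are standard (Dixmier, Proposition~4.2.5). For the converse, point-separation by $C^*(\Tt)$ is \COR{dist}(C), resting on the extension \THM{extend}; exhaustion of finite-dimensional representations is \COR{nondeg}, which again goes back to \THM{SW}. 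Your compactness argument for bicontinuity at the end is exactly what the paper does.
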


\THM{subh1} establishes a one-to-one correspondence between unital subhomogeneous $C^*$-algebras
and proper m-towers of finite heights. It also enables characterizing $*$-homomorphisms between such
algebras by means of so-called \textit{morphisms} between m-towers.\par
To describe models for all (possibly) nonunital subhomogeneous $C^*$-agebras, it suffices to collect
all (possibly nonunital) finite-dimensional representations. More precisely, for any $C^*$-algebra
$\aaA$, the \textit{concrete pointed tower} of $\aaA$, denoted by $(\Zz(\aaA),\theta_{\aaA})$, is
the tower $\Zz(\aaA)$ of all its (possibly zero) finite-dimensional representations together with
a distinguished element $\theta_{\aaA}$ which is the zero one-dimensional representation of $\aaA$.
An \textit{abstract} pointed m-tower is any pair $(\Zz,\theta)$ where $\Zz$ is an m-tower and
$\theta \in \Zz$ is such that $d(\theta) = 1$. For each such a pair one defines $C^*(\Zz,\theta)$
as the subspace of $C^*(\Zz)$ consisting of all functions $f \in C^*(\Zz)$ for which $f(\theta) =
0$. For any $C^*$-algebra $\aaA$, $\JjJ_{\aaA}\dd \aaA \to C^*(\Zz(\aaA),\theta)$ is defined
in the same manner as for unital $C^*$-algebras. Then we have:

\begin{thm}{subh0}
For any subhomogeneous $C^*$-algebra $\aaA$, $(\Zz(\aaA),\theta_{\aaA})$ is a proper pointed m-tower
of finite height and $J_{\aaA}\dd \aaA \to C^*(\Zz(\aaA),\theta_{\aaA})$ is
a $*$\hyp{}isomorphism.\par
Conversely, if $(\Ss,\kappa)$ is an arbitrary proper pointed m-tower of finite height, then
$C^*(\Ss,\kappa)$ is subhomogeneous and for every its finite-dimensional representation $\pi$ there
is a unique point $\sS \in \Ss$ such that $\pi = \pi_{\sS}$ where
\begin{equation*}
\pi_{\sS}\dd C^*(\Ss,\kappa) \ni u \mapsto u(\sS) \in \mmM_{d(\sS)}.
\end{equation*}
Moreover, the assignment $\sS \mapsto \pi_{\sS}$ correctly defines an isomorphism between
the pointed towers $(\Ss,\kappa)$ and $\Zz(C^*(\Ss,\kappa),\theta_{C^*(\Ss,\kappa)})$.
\end{thm}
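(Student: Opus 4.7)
The plan is to reduce both assertions of \THM{subh0} to their unital counterparts in \THM{subh1} via the unconditional unitization. Given any $C^*$-algebra $\aaA$, let $\aaA^{\sim} \df \aaA \oplus \CCC$ carry the standard unitization product, so that $\aaA^{\sim}$ is unital even when $\aaA$ itself is. Every (possibly non-unital) $*$-homomorphism $\pi\dd \aaA \to \mmM_n$ admits a unique unital extension $\tilde{\pi}\dd \aaA^{\sim} \to \mmM_n$, namely $\tilde{\pi}(a + \lambda \cdot 1) \df \pi(a) + \lambda I_n$, and every unital finite-dimensional representation of $\aaA^{\sim}$ arises in this way. The assignment $\pi \mapsto \tilde{\pi}$ visibly preserves degree, direct sum and unitary action and, being given by a linear formula in the matrix entries, is a homeomorphism $\Zz(\aaA) \to \Xx(\aaA^{\sim})$; it carries $\theta_{\aaA}$ to the canonical character $\chi\dd a + \lambda \cdot 1 \mapsto \lambda$. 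Hence $(\Zz(\aaA),\theta_{\aaA})$ and $(\Xx(\aaA^{\sim}),\chi)$ are isomorphic as pointed m-towers.

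Assume first that $\aaA$ is subhomogeneous. Each irreducible representation of $\aaA^{\sim}$ either restricts to an irreducible representation of $\aaA$ of the same degree or factors through $\chi$ and is therefore one-dimensional; hence $\aaA^{\sim}$ is unital subhomogeneous as well. By \THM{subh1}, $\Xx(\aaA^{\sim})$ is a proper m-tower of finite height and $J_{\aaA^{\sim}}\dd \aaA^{\sim} \to C^*(\Xx(\aaA^{\sim}))$ is a $*$-isomorphism. Transporting along the isomorphism of the previous paragraph, $(\Zz(\aaA),\theta_{\aaA})$ inherits these properties. Moreover, $J_{\aaA^{\sim}}$ maps the ideal $\aaA = \ker \chi$ onto the ideal $\{g \in C^*(\Xx(\aaA^{\sim}))\dd\ g(\chi)=0\}$, which corresponds, via the tower isomorphism, to $C^*(\Zz(\aaA),\theta_{\aaA})$; the restriction of $J_{\aaA^{\sim}}$ to $\aaA$ is manifestly the map $J_{\aaA}$ of the theorem, thus proving it to be a $*$-isomorphism.

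For the converse, let $(\Ss,\kappa)$ be a proper pointed m-tower of finite height and set $\bbB \df C^*(\Ss)$. By \THM{subh1}, $\bbB$ is unital subhomogeneous and $\sS \mapsto \pi_{\sS}$ is an isomorphism $\Ss \to \Xx(\bbB)$. Because $d(\kappa) = 1$, the evaluation $\pi_{\kappa}$ is a character of $\bbB$ whose kernel is precisely $C^*(\Ss,\kappa)$; since closed $*$-subalgebras of subhomogeneous $C^*$-algebras remain subhomogeneous, so does $C^*(\Ss,\kappa)$. As $1_{\bbB} \notin C^*(\Ss,\kappa)$, the universal unital $*$-homomorphism $C^*(\Ss,\kappa)^{\sim} \to \bbB$ extending the inclusion is a $*$-isomorphism. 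Applying the bijection of the first paragraph with $\aaA \df C^*(\Ss,\kappa)$ now identifies $\Zz(C^*(\Ss,\kappa))$ with $\Xx(\bbB)$ and $\theta_{C^*(\Ss,\kappa)}$ with $\pi_{\kappa}$; composing with $\Xx(\bbB) \cong \Ss$ yields the desired pointed m-tower isomorphism, and the existence and uniqueness of $\sS \in \Ss$ representing any given finite-dimensional $\pi$ follow at once.

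The main technical obstacle is the one-time verification, executed in the first paragraph, that the unitization correspondence $\pi \leftrightarrow \tilde{\pi}$ is genuinely an isomorphism of concrete m-towers; this amounts to unwinding the formula $\tilde{\pi}(a + \lambda \cdot 1) = \pi(a) + \lambda I_n$ together with a short check of bicontinuity in the pointwise convergence topology, both of which are elementary. Once this is in place, the remainder of the proof is bookkeeping built on \THM{subh1}.
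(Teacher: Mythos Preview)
Your argument is correct and follows essentially the same route as the paper's own proof: both pass to the unconditional unitization $\aaA^{\sim}$ (the paper writes $\aaA_1$), identify $\Zz(\aaA)$ with $\Xx(\aaA^{\sim})$ as pointed m-towers, invoke \THM{subh1}, and read off the statement for $C^*(\Zz(\aaA),\theta_{\aaA})$ as the kernel of the canonical character; for the converse, both use that $C^*(\Ss)$ is the unitization of $C^*(\Ss,\kappa)$. Your write-up is somewhat more explicit about the tower isomorphism and the matching of $\theta_{\aaA}$ with $\chi$ (respectively $\pi_{\kappa}$), while the paper compresses this into a dimension count, but the underlying strategy is identical.
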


From \THM[s]{subh1} and \THM[]{subh0} one may easily deduce the commutative Gelfand-Naimark theorem
as well as a result on models for homogeneous $C^*$-algebras due to Fell \cite{fe3} and Tomiyama and
Takesaki \cite{t-t} (this is discussed in \REM{homo} in Section~5). Since $*$-homomorphisms between
subhomogeneous $C^*$-algebras may naturally be interpreted, in a functorial manner, as morphisms
between m-towers, therefore our approach to homogeneous (as a part of subhomogeneous) $C^*$-algebras
may be seen as better than the original proposed by Fell, and Tomiyama and Takesaki (because using
their models it is far from `naturalness' to describe $*$-homomorhisms between $n$-homogeneous and
$m$-homogeneous $C^*$-algebras for different $n$ and $m$). A one-to-one \textit{functorial}
correspondence between the realms of subhomogeneous $C^*$-algebras and (proper) m-towers gives new
tools in investigations of other classes of $C^*$-algebras, such as approximately subhomogeneous
(ASH), approximately homogeneous (AH) or CCR (which are ASH, due to a result of Sudo \cite{sud})
as well as GCR $C^*$-algebras (taking into account their \textit{composition series}; see
\cite{kap}).\par
Although \THM{subh1} is very intuitive, its proofs is difficult and involves our recent result
\cite{pn3} on norm closures of convex sets in uniform spaces of vector-valued functions.\par
The paper is organized as follows. Section~2 we discuss special kinds of towers, which are very
natural and intuitive. Based on their properties, in the third part we introduce abstract towers and
distinguish m-towers among them. We establish fundamental properties of m-towers, which shall find
applications in further sections. In Section~4 we study so-called essentially locally compact
m-towers and prove for them a variation of the classical Stone-Weierstrass theorem (see \THM{SW!}).
Its special case, \THM{SW}, is crucial for proving \THM{subh1}. We also characterize there all
(closed two-sided) ideals (see \COR{ideal}) and finite-dimensional nondegenerate representations
of $C^*$-algebras associated with essentially locally compact m-towers (cosult \COR{nondeg}).
The fifth part is devoted to the proofs of \THM[s]{subh1} and \THM[]{subh0} and some of their
consequences. We also give there a description of models for all inverse limits of subhomogeneous
$C^*$-algebras (consult \THM{inv} as well as \REM{nonuniq}). In Section~6 we generalize \THM{subh0}
to the case of all so-called shrinking $C^*$-algebras (that include, among other algebras, direct
products of subhomogeneous $C^*$-algebras). We establish man interesting properties and give a few
characterizations of them. The last, seventh part, is devoted to the concept of so-called shrinking
multiplier algebras (associated only with shrinking $C^*$-algebras). The main result of this section
is \THM{closed} which enables characterizing those separable shrinking $C^*$-algebras which coincide
with their shrinking multiplier algebras.

\subsection*{Notation and terminology}
Representations of $C^*$-algebras need not be nonzero and $*$-homomorphisms between unital
$C^*$-algebras need not preserve unities. Ideals in $C^*$-algebras are, by definition, closed and
two-sided. All topological spaces are assumed to be Hausdorff (unless otherwise stated).
A \textit{map} is a continuous function. A map $u\dd X \to Y$ is \textit{proper} if $u^{-1}(L)$ is
compact for any compact $L \subset Y$. By a \textit{topological semigroup} we mean any semigroup
(possibly with no unit) equipped with a (Hausdorff) topology with respect to which the action
of the semigroup is continuous. A \textit{clopen} set in a topological space is a set which is
simultaneously open and closed.\par
All notations and terminologies introduced earlier in this section are obligatory. Additionally,
we denote by $I_n$ the unit $n \times n$ matrix.

\section{Standard towers}

This brief section is devoted to a very special type of m-towers called \textit{standard towers}
(general m-towers shall be discussed in the next part), which are intuitive and natural. We start
from them in order to elucidate main ideas and make it easier to assimilate further definitions
(which may be seen strange or somewhat artificial). Actually, we shall prove in the sequel that each
proper m-tower is \textit{isomorphic} to a standard tower.\par
Let $\Lambda$ be an arbitrary nonempty set (of indices). We define $\mmM[\Lambda]$
as the topological disjoint union $\bigsqcup_{n=1}^{\infty} \mmM_n^{\Lambda}$ of the spaces
$\mmM_n^{\Lambda}$ each of which is equipped with the pointwise convergence topology. Further, for
two members $\ueX = (X_{\lambda})_{\lambda\in\Lambda}$ and $\ueY =
(Y_{\lambda})_{\lambda\in\Lambda}$ of $\mmM[\Lambda]$, we define $d(\ueX)$ and $\ueX \oplus \ueY$
as follows: $d(\ueX) \df n$ where $n > 0$ is such that $\ueX \in \mmM_n^{\Lambda}$ and $\ueX \oplus
\ueY \df (X_{\lambda} \oplus Y_{\lambda})_{\lambda\in\Lambda}$. If, in addition, $U \in
\uuU_{d(\ueX)}$, $U . \ueX$ is defined coordinatewise; that is, $U . \ueX \df
(U . X_{\lambda})_{\lambda\in\Lambda}$. In this way we have obtained functions $d\dd \mmM[\Lambda]
\to \NNN$ and $\oplus\dd \mmM[\Lambda] \times \mmM[\Lambda] \to \mmM[\Lambda]$ and a collection
\begin{equation*}
\uuU_n \times \mmM_n^{\Lambda} \ni (U,\ueX) \mapsto U . \ueX \in \mmM_n^{\lambda}, \qquad
n=1,2,\ldots
\end{equation*}
of group actions. We call $d$, $\oplus$ and the above collection the \textit{degree} map,
the \textit{addition} and the \textit{unitary action} (respectively). For a further use, we put
$\mmM \df \mmM[\{1\}]$ and we shall think of $\mmM$ as of the space of all (square complex) single
matrices.

\begin{dfn}{standard}
A \textit{standard tower} is any subspace $\ttT$ of $\mmM[\Lambda]$ (for some set $\Lambda$) which
satisfies all the following conditions:
\begin{enumerate}[(ST1)]\addtocounter{enumi}{-1}
\item $\ttT$ is a closed subset of $\mmM[\Lambda]$;
\item whenever $(X_{\lambda})_{\lambda\in\Lambda} \in \mmM[\Lambda]$ belongs to $\ttT$, then
 $\|X_{\lambda}\| \leqsl 1$ for all $\lambda \in \Lambda$;
\item for each $\ueX \in \ttT$ and $U \in \uuU_{d(\ueX)}$, $U . \ueX$ belongs to $\ttT$ as well;
\item for any two elements $\ueX$ and $\ueY$ of $\mmM[\Lambda]$,
 \begin{equation}\label{eqn:oplus}
 \ueX \oplus \ueY \in \ttT \iff \ueX, \ueY \in \ttT.
 \end{equation}
\end{enumerate}
For any standard tower $\ttT$ and each integer $n > 0$, the subspace $\ttT_n$ is defined
as $\{\ueX \in \ttT\dd\ d(\ueX) = n\}$.\par
By a \textit{standard pointed tower} we mean any pair of the form $(\ttT,\theta_{\ttT})$ where $\ttT
\subset \mmM[\Lambda]$ is a tower which contains the point $\theta_{\ttT} =
(Z_{\lambda})_{\lambda\in\Lambda}$ with $Z_{\lambda} = 0$ for each $\lambda \in \Lambda$. Note that
$d(\theta_{\ttT}) = 1$ for any standard pointed tower $(\ttT,\theta_{\ttT})$.
\end{dfn}

The following result is an immediate consequence of the Tychonoff theorem (on products of compact
spaces) and the definition of m-towers. We therefore omit its proof.

\begin{pro}{std1}
Let $\ttT$ be a standard tower. Then:
\begin{enumerate}[\upshape(T1)]\addtocounter{enumi}{-1}
\item $d\dd \ttT \to \NNN$ is a proper map;
\item $(\ttT,\oplus)$ is a topological semigroup and for each $n > 0$, the map $\ttT_n \times \ttT
 \ni (\ueX,\ueY) \mapsto \ueX \oplus \ueY \in \ttT$ is a closed embedding;
\item $d(\ueX \oplus \ueY) = d(\ueX) + d(\ueY)$ and $d(U . \ueX) = d(\ueX)$ for any $\ueX, \ueY \in
 \ttT$ and $U \in \uuU_{d(\ueX)}$;
\item for each $n > 0$, the function $\uuU_n \times \ttT_n \ni (U,\ueX) \mapsto U . \ueX \in \ttT_n$
 is a continuous group action \textup{(}that is, $(UV) . \ueX = U . (V . \ueX)$ and $I_n . \ueX =
 \ueX$ for all $U, V \in \uuU_n$ and $\ueX \in \ttT_n$\textup{)};
\item $U . \ueX = \ueX$ provided $\ueX$ is an arbitrary member of $\ttT$ and $U \in \uuU_{d(\ueX)}$
 is a scalar multiple of the unit matrix;
\item for any $\ueX \in \ttT_n$, $\ueY \in \ttT_k$, $U \in \uuU_n$ and $V \in \uuU_k$, $(U \oplus V)
 . (\ueX \oplus \ueY) = (U . \ueX) \oplus (V . \ueY)$;
\item for any two elements $\ueX$ and $\ueY$ of $\ttT$, $U_{d(\ueX),d(\ueY)} . (\ueX \oplus \ueY) =
 \ueY \oplus \ueX$ where, for any positive integers $p$ and $q$, $U_{p,q} \in \uuU_{p+q}$ is defined
 by the rule:
 \begin{equation*}
 \begin{pmatrix}w_1 & \dots & w_q & z_1 & \dots & z_p\end{pmatrix} \cdot U_{p,q} \df
 \begin{pmatrix}z_1 & \dots & z_p & w_1 & \dots & w_q\end{pmatrix}.
 \end{equation*}
\end{enumerate}
In particular, the subspaces $\ttT_n$ are compact and $\ttT$ is locally compact.
\end{pro}

Properties (T1)--(T6) will serve as axioms of \textit{towers} (see the beginning of the next
section), which are more general than m-towers. The latter structures will have also some other
properties, which we shall now establish for standard towers. To this end, we introduce

\begin{dfn}{irr}
Let $\ttT$ be a standard tower. An element $\ueX$ of $\ttT$ is said to be
\begin{itemize}
\item \textit{reducible} if there are $\ueA, \ueB \in \ttT$ and $V \in \uuU_{d(\ueX)}$ such that
 $V . \ueX = \ueA \oplus \ueB$;
\item \textit{irreducible} if $\ueX$ is not reducible.
\end{itemize}
The \textit{stabilizer} $\stab(\ueX)$ of $\ueX$ is defined as $\stab(\ueX) \df \{U \in
\uuU_{d(\ueX)}\dd\ U . \ueX = \ueX\}$.\par
Finally, for two elements $\ueX$ and $\ueY$ of $\ttT$ we shall write
\begin{itemize}
\item $\ueX \equiv \ueY$ if $U . \ueX = \ueY$ for some $U \in \uuU_{d(\ueX)}$;
\item $\ueX \preccurlyeq \ueY$ if either $\ueX \equiv \ueY$ or there are $\ueA, \ueB \in \ttT$ such
 that $\ueX \equiv \ueA$ and $\ueY \equiv \ueA \oplus \ueB$;
\item $\ueX \perp \ueY$ if there is no $\ueA \in \ttT$ for which $\ueA \preccurlyeq \ueX$ as well as
 $\ueA \preccurlyeq \ueY$; if this happens, we call $\ueX$ and $\ueY$ \textit{disjoint}.
\end{itemize}
\end{dfn}

Key properties of standard towers are established below.

\begin{pro}{std2}
Let $\ueX$, $\ueY$ and $\ueZ$ be three elements of a standard tower $\ttT$. Then
\begin{enumerate}[\upshape(mT1)]
\item if $\ueZ \perp \ueX$ and $\ueZ \perp \ueY$, then $\ueZ \perp \ueX \oplus \ueY$;
\item $\ueX \equiv \ueY$ provided $\ueZ \oplus \ueX \equiv \ueZ \oplus \ueY$;
\item if $\ueX \perp \ueY$, then $\stab(\ueX \oplus \ueY) = \{U \oplus V\dd\ U \in \stab(\ueX),\
 V \in \stab(\ueY)\}$;
\item if $\ueX$ is irreducible and $p \df d(\ueX)$, then for each $n > 0$,
 \begin{equation*}
 \stab(\underbrace{\ueX \oplus \ldots \oplus \ueX}_n) = \{I_p \otimes U\dd U \in \uuU_n\}.
 \end{equation*}
\end{enumerate}
\end{pro}

Before passing to a proof, we recall that if $U = [z_{jk}] \in \uuU_n$, then $I_p \otimes U$
coincides with the block matrix $[z_{jk} I_p] \in \uuU_{np}$ (see, for example, Section~6.6
in \cite{k-r}).

\begin{proof}
All items of the proposition are known consequences of (well-known) facts on von Neumann algebras
and may be shown as follows. Let $\Lambda$ be a set such that $\ttT \subset \mmM[\Lambda]$. Further,
for any $\ueX = (X_{\lambda})_{\lambda\in\Lambda} \in \ttT$ denote by $\WwW'(\ueX)$ the set of all
matrices $A \in \mmM_{d(\ueX)}$ which commute with both $X_{\lambda}$ and $X_{\lambda}^*$ for any
$\lambda \in \Lambda$. Then $\WwW'(\ueX)$ is a von Neumann algebra such that
\begin{equation}\label{eqn:aux1}
\WwW'(\ueX) \cap \uuU_{d(\ueX)} = \stab(\ueX).
\end{equation}
To show (mT1) and (mT2), we consider $\ueW = (W_{\lambda})_{\lambda\in\Lambda} \df \ueX \oplus \ueY
\oplus \ueZ$ and three projections $P, Q, R \in \WwW'(\ueW)$ given by $P \df I_{d(\ueX)} \oplus
0_{d(\ueY)} \oplus 0_{d(\ueZ)}$, $Q \df 0_{d(\ueX)} \oplus I_{d(\ueY)} \oplus 0_{d(\ueZ)}$ and
$R \df 0_{d(\ueX)} \oplus 0_{d(\ueY)} \oplus I_{d(\ueZ)}$ (where $0_n$ denotes the zero $n \times n$
matrix). Since the restrictions of $\ueW$ to the ranges of $P$, $Q$ and $R$ are unitarily equivalent
to, respectively, $\ueX$, $\ueY$ and $\ueZ$ (which means, for example, that, for $H$ denoting
the range of $P$, $U W_{\lambda}\bigr|_H = X_{\lambda} U$ for all $\lambda$ and a single unitary
operator $U\dd H \to \CCC^{d(\ueX)}$), we conclude from Proposition~2.3.1 in \cite{pn1} (see also
Proposition~1.35 in \cite{ern}; these results are formulated for finite tuples of operators and
single operators, but the assumption of finiteness of tuples is superfluous) that
\begin{itemize}
\item $\ueX \equiv \ueY$ iff the projections $P$ and $Q$ are Murray-von Neumann equivalent
 in $\WwW'(\ueW)$; similarly, $\ueZ \oplus \ueX \equiv \ueZ \oplus \ueY$ iff $R+P$ and $R+Q$ are
 Murray-von Neumann equivalent in $\WwW'(\ueW)$;
\item $\ueZ \perp \ueX$ (resp.\ $\ueZ \perp \ueY$; $\ueZ \perp \ueX \oplus \ueY$) iff the central
 carriers $c_R$ and $c_P$ of $R$ and $P$ (resp.\ $c_R$ and $c_Q$; $c_R$ and $c_{P+Q}$)
 in $\WwW'(\ueW)$ are mutually orthogonal.
\end{itemize}
So, under the assumption of (mT1), we conclude that $c_R c_P = c_R c_Q$. So, $c_R (c_P + c_Q) = 0$
and consequently $c_R c_{P+Q} = 0$, which means that $Z \perp \ueX \oplus \ueY$. This yields (mT1).
Further, if $\ueZ \oplus \ueX \equiv \ueZ \oplus \ueY$, then the projections $R+P$ and $R+Q$ are
Murray-von Neumann equivalent in $\WwW'(\ueW)$. But $\WwW'(\ueW)$ is a finite von Neumann algebra
and therefore the equivalence of $R+P$ and $R+Q$ implies the equivalence of $P$ and $Q$, which
translates into $\ueX \equiv \ueY$. In this way we showed (mT2).\par
To prove (mT3), observe that the inclusion ``$\supset$'' holds with no additional assumptions
on $\ueX$ and $\ueY$ (which follows e.g.\ from (T5)). To see the reverse inclusion (provided $\ueX
\perp \ueY$), put $n \df d(\ueX)$, $k \df d(\ueY)$ and take an arbitrary matrix $W \in \stab(\ueX
\oplus \ueY)$. Write $W$ as a block matrix $\begin{pmatrix}U & A\\B & V\end{pmatrix}$ where $U \in
\mmM_n$, $V \in \mmM_k$, $A$ is an $n \times k$ and $B$ is an $k \times n$ matrix. Since $W . (\ueX
\oplus \ueY) = \ueX \oplus \ueY$ (and consequently $W^* . (\ueX \oplus \ueY) = \ueX \oplus \ueY$),
we conclude that $U X_{\lambda} = X_{\lambda} U$, $V Y_{\lambda} = Y_{\lambda} V$ and
\begin{equation}\label{eqn:aux2}
\begin{cases}
A Y_{\lambda} = X_{\lambda} A,&\\
A Y_{\lambda}^* = X_{\lambda}^* A,&\\
B X_{\lambda} = Y_{\lambda} B,&\\
B X_{\lambda}^* = Y_{\lambda}^* B&\\
\end{cases}
\end{equation}
for any $\lambda \in \Lambda$ (where $(X_{\lambda})_{\lambda\in\Lambda} \df \ueX$ and
$(Y_{\lambda})_{\lambda\in\Lambda} \df \ueY$). So, it suffices to check that both $A$ and $B$ are
zero matrices. One infers from the first two equations in \eqref{eqn:aux2} and Schur's lemma
on intertwinning operators (see, for example, Theorem~1.5 in \cite{ern}) that the range $V$ of $A$
is a reducing subspace for each $X_{\lambda}$, the orthogonal complement $W$ of the kernel of $A$ is
a reducing subspace for each $Y_{\lambda}$ and $Q Y_{\lambda}\bigr|_W = X_{\lambda}\bigr|_V
Q\bigr|_W$ where $Q$ is the partial isometry which appears in the polar decomposition of $A$. Since
$Q$ sends isometrically $W$ onto $V$, the above properties imply that if $A \neq 0$, then $\ueX
\not\perp \ueY$ (one uses here (ST2) and (ST3)), which contradicts the assumption in (mT3). So, $A =
0$. In a similar manner, starting from the last two equations in \eqref{eqn:aux2}, one shows that
$B = 0$. This completes the proof of (mT3).\par
Finally, the assertion of (mT3) immediately follows from the facts that if $\ueX$ is irreducible,
then $\WwW'(\ueX)$ consists of scalar multiples of the unit matrix (thanks to \eqref{eqn:aux1} and
(ST2)--(ST3)), and if a matrix $W \in \stab(\underbrace{\ueX \oplus \ldots \oplus \ueX}_n)$ is
expressed as a block matrix $[T_{jk}]$ with $T_{jk} \in \mmM_p$, then $T_{jk} \in \WwW'(\ueX)$ for
any $j$ and $k$.
\end{proof}

\section{M-towers}

A \textit{tower} is a quadruple $\Tt = (\ttT,d,\oplus,.)$ such that each of the following five
conditions is fulfilled:
\begin{itemize}
\item $\ttT$ is a topological space;
\item $d\dd \ttT \to \NNN$ is a map;
\item ``$\oplus$'' is a function of $\ttT \times \ttT$ into $\ttT$;
\item ``$.$'' is a function of $\bigcup_{n=1}^{\infty} (\uuU_n \times \ttT_n)$ into $\ttT$ where
 $\ttT_n \df \{\ueX \in \ttT\dd\ d(\ueX) = n\}$;
\item conditions (T1)--(T6) of \PRO{std1} hold.
\end{itemize}
If, in addition, condition (T0) is fulfilled, the tower $\Tt$ is said to be \textit{proper}.
(If this happens, the subspaces $\ttT_n$ are compact and $\ttT$ is locally compact.) We call $d$,
``$\oplus$'' and ``$.$'' the \textit{ingredients} of the tower $\ttT$.\par
A \textit{pointed tower} is a pair $(\Tt,\theta)$ where $\Tt = (\ttT,d,\oplus,.)$ is a tower and
$\theta$ is a point in $\ttT$ such that $d(\theta) = 1$.\par
By a \textit{morphism} between two towers $(\ttT,d,\oplus,.)$ and $(\ttT',d',\oplus,.)$ we mean any
map $v\dd \ttT \to \ttT'$ such that for any $\ueX, \ueY \in \ttT$ each of the following conditions
is fulfilled:
\begin{enumerate}[(M1)]
\item $d'(v(\ueX)) = d(\ueX)$;
\item $v(U . \ueX) = U . v(\ueX)$ for all $U \in \uuU_{d(\ueX)}$;
\item $v(\ueX \oplus \ueY) = v(\ueX) \oplus v(\ueY)$.
\end{enumerate}
Similarly, a \textit{morphism} between two pointed towers $(\Tt,\theta)$ and $(\Tt',\theta')$ is any
morphism between the towers $\Tt$ and $\Tt'$ which sends $\theta$ onto $\theta'$.\par
It is easy to see that towers (as objects) with morphisms between them form a category. Thus, we can
(and will) speak of \textit{isomorphisms} between towers and \textit{isomorphic} towers. (It is easy
to see that a morphism between two towers is an isomorphism iff it is a homeomorphism between
underlying topological spaces.)\par
A \textit{subtower} of a tower $(\ssS,d,\oplus,.)$ is any closed subset $\ttT$ of $\ssS$ which
fulfills condition (ST2), and \eqref{eqn:oplus} for all $\ueX, \ueY \in \ssS$. It is an easy
observation that a subtower of $\ssS$ is a tower when it is equipped with all ingredients inherited
from $\ssS$. Similarly, a \textit{pointed subtower} of a pointed tower $(\Ss,\theta)$ is any pair
$(\Tt,\theta')$ where $\Tt$ is a subtower of $\Ss$ containing $\theta'$ and $\theta' = \theta$.\par
In the very same way as in \DEF{irr}, we define reducible and irreducible elements of a tower,
stabilizers of its elements as well as relations ``$\equiv$'', ``$\preccurlyeq$'' and ``$\perp$''.
Additionally, for each $n > 0$ and an element $\ueX$ of a tower (or a matrix), we shall denote
$\underbrace{\ueX \oplus \ldots \oplus \ueX}_n$ by $n \odot \ueX$. Also, for simplicity, whenever
$j$ runs over a finite set (known from the context) of integers and $\ueX_j$ for each such $j$
denotes an element of a common tower (or each $\ueX_j$ is a matrix), $\bigoplus_j \ueX_j$ will stand
for the sum of all $\ueX_j$ arranged in accordance with the natural order of the indices $j$.
Similarly, if $\Ss_j$ are arbitrary subsets of a tower (and $j$ runs over a finite set of integers),
$\bigoplus_j \Ss_j$ will denote the set of all elements of the form $\bigoplus_j \sS_j$ where $\sS_j
\in \Ss_j$ for each $j$. Furthermore, for any subset $\Aa$ of a tower, $\uuU . \Aa$ will stand for
the set of all elements of the form $U . \aA$ where $\aA \in \Aa$ and $U \in \uuU_{d(\aA)}$.\par
The \textit{core} of a tower $\Tt$, to be denoted by $\core(\Tt)$, is defined as the set of all
irreducible elements of $\Tt$. Additionally, $\overline{\core}(\Tt)$ will stand for the closure
of the core of $\Tt$. The \textit{height} of the tower $\Tt$ is the quantity
\begin{equation*}
\hgt(\Tt) \df \sup \{d(\ueT)\dd\ \ueT \in \core(\Tt)\} \in \{0,1,2,\ldots,\infty\}
\end{equation*}
(where $\sup(\varempty) \df 0$).

\begin{dfn}{sigma}
A (possibly non-Hausdorff) topological space $X$ is said to have \textit{property $(\sigma)$ with
respect to $(K_n)_{n=1}^{\infty}$} if each of the following conditions is fulfilled:
\begin{enumerate}[($\sigma$1)]
\item $K_n \subset K_{n+1}$ for all $n$, and $X = \bigcup_{n=1}^{\infty} K_n$;
\item each of $K_n$ is a compact Hausdorff space in the topology inherited from $X$;
\item a set $A \subset X$ is closed (in $X$) iff $A \cap K_n$ is closed for each $n$.
\end{enumerate}
The space $X$ is said to have \textit{property $(\sigma)$} if $X$ has property $(\sigma)$ with
respect to some $(K_n)_{n=1}^{\infty}$.
\end{dfn}

Now we may turn to the main topic of the section.

\begin{dfn}{m-tower}
An \textit{m-tower} is a tower $\Tt = (\ttT,d,\oplus,.)$ such that conditions (mT1)--(mT4)
of \PRO{std2} are fulfilled for all $\ueX, \ueY \in \ttT$, and the topological space $\ttT$ has
property $(\sigma)$.
\end{dfn}

In the above naming, the prefix `m' is to emphasize a strong resemblance of m-towers to standard
towers (which consist of tuples of matrices). Property $(\sigma)$ will enable us to extend certain
matrix-valued functions. (To simplify the presentation, we could replace it by requiring that
an m-tower has to be proper. However, we chose property $(\sigma)$ to enable giving a description
of inverse limits of unital subhomogeneous $C^*$-algebras, which shall be done in \THM{inv}.)\par
The strength of property $(\sigma)$ is revealed below.

\begin{lem}{para}
Let $X$ be a \textup{(}possibly non-Hausdorff\textup{)} topological space that have property
$(\sigma)$ with respect to $(K_n)_{n=1}^{\infty}$. Then:
\begin{enumerate}[\upshape(a)]
\item for any topological space $Y$, a function $f\dd X \to Y$ is continuous iff $f\bigr|_{K_n}\dd
 K_n \to Y$ is continuous for any $n$; and
\item $X$ is a paracompact Hausdorff space.
\end{enumerate}
In particular, each m-tower is a paracompact topological space.
\end{lem}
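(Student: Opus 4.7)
Part (a) is a direct application of axiom $(\sigma 3)$. The forward direction is immediate; for the converse, given a closed set $C \subset Y$, continuity of each $f|_{K_n}$ gives that $f^{-1}(C) \cap K_n = (f|_{K_n})^{-1}(C)$ is closed in $K_n$ for every $n$, whence $f^{-1}(C)$ is closed in $X$ by $(\sigma 3)$.

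For part (b), the plan is to show that $X$ is regular Hausdorff and Lindel\"{o}f, and then appeal to the standard theorem that every regular Lindel\"{o}f space is paracompact. Lindel\"{o}fness is immediate since $X = \bigcup_n K_n$ is $\sigma$-compact. The real work lies in separation, because $(\sigma)$ permits $X$ to be non-Hausdorff a priori. A preliminary observation is that every $K_n$ is closed in $X$: for each $m$ the set $K_n \cap K_m$ equals $K_{\min(n,m)}$, a compact subset of the Hausdorff space $K_{\max(n,m)}$, hence closed in $K_m$, so $(\sigma 3)$ applies. In particular, singletons are closed and $X$ is $T_1$; moreover, for any $A \subset K_n$ and any $m \geq n$ the closure of $A$ in $K_m$ coincides with its closure in $K_n$.

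To prove regularity, fix $x \in X$ and closed $F \subset X$ with $x \notin F$, and choose $n_0$ with $x \in K_{n_0}$. I construct inductively open sets $U_n, V_n \subset K_n$ for $n \geq n_0$ satisfying $x \in U_n$, $F \cap K_n \subset V_n$, $\overline{U_n} \cap \overline{V_n} = \varempty$, $\overline{U_n} \subset U_{n+1}$, and $\overline{V_n} \subset V_{n+1}$: the base case invokes normality of $K_{n_0}$ applied to $\{x\}$ and $F \cap K_{n_0}$, while the inductive step applies normality of $K_{n+1}$ to the disjoint closed sets $\overline{U_n}$ and $\overline{V_n} \cup (F \cap K_{n+1})$ (disjointness using $F \cap K_n \subset V_n$). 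The unions $U = \bigcup_n U_n$ and $V = \bigcup_n V_n$ furnish the required separation; their openness in $X$ is verified via $(\sigma 3)$ by showing that each $U \cap K_m = \bigcup_{n \geq m}(U_n \cap K_m)$ is open in $K_m$, which uses $U_n \subset U_m$ for $n_0 \leq n \leq m$ (a consequence of the nested-closure condition). Regularity together with $T_1$ yields Hausdorff, and the theorem quoted above gives paracompactness. The claim about m-towers follows at once from \DEF{m-tower}. The main obstacle is the inductive construction itself: arranging nested closures so that the countable unions are simultaneously disjoint and have open restrictions to every $K_m$ is precisely where property $(\sigma)$ does the essential work.
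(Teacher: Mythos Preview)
Your proof is correct and shares the paper's overall architecture: establish a separation axiom, observe Lindel\"{o}fness from $\sigma$-compactness, and invoke the standard theorem that regular Lindel\"{o}f spaces are paracompact. Part~(a) is handled identically.

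The genuine difference lies in how the separation axiom is obtained. You build regularity directly, inductively constructing nested separating open sets $U_n, V_n \subset K_n$ using normality of each $K_n$. The paper instead proves \emph{normality} by verifying the Tietze extension property: given a real-valued map on a closed set $A$, it extends inductively to maps $f_n$ on $K_n \cup A$, and the union is continuous by part~(a). Both arguments exploit the same inductive-on-$K_n$ mechanism, but with different objects (open sets versus function extensions). The paper's route yields a stronger conclusion (normality rather than mere regularity) and, more importantly, foreshadows the function-extension technique that recurs throughout Section~3 (notably in \LEM{extend} and \THM{extend}); your route is perhaps more elementary for the stated goal but does not feed into those later arguments.
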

\begin{proof}
Point (a) is immediate and may be shown by testing the closedness of the inverse image under $f$
of a closed set in $Y$. We turn to (b). We see that all finite sets are closed in $X$. It is also
clear that each of $K_n$ is closed in $X$. To show that $X$ is normal, it suffices to check
the assertion of Tietze's theorem; that is, that every real-valued map defined on a closed set
in $X$ is extendable to a real-valued map on $X$. To this end, take an arbitrary closed set $A$
in $X$ and a map $f_0\dd A \to \RRR$. By induction, we may find maps $f_n\dd K_n \cup A \to \RRR$
such that $f_n$ extends $f_{n-1}$ for $n > 0$. Indeed, putting $K_0 \df \varempty$ and assuming
$f_{n-1}$ is defined (for some $n > 0$), first we extend the restriction of $f_{n-1}$ to $K_{n-1}
\cup (A \cap K_n)$ to a map $v_n\dd K_n \to \RRR$ and then define $f_n$ as the union of $v_n$ and
$f_{n-1}$. Finally, we define the extension $f\dd X \to \RRR$ of $f_0$ by the rule $f(x) \df f_n(x)$
for $x \in K_n$. Since $f\bigr|_{K_n}$ is continuous for each $n > 0$, we infer from (a) that $f$
itself is continuous.\par
We have shown above that $X$ is a normal space (with finite sets closed). It follows from our
assumptions that $X$ is $\sigma$-compact (that is, $X$ is a countable union of compact sets) and
therefore $X$ has the Lindel\"{o}f property (which means that every open cover of $X$ has
a countable subcover). So, Theorem~3.8.11 in \cite{eng} implies that $X$ is paracompact.
\end{proof}

\begin{pro}{std-m-tower}
Each standard tower is an m-tower. More generally, a proper tower is an m-tower iff conditions
\textup{(mT1)--(mT4)} hold for any its elements $\ueX$ and $\ueY$.
\end{pro}
\begin{proof}
All we need to explain is that for a proper tower $(\ttT,d,\oplus,.)$, the space $\ttT$ has property
$(\sigma)$, which is immediate---it suffices to put $\kkK_n \df d^{-1}(\{1,\ldots,n\})$.
\end{proof}

From now on, for simplicity, we shall identify an m-tower $\Tt = (\ttT,d,\oplus,.)$ with its
underlying topological space $\ttT$ (and thus we shall write, for example, ``$\xX \in \Tt$'' instead
of ``$\ueX \in \ttT$''). Additionally, $\Tt_n$ will stand for $d^{-1}(\{n\})$.

In m-towers a counterpart of the prime decomposition theorem (for natural numbers) holds, as shown
by

\begin{pro}{pd}
Let $\Tt$ be an m-tower.
\begin{enumerate}[\upshape(A)]
\item For each $\tT \in \Tt$ there is a finite system $\tT_1,\ldots,\tT_n$ of irreducible elements
 of $\Tt$ such that $\tT \equiv \bigoplus_{j=1}^n \tT_j$.
\item If $\tT_1,\ldots,\tT_n$ and $\sS_1,\ldots,\sS_k$ are two systems of irreducible elements
 of $\Tt$ for which $\bigoplus_{j=1}^n \tT_j \equiv \bigoplus_{j=1}^k \sS_j$, then $k = n$ and
 there exists a permutation $\tau$ of $\{1,\ldots,n\}$ such that $\sS_j \equiv \tT_{\tau(j)}$ for
 each $j$.
\end{enumerate}
\end{pro}
\begin{proof}
To show (A), it suffices to mimic a classical proof of the prime decomposition theorem for natural
numbers. We shall prove the assertion of (A) by induction on $d(\tT)$. It is obvious that $\tT$ is
irreducible provided $d(\tT) = 1$. Further, if $\tT$ is irreducible, it suffices to put $n \df 1$
and $\tT_1 \df \tT$. Finally, if $\tT$ is reducible, then $\tT \equiv \aA \oplus \bB$ for some $\aA,
\bB \in \Tt$. Then $d(\aA) < d(\tT)$ and $d(\bB) < d(\tT)$ and thus from the induction hypothesis
it follows that $\aA \equiv \bigoplus_{j=1}^p \aA_j$ and $\bB \equiv \bigoplus_{j=1}^q \bB_j$ for
some irreducible elements $\aA_1,\ldots,\aA_p,\bB_1,\ldots,\bB_q \in \Tt$. Now (T5) yields that $\tT
\equiv (\bigoplus_{j=1}^p \aA_j) \oplus (\bigoplus_{j=1}^q \bB_j)$ and we are done.\par
To show (B), we employ axioms (mT1)--(mT2) and (T6) and use induction on $\ell \df \max(n,k)$. When
$\ell = 1$, we have nothing to do. Now assume $\ell > 1$. Notice that for any two irreducible
elements $\xX$ and $\yY$ of $\Tt$, either $\xX \equiv \yY$ or $\xX \perp \yY$. Thus, if there was
no $j$ for which $\sS_1 \equiv \tT_j$, then we would deduce that $\sS_1 \perp \tT_j$ for all $j$ and
(mT1) would imply that $\sS_1 \perp \tT \df \bigoplus_{j=1}^n \tT_j$, which is false, because $\sS_1
\preccurlyeq \tT$. So, there is $\tau(1) \in \{1,\ldots,n\}$ for which $\sS_1 \equiv \tT_{\tau(1)}$.
In particular, $d(\sS_1) = d(\tT_{\tau(1)})$ and, consequently, $\sum_{j\neq\tau(1)} d(\tT_j) =
\sum_{j=2}^k d(\sS_j)$. Since at least one of these sums is positive, we see that both $n$ and $k$
are greater than $1$. Further, using (T6), one easily shows that $\tT \equiv \tT_{\tau(1)} \oplus
\tT'$ where $\tT' \df \bigoplus_{j\neq\tau(1)} \tT_j$. Finally, (mT2) (combined with (T5)) gives
$\tT' \equiv \bigoplus_{j=2}^k \sS_j$, and now the induction hypothesis finishes the proof.
\end{proof}

The proof of the following consequence of \PRO{pd} is left as an exercise.

\begin{cor}{pd}
Let $\tT_1,\ldots,\tT_n$ be irreducible members of an m-tower $\Tt$.
\begin{enumerate}[\upshape(a)]
\item If $\xX \in \Tt$ is such that $\xX \preccurlyeq \bigoplus_{j=1}^n \tT_j$, then there exists
 a nonempty set $J \subset \{1,\ldots,n\}$ for which $\xX \equiv \bigoplus_{j \in J} \tT_j$.
\item If $\sS_1,\ldots,\sS_p$ are irreducible elements of $\Tt$, then $\bigoplus_{j=1}^n \tT_j
 \perp \bigoplus_{k=1}^p \sS_k$ iff $\tT_j \perp \sS_k$ for all $j$ and $k$.
\end{enumerate}
\end{cor}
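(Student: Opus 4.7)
The plan is to derive both parts directly from \PRO{pd}, which supplies existence and uniqueness of prime decompositions of elements of $\Tt$ into irreducibles. The proofs will be almost purely formal manipulations of the relations $\equiv$, $\preccurlyeq$, $\perp$ together with the axioms (T2), (T5), (T6).

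For part (a): I would begin by unfolding the definition of $\xX \preccurlyeq \bigoplus_{j=1}^n \tT_j$. If $\xX \equiv \bigoplus_{j=1}^n \tT_j$, take $J \df \{1,\ldots,n\}$. Otherwise there exist $\aA, \bB \in \Tt$ with $\xX \equiv \aA$ and $\bigoplus_{j=1}^n \tT_j \equiv \aA \oplus \bB$. Apply \PRO{pd}(A) to decompose $\aA \equiv \bigoplus_{i \in I} \aA_i$ and $\bB \equiv \bigoplus_{\ell \in L} \bB_\ell$ into irreducibles; note $I \neq \varempty$ since $d(\aA) = d(\xX) \geqsl 1$. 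Using (T5) together with associativity of $\oplus$ (built into the semigroup axiom (T2)), merge these into a single decomposition $\bigoplus_{j=1}^n \tT_j \equiv (\bigoplus_{i \in I} \aA_i) \oplus (\bigoplus_{\ell \in L} \bB_\ell)$ of the left-hand side into irreducibles. Uniqueness (\PRO{pd}(B)) then yields a bijection between $\{1,\ldots,n\}$ and $I \sqcup L$ respecting $\equiv$. Letting $J \subset \{1,\ldots,n\}$ be the preimage of $I$, an application of (T6) to permute summands gives $\bigoplus_{j \in J} \tT_j \equiv \bigoplus_{i \in I} \aA_i \equiv \aA \equiv \xX$, with $J \neq \varempty$.

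For part (b), two preliminary observations are needed. First, $\preccurlyeq$ is transitive: a short case analysis based on the definition, using associativity of $\oplus$ (from (T2)) and the fact that $\equiv$ can be pushed through $\oplus$ (a direct consequence of (T5)). Second, for every $j_0$ one has $\tT_{j_0} \preccurlyeq \bigoplus_{j=1}^n \tT_j$, since (T6) lets one rearrange any finite direct sum so as to bring $\tT_{j_0}$ to the front; analogously for the $\sS_k$'s. For the ``only if'' direction, assume $\bigoplus_j \tT_j \perp \bigoplus_k \sS_k$ but $\tT_{j_0} \not\perp \sS_{k_0}$ for some $(j_0,k_0)$; any common $\preccurlyeq$-lower bound of $\tT_{j_0}$ and $\sS_{k_0}$ is then, by transitivity, a common lower bound of the two direct sums, contradicting orthogonality. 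For the ``if'' direction, suppose $\tT_j \perp \sS_k$ for every $j,k$ yet $\bigoplus_j \tT_j \not\perp \bigoplus_k \sS_k$; some $\aA$ then satisfies $\aA \preccurlyeq \bigoplus_j \tT_j$ and $\aA \preccurlyeq \bigoplus_k \sS_k$, so part (a) yields nonempty $J, K$ with $\aA \equiv \bigoplus_{j \in J} \tT_j$ and $\aA \equiv \bigoplus_{k \in K} \sS_k$. Applying \PRO{pd}(B) to these two prime decompositions of $\aA$ forces some $\tT_{j_0} \equiv \sS_{k_0}$, contradicting $\tT_{j_0} \perp \sS_{k_0}$ (since every element is $\preccurlyeq$ itself).

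The main obstacle is purely bookkeeping: one must carefully merge the prime decompositions of $\aA$ and $\bB$ into a single decomposition before invoking the uniqueness statement, and translate the resulting permutation back into the desired subset $J$. No ingredients beyond \PRO{pd}, the semigroup axiom (T2), and the compatibility laws (T5) and (T6) are required.
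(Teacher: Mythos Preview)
Your proof is correct and is precisely the kind of argument the paper has in mind: it leaves the proof as an exercise, noting only that the corollary is a consequence of \PRO{pd}, and your derivation from \PRO{pd}(A)--(B) together with (T2), (T5), (T6) fills in that exercise faithfully. One small remark: for the ``if'' direction of (b) you could alternatively invoke axiom (mT1) iteratively (using symmetry of $\perp$) and bypass part (a) and \PRO{pd}(B), but your route via (a) is equally valid.
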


\begin{dfn}{C*tower}
Let $\Tt$ be a tower. A set $\Aa \subset \Tt$ is said to be
\begin{itemize}
\item \textit{unitarily invariant} if $\uuU . \Aa \subset \Aa$;
\item a \textit{semitower} if $\Aa$ is unitarily invariant and whenever $\tT, \sS \in \Tt$ are such
 that $\tT \oplus \sS \in \Aa$, then $\tT, \sS \in \Aa$; equivalently, $\Aa$ is a semitower iff each
 $\tT \in \Tt$ for which there is $\sS \in \Aa$ with $\tT \preccurlyeq \sS$ belongs to $\Aa$.
\end{itemize}
A function $f\dd \Aa \to \mmM$ (recall that $\mmM$ is the space of all single matrices) is called
\textit{compatible} if
\begin{itemize}
\item $d(f(\aA)) = d(\aA)$ for any $\aA \in \Aa$; and
\item $f(U . (\bigoplus_{j=1}^n \aA_j)) = U . (\bigoplus_{j=1}^n f(\aA_j))$ whenever $\aA_1,\ldots,
 \aA_n \in \Aa$ and $U \in \uuU_N$ with $N = \sum_{j=1}^n d(\aA_j)$ are such that $U .
 (\bigoplus_{j=1}^n \aA_j) \in \Aa$.
\end{itemize}
Since $\mmM$ is a topological space, we may, of course, speak of compatible maps. Additionally,
we call a compatible function $f\dd \Aa \to \mmM$ \textit{bounded} if
\begin{equation*}
(\|f\| \df\,) \sup_{\aA\in Aa} \|f(\aA)\| < \infty
\end{equation*}
(where $\sup(\varempty) \df 0$).\par
We denote by $C^*_{\Tt}(\Aa)$ the set of all $\mmM$-valued compatible maps defined on $\Aa$ that are
bounded. It is easy to see that $C^*_{\Tt}(\Aa)$ is a $C^*$-algebra when all algebraic operations
are defined pointwise; and $C^*_{\Tt}(\Aa)$ is unital provided $\Aa \neq \varempty$ (the unit
$j_{\Aa}$ of $C^*_{\Tt}(\Aa)$ is constantly equal to $I_n$ on each $\Aa \cap \Tt_n$).\par
The $C^*$-algebra $C^*(\Tt) \df C^*_{\Tt}(\Tt)$ is called the \textit{$C^*$-algebra over $\Tt$}.
Similarly, \textit{$C^*$-algebra over a pointed tower $(\Tt,\theta)$}, denoted by $C^*(\Tt,\theta)$,
consists of all maps $f \in C^*(\Tt)$ that vanish at $\theta$. It is easy to see that
$C^*(\Tt,\theta)$ is an ideal in $C^*(\Tt)$.
\end{dfn}

It is readily seen that a subtower of an m-tower is itself an m-tower, that a finite union
of semitowers (in any tower) is a semitower and that $C^*(\Ss) = C^*_{\Tt}(\Ss)$ for any subtower
$\Ss$ of a tower $\Tt$. A strong property of m-towers is formulated below.

\begin{thm}{extend}
Let $\Tt$ be an m-tower, $\Aa$ a closed set in $\Tt$ such that $\Aa \subset \core(\Tt)$ and let
$\Ss$ be a semitower of $\Tt$. Further, let $f \in C^*_{\Tt}(\Aa)$ and $g \in C^*_{\Tt}(\Ss)$ be
maps such that $f\bigr|_{\Aa \cap \Ss} = g\bigr|_{\Aa \cap \Ss}$. Then there is $h \in C^*(\Tt)$
which extends both $f$ and $g$ and $\|h\| = \max(\|f\|,\|g\|)$.
\end{thm}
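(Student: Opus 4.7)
The plan is to construct $h$ directly from the prime decomposition provided by \PRO{pd}. For each $\tT\in\Tt$, I write $\tT=U.\bigoplus_j\tT_j$ with the $\tT_j$'s irreducible and set
\[
h(\tT):=U.\bigoplus_j M_j,\qquad M_j=\begin{cases}f(\tT_j)&\text{if }\tT_j\in\Aa,\\g(\tT_j)&\text{if }\tT_j\in\Ss\setminus\Aa,\\0_{d(\tT_j)}&\text{otherwise.}\end{cases}
\]
Before carrying this out I would first extend $f$ to $\uuU.\Aa:=\bigcup_n\uuU_n.(\Aa\cap\Tt_n)$ by the rule $f(U.\aA):=U.f(\aA)$. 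This is unambiguous because each $\aA\in\Aa$ is irreducible, so by (mT4) with $n=1$ its stabilizer consists of scalars, and $f$ is already compatible on $\Aa$ itself. Properness of $d$ and closedness of $\Aa$ make each $\Aa\cap\Tt_n$ compact; then $\uuU_n.(\Aa\cap\Tt_n)$ is a continuous image of a compact product, and property $(\sigma)$ together with \LEM{para}(a) yield both the closedness of $\uuU.\Aa$ in $\Tt$ and the continuity of the extension. After this reduction $\Aa$ is unitarily invariant, and so is $\Ss$ by definition.

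For well-definedness of $h$, observe that two prime decompositions of $\tT$ differ by a permutation of summands and individual unitary equivalences between irreducibles (\PRO{pd}(B)); unitary equivariance of $f$ and $g$ combined with (T5)--(T6) then shows $\bigoplus_j M_j$ is independent of the chosen list $(\tT_j)$. Changing the conjugating $U$ amounts to left-multiplication by an element of $\stab(\bigoplus_j\tT_j)$; after regrouping equivalent irreducibles, (mT1), (mT3), and (mT4) identify this stabilizer with the group generated by permutations of equivalent blocks and by scalar unitaries within each block of repeated irreducibles, both of which preserve $\bigoplus_j M_j$. The hypothesis $f\bigr|_{\Aa\cap\Ss}=g\bigr|_{\Aa\cap\Ss}$ handles the overlap between the two clauses of the definition. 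That $h$ extends $f$ is immediate since every element of $\Aa$ is irreducible. For $\tT\in\Ss$ the semitower property forces every irreducible summand to lie in $\Ss$, so each $M_j$ equals $g(\tT_j)$ (possibly via the $\Aa\cap\Ss$ identification), and compatibility of $g$ gives $h(\tT)=g(\tT)$.

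The main obstacle is \emph{continuity}, because the prime decomposition is itself highly discontinuous in $\tT$. By \LEM{para}(a) together with property $(\sigma)$ applied with $K_n:=d^{-1}(\{1,\ldots,n\})$, it suffices to verify continuity of $h\bigr|_{\Tt_n}$ on each compact level $\Tt_n$. Given a convergent net $\tT^{(\alpha)}\to\tT$ in $\Tt_n$, I would pass to a subnet along which a choice of conjugating unitaries from the prime decompositions of the $\tT^{(\alpha)}$ converges in the compact group $\uuU_n$; by closedness of $\Aa$ and compactness of $\bigcup_{k\leqsl n}(\Aa\cap\Tt_k)$, the subsum of those irreducible components of $\tT^{(\alpha)}$ bearing the ``$\Aa$-label'' can also be taken to converge, and \PRO{pd}(B) then matches its limit to the corresponding subsum of $\tT$. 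Continuity of $f$ on $\Aa$ handles the $\Aa$-block, continuity of $g$ on $\Ss$ (invoked summand-by-summand and propagated to the ``$\Ss$-block'' via compatibility of $g$) handles the $\Ss$-block, and the remaining zero block causes no difficulty. Finally, since $h(\tT)$ is unitarily equivalent to a block-diagonal matrix with blocks $M_j$, we get $\|h(\tT)\|=\max_j\|M_j\|\leqsl\max(\|f\|,\|g\|)$; the reverse inequality is automatic because $h$ extends both $f$ and $g$.
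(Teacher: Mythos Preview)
Your construction of $h$ is well defined and compatible---this part essentially reproduces the content of the paper's \LEM{key} and \LEM{semi}---but the continuity argument has a genuine gap. The clause ``$M_j=0$ if $\tT_j\notin\Aa\cup\Ss$'' is the problem: nothing prevents a net of irreducibles lying \emph{outside} $\Aa$ from converging to a point \emph{in} $\Aa$. Closedness of $\Aa$ only guarantees that limits of nets in $\Aa$ stay in $\Aa$, not the converse. Concretely, take $\Ss=\varnothing$, fix a single irreducible $\xX\in\core(\Tt)$ with $f(\xX)\neq 0$, and let $\Aa=\uuU.\{\xX\}$. In any standard tower there are irreducibles $\tT^{(\alpha)}\notin\Aa$ with $\tT^{(\alpha)}\to\xX$; your $h$ sends every $\tT^{(\alpha)}$ to $0$ but sends $\xX$ to $f(\xX)\neq 0$, so $h$ is discontinuous at $\xX$. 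The sentence ``the remaining zero block causes no difficulty'' is exactly where the argument fails: you cannot extend continuously from a closed set to its complement by fiat, and the same objection applies to your treatment of the $\Ss$-block.

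The paper avoids this by never prescribing values on $\core(\Tt)\setminus(\Aa\cup\Ss)$. Instead it (i) extends $f\cup g$ to the subtower $\grp{(\uuU.\Aa)\cup\Ss}$ via prime decomposition, as you do; then (ii) on each successive level $\Tt_k$, uses a Tietze-type extension (\LEM{extend}: average a Tietze extension over $\uuU_k$) to push the map from the closed set already handled out to all of $\Tt_k$; and (iii) iterates, alternating the Tietze step with another application of the subtower-extension lemma. Step~(ii) is indispensable and has no analogue in your proposal. A secondary issue: you invoke properness of $d$ (hence compactness of $\Tt_n$) in both your first and third paragraphs, but the theorem is stated for arbitrary m-towers, which need not satisfy~(T0); the paper's inductive scheme uses only normality of the $\Tt_N$ (via \LEM{para}) rather than compactness.
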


In the proof of \THM{extend} we shall involve the next five lemmas, in which we assume $\Tt$ is
an m-tower. The proof of the first of them is left as an exercise.

\begin{lem}{unitary}
Let $\Aa$ be a closed set in a tower. Then $\uuU . \Aa$ is closed and any map $u \in C^*_{\Tt}(\Aa)$
admits a unique extension to a map $v \in C^*_{\Tt}(\uuU . \Aa)$. Moreover, $\|v\| = \|u\|$.
\end{lem}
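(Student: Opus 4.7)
My plan splits into three blocks: closedness of $\uuU.\Aa$, construction and compatibility of the extension $v$, and its continuity (the main obstacle); uniqueness and the norm identity fall out at the end.

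For closedness, since $d\colon\Tt\to\NNN$ is continuous and $\NNN$ is discrete, each level set $\Tt_n$ is clopen in $\Tt$, so $\Tt$ is the topological coproduct $\bigsqcup_{n\geqsl1}\Tt_n$ and it suffices to show that $\uuU_n.(\Aa\cap\Tt_n)$ is closed in $\Tt_n$ for every $n$. Given a convergent net $U_\alpha.\aA_\alpha\to\tT\in\Tt_n$, compactness of $\uuU_n$ lets me pass to a subnet with $U_\alpha\to U\in\uuU_n$; then (T3) yields $\aA_\alpha=U_\alpha^{-1}.(U_\alpha.\aA_\alpha)\to U^{-1}.\tT$, and closedness of $\Aa$ forces $U^{-1}.\tT\in\Aa$, whence $\tT=U.(U^{-1}.\tT)\in\uuU.\Aa$.

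For the extension, I set $v(U.\aA)\df U.u(\aA)$ for $\aA\in\Aa$ and $U\in\uuU_{d(\aA)}$. Well-definedness is just the one-term case of the compatibility of $u$: if $U.\aA=V.\bB$ with $\aA,\bB\in\Aa$, then $(V^{-1}U).\aA=\bB\in\Aa$ yields $u(\bB)=(V^{-1}U).u(\aA)$ and hence $V.u(\bB)=U.u(\aA)$. The multi-term compatibility of $v$ on $\uuU.\Aa$ proceeds analogously: given $\xX_j=V_j.\aA_j\in\uuU.\Aa$ and $W$ with $W.\bigoplus_j\xX_j\in\uuU.\Aa$, write $W.\bigoplus_j\xX_j=Z.\bB$ with $\bB\in\Aa$, then apply (T5) to recast this as $[Z^{-1}W(\bigoplus_jV_j)].(\bigoplus_j\aA_j)=\bB$ and invoke the multi-term compatibility of $u$ to obtain the identity $v(W.\bigoplus_j\xX_j)=W.\bigoplus_jv(\xX_j)$.

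The delicate step is continuity: the decomposition $\xX=U.\aA$ is not unique and $\Tt_n$ is not assumed compact, so a direct argument fails. I plan to argue by subnets on each slice $\uuU_n.(\Aa\cap\Tt_n)$ (which suffices by the coproduct structure): if $x_\alpha\to x$ with arbitrary decompositions $x_\alpha=U_\alpha.\aA_\alpha$, then passing to any subnet, compactness of $\uuU_n$ allows extracting a further subnet with $U_\alpha\to V\in\uuU_n$; the closedness argument then gives $\aA_\alpha\to V^{-1}.x\in\Aa$, and continuity of $u$ together with well-definedness yield $v(x_\alpha)=U_\alpha.u(\aA_\alpha)\to V.u(V^{-1}.x)=v(x)$. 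Since $\mmM$ is Hausdorff, this forces the whole net $v(x_\alpha)$ to converge to $v(x)$. Uniqueness of $v$ is immediate, because any compatible extension must satisfy $v(U.\aA)=U.u(\aA)$; and $\|v\|=\|u\|$ follows from the unitary invariance of the operator norm together with $\Aa\subset\uuU.\Aa$.
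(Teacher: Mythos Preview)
Your proof is correct. The paper explicitly leaves this lemma as an exercise, so there is no authorial proof to compare against; your argument---closedness via compactness of $\uuU_n$ and the continuous action, well-definedness and compatibility of $v(U.\aA)\df U.u(\aA)$ from the single-term and multi-term compatibility of $u$ together with (T5), and continuity via the subnet principle---is exactly the kind of routine verification the paper intends the reader to supply.
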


The following is a key lemma.

\begin{lem}{key}
Let $\tT_1,\ldots,\tT_n$ be a system of irreducible elements of $\Tt$, $\tau$ be a permutation
of $\{1,\ldots,n\}$ and let $N \df \sum_{j=1}^n d(\tT_j)$. Further, let $A_1,\ldots,A_n$ be any
system of matrices such that
\begin{enumerate}[\upshape({a}x1)]
\item $d(A_j) = d(\tT_j)$ for each $j$; and
\item whenever $j, k \in \{1,\ldots,n\}$ and $V \in \uuU_{d(\tT_j)}$ are such that $V . \tT_j =
 \tT_k$, then $V . A_j = A_k$.
\end{enumerate}
Then, $U. (\bigoplus_{j=1}^n A_j) = \bigoplus_{j=1}^n A_{\tau(j)}$ for any $U \in \uuU_N$ for which
$U . (\bigoplus_{j=1}^n \tT_j) = \bigoplus_{j=1}^n \tT_{\tau(j)}$.
\end{lem}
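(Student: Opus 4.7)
The plan is to exploit that the set of $U \in \uuU_N$ satisfying $U . \xX = \yY$, with $\xX \df \bigoplus_{j=1}^n \tT_j$ and $\yY \df \bigoplus_{j=1}^n \tT_{\tau(j)}$, is the coset $U_0 \stab(\xX)$ for any single particular solution $U_0$. Thus the assertion reduces to two sub-problems: (i) producing one distinguished $U_0$ that also sends $\bigoplus_{j=1}^n A_j$ to $\bigoplus_{j=1}^n A_{\tau(j)}$, and (ii) showing that every $S \in \stab(\xX)$ leaves $\bigoplus_{j=1}^n A_j$ invariant.

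For (i), I would take $U_0$ to be the block-permutation matrix corresponding to $\tau$, assembled as a composite of the transpositions $U_{p,q}$ of (T6). Condition (ax1) ensures that $\bigoplus_{j=1}^n A_j$ has the same block partition as $\xX$, so conjugation by $U_0$ permutes the $A$-blocks in the same pattern as the $\tT$-blocks; a direct matrix computation then delivers both $U_0 . \xX = \yY$ and $U_0 . \bigoplus_{j=1}^n A_j = \bigoplus_{j=1}^n A_{\tau(j)}$.

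For (ii), the strategy is to decompose $\xX$ according to the equivalence classes of its irreducible constituents. By \COR{pd}(b), two irreducibles are either equivalent or disjoint, so grouping the indices into classes $E_1, \ldots, E_r$ (with $j$ and $k$ in the same class iff $\tT_j \equiv \tT_k$), the partial sums $\xX^{(c)} \df \bigoplus_{j \in E_c} \tT_j$ become pairwise disjoint after iterating (mT1). A single block-permutation $P$ reorganizes $\xX$ into $\bigoplus_c \xX^{(c)}$ and, by (ax1) again, $\bigoplus_{j=1}^n A_j$ into $\bigoplus_c \aA^{(c)}$ with $\aA^{(c)} \df \bigoplus_{j \in E_c} A_j$. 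Iterating (mT3) then yields $\stab(\bigoplus_c \xX^{(c)}) = \bigoplus_c \stab(\xX^{(c)})$, and together with (T5) this reduces (ii) to the single-class statement that $S^{(c)} . \aA^{(c)} = \aA^{(c)}$ for each $S^{(c)} \in \stab(\xX^{(c)})$.

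Within a single class, all $\tT_j$ with $j \in E_c$ are unitarily equivalent to a chosen representative $\tT^{(c)} \df \tT_{j_c}$ (for a fixed index $j_c \in E_c$) of degree $p_c$; setting $B^{(c)} \df A_{j_c}$, hypothesis (ax2) forces $A_j = V_j . B^{(c)}$ whenever $V_j \in \uuU_{p_c}$ satisfies $V_j . \tT^{(c)} = \tT_j$. Conjugating by $W^{(c)} \df \bigoplus_{j \in E_c} V_j$ then simultaneously reduces $\xX^{(c)}$ to $m_c \odot \tT^{(c)}$ and $\aA^{(c)}$ to $m_c \odot B^{(c)}$. At this point (mT4) identifies $\stab(m_c \odot \tT^{(c)})$ with $\{I_{p_c} \otimes M \dd M \in \uuU_{m_c}\}$, and the proof closes with the direct block-matrix identity $(I_{p_c} \otimes M) . (m_c \odot B^{(c)}) = m_c \odot B^{(c)}$, immediate from the explicit form $I_{p_c} \otimes M = [m_{jk} I_{p_c}]$. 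The main obstacle will be the bookkeeping needed to track blocks through the reorderings; the substantive ingredients are (mT4) together with the equivariance hypothesis (ax2).
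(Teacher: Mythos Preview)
Your proposal is correct and follows essentially the same route as the paper's proof: both arguments reduce the general permutation to the identity case via a block-permutation built from the matrices $U_{p,q}$ of (T6), then group the irreducible constituents into equivalence classes, use a block-diagonal unitary (built from the $V_j$'s supplied by (ax2)) to normalize each class to a multiple $m_c \odot \tT^{(c)}$, and finish by combining (mT1), (mT3), and (mT4) to identify the stabilizer as a direct sum of groups $\{I_{p_c} \otimes M\}$, each of which fixes $m_c \odot B^{(c)}$ by a direct block computation. The only difference is organizational---you phrase the argument as ``one solution plus stabilizer'' from the outset, whereas the paper first treats $\tau = \id$ and then reduces to it---but the substantive steps coincide.
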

\begin{proof}
First we assume $\tau$ is the identity. For any positive integers $p$ and $q$, let $U_{p,q}$ be
as specified in (T6). Let $\xi$ be a permutation of $\{1,\ldots,n\}$ such that for some finite
increasing sequence $\nu_0,\ldots,\nu_N$ of integers with $\nu_0 = 0$ and $\nu_N = n$ one has (for
any $s \in \{1,\ldots,N\}$):
\begin{enumerate}[({A}1)]
\item $\tT_{\xi(j)} \equiv \tT_{\xi(\nu(s))}$ whenever $\nu_{s-1} < j < \nu_s$; and
\item $\tT_{\xi(\nu(s))} \perp \tT_{\xi(\nu(s'))}$ whenever $s' \in \{1,\ldots,N\}$ differs from
 $s$.
\end{enumerate}
For each $s \in \{1,\ldots,N\}$, we put $\beta_s \df \nu(s) - \nu(s-1)$. Employing (T6), we see that
there is $W \in \uuU_N$ that is the product of a finite number of the matrices $U_{p,q}$ and
satisfies
\begin{equation}\label{eqn:aux3}
W . \Bigl(\bigoplus_{j=1}^n \tT_j\Bigr) = \bigoplus_{j=1}^n \tT_{\xi(j)}.
\end{equation}
A specific form of $W$ yields that also
\begin{equation}\label{eqn:aux4}
W . \Bigl( \bigoplus_{j=1}^n A_j\Bigr) = \bigoplus_{j=1}^n A_{\xi(j)}.
\end{equation}
Further, for any $s \in \{1,\ldots,N\}$ and each $j$ with $\nu_{s-1} < j \leqsl \nu_s$ we choose
$V_j \in \uuU_{d(\tT_{\xi(\nu_s)})}$ such that $V_j . \tT_{\xi(j)} = \tT_{\xi(\nu(s))}$ (see (A1)).
Put $V \df \bigoplus_{j=1}^n V_j$. We infer from (T5) that
\begin{equation}\label{eqn:aux5}
V . \Bigl(\bigoplus_{j=1}^n \tT_{\xi(j)}\Bigr) = \bigoplus_{s=1}^N (\beta_s \odot \tT_{\nu(s)})
\end{equation}
and from (ax2) that
\begin{equation}\label{eqn:aux6}
V . \Bigl(\bigoplus_{s=1}^N A_{\xi(s)}\Bigr) = \bigoplus_{s=1}^N (\beta_s \odot A_{\nu(s)}).
\end{equation}
Further, (A2) and (mT1) yield that $\beta_s \odot \tT_{\nu(s)} \perp \bigoplus_{s' \neq s}
(\beta_{s'} \odot \tT_{\nu(s')})$ for each $s \in \{1,\ldots,N\}$. So, one deduces from (mT3)--(mT4)
that $\stab(\bigoplus_{s=1}^N \beta_s \odot \tT_s) = \{\bigoplus_{s=1}^N (I_{d(\tT_s)} \otimes
U_s)\dd\ U_s \in \uuU_{\beta_s}\}$. Consequently, it is easy to verify that then
\begin{equation}\label{eqn:aux7}
U . \Bigl(\bigoplus_{s=1}^N (\beta_s \odot A_{\nu(s)})\Bigr) = \bigoplus_{s=1}^N (\beta_s \odot
A_{\nu(s)}) \qquad \Bigl(U \in \stab\Bigl(\bigoplus_{s=1}^N \beta_s \odot \tT_{\nu(s)}\Bigr)\Bigr).
\end{equation}
Now if $U \in \stab(\bigoplus_{j=1}^n \tT_j)$, then $V W U W^{-1} V^{-1} \in \stab(\bigoplus_{s=1}^N
\beta_s \odot \tT_{\nu(s)})$, by (T3), \eqref{eqn:aux3} and \eqref{eqn:aux5}. And a combination
of \eqref{eqn:aux7}, \eqref{eqn:aux6} and \eqref{eqn:aux4} yields that $U . (\bigoplus_{j=1}^n A_j)
= (\bigoplus_{j=1}^n A_j)$. So, in the case when $\tau$ is the identity, the proof is finished.\par
Now assume $\tau$ is arbitrary. We may find $W \in \uuU_N$ that is the product of a finite number
of the matrices $U_{p,q}$ and satisfies $W . (\bigoplus_{j=1}^n \tT_j) = \bigoplus_{j=1}^n
\tT_{\tau(j)}$ (by (T5)) as well as $W . (\bigoplus_{j=1}^n A_j) = \bigoplus_{j=1}^n A_j$. We see
hat if $U \in \uuU_N$ is as specified in the lemma, then $W^{-1} U \in \stab(\bigoplus_{j=1}^n
\tT_j)$. So, it follows from the first part of the proof that $(W^{-1} U) . (\bigoplus_{j=1}^n A_j)
= \bigoplus_{j=1}^n A_j$ and consequently $U . (\bigoplus_{j=1}^n A_j) = W . (\bigoplus_{j=1}^n
A_j)$, which completes the proof.
\end{proof}

\begin{lem}{extend}
If $\Ff$ is a closed set in $\Tt_N$, then every map $u \in C^*_{\Tt}(\Ff)$ extends to a map $v \in
C^*_{\Tt}(\Tt_N)$ such that $\|v\| = \|u\|$.
\end{lem}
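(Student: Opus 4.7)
The plan is to reduce to the case when $\Ff$ is $\uuU_N$-invariant, construct an unconstrained Tietze-type continuous extension, and then restore $\uuU_N$-equivariance by averaging over the compact group $\uuU_N$ with its normalized Haar measure. A preliminary observation that organizes everything: for a set $\Aa \subset \Tt_N$ the compatibility condition of \DEF{C*tower} collapses to plain $\uuU_N$-equivariance, because any sum $\aA_1 \oplus \ldots \oplus \aA_n$ with $n \geqsl 2$ and each $\aA_j \in \Tt$ has degree strictly greater than any single summand and so cannot lie in $\Tt_N$.

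First, using that $\uuU . \Ff = \uuU_N . \Ff$ and that $\Ff$ is closed in $\Tt$, I invoke \LEM{unitary} to replace $\Ff$ by $\uuU_N . \Ff$ (a closed, $\uuU_N$-invariant subset of $\Tt_N$) and $u$ by its unique norm-preserving compatible extension to this larger set. Next, \LEM{para} gives that $\Tt$ is paracompact Hausdorff; since $\Tt_N = d^{-1}(\{N\})$ is clopen in $\Tt$, it inherits paracompact Hausdorff structure and is in particular normal. Applying Tietze coordinatewise to the real and imaginary parts of the matrix entries of $u$, and then composing with the radial retraction $r\dd \mmM_N \to \mmM_N$ defined by $r(X) \df X$ when $\|X\| \leqsl \|u\|$ and $r(X) \df \|u\| X / \|X\|$ otherwise, produces a continuous map $h\dd \Tt_N \to \mmM_N$ extending $u$ with $\|h(\xX)\| \leqsl \|u\|$ for every $\xX$.

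Let $\mu$ denote the normalized Haar measure on $\uuU_N$ and set
\begin{equation*}
v(\xX) \df \int_{\uuU_N} U^{-1} . h(U . \xX) \dint{\mu(U)} \qquad (\xX \in \Tt_N),
\end{equation*}
the integral being computed entrywise in $\mmM_N$. Joint continuity of the integrand together with compactness of $\uuU_N$ gives continuity of $v$; since the unitary action preserves the operator norm, convexity of the norm gives $\|v(\xX)\| \leqsl \|u\|$; and for $\xX \in \Ff$ the $\uuU_N$-invariance of $\Ff$ combined with equivariance of $u$ makes the integrand identically equal to $u(\xX)$, so $v$ extends $u$ and $\|v\| = \|u\|$. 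Finally, the substitution $V = UW$ together with left-invariance of $\mu$ yields $v(W . \xX) = W . v(\xX)$ for $W \in \uuU_N$, and by the opening observation this is the only compatibility condition that needs to hold.

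The only real obstacle is that neither ingredient suffices on its own: Tietze gives no control over the $\uuU_N$-action, and averaging needs a pre-existing continuous function to average. The marriage succeeds because $\uuU_N$ is compact (so Haar measure exists), its action on $\mmM_N$ is linear (so averages live in $\mmM_N$), and $\Tt_N$ is a single degree-slice on which the $\oplus$ part of the compatibility axiom is vacuous; extending across different degrees would require a genuinely different argument.
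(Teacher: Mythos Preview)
Your proof is correct and follows essentially the same approach as the paper's: extend to the $\uuU_N$-saturation via \LEM{unitary}, invoke normality of $\Tt_N$ (from \LEM{para}) to apply Tietze into the norm-ball of $\mmM_N$, and then average over Haar measure on $\uuU_N$ to restore equivariance. Your write-up is in fact more detailed than the paper's, which leaves the verification of the properties of the averaged map to the reader; your opening observation that compatibility on a subset of a single degree-slice $\Tt_N$ collapses to mere $\uuU_N$-equivariance is exactly the point that makes the argument go through.
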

\begin{proof}
It follows from \LEM{unitary} that $u$ extends to a map $u_1 \in C^*_{\Tt}(\uuU . \Ff)$ such that
$\|u_1\| = \|u\|$. We conclude from \LEM{para} that $\Tt_N$ is a normal topological space. Denote
by $\Omega$ the set of all matrices $A \in \mmM_N$ whose norms do not exceed $\|u\|$. Since $\Omega$
is convex and compact, it is a retract of $\mmM_N$. So, it follows from Tietze's extension theorem
that there is a map $w\dd \Tt_N \to \Omega$ which extends $u_1$. We define $v\dd \Tt_N \to \Omega$
by $v(\tT) \df \int_{\uuU_N} U^{-1} . w(U . \tT) \dint{\lambda(U)}$ where $\lambda$ is
the (probabilistic) Haar measure of $\uuU_N$ and the integral is understood entrywise. We leave
to the reader a verification of all desired properties of $v$.
\end{proof}

\begin{lem}{sub}
Let $\Dd$ be a closed set in $\Tt$ that contains all irreducible elements $\tT$ of $\Tt$ for which
there is $\dD \in \Dd$ with $\tT \preccurlyeq \dD$. Then the set
\begin{multline}\label{eqn:subtower}
\grp{\Dd} \df \Bigl\{U . \Bigl(\bigoplus_{j=1}^n \dD_j\Bigr)\dd\ n > 0,\ \dD_1,\ldots,\dD_n \in
\Dd \cap \core(\Tt),\\U \in \uuU_N \textup{ where } N = \sum_{j=1}^n d(\dD_j)\Bigr\}
\end{multline}
is a subtower of $\Tt$ that contains $\Dd$.
\end{lem}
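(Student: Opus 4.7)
The approach is to verify the four defining conditions of a subtower containing $\Dd$, namely: (a) $\Dd \subset \grp{\Dd}$; (b) closedness of $\grp{\Dd}$ in $\Tt$; (c) unitary invariance; and (d) the biconditional \eqref{eqn:oplus} for all $\ueX, \ueY \in \Tt$.

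Conditions (a), (c), and (d) are easy to dispatch. For (a), I would invoke \PRO{pd}(A) to decompose any $\dD \in \Dd$ as $\dD \equiv \bigoplus_{j=1}^n \tT_j$ with $\tT_j$ irreducible; since each $\tT_j \preccurlyeq \dD \in \Dd$, the hypothesis on $\Dd$ forces $\tT_j \in \Dd \cap \core(\Tt)$, so $\dD \in \grp{\Dd}$. Condition (c) is immediate from the group-action axiom (T3), absorbing the outer unitary into the one from the defining expression. For the forward direction of (d), axiom (T5) yields $(U . \bigoplus_j \dD_j) \oplus (V . \bigoplus_k \dD'_k) = (U \oplus V) . (\bigoplus_j \dD_j \oplus \bigoplus_k \dD'_k)$. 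For the reverse, if $\ueX \oplus \ueY \equiv \bigoplus_j \dD_j$ with the $\dD_j \in \Dd \cap \core(\Tt)$ irreducible, then $\ueX \preccurlyeq \bigoplus_j \dD_j$, and \COR{pd}(a) furnishes a nonempty $J \subset \{1,\ldots,n\}$ with $\ueX \equiv \bigoplus_{j \in J} \dD_j$, placing $\ueX$ (and analogously $\ueY$) in $\grp{\Dd}$.

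The main obstacle is (b). Given a net $(\ueX_\alpha)$ in $\grp{\Dd}$ converging to $\ueX \in \Tt$, continuity of $d$ into the discrete space $\NNN$ forces $d(\ueX_\alpha) = d(\ueX) = N$ eventually. Each $\ueX_\alpha = U_\alpha . (\bigoplus_{j} \dD_{j,\alpha})$ has a ``shape'' $(d(\dD_{j,\alpha}))_j$ that is a composition of $N$; since there are only finitely many such compositions, a standard cofinality argument on the directed index set lets me refine to a subnet on which the shape $(n_1,\ldots,n_r)$ is fixed. Compactness of $\uuU_N$ provides a further subnet with $U_\alpha \to U$, and continuity of the action (T4) yields $\bigoplus_j \dD_{j,\alpha} = U_\alpha^{-1} . \ueX_\alpha \to U^{-1} . \ueX$. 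Iterating (T2), the sum map $\Tt_{n_1} \times \cdots \times \Tt_{n_r} \to \Tt$ is a closed embedding (each $\Tt_{n_j}$ is clopen in $\Tt$ because $d$ takes values in discrete $\NNN$), so the limit takes the form $\bigoplus_j \dD_j$ with $\dD_{j,\alpha} \to \dD_j$, and closedness of $\Dd$ then forces each $\dD_j \in \Dd$.

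The remaining subtlety, and precisely where the hypothesis on $\Dd$ is essential, is that the limits $\dD_j$ need not themselves be irreducible even though every $\dD_{j,\alpha}$ is. To patch this, I would apply \PRO{pd}(A) once more to decompose each $\dD_j \equiv \bigoplus_i \dD_j^{(i)}$ into irreducibles; since $\dD_j^{(i)} \preccurlyeq \dD_j \in \Dd$, the hypothesis gives $\dD_j^{(i)} \in \Dd \cap \core(\Tt)$. Absorbing the intertwining unitaries via (T5) and (T3) then expresses $\ueX$ as a single unitary rotation of $\bigoplus_{j,i} \dD_j^{(i)}$, exhibiting $\ueX$ as an element of $\grp{\Dd}$ and completing the verification that $\grp{\Dd}$ is closed.
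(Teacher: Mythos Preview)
Your proof is correct and follows essentially the same strategy as the paper's: both reduce closedness to a degree-$N$ slice, exploit that there are only finitely many compositions of $N$, use the closed-embedding property of $\oplus$ together with closedness of $\Dd$, and then invoke the hypothesis on $\Dd$ to re-decompose possibly reducible limits into irreducibles from $\Dd \cap \core(\Tt)$. The paper packages the closedness step more structurally---writing $\grp{\Dd} \cap \Tt_N$ as a finite union of sets $\uuU . (\bigoplus_{j=1}^k (\Dd \cap \Tt_{\nu_j}))$ and citing (T1) and \LEM{unitary} to see each is closed---whereas you unwind this into an explicit net/subnet argument; the content is the same. Two small bookkeeping slips: continuity of the unitary action is axiom (T3), not (T4), and the closed-embedding property of the sum map is (T1), not (T2).
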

\begin{proof}
It follows from the assumptions of the lemma, (T3), (T5) and \COR{pd} that $\grp{\Dd}$ contains
$\Dd$ and satisfies (ST2) and \eqref{eqn:oplus}. So, the main difficulty is the closedness
of $\grp{\Dd}$. Since the sets $\Tt_n$ are clopen (because $d$ is a map), it suffices to show that
$\grp{\Dd} \cap \Tt_N$ is closed for every $N > 0$. Note that $\grp{\Dd} \cap \Tt_N$ is the union
of a finite number of sets of the form $\uuU . (\bigoplus_{j=1}^k (\Dd \cap \Tt_{\nu_j}))$ where
$\sum_{j=1}^k \nu_j = N$. But each of the aforementioned sets is closed, thanks to (T1) and
\LEM{unitary}.
\end{proof}

\begin{lem}{semi}
For any semitower $\Dd$ of $\Tt$ there exists a subtower $\Ee$ such that each $u \in C^*_{\Tt}(\Dd)$
extends to a map $w \in C^*(\Ee)$ with $\|w\| = \|u\|$.
\end{lem}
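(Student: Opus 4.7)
The strategy is to generate $\Ee$ from the irreducible elements of $\Dd$ via \LEM{sub}, and to define $w$ on $\Ee$ by the prime-decomposition formula. Set $\Dd_0 \df \Dd \cap \core(\Tt)$. Because $\Dd$ is a semitower, $\Dd_0$ is unitarily invariant, and by \PRO{pd} together with the summand-closure of $\Dd$, every $\dD \in \Dd$ is unitarily equivalent to a finite $\oplus$-sum of members of $\Dd_0$. In particular, $u$ itself is already determined on $\Dd$ by its values on $\Dd_0$, via the formula $U . \bigoplus_j \dD_j \mapsto U . \bigoplus_j u(\dD_j)$ (well-defined by \LEM{key} applied to the system of irreducibles $\dD_1,\ldots,\dD_n \in \Dd_0$ and the matrices $u(\dD_1),\ldots,u(\dD_n)$).

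First I would extend $u|_{\Dd_0}$ to a continuous compatible bounded map $\tilde u$ on the closure $F \df \overline{\Dd_0}$ in $\Tt$, with $\|\tilde u\| \leqsl \|u\|$. Using property $(\sigma)$ (via \LEM{para}(a)) this reduces to a degree-by-degree argument: for each $N$, extend $u|_{\Dd_0 \cap \Tt_N}$ to a continuous compatible map on the closure of $\Dd_0 \cap \Tt_N$ inside $\Tt_N$. Unitary equivariance of $u$ rigidly determines candidate limit values; compactness of the ball $\{X \in \mmM_N\dd \|X\| \leqsl \|u\|\}$ guarantees subsequential limits exist; and the Haar-averaging trick from the proof of \LEM{extend} enforces full compatibility of the limit. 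Patching across $N$ via \LEM{para}(a) yields the desired $\tilde u \in C^*_{\Tt}(F)$.

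Next I would apply \LEM{sub} to $F$ — if necessary, first enlarging $F$ to include all irreducible summands of its reducible limit points (each such enlargement preserves closedness thanks to \LEM{unitary}) — to obtain a subtower $\Ee \df \grp{F}$ of $\Tt$. Since $\Dd_0 \subset F \cap \core(\Tt)$, one has $\Dd \subset \Ee$. The extension $w$ on $\Ee$ is defined by $w(U . (\bigoplus_j \xX_j)) \df U . (\bigoplus_j \tilde u(\xX_j))$ for $\xX_j \in F \cap \core(\Tt)$ and $U \in \uuU_{\sum_j d(\xX_j)}$; well-definedness is exactly \LEM{key}. Continuity of $w$ on each $\Ee \cap \Tt_N$ follows by writing that set as a finite union of closed orbits $\uuU_N . \bigoplus_j (F \cap \Tt_{n_j})$ (closed by \LEM{unitary}) and appealing to \LEM{para}(a); boundedness with $\|w\| = \|u\|$ is immediate from $\|A \oplus B\| = \max(\|A\|,\|B\|)$ and $\|U . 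A\| = \|A\|$.

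The main obstacle is the continuous extension in the second paragraph: in general, bounded continuous functions on non-closed subsets do not extend continuously to the closure. The resolution rests on the unitary equivariance of $u$ (which constrains any candidate extension at a limit point through the orbit action), together with axiom (mT4) from \PRO{std2} (which controls how stabilizers can enlarge when passing to a limit) and the Haar-averaging smoothing used in \LEM{extend}, jointly forcing a canonical continuous extension.
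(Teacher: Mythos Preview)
Your overall strategy---generate $\Ee = \grp{\Dd}$ via \LEM{sub} and define $w$ by the prime-decomposition formula, with well-definedness secured by \LEM{key}---matches the paper's proof, and your third paragraph is essentially correct.

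The genuine gap is your second paragraph. You propose to extend $u|_{\Dd_0}$ continuously to $F = \overline{\Dd_0}$, and you correctly flag this as the ``main obstacle''; but the resolution you offer does not work. Equivariance constrains values along orbits, not along sequences approaching a limit point whose stabilizer may well be trivial; axiom (mT4) describes stabilizers of irreducibles but carries no convergence information; and the Haar-averaging device in \LEM{extend} presupposes a continuous extension already obtained by Tietze from a \emph{closed} domain, which is then averaged into a compatible one---it cannot manufacture continuity at boundary points that were never in the domain. For a concrete failure, take $\Tt_1$ to be the closed unit disk in $\mmM_1$ and $\Dd = \Dd_0$ the open disk: every bounded continuous $u$ on $\Dd_0$ is vacuously compatible (the $\uuU_1$-action is trivial), yet most do not extend to the boundary circle.

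The paper sidesteps this entirely: it works with $\Dd$ itself (which in every application, and implicitly in the lemma via the appeal to \LEM{sub}, is closed), sets $\Ee = \grp{\Dd}$ directly, and defines $w$ on all of $\Ee$ at once by the formula. No passage to a closure is needed. Continuity on each $\Ee \cap \Tt_N$ then follows as in your third paragraph, since that set is a finite union of closed pieces $\uuU . \bigl(\bigoplus_j (\Dd \cap \Tt_{\nu_j})\bigr)$---closed precisely because $\Dd$ is. If $\Dd$ is closed, your detour through $\overline{\Dd_0}$ is harmless but redundant (the extension you seek is just $u|_{\overline{\Dd_0}}$, since $\overline{\Dd_0} \subset \Dd$); if $\Dd$ is not closed, the detour fails and so does the lemma in that generality.
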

\begin{proof}
Let $\Ee \df \grp{\Dd}$ where $\grp{\Dd}$ is given by \eqref{eqn:subtower}. It follows from
\LEM{sub} that $\Ee$ is a subtower. We define a function $w\dd \Ee \to \mmM$ by the rule:
\begin{equation*}
w\Bigl(U . \Bigl(\bigoplus_{j=1}^n \dD_j\Bigr)\Bigr) \df U . \Bigl(\bigoplus_{j=1}^n u(\dD_j)\Bigr)
\end{equation*}
where $n > 0$; $\dD_j \in \Dd$ are irreducible; and $U \in \uuU_N$ with $N = \sum_{j=1}^n d(\dD_j)$.
The main point is that $w$ is well defined. To show this, assume
\begin{equation}\label{eqn:aux8}
U . \Bigl(\bigoplus_{j=1}^n \tT_j\Bigr) = V . \Bigl(\bigoplus_{j=1}^k \sS_j\Bigr)
\end{equation}
(where $\tT_j, \sS_j \in \Dd$ are irreducible and $U$ and $V$ are unitary matrices of respective
degrees). We need to prove that
\begin{equation}\label{eqn:aux9}
U . \Bigl(\bigoplus_{j=1}^n u(\tT_j)\Bigr) = V . \Bigl(\bigoplus_{j=1}^k u(\sS_j)\Bigr).
\end{equation}
Relation \eqref{eqn:aux8} means, in particular, that $\bigoplus_{j=1}^n \tT_j \equiv
\bigoplus_{j=1}^k \sS_j$. So, we infer from \PRO{pd} that $k = n$ and there is a permutation $\tau$
of $\{1,\ldots,n\}$ such that $\sS_j \equiv \tT_{\tau(j)}$. Let $W_j \in \uuU_{d(\sS_j)}$ be such
that $W_j . \sS_j = \tT_{\tau(j)}$. Put $W \df \bigoplus_{j=1}^n W_j$. We conclude from (T5), (T3)
and \eqref{eqn:aux8} that
\begin{equation}\label{eqn:aux10}
(W V^{-1} U) . \Bigl(\bigoplus_{j=1}^n \tT_j\Bigr) = \bigoplus_{j=1}^n \tT_{\tau(j)}.
\end{equation}
Further, we claim that for $A_j \df u(\tT_j)$ conditions (ax1) and (ax2) of \LEM{key} are fulfilled.
Indeed, (ax1) is immediate, whereas (ax2) follows from the fact that $u \in C^*_{\Tt}(\Dd)$. So,
\LEM{key}, combined with \eqref{eqn:aux10}, yields that
\begin{equation*}
(W V^{-1} U) . (\bigoplus_{j=1}^n u(\tT_j)) = \bigoplus_{j=1}^n u(\tT_{\tau(j)}),
\end{equation*}
which simply leads us to \eqref{eqn:aux9}.\par
Further, it follows from the very definition of $w$ that $w$ is bounded, $\|w\| = \|u\|$, $w$
extends $u$; and that $w \in C^*(\Ee)$ provided $w$ is continuous. Taking these remarks into
account, it remains to establish the continuity of $w$. Notice that $w$ is continuous iff it is
so on each of the sets $\Ee_N (= \Ee \cap \Tt_N)$. Arguing as in the proof of \LEM{sub}, we see that
it is sufficient that $w$ be continuous on each set of the form $\uuU . (\bigoplus_{j=1}^n
\Ee_{\nu_j})$ (which is closed), which readily follows from axiom (T1) and \LEM{unitary}.
\end{proof}

\begin{proof}[Proof of \THM{extend}]
By induction, we shall construct sequences $\Ee_0,\Ee_1,\ldots$ and $h_0,h_1,\ldots$ of subtowers
of $\Tt$ and maps (respectively) such that, for each $k \leqsl 0$,
\begin{enumerate}[(1$_k$)]
\item $\Ee_k \supset \Ee_{k-1} \cup \bigcup_{j=1}^k \Tt_j$ provided $k > 0$; and $\Ee_0 \supset
 \Aa \cup \Ss$;
\item $h_k \in C^*(\Ee_k)$ and $h_k$ extends $h_{k-1}$ provided $k > 0$; and $h_0$ extends both $f$
 and $g$;
\item $\|h_k\| \leqsl R \df \max(\|f\|,\|g\|)$.
\end{enumerate}
It follows from \LEM{unitary} that there is $f_1 \in C^*_{\Tt}(\uuU . \Aa)$ which extends $f$ and
has the same norm. Notice that $\uuU . \Aa$ is a semitower (since $\uuU . \Aa \subset \core(\Tt)$)
and the maps $f_1$ and $g$ agree (because $\Ss$ is unitary invariant) and their union belongs
to $C^*_{\Tt}((\uuU . \Aa) \cup \Ss)$. So, $(\uuU . \Aa) \cup \Ss$ is a semitower as weel and
we conclude from \LEM{semi} that there is a subtower $\Ee_0$ of $\Tt$ and a map $h_0 \in C^*(\Ee_0)$
that extends both $f_1$ and $g$. We see that conditions (1$_0$)--(3$_0$) are fulfilled. Now assume
$\Ee_{k-1}$ and $h_{k-1}$ are already defined (for some $k > 0$). First apply \LEM{extend} with $N =
k$, $\Ff = \Ee_{k-1} \cap \Tt_k$ and $u = h_{k-1}\bigr|_{\Ff}$ to obtain a map $w_k \in
C^*_{\Tt}(\Tt_k)$ that agrees with $h_{k-1}$ on $\Ee_{k-1} \cap \Tt_k$ and satisfies $\|w_k\| \leqsl
R$. Observe that $\Dd_k \df \Ee_{k-1} \cup \Tt_k$ is a semitower (by (1$_{k-1}$)) and the union
$u_k$ of $w_k$ and $h_{k-1}$ belongs to $C^*_{\Tt}(\Dd_k)$ (because $\Ee_{k-1}$ is a semitower).
Next apply \LEM{semi} (with $\Dd = \Dd_k$ and $u = u_k$) to obtain $\Ee_k$ and $h_k$. Finally,
we define a function $h\dd \Tt \to \mmM$ by the rule: $h(\tT) \df h_n(\tT)$ for $\tT \in \Tt_n$ (see
(1$_n$)). We leave it to the reader that $h$ is a map we searched for.
\end{proof}

\begin{cor}{dist}
Let $\Tt$ be an m-tower.
\begin{enumerate}[\upshape(A)]
\item For any sequence $\tT_1,\tT_2,\ldots$ of mutually disjoint irreducible elements of $\Tt$ such
 that the set $\{j\dd\ d(\tT_j) = n\}$ is finite for any $n$, and for any sequence $A_1,A_2,\ldots$
 of matrices with $d(A_j) = d(\tT_j)\ (j \geqsl 1)$ and $M \df \sup_{j\geqsl1} \|A_j\| < \infty$,
 there is $f \in C^*(\Tt)$ such that $f(\tT_j) = A_j$ for all $j$ and $\|f\| = M$.
\item If $\Ss$ is a subtower of $\Tt$ and $\tT \in \core(\Tt) \setminus \Ss$, then for any matrix
 $A$ with $d(A) = d(\tT)$ and each $u \in C^*(\Ss)$ there is $v \in C^*(\Tt)$ that extends $u$ and
 satisfies $\|v\| = \max(\|A\|,\|u\|)$.
\item If $\tT_1$ and $\tT_2$ are two distinct elements of $\Tt$, then there exists $g \in C^*(\Tt)$
 for which $g(\tT_1) \neq g(\tT_2)$.
\end{enumerate}
\end{cor}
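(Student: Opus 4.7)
The plan is to reduce each of (A), (B), and (C) to a single application of \THM{extend}, exploiting the recurring observation that when the extension data live on a subset of $\core(\Tt)$, compatibility costs almost nothing: by (mT4) with $n=1$, the stabilizer of an irreducible element consists only of scalar unitaries $\lambda I$, and conjugation by such scalars is trivial on matrices, so no equivariance constraint is imposed on individual values.

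For (A), I first verify that $\Aa \df \{\tT_j\dd j \geqsl 1\}$ is closed in $\Tt$. Each $\Tt_n$ is clopen, each $\Aa \cap \Tt_n$ is finite and therefore closed in the Hausdorff space $\Tt_n$, and property $(\sigma)$ applied to the filtration $d^{-1}(\{1,\ldots,m\})$ then forces $\Aa$ itself to be closed. Define $f\dd \Aa \to \mmM$ by $f(\tT_j) \df A_j$; the pairwise disjointness of the $\tT_j$ rules out any nontrivial equation $U . \tT_j = \tT_k$ with $j \neq k$, so compatibility reduces to the scalar invariance mentioned above and $f \in C^*_\Tt(\Aa)$ with $\|f\| = M$. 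Now \THM{extend} with $\Ss \df \varempty$ produces the desired $h$. For (B), take $\Aa \df \{\tT\}$ (closed in the Hausdorff space $\Tt$ and contained in $\core(\Tt)$) together with $f(\tT) \df A$; the given subtower $\Ss$ is in particular a semitower, and $\tT \notin \Ss$ yields $\Aa \cap \Ss = \varempty$, so \THM{extend} applies at once.

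For (C) I split into three subcases. If $d(\tT_1) \neq d(\tT_2)$, the zero function already separates, since $g(\tT_i) \in \mmM_{d(\tT_i)}$ sit in distinct summands of $\mmM$. Otherwise put $n \df d(\tT_1)$ and, using \PRO{pd}(A), fix pairwise non-equivalent irreducibles $\sS_1,\ldots,\sS_k$ exhausting the $\equiv$-classes that appear in either decomposition; these $\sS_s$ are pairwise disjoint, since $\aA \preccurlyeq \tT$ with $\tT$ irreducible forces $\aA \equiv \tT$. Write $m_{i,s}$ for the multiplicity of $\sS_s$ in $\tT_i$. If $\tT_1 \not\equiv \tT_2$, then \PRO{pd}(B) yields some $s$ with $m_{1,s} \neq m_{2,s}$; applying (A) with $g(\sS_s) \df c_s I_{d(\sS_s)}$ for distinct nonzero scalars $c_s$ makes $g(\tT_i)$ unitarily similar to $\bigoplus_s c_s I_{m_{i,s} d(\sS_s)}$, whose spectral multiplicities encode the $(m_{i,s})_s$ and therefore distinguish $\tT_1$ from $\tT_2$.

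The genuinely hard subcase is $\tT_1 \equiv \tT_2$ with $\tT_1 \neq \tT_2$. Fix $U_0 \in \uuU_n$ with $U_0 . \tT_1 = \tT_2$; then $U_0 \notin \stab(\tT_1)$. Using (T6) and the $\equiv$-class grouping from \PRO{pd}(A), write $\tT_1 = W . \xX$ where $\xX \df \bigoplus_s m_s \odot \sS_s$ and $m_s = m_{1,s} = m_{2,s}$. With free choice of $N_s \df g(\sS_s) \in \mmM_{d(\sS_s)}$, part (A) produces $g \in C^*(\Tt)$ satisfying $g(\tT_1) = W (\bigoplus_s m_s \odot N_s) W^{-1}$ and $g(\tT_2) = U_0 g(\tT_1) U_0^{-1}$. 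If these agreed for every admissible $(N_s)$, then $W^{-1} U_0 W$ would commute with every $\bigoplus_s m_s \odot N_s$; the explicit stabilizer formulas in (mT3)--(mT4) identify the set of unitaries in this algebraic commutant with $\stab(\xX)$, hence $W^{-1} U_0 W \in \stab(\xX)$, giving $U_0 . \tT_1 = \tT_1 = \tT_2$, a contradiction. The main obstacle is precisely this last identification of the unitary commutant of $\{g(\tT_1)\dd g \in C^*(\Tt)\}$ with $\stab(\tT_1)$ (conjugated by $W$), which is what converts the weak input $U_0 \notin \stab(\tT_1)$ into the concrete existence of a separating $g$.
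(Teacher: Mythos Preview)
Your proof is correct and follows essentially the same route as the paper: (A) and (B) are immediate from \THM{extend}, and for (C) both you and the paper reduce the delicate case $\tT_1 \equiv \tT_2$ to the identification of $\stab(\bigoplus_s m_s \odot \sS_s)$ with the unitary part of the commutant of the corresponding block-diagonal matrix algebra via (mT3)--(mT4). Two minor remarks: in (A), property $(\sigma)$ is not asserted with respect to the filtration $(d^{-1}(\{1,\ldots,m\}))_m$ (these sets need not be compact when $\Tt$ is not proper), but $\Aa$ is closed anyway since each $\Tt_n$ is clopen and Hausdorff and meets $\Aa$ in a finite set; and in the $\tT_1 \not\equiv \tT_2$ subcase of (C), your distinct-scalar spectral argument is a clean uniform substitute for the paper's two-subcase split (an irreducible present in one decomposition but not the other, versus a shared irreducible with different multiplicities).
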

\begin{proof}
Both (A) and (B) are special cases of \THM{extend}. We turn to (C). First assume that $\tT_1
\not\equiv \tT_2$ (which is a simpler case). Let $\aA_1^{(1)},\ldots,\aA_{p_1}^{(1)}$ and
$\aA_1^{(2)},\ldots,\aA_{p_2}^{(2)}$ be two systems of mutually disjoint irreducible elements
of $\Tt$ such that, for $\epsi=1,2$, $\tT_{\epsi} = U_{\epsi} . (\bigoplus_{j=1}^{p_{\epsi}}
(\alpha_j^{(\epsi)} \odot \aA_j^{(\epsi)}))$ for some unitary matrix $U_{\epsi}$ and positive
integers $\alpha_1^{(\epsi)},\ldots,\alpha_{p_{\epsi}}^{(\epsi)}$ (consult \PRO{pd}). Since $\tT_1
\not\equiv \tT_2$, one concludes that either some of $\aA_j^{(1)}$ is disjoint from each
of $\aA_k^{(2)}$ (or conversely), or there are $k_1 \in \{1,\ldots,p_1\}$ and $k_2 \in
\{1,\ldots,p_2\}$ such that $\aA_{k_1}^{(1)} \equiv \aA_{k_2}^{(2)}$ and $\alpha_{k_1}^{(1)} \neq
\alpha_{k_2}^{(2)}$. Thus, we may assume, with no loss of generality, that either $\aA_1^{(1)}$ is
disjoint from each of $\aA_k^{(2)}$, or $\aA_1^{(1)} \equiv \aA_1^{(2)}$ and $\alpha_1^{(1)} \neq
\alpha_1^{(2)}$. In the former case we conclude that there is $g \in C^*(\Tt)$ that vanishes at each
$\aA_k^{(2)}$ and each $\aA_j^{(1)}$ with $j \neq 1$, and $g(\aA_1^{(1)}) \neq 0$. Then $g(\tT_1)
\neq 0 = g(\tT_2)$ and we are done. In the latter case we may find $g \in C^*(\Tt)$ that vanishes
at each $\aA_j^{(1)}$ and $\aA_k^{(2)}$ with $j > 1$ and $k > 1$, and $g(\aA_1^{(1)}) =
g(\aA_1^{(2)}) = I_n$ where $n \df d(\aA_1^{(1)})$. Then, for $\epsi = 1,2$, $g(\tT_{\epsi})$ is
a projection of rank $\alpha_{\epsi} n$ and hence $g(\tT_1) \neq g(\tT_2)$. This finishes the proof
in the case when $\tT_1 \not\equiv \tT_2$.\par
Finally, we assume $\tT_1 \equiv \tT_2$. Then we can find two unitary matrices $U_1$ and $U_2$, and
finite systems $\aA_1,\ldots,\aA_p$ and $\alpha_1,\ldots,\alpha_p$ of, respectively, mutually
disjoint irreducible elements of $\Tt$ and positive integers such that $\tT_{\epsi} = U_{\epsi} .
(\bigoplus_{j=1}^p (\alpha_j \odot \aA_j))$ for $\epsi=1,2$. Since $\tT_1 \neq \tT_2$, we see that
\begin{equation}\label{eqn:aux11}
U_2^{-1} U_1 \notin \stab\Bigl(\bigoplus_{j=1}^p (\alpha_j \odot \aA_j)\Bigr).
\end{equation}
Axioms (mT1)--(mT4) imply that
\begin{equation}\label{eqn:aux12}
\stab\Bigl(\bigoplus_{j=1}^p (\alpha_j \odot \aA_j)\Bigr) = \Bigl\{\bigoplus_{j=1}^p (I_{d(\aA_j)}
\otimes V_j)\dd\ V_j \in \uuU_{\alpha_j}\Bigr\}.
\end{equation}
Now take a system $A_1,\ldots,A_p$ of mutually disjoint (that is, mutually unitarily inequivalent)
irreducible matrices such that $d(A_j) = d(\aA_j)$. Let $g \in C^*(\Tt)$ be such that $g(\aA_j) =
A_j$ for each $j$. We claim that then $g(\tT_1) \neq g(\tT_2)$, which follows from
\eqref{eqn:aux11}, \eqref{eqn:aux12} and the relation $\stab(\bigoplus_{j=1}^p (\alpha_j \odot
\aA_j)) = \WwW'(\bigoplus_{j=1}^p (\alpha_j \odot A_j)) \cap \uuU_N$ (where $N = \sum_{j=1}^p
d(A_j)$). The details are left to the reader.
\end{proof}

\begin{lem}{core}
The core of any tower is an open set. If $\Tt$ is an m-tower, then an element $\xX \in \Tt$ is
irreducible iff $\stab(\xX)$ consists only of scalar multiples of the unit matrix.
\end{lem}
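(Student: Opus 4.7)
For the openness of $\core(\Tt)$, I would argue stratum by stratum. Since $d$ is continuous and $\NNN$ carries the discrete topology, each $\Tt_n=d^{-1}(\{n\})$ is clopen in $\Tt$, so it suffices to show that the set of reducibles in $\Tt_n$ is closed. An element $\xX\in\Tt_n$ is reducible exactly when $\xX\in\uuU_n.(\Tt_p\oplus\Tt_q)$ for some $p,q\geqsl1$ with $p+q=n$. By (T1), the map $(\aA,\bB)\mapsto\aA\oplus\bB$ is a closed embedding of $\Tt_p\times\Tt_q$ into $\Tt_{p+q}$, so $\Tt_p\oplus\Tt_q$ is closed in $\Tt_n$. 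Compactness of $\uuU_n$ together with the continuous action (T3) on the Hausdorff space $\Tt_n$ then yields that the orbit $\uuU_n.(\Tt_p\oplus\Tt_q)$ is closed (standard net chase: if $U_\alpha.\yY_\alpha\to\xX$, pass to a subnet with $U_\alpha\to U$; then $\yY_\alpha\to U^{-1}.\xX$, which lies in the closed set $\Tt_p\oplus\Tt_q$). As there are only finitely many nontrivial splittings of $n$, the reducibles in $\Tt_n$ form a finite union of closed sets, hence are closed; irreducibles are thus open in $\Tt_n$, and summing over $n$ gives that $\core(\Tt)$ is open in $\Tt$.

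For the characterization of irreducibility via stabilizers in an m-tower, the forward direction is an immediate special case of (mT4) with $n=1$: if $\xX$ is irreducible and $p\df d(\xX)$, then
\begin{equation*}
\stab(\xX)=\{I_p\otimes U\dd U\in\uuU_1\},
\end{equation*}
and since $\uuU_1$ is the unit circle, the right-hand side is exactly the set of scalar multiples of $I_p$. For the converse I would prove the contrapositive: suppose $\xX$ is reducible, so there are $\aA,\bB\in\Tt$ with $d(\aA),d(\bB)\geqsl1$ and $W\in\uuU_{d(\xX)}$ such that $W.\xX=\aA\oplus\bB$. Pick any $\theta\in(0,2\pi)$ and set
\begin{equation*}
U\df W^{-1}(I_{d(\aA)}\oplus e^{i\theta}I_{d(\bB)})W.
\end{equation*}
By (T4), $I_{d(\aA)}.\aA=\aA$ and $(e^{i\theta}I_{d(\bB)}).\bB=\bB$; (T5) then gives $(I_{d(\aA)}\oplus e^{i\theta}I_{d(\bB)}).(\aA\oplus\bB)=\aA\oplus\bB$, and (T3) yields $U.\xX=\xX$, so $U\in\stab(\xX)$. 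However, $U$ is conjugate to a matrix with two distinct eigenvalues $1$ and $e^{i\theta}$, so $U$ is not a scalar multiple of $I_{d(\xX)}$.

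The only mildly delicate point in the whole argument is the closedness of $\uuU_n.(\Tt_p\oplus\Tt_q)$, which relies crucially on the Hausdorffness of $\Tt_n$ (imposed globally on all topological spaces in the paper) and the compactness of $\uuU_n$; everything else reduces to routine bookkeeping with the tower axioms (T3)--(T5) and the m-tower axiom (mT4).
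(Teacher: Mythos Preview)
Your proof is correct and follows essentially the same line as the paper's. The paper likewise shows that the reducibles in each $\Tt_N$ form a finite union of sets $\uuU.(\bigoplus_j \Tt_{\nu_j})$, invokes (T1) and (the content of) \LEM{unitary} for their closedness, and for the stabilizer criterion uses (mT4) in one direction and constructs a non-scalar stabilizer via (T3)--(T5) in the other; you have simply supplied the details (the compactness-of-$\uuU_n$ net argument and the explicit matrix $W^{-1}(I\oplus e^{i\theta}I)W$) that the paper leaves implicit.
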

\begin{proof}
Let $\Ss$ be a tower. Note that $\core(\Ss)$ is open iff $\Ss_N \setminus \core(\Ss)$ is closed for
any $N$. To show the latter statement, observe that $\Ss_N \setminus \core(\Ss)$ coincides with
a finite union of all sets of the form $\uuU . (\bigoplus_{j=1}^n \Ss_{\nu_j})$ where $\nu_1,\ldots,
\nu_n$ are positive integers which sum up to $N$, and $n > 1$. It follows from (T1) and \LEM{core}
that each of the aforementioned sets is closed, which proves the first claim of the lemma. In order
to show the second, we only need to focus on the `if' part (because the `only if' part is included
in (mT4)). To this end, assume $\xX$ is reducible. Then there are a matrix $U \in \uuU_{d(\xX)}$ and
two elements $\tT$ and $\sS$ of $\Tt$ such that $\xX = U . (\tT \oplus \sS)$. One may then easily
deduce from (T3) and (T5) that $\stab(\xX)$ contains matrices different from scalar multiplies
of the unit matrix.
\end{proof}

We skip the proofs of the next two results (since they are immediate consequences of \COR{dist} and
\LEM{core}).

\begin{cor}{repr}
For any element $\tT$ of an m-tower $\Tt$, the assignment $f \mapsto f(\tT)$ correctly defines
a unital finite-dimensional representation $\pi_{\tT}\dd C^*(\Tt) \to \mmM_{d(\tT)}$. Moreover,
$\pi_{\tT}$ is irreducible iff $\tT$ is irreducible; and the assignment $\tT \mapsto \pi_{\tT}$ is
one-to-one.
\end{cor}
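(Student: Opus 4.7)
The plan is to handle the three claims separately, each as a brief deduction from \COR{dist} or \LEM{core}, in line with the authors' remark that the result is immediate from those. First I would verify that $\pi_{\tT}$ is a well-defined unital $*$-homomorphism: membership $f(\tT) \in \mmM_{d(\tT)}$ is built into the very definition of $C^*(\Tt)$, the pointwise definition of the algebraic operations on $C^*(\Tt)$ forces evaluation at $\tT$ to be linear, multiplicative and $*$-preserving, and unitality is immediate because the unit $j_{\Tt}$ of $C^*(\Tt)$ takes the value $I_{d(\tT)}$ at $\tT$.

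Next, injectivity of the assignment $\tT \mapsto \pi_{\tT}$ I would obtain by a direct invocation of part (C) of \COR{dist}: for any two distinct $\tT_1, \tT_2 \in \Tt$ that result produces $g \in C^*(\Tt)$ with $g(\tT_1) \neq g(\tT_2)$, which witnesses $\pi_{\tT_1}(g) \neq \pi_{\tT_2}(g)$ and hence $\pi_{\tT_1} \neq \pi_{\tT_2}$.

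Finally, for the equivalence $\pi_{\tT}$ irreducible $\iff$ $\tT$ irreducible, I would argue the two implications separately. The forward implication I would establish by applying part (A) of \COR{dist} to the single-term sequence $\tT$: for every matrix $A$ with $d(A) = d(\tT)$ there exists $f \in C^*(\Tt)$ with $f(\tT) = A$, so $\pi_{\tT}$ is surjective onto the simple algebra $\mmM_{d(\tT)}$ and is therefore irreducible. For the converse I would pass to contrapositives and use the characterization in \LEM{core}: if $\tT$ is reducible, pick a non-scalar $U \in \stab(\tT)$ supplied by that lemma; then compatibility of every $f \in C^*(\Tt)$ gives $U\,\pi_{\tT}(f)\,U^{-1} = f(U . \tT) = f(\tT) = \pi_{\tT}(f)$, so $U$ is a non-scalar element of the commutant of $\pi_{\tT}(C^*(\Tt))$ in $\mmM_{d(\tT)}$ and $\pi_{\tT}$ is not irreducible. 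Since every step is a direct consequence of results already established, I expect no substantive obstacle, which is presumably why the proof is omitted.
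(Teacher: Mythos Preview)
Your proposal is correct and follows exactly the route the paper indicates (the authors omit the proof, stating it is an immediate consequence of \COR{dist} and \LEM{core}). One cosmetic slip: you have swapped the labels ``forward'' and ``converse'' in the irreducibility equivalence---your first argument (surjectivity via part~(A) of \COR{dist}) actually proves $\tT$ irreducible $\Rightarrow$ $\pi_{\tT}$ irreducible, and your second (the non-scalar stabilizer from \LEM{core}) proves the contrapositive of the other direction---but the mathematics is sound.
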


\begin{cor}{center}
For any m-tower $\Tt$, the center of $C^*(\Tt)$ coincides with the set of all $u \in C^*(\Tt)$ such
that $u(\tT)$ is a scalar multiple of the unit matrix for any $\tT \in \core(\Tt)$.
\end{cor}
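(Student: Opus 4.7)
The plan is to prove both inclusions by combining the prime decomposition \PRO{pd} with the evaluation-surjectivity supplied by \COR{dist}(A).

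For the forward inclusion, fix $u$ in the center of $C^*(\Tt)$ and an arbitrary $\tT \in \core(\Tt)$. \COR{dist}(A) applied to the one-term sequence $(\tT)$ shows that every $A \in \mmM_{d(\tT)}$ is realized as $v(\tT)$ for some $v \in C^*(\Tt)$; equivalently, the evaluation representation $\pi_{\tT}$ from \COR{repr} is surjective onto $\mmM_{d(\tT)}$. Evaluating the relation $uv = vu$ at $\tT$ then forces $u(\tT)$ to commute with every matrix of degree $d(\tT)$, so it is a scalar multiple of $I_{d(\tT)}$.

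For the reverse inclusion, suppose $u(\tT) = \lambda(\tT) I_{d(\tT)}$ for every $\tT \in \core(\Tt)$, and pick arbitrary $v \in C^*(\Tt)$ and $\sS \in \Tt$. The plan is to invoke \PRO{pd}(A) and group equivalent irreducible summands so as to write
\begin{equation*}
\sS = W . \Bigl(\bigoplus_{j=1}^n (\alpha_j \odot \aA_j)\Bigr)
\end{equation*}
with $\aA_1, \ldots, \aA_n$ mutually disjoint irreducible elements of $\Tt$, positive integers $\alpha_j$, and $W \in \uuU_{d(\sS)}$. The compatibility axioms built into the definition of $C^*(\Tt)$ will then express both $u(\sS)$ and $v(\sS)$ as $W$-conjugates of matrices that are block-diagonal with matching block sizes $\alpha_j d(\aA_j)$; the blocks of the first will be scalar matrices and hence commute with the corresponding blocks of the second, giving $u(\sS) v(\sS) = v(\sS) u(\sS)$, as required.

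The only delicate step is the grouping that converts \PRO{pd}(A) into the form with pairwise disjoint $\aA_j$. This reduces to the elementary observation that $\tT_1 \oplus \tT_2 \equiv 2 \odot \tT_1$ whenever the irreducibles $\tT_1$ and $\tT_2$ are equivalent, which follows directly from (T3) and (T5); iterating this consolidation combines all equivalent terms in the prime decomposition of $\sS$.
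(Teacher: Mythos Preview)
Your proof is correct. The paper itself omits the argument, remarking only that the corollary is an immediate consequence of \COR{dist} and \LEM{core}, so there is no detailed proof to compare against; your approach via \COR{dist}(A) for the forward inclusion and \PRO{pd}(A) for the reverse is exactly the natural one.

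One small simplification: the ``delicate step'' of grouping equivalent irreducibles into blocks $\alpha_j \odot \aA_j$ with the $\aA_j$ mutually disjoint is unnecessary. Directly from \PRO{pd}(A) write $\sS = W . \bigl(\bigoplus_{j=1}^n \tT_j\bigr)$ with the $\tT_j$ irreducible (not necessarily disjoint). Compatibility gives $u(\sS) = W . \bigl(\bigoplus_j u(\tT_j)\bigr)$ and $v(\sS) = W . \bigl(\bigoplus_j v(\tT_j)\bigr)$ with \emph{identical} block structure, and since each $u(\tT_j) = \lambda(\tT_j) I_{d(\tT_j)}$ is already scalar, it commutes with the corresponding block $v(\tT_j)$. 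No consolidation is required.
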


The next result explains the importance of axioms (mT1)--(mT4).

\begin{pro}{char}
A tower is isomorphic to a standard tower iff it is a proper m-tower.
\end{pro}
\begin{proof}
The `only if' part readily follows from \PRO{std-m-tower}. Here we shall focus only on the `if'
part. We shall apply \THM{extend}. Assume $\Tt$ is a proper m-tower. In particular, $\Tt_n$ is
a compact space for each $n$. Denote by $\Lambda$ the closed unit ball of $C^*(\Tt)$ and define
$\Phi\dd \Tt \to \mmM[\Lambda]$ by $(\Phi(\tT))(\lambda) \df \lambda(\tT)\ (\tT \in \Tt,\ \lambda
\in \Lambda)$. It follows from the very definition of $\Phi$ that $\Phi$ is well defined, continuous
and satisfies axioms (M1)--(M3) of a morphism. We claim that $\Phi$ is a closed embedding.
To convince oneself of that, observe that it suffices that $\Phi$ is one-to-one on each of the sets
$\Tt_n$ (because $\Tt_n$ is compact and clopen, and $\Phi(\Tt_n) \subset \mmM_n^{\Lambda}$), which
is covered by item (C) of \COR{dist}. Finally, let us briefly show that $\Tt' \df \Phi(\Tt)$ is
a standard tower. Axioms (ST0)--(ST2) and implication ``$\impliedby$'' in (ST3) are transparent.
To end the proof, assume $\Phi(\tT) = \aA \oplus \bB$ where $\tT \in \Tt$, and $\aA =
(A_{\lambda})_{\lambda\in\Lambda}$ and $\bB = (B_{\lambda})_{\lambda\in\Lambda}$ are two members
of $\mmM[\Lambda]$. We only need to prove that $\aA, \bB \in \Tt'$. To this end, we denote by $\Ss$
the largest standard tower contained in $\mmM[\Lambda]$, that is, $\Ss$ consists of all
$(X_{\lambda})_{\lambda\in\Lambda} \in \mmM[\Lambda]$ with $\|X_{\lambda}\| \leqsl 1$ for each
$\lambda$. (It is easy to check that $\Ss$ is indeed a standard tower and that $\Tt' \subset \Ss$.)
It follows from \PRO{pd} that there are irreducible elements $\tT_1,\ldots,\tT_n$ of $\Tt$ such that
$\tT \equiv \bigoplus_{j=1}^n \tT_j$. Since $\Phi$ is one-to-one, one easily infers that each
of $\Phi(\tT_j)$ is irreducible in $\Ss$ (because $\stab(\Phi(\xX)) = \stab(\xX)$ for any $\xX \in
\Tt$). Observe that $(\aA \oplus \bB =) \Phi(\tT) \equiv \bigoplus_{j=1}^n \Phi(\tT_j)$. So,
\COR{pd} (applied for the m-tower $\Ss$) yields that there are two nonempty subets $J$ and $J'$
of $\{1,\ldots,n\}$ such that $\aA \equiv \bigoplus_{j \in J} \Phi(\tT_j) (=
\Phi(\bigoplus_{j \in J} \tT_j))$ and $\bB \equiv \bigoplus_{j \in J'} \Phi(\tT_j) (=
\Phi(\bigoplus_{j \in J'} \tT_j))$. These two relations easily imply that both $\aA$ and $\bB$ are
values of $\Phi$, which is equivalent to the fact that $\aA, \bB \in \Tt'$.
\end{proof}

The following simple result will prove useful in the sequel. We skip its proof.

\begin{lem}{norm}
Let $\Tt$ be an m-tower. Then, for every $f \in C^*(\Tt)$,
\begin{equation*}
\|f\| = \sup \{\|f(\tT)\|\dd\ \tT \in \core(\Tt)\}.
\end{equation*}
\end{lem}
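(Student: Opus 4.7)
The plan is to reduce an arbitrary point of $\Tt$ to a sum of irreducibles via the prime decomposition theorem (\PRO{pd}), then exploit the compatibility of $f$ together with the fact that unitary conjugation and direct summation are isometric on matrices.

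In more detail, I would fix $f \in C^*(\Tt)$ and pick an arbitrary $\tT \in \Tt$. By \PRO{pd}(A), there exist irreducible elements $\tT_1,\ldots,\tT_n \in \core(\Tt)$ and a unitary $U \in \uuU_{d(\tT)}$ such that
\begin{equation*}
\tT = U . \Bigl(\bigoplus_{j=1}^n \tT_j\Bigr).
\end{equation*}
Since $f$ is compatible (\DEF{C*tower}) and $\tT \in \Tt$ obviously lies in the domain of $f$, this yields
\begin{equation*}
f(\tT) = U . \Bigl(\bigoplus_{j=1}^n f(\tT_j)\Bigr).
\end{equation*}
Unitary conjugation preserves the operator norm, and the norm of a block-diagonal matrix equals the maximum of the norms of its diagonal blocks, hence
\begin{equation*}
\|f(\tT)\| = \Bigl\|\bigoplus_{j=1}^n f(\tT_j)\Bigr\| = \max_{1 \leqsl j \leqsl n} \|f(\tT_j)\| \leqsl \sup_{\sS \in \core(\Tt)} \|f(\sS)\|.
\end{equation*}

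Taking the supremum over all $\tT \in \Tt$ on the left-hand side gives $\|f\| \leqsl \sup\{\|f(\sS)\|\dd \sS \in \core(\Tt)\}$, and the reverse inequality is trivial since $\core(\Tt) \subset \Tt$. There is no real obstacle here: the argument is a direct application of \PRO{pd}(A) together with the definition of compatibility, and the case $\core(\Tt) = \varempty$ (if it can occur) forces $\Tt$ itself to be empty by \PRO{pd}(A), so both sides are $0$ by the convention $\sup(\varempty) \df 0$.
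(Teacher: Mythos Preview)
Your argument is correct and is exactly the intended one: the paper omits the proof of this lemma as a simple exercise, and the reduction via \PRO{pd}(A) together with compatibility of $f$ and the fact that unitary conjugation is isometric while block-diagonal norms equal the maximum of the block norms is precisely what is needed.
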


\begin{rem}{superfl}
A careful reader have noticed that all results of the section are true for any tower that satisfies
conditions (mT1)--(mT4) and whose underlying topological space is normal (which was essential
in \LEM{extend}). Property $(\sigma)$ was added by us only in order to make m-towers similar
to standard towers. For example, according to \LEM{para}, m-tower are paracompact. In \THM{inv}
we shall show that the class of all algebras of the form $C^*(\Tt)$ (where $\Tt$ is an m-tower)
contains, up to $*$-isomorphism, all unital $C^*$-algebras that are inverse limits of subhomogeneous
$C^*$-algebras and only such algebras. The proof of the aforementioned result shows also that
$C^*(\Tt)$ is $*$-isomorphic to the inverse limit of a sequence of unital subhomogeneous
$C^*$-algebras for any tower $\Tt$. Thus, it is a reasonable idea to distinguish towers which,
on the one hand, form a rich class (with respect to applications in operator algebras), and,
on the other hand, are as good as possible. This way of thinking motivated us to add
to the definition of m-towers property $(\sigma)$.\par
It is also worth adding here a note that we have never used axiom (T4). This is because of the fact
that (T4) is a consequence of (mT1)--(mT4) and \PRO{pd}. So, in the definition of m-towers, one may
omit axiom (T4).
\end{rem}

\section{Essentially locally compact m-towers}

\begin{dfn}{ess_loc_comp}
An m-tower $\Tt$ is said to be \textit{essentially locally compact} (briefly, \textit{elc})
if the space $\overline{\core}(\Tt)$ is locally compact. If this happens, the $C^*$-algebra
$C^*_0(\Tt)$ is defined as the $*$-subalgebra of $C^*(\Tt)$ consisting of all maps whose
restrictions to $\overline{\core}(\Tt)$ vanish at infinity. More precisely, a map $f \in C^*(\Tt)$
belongs to $C^*_0(\Tt)$ iff for each $\epsi > 0$ there is a compact set $K \subset
\overline{\core}(\Tt)$ such that $\|f(\tT)\| \leqsl \epsi$ for any $\tT \in \overline{\core}(\Tt)
\setminus K$.\par
Similarly, we call a pointed m-tower $(\Tt,\theta)$ \textit{essentially locally compact}
(\textit{elc}) provided the tower $\Tt$ is so. In that case we put $C^*_0(\Tt,\theta) \df
C^*(\Tt,\theta) \cap C^*_0(\Tt)$.
\end{dfn}

The reader should notice that every proper m-tower is elc and that a subtower of an elc m-tower is
elc as well. Observe also that for an elc tower $\Tt$, $C^*_0(\Tt) = C^*(\Tt)$ \iaoi{} $\Tt$ has
finite height.\par
For $C^*$-algebras of the form $C^*_0(\Tt)$ where $\Tt$ is an elc m-tower all ideals as well as all
nondegenerate finite-dimensional representations can simply be characterized. The results on these
topics shall be derived from the next result, which is a special case of our
\textit{Stone-Weierstrass theorem for elc m-towers} (see \THM{SW!} at the end of the section).

\begin{thm}{SW}
Let $\Tt$ be an elc m-tower and $\EeE$ a $*$-subalgebra of $C^*_0(\Tt)$. Assume for any irreducible
element $\tT$ of $\Tt$ one the following two conditions is fulfilled:
\begin{enumerate}[\upshape(1$_{\tT}$)]
\item either $g(\tT) = 0$ for any $g \in \EeE$; or
\item whenever $\sS \in \Tt$ is irreducible and $\sS \perp \tT$, then there is $g \in \EeE$ with
 $g(\tT) = I_{d(\tT)}$ and $g(\sS) = 0$.
\end{enumerate}
Then the uniform closure of $\EeE$ in $C^*_0(\Tt)$ consists of all maps $u \in C^*_0(\Tt)$ such that
\begin{itemize}
\item[($*$)] for any $\xX \in \core(\Tt)$ there exists $v \in \EeE$ with $v(\xX) = u(\xX)$.
\end{itemize}
\end{thm}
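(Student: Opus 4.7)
My plan splits into two directions. The easy half is that every $u$ in the uniform closure of $\EeE$ satisfies $(*)$: for $\xX \in \core(\Tt)$ the representation $\pi_\xX$ of \COR{repr} is irreducible, so $\pi_\xX(\EeE)$ is a $*$-subalgebra of the finite-dimensional algebra $\mmM_{d(\xX)}$ and is therefore norm-closed. A uniform limit $u = \lim v_n$ with $v_n \in \EeE$ then has $u(\xX) = \lim v_n(\xX) \in \pi_\xX(\EeE)$, yielding the required $v \in \EeE$.

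For the substantive direction I fix $u \in C^*_0(\Tt)$ satisfying $(*)$ and $\epsi > 0$, and aim to produce $v \in \EeE$ with $\|v - u\| < \epsi$. The first step is to upgrade $(*)$ from $\core(\Tt)$ to all of $\Tt$: for every $\tT \in \Tt$ there is $v \in \EeE$ with $v(\tT) = u(\tT)$. Given $\tT$, \PRO{pd} together with \COR{pd} produces $\tT \equiv \bigoplus_{j=1}^n (\alpha_j \odot \aA_j)$ with the $\aA_j$ mutually disjoint irreducibles. Indices $j$ for which $(1_{\aA_j})$ holds satisfy $u(\aA_j) = 0$, by applying $(*)$ at $\aA_j$. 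For each $j$ with $(2_{\aA_j})$ I will produce a cutoff $g_j \in \EeE$ with $g_j(\aA_j) = I_{d(\aA_j)}$ and $g_j(\aA_i) = 0$ for all $i \neq j$, built as the product of a finite family of elements supplied by $(2_{\aA_j})$, one for each $\aA_i$ disjoint from $\aA_j$. Combined with $w_j \in \EeE$ obeying $w_j(\aA_j) = u(\aA_j)$ (furnished by $(*)$), the sum $v_\tT \df \sum_j w_j g_j$ agrees with $u$ at every $\aA_k$, and hence, by compatibility of $C^*(\Tt)$-maps, at $\tT$.

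Next I localize. Since $u \in C^*_0(\Tt)$, I choose a compact $K \subset \overline{\core}(\Tt)$ with $\|u(\yY)\| < \epsi/3$ for $\yY \in \overline{\core}(\Tt) \setminus K$. For each $\tT \in K$ the map $v_\tT$ constructed above satisfies $v_\tT(\tT) = u(\tT)$, so by continuity there is an open neighborhood $W_\tT$ of $\tT$ in $\Tt$ such that $\|v_\tT(\yY) - u(\yY)\| < \epsi/3$ for every $\yY \in W_\tT$. Compactness of $K$ then yields a finite subcover $W_{\tT_1}, \ldots, W_{\tT_m}$.

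The concluding gluing step is where I expect the main obstacle: I must assemble the local approximants $v_{\tT_i}$ into a single $v \in \EeE$ with $\|v - u\| < \epsi$. By \LEM{norm} it is enough to estimate on $\core(\Tt)$; inside $K$ the local bounds apply, while outside $K$ the $C^*_0$-estimate on $u$ together with a bound on the glued element will handle the tail. My plan is to construct a noncommutative partition of unity $\{e_i\} \subset \EeE$ subordinate to $\{W_{\tT_i}\}$ from positive elements of the form $g^* g$ for cutoffs $g$ coming from $(2)$, combined with continuous functional calculus, and then set $v \df \sum_i e_i^{1/2} v_{\tT_i} e_i^{1/2}$. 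The delicate point is that $(2_\tT)$ separates irreducibles only pairwise, so the partition elements must be built as finite products as in step one, and I must simultaneously secure subordination, the approximate normalization $\sum_i e_i \approx I$ on $K$, and membership of the final expression in $\EeE$ (not merely its closure), all while the only structural control off $\core(\Tt)$ comes through the unitary/$\oplus$-compatibility propagating values from irreducibles.
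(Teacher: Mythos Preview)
Your easy direction and the ``upgrade'' of $(*)$ to arbitrary $\tT\in\Tt$ are fine and parallel what the paper does (the paper only needs the two-irreducible version $(*')$, but your argument via cutoffs $g_j$ is correct).

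The gap is exactly where you suspect: the gluing step cannot be completed with the tools you have assembled. The hypothesis $(2_\tT)$ gives you elements of $\EeE$ with prescribed values at finitely many \emph{individual} irreducible points; it gives you no control whatsoever over the behaviour of such elements on open neighbourhoods. So there is no mechanism for producing $e_i\in\bar\EeE$ \emph{subordinate} to $W_{\tT_i}$, i.e.\ vanishing on $\core(\Tt)\setminus W_{\tT_i}$. Without subordination the sum $\sum_i e_i^{1/2} v_{\tT_i} e_i^{1/2}$ carries no useful estimate even on $K$, and the tail is worse: the local approximants $v_{\tT_i}$ are arbitrary elements of $\EeE$ with no bound on $\core(\Tt)\setminus K$, so nothing forces $v$ to be small there. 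In the commutative Stone--Weierstrass theorem the partition of unity is itself produced \emph{by} the density result one is proving (via polynomials in separating functions), so invoking it here is circular.

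The paper's route avoids partitions of unity entirely. It first embeds $C^*_0(\Tt)$ into $C_0(\Omega,\aaA)$ with $\Omega=\overline{\core}(\Tt)$ and $\aaA=\prod_n\mmM_n$, and invokes an external ``bounded convergence'' theorem (\THM{C0}) to reduce the approximation problem to compact sets $L\subset\Omega$ of a special combinatorial shape relative to a Borel partition (\LEM{Borel}). For such $L$ one extracts a compact set $K\subset\core(\Tt)$ of bounded degree, maps everything into $C(K,\mmM_R)$ for a single matrix size $R=N!$, and applies a noncommutative Stone--Weierstrass theorem (\THM{S-W}) that requires only the \emph{two-point} approximation property $(*')$. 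Thus the hard analytic work---replacing your partition of unity---is outsourced to two nontrivial external results (cited as \cite{pn2} and \cite{pn3}); without analogues of these, a direct gluing argument along your lines does not go through.
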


As we have already said, the above result is a special case of \THM{SW!}. But, it is also a key step
in the proof of the latter result.\par
In the proof of \THM{SW} we shall apply the next two results, recently discovered by us. The first
of them is another variation of the Stone-Weierstrass theorem.

\begin{thm}[\cite{pn2}]{S-W}
Let $K$ be a compact space and $\ddD$ be a unital $C^*$-algebra. Let $\LlL$ be a $*$-subalgebra
of $C(K,\ddD)$ such that for any two points $x$ and $y$ of $K$, one of the following two conditions
is fulfilled:
\begin{enumerate}[\upshape(SW1)]
\item either there exists $u \in \LlL$ such that $u(x)$ and $u(y)$ are normal elements of $\ddD$
 with disjoint spectra; or
\item the spectra of $f(x)$ and $f(y)$ \textup{(}computed in $\ddD$\textup{)} coincide for any
 selfadjoint $f \in \LlL$.
\end{enumerate}
Then the \textup{(}uniform\textup{)} closure of $\LlL$ in $C(K,\ddD)$ coincides with the $*$-algebra
$\Delta_2(\LlL)$ of all maps $u \in C(K,\ddD)$ such that for any $x, y \in K$ and each $\epsi > 0$
there exists $v \in \LlL$ with $\|v(z) - u(z)\| < \epsi$ for $z \in \{x,y\}$.
\end{thm}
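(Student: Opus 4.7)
The easy direction $\overline{\LlL}\subseteq\Delta_2(\LlL)$ follows at once from the definition of uniform convergence: if $v_n\to u$ uniformly then for any $x,y\in K$ and $\epsi>0$, sufficiently large $n$ yields $\|v_n(z)-u(z)\|<\epsi$ for $z\in\{x,y\}$. The substance of the theorem is thus the reverse inclusion $\Delta_2(\LlL)\subseteq\overline{\LlL}$, which I would prove by an equivalence-class decomposition of $K$ followed by a gluing argument.

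First, introduce the relation $x\sim y$ on $K$ defined by condition (SW2): $\text{spec}(f(x))=\text{spec}(f(y))$ for every selfadjoint $f\in\LlL$. Since for each selfadjoint $f\in\LlL$ the spectrum map $x\mapsto\text{spec}(f(x))$ is continuous in the Hausdorff metric on compact subsets of $[-\|f\|,\|f\|]$, the relation $\sim$ is a closed equivalence, so $Q\df K/{\sim}$ is compact Hausdorff. By hypothesis, for any $x\not\sim y$ condition (SW1) furnishes $u_{xy}\in\LlL$ with $u_{xy}(x)$ and $u_{xy}(y)$ normal and spectrally disjoint; continuous functional calculus in $\ddD$ applied to $u_{xy}$, together with polynomial approximation of characteristic functions of the (globally compact) spectrum of $u_{xy}$ on $K$, produces near-idempotents in $\overline{\LlL}$ which serve as approximate indicators separating the classes $[x]$ and $[y]$ in $Q$.

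The gluing step then proceeds as follows. Given $u\in\Delta_2(\LlL)$ and $\epsi>0$, for each $x\in K$ apply $\Delta_2$ to the diagonal pair $(x,x)$ to obtain $v_x\in\LlL$ with $\|v_x(x)-u(x)\|<\epsi/3$, and let $W_x$ be an open neighborhood of $x$ on which $\|v_x-u\|<\epsi/3$ by continuity. Extract a finite subcover $W_{x_1},\dots,W_{x_n}$. From the near-indicators above manufacture a quasi-partition-of-unity $\{p_i\}_{i=1}^n\subset\overline{\LlL}$ with $p_i(z)\approx I_{\ddD}$ for $z$ in $W_{x_i}$, $p_i(z)\approx0$ outside a slightly larger neighborhood, and $\sum_i p_i(z)\approx I_{\ddD}$ throughout $K$. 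Then $v=\sum_i p_i v_{x_i}$ lies in $\overline{\LlL}$ and, point by point, is dominated by the single term $p_i(z)v_{x_i}(z)\approx v_{x_i}(z)\approx u(z)$ corresponding to a $W_{x_i}$ containing $z$, with controlled error contributions from the remaining $p_j$'s.

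The main obstacle is two-fold. Firstly, the partition-of-unity construction must take place inside $\overline{\LlL}$ rather than in an external copy of $C(K)$, because $\LlL$ need not contain scalar multiples of the unit, so one is forced to build the $p_i$'s entirely from functional calculus on the elements supplied by (SW1); this is delicate when the gaps between the spectra provided are not uniform across $K$ and one must argue via compactness of $K$ and quantitative polynomial approximation. Secondly, one must verify that $u\in\Delta_2(\LlL)$ together with (SW2) on a single $\sim$-class $C\subseteq K$ forces $u|_C$ to be uniformly approximable by $v|_C$ for some single $v\in\LlL$---a fibered mini-Stone\nobreakdash-Weierstrass assertion whose proof requires unpacking how the spectral rigidity imposed by (SW2) constrains the action of the $*$-algebra $\LlL$ on a class. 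Controlling both of these simultaneously, while keeping the norm constants of the final combination bounded by $O(\epsi)$, is the heart of the argument.
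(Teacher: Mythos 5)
The paper does not actually prove this statement (it is imported from \cite{pn2}), so there is no internal proof to compare against; judged on its own terms, your proposal has a genuine gap --- indeed the two steps you yourself flag as ``the heart of the argument'' are not only unproved but, in the form you describe, the first one cannot work. The approximate partition of unity $\{p_i\}\subset\overline{\LlL}$ subordinate to an \emph{arbitrary} finite cover $W_{x_1},\dots,W_{x_n}$ does not exist: if $x$ and $y$ lie in the same (SW2)-class, then every selfadjoint element of $\overline{\LlL}$ still has equal spectra at $x$ and $y$ (take real parts of approximants and use continuity of the spectrum on selfadjoint elements), so no $p\in\overline{\LlL}$ can satisfy $p(x)\approx I_{\ddD}$ and $p(y)\approx 0$ --- consider $p^*p$, whose spectrum would be concentrated near $1$ at $x$ and near $0$ at $y$. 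Hence localization is only possible to sets that are unions of classes, not to the small neighbourhoods your gluing needs (and $u\in\Delta_2(\LlL)$ is in general far from constant on a class, so you genuinely would need such localization). The functional-calculus step producing the $p_i$ is also unavailable as stated: $u_{xy}$ is normal only \emph{at the two points} $x$ and $y$, so there is no continuous functional calculus for $u_{xy}$ inside $C(K,\ddD)$, and polynomials in $u_{xy},u_{xy}^*$ give no control at the remaining points of $K$, which is exactly the global control ($\sum_i p_i\approx I$ on all of $K$, $p_i\approx 0$ off $W_{x_i}$) that the patching requires.

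Second, the ``fibered mini-Stone-Weierstrass assertion'' --- that on a single (SW2)-class the two-point interpolation property forces uniform approximation by elements of $\LlL$ --- is not a technical lemma to be unpacked later: it \emph{is} the theorem in the case where (SW2) holds for every pair of points, so your reduction is circular unless that case is established by independent means, and the outline offers no mechanism for it. Results of this type are not proved by partitions of unity at all; the known arguments go through representation-theoretic machinery --- Glimm/Longo/Popa-type factorial Stone--Weierstrass theorems, or Dixmier's rich-subalgebra criterion (Proposition~11.1.6 in \cite{di2}), which is precisely how the present paper handles the analogous closure statement in the proof of \THM{SW!}. As it stands, your argument reduces the statement to one claim that is false as formulated (arbitrary localization inside a class) and one claim that is equivalent to the statement itself.
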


(The above theorem is a special case of a result formulated and proved in \cite{pn2}.) For a more
general result in this direction the reader is referred to celebrated papers by Longo \cite{lon} and
Popa \cite{pop} (consult also \cite{gli}, \S4.7 in \cite{sak} and Chapter~11 in \cite{di2}).\par
The second tool reads as follows.

\begin{thm}[\cite{pn3}]{C0}
Let $\Omega$ be a locally compact space and $\bbB$ a countable collection of pairwise disjoint Borel
subsets of $\Omega$ that cover $\Omega$. Further, let $\aaA$ be a $C^*$-algebra and $V$
a $*$-subalgebra of $C_0(\Omega,\aaA)$. The \textup{(}uniform\textup{)} closure of $V$ consists
of all maps $f \in C_0(\Omega,\aaA)$ such that $f\bigr|_L$ belongs to the uniform closure
of $V\bigr|_L \df \{g\bigr|_L\dd\ g \in V\} \subset C(L,\aaA)$ for each $L \subset \Omega$ such that
\begin{itemize}
\item[($**$)] the set $L \cap B$ is compact for each $B \in \bbB$ and nonempty only for a finite
 number of such $B$.
\end{itemize}
\end{thm}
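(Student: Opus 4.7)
The forward inclusion is immediate: if $f$ lies in the norm closure of $V$, then $f\bigr|_L$ is in the uniform closure of $V\bigr|_L$ for \emph{any} $L \subset \Omega$, in particular for those satisfying $(**)$. My plan for the substantive reverse inclusion is a Hahn--Banach duality argument, combined with inner regularity and a functional-calculus trick for norm control.

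Suppose $f \in C_0(\Omega,\aaA)$ satisfies the local hypothesis but, for contradiction, $f \notin \overline{V}$. By Hahn--Banach pick a bounded linear functional $\mu$ on the $C^*$-algebra $C_0(\Omega,\aaA)$ with $\mu\bigr|_V = 0$ (hence also $\mu\bigr|_{\overline{V}} = 0$) and $\mu(f) \neq 0$. I would identify $\mu$, via a vector-valued Riesz representation, with a regular $\aaA^*$-valued Borel measure on $\Omega$; its scalar total variation $|\mu|$ is a finite positive regular Borel measure. Since $\bbB$ is a countable disjoint Borel partition of $\Omega$, the $\sigma$-additivity of $|\mu|$ yields a norm-convergent decomposition $|\mu| = \sum_{B \in \bbB} |\mu|\bigr|_B$. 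Given $\epsi > 0$, pick a finite subcollection $\{B_1,\ldots,B_N\} \subset \bbB$ with $|\mu|(\Omega \setminus \bigcup_{i=1}^N B_i) < \epsi$, and, by inner regularity, compact $K_i \subset B_i$ with $|\mu|(B_i \setminus K_i) < \epsi/N$. Put $L \df K_1 \cup \cdots \cup K_N$. Disjointness of $\bbB$ forces $L \cap B_i = K_i$ for $i \leqsl N$ and $L \cap B = \varempty$ for every other $B \in \bbB$; hence $L$ satisfies $(**)$, and $|\mu|(\Omega \setminus L) < 2\epsi$.

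By hypothesis $f\bigr|_L$ lies in the uniform closure of $V\bigr|_L$, so for any $\eta \in (0,1]$ there exists $g \in V$ with $\|g\bigr|_L - f\bigr|_L\| < \eta$. The main difficulty is that $\|g\|$ on $\Omega \setminus L$ is unconstrained, which would defeat a naive splitting estimate. The fix is bounded functional calculus inside the $C^*$-algebra $\overline{V}$: choose $k \in C_0([0,\infty))$ with $k(0) = 0$, $k \equiv 1$ on $[\eta,\|f\|+1]$, and $k(t) = (\|f\|+1)/t$ for $t \geqsl \|f\|+1$. Then $g' \df g \cdot k(|g|)$ belongs to $\overline{V}$, continuous functional calculus yields $\|g'\| \leqsl \|f\|+1$, and, since $\sigma(|g(x)|) \subset [0, \|f\|+\eta] \subset [0,\|f\|+1]$ for $x \in L$, one has $\|g'\bigr|_L - g\bigr|_L\| \leqsl \eta$, whence $\|g'\bigr|_L - f\bigr|_L\| \leqsl 2\eta$. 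Since $\mu(g') = 0$, splitting $\mu(f - g')$ over $L$ and its complement gives
\begin{equation*}
|\mu(f)| = |\mu(f - g')| \leqsl 2\eta\,\|\mu\| + 2\epsi\,(2\|f\|+1).
\end{equation*}
Letting $\eta$ and then $\epsi$ tend to $0$ forces $\mu(f) = 0$, the desired contradiction.

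The principal obstacles are: (i) a careful vector-valued Riesz representation of $C_0(\Omega,\aaA)^*$ and its scalar total variation in the noncommutative setting (routine when $\aaA = \CCC$, more delicate in general, and likely the main technical input from \cite{pn3}); and (ii) the norm-control step, which crucially requires passing from the $*$-subalgebra $V$ to its $C^*$-closure $\overline{V}$ before performing the functional-calculus truncation $g \mapsto g \cdot k(|g|)$, since $V$ itself need not be closed under such operations.
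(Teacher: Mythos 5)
Your proposal addresses a statement the paper does not actually prove: Theorem \textup{4.4} is imported from \cite{pn3} (which, as the introduction says, treats norm closures of \emph{convex sets} of vector-valued maps), so there is no internal argument to compare yours with. Taken on its own, your duality proof looks correct. The chain Hahn--Banach $\to$ vector-valued Riesz/Singer representation of $C_0(\Omega,\aaA)^*$ by regular $\aaA^*$-valued Borel measures of bounded variation $\to$ countable additivity of $|\mu|$ over the partition $\bbB$ plus inner regularity to produce an $L$ satisfying ($**$) with $|\mu|(\Omega\setminus L)$ small $\to$ the truncation $g\mapsto g\,k(|g|)$ to regain a uniform bound inside $\overline{V}$ $\to$ the splitting estimate, is sound; I checked the norm computations ($\|g\,k(|g|)\|\leqsl\sup_t t\,k(t)=\|f\|+1$ and $\sup_{t\leqsl\eta}t|k(t)-1|\leqsl\eta$) and the quantifier order ($L$ depends only on $\epsi$, $g$ on $\eta$). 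Two points deserve explicit mention: inner regularity of $|\mu|$ on the merely Borel sets $B_i$ holds because $|\mu|$ is a \emph{finite} regular Borel measure on a locally compact Hausdorff space (pass through the one-point compactification for the representation itself); and the truncation genuinely uses that $V$ is a $*$-subalgebra, so that $|g|$ and $k(|g|)$ (with $k(0)=0$) lie in the $C^*$-algebra $\overline{V}$ and evaluation at points intertwines the functional calculus. Note also that the classical Singer/Dinculeanu representation, not \cite{pn3}, is the right citation for step (i); since \cite{pn3} must work for convex sets, its mechanism cannot be your multiplicative truncation, so your argument is a genuinely different, more elementary route that buys self-containedness for $*$-subalgebras at the cost of the extra generality the paper relies on elsewhere.
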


For similar, but much stronger and more general results than \THM{C0}, consult \cite{pn3}.\par
For a better transparency, let us separate a part of the proof of \THM{SW} in the lemma below.

\begin{lem}{Borel}
Let $\Tt$ be an elc m-tower. For a finite monotone increasing sequence $\nu = (\nu_1,\ldots,\nu_n)$
of positive integers, put
\begin{equation}\label{eqn:B}
B(\nu) \df \Bigl\{U . \Bigl(\bigoplus_{j=1}^n \tT_j\Bigr)\dd\ U \in \uuU_{|\nu|},\ \tT_j \in
\core(\Tt) \cap \Tt_{\nu_j}\Bigr\} \cap \overline{\core}(\Tt)
\end{equation}
where $|\nu| \df \sum_{j=1}^n \nu_j$. Then all sets of the form $B(\nu)$ \textup{(}where $\nu$ runs
over all finite monotone increasing sequences of positive integers\textup{)} are pairwise disjoint,
Borel and cover $\overline{\core}(\Tt)$.
\end{lem}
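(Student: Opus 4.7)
The plan is to reduce all three assertions to Proposition \ref{pro:pd} (existence and uniqueness of irreducible decomposition) together with the closedness granted by axiom (T1) and \LEM{unitary}. Fix $N$ and let $\nu$ range over monotone increasing compositions with $|\nu| = N$.

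\emph{Covering and disjointness.} Given $\xX \in \overline{\core}(\Tt) \cap \Tt_N$, Proposition \ref{pro:pd}(A) produces irreducibles $\tT_1,\ldots,\tT_n$ with $\xX \equiv \bigoplus_{j=1}^n \tT_j$; composing finitely many shuffles $U_{p,q}$ from (T6) reorders the summands so that $(d(\tT_j))_j$ is monotone increasing, placing $\xX$ in $B((d(\tT_1),\ldots,d(\tT_n)))$. Conversely, if $\xX \in B(\nu) \cap B(\mu)$, then $\xX$ admits two irreducible decompositions whose multisets of degrees must coincide by Proposition \ref{pro:pd}(B); since both $\nu$ and $\mu$ are sorted this forces $\nu = \mu$.

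\emph{Borel property.} For any composition $\mu = (\mu_1,\ldots,\mu_m)$ of $N$ (sorted or not) I would put
$$O_\mu \df \uuU.(\Tt_{\mu_1} \oplus \cdots \oplus \Tt_{\mu_m}).$$
Iterated use of the closed-embedding statement in (T1) shows that $\Tt_{\mu_1} \oplus \cdots \oplus \Tt_{\mu_m}$ is closed in $\Tt$, whence \LEM{unitary} makes $O_\mu$ closed. The crucial identity is
$$B(\nu) \;=\; \overline{\core}(\Tt) \cap \Bigl(O_\nu \setminus \bigcup_{\mu \prec \nu} O_\mu\Bigr),$$
where $\mu \prec \nu$ means that $\mu$ is a strict refinement of $\nu$ (each $\nu_j$ is replaced by a composition of itself, with at least one of these replacements non-trivial). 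Since $|\nu| = N$ there are only finitely many such $\mu$, so the right-hand side is Borel (indeed locally closed inside $\overline{\core}(\Tt)$).

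To verify the identity: for ``$\subseteq$'', if $\xX \in B(\nu)$ also lay in some $O_\mu$ with $\mu \prec \nu$, write $\xX \equiv \bigoplus_k \sS_k$ with $d(\sS_k) = \mu_k$ and decompose each $\sS_k$ into irreducibles via Proposition \ref{pro:pd}(A); the resulting irreducible decomposition of $\xX$ would have degree multiset strictly refining $\nu$, which collides with the explicit irreducible decomposition coming from the definition of $B(\nu)$ via Proposition \ref{pro:pd}(B). For ``$\supseteq$'', writing $\xX = U.(\bigoplus_j \aA_j)$ with $d(\aA_j) = \nu_j$, any reducible $\aA_j$ would, after further decomposition, exhibit $\xX$ in some $O_\mu$ with $\mu \prec \nu$; hence every $\aA_j$ must be irreducible and $\xX \in B(\nu)$. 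The main obstacle is precisely this combinatorial identification: ensuring that a finer composition never accidentally reproduces the prime decomposition of $\xX$, which is exactly what the uniqueness clause of Proposition \ref{pro:pd}(B) supplies.
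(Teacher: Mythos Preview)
Your argument is correct and, in the Borel step, actually repairs a slip in the paper's version. The paper introduces the same closed sets (there written $F(\mu)$, identical to your $O_\mu$ for monotone increasing $\mu$) and asserts that $D(\nu)\cap F(\mu)=\varempty$ for \emph{every} monotone increasing $\mu\neq\nu$, concluding $D(\nu)=F(\nu)\setminus\bigcup_{\mu\neq\nu}F(\mu)$. That assertion is false: for example $D((1,1))\subset \Tt_2=F((2))$, and in fact the right-hand side of the paper's formula is empty whenever $\nu\neq(|\nu|)$, since $F((|\nu|))=\Tt_{|\nu|}$ swallows everything. Your restriction to \emph{strict refinements} $\mu\prec\nu$ is precisely the fix: such $\mu$ have more parts than $\nu$, so any $O_\mu$-decomposition of $\xX$ yields, after passing to irreducibles, strictly more than $n$ summands, contradicting the uniqueness clause of \PRO{pd}. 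The covering and disjointness paragraphs match the paper's reasoning verbatim.
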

\begin{proof}
(To avoid misunderstandings, we recall that a sequence $(\nu_1,\ldots,\nu_n)$ is monotone increasing
if $\nu_{j-1} \leqsl \nu_j$ for any $j \in \{1,\ldots,n\}$ different from $1$.) For simplicity,
denote by $\bbB$ the collection of all sets of the form $B(\nu)$. It follows from \PRO{pd} and (T6)
that $\bbB$ consists of pairwise disjoint sets that cover $\overline{\core}(\Tt)$. So, we only need
to show that $B(\nu)$ is a Borel set. To this end, denote by $D(\nu)$ and $F(\nu)$ the sets,
respectively, $\{U . \bigoplus_{j=1}^n \tT_j\dd\ U \in \uuU_{|\nu|},\ \tT_j \in \core(\Tt)
\cap \Tt_{\nu_j}\}$ and $\{U . (\bigoplus_{j=1}^n \tT_j)\dd\ U \in \uuU_{|\nu|},\ \tT_j \in
\Tt_{\nu_j}\}$ (where $n$ is the number of entries of $\nu$) and note that
\begin{itemize}
\item $B(\nu) = D(\nu) \cap \overline{\core}(\Tt)$;
\item $F(\nu)$ is closed in $\Tt$ (see the proof of \LEM{sub});
\item $D(\nu) \subset F(\nu)$;
\item $\bigcup_{\mu} F(\mu) = \Tt$ where $\mu$ runs over all finite monotone increasing sequences
 of positive integers.
\end{itemize}
Taking into account the above remarks, in order to conclude that $B(\nu)$ is Borel, it is enough
to verify that $D(\nu) \cap F(\mu) = \varempty$ for distinct sequences $\nu$ and $\mu$ (because then
$D(\nu) = F(\nu) \setminus \bigcup_{\mu\neq\nu} F(\mu)$), which simply follows e.g.\ from \PRO{pd}.
\end{proof}

\begin{proof}[Proof of \THM{SW}]
Our only task is to prove that each map $u \in C^*_0(\Tt)$ that satisfies ($*$) belongs
to the uniform closure of $\EeE$. First of all, observe that, under the assumptions of the theorem,
($*$) is equivalent to:
\begin{itemize}
\item[($*$')] for any $\xX, \yY \in \core(\Tt)$ there exists $v \in \EeE$ with $v(\xX) = u(\xX)$ and
 $v(\yY) = u(\yY)$.
\end{itemize}
Indeed, if either $\xX \equiv \yY$ or ($1_{\xX}$) (resp.\ ($1_{\yY}$)) holds, the conclusion
of ($*$') for $\xX$ and $\yY$ readily follows from ($*$) applied to $\yY$ (resp.\ to $\xX$).
On the other hand, if $\xX \perp \yY$ and both conditions ($2_{\xX}$) and ($2_{\yY}$) are fulfilled,
there are functions $g_1,g_2,v_1,v_2 \in \EeE$ with $g_1(\xX) = I_{d(\xX)}$, $g_1(\yY) = 0$,
$g_2(\yY) = I_{d(\yY)}$, $g_2(\xX) = 0$, $v_1(\xX) = u(\xX)$ and $v_2(\yY) = u(\yY)$ (the last two
properties are derived from ($*$)). Then $v \df g_1 v_1 + g_2 v_2$ belongs to $\EeE$ and satisfies
$v(\xX) = u(\xX)$ and $v(\yY) = u(\yY)$.\par
For simplicity, put $\Omega \df \overline{\core}(\Tt)$ and let
$\aaA$ stand for the product $\prod_{n=1}^{\infty} \mmM_n$ of the $C^*$-algebras $\mmM_n$. For each
$A \in \mmM_k$, we denote by $A^{\#}$ the element $(X_n)_{n=1}^{\infty}$ of $\aaA$ such that $X_k =
A$ and $X_n = 0$ for $n \neq k$. Further, for any $u \in C^*_0(\Tt)$ we define a function
$\Psi(u)\dd \Omega \to \aaA$ by $\Psi(u)(\tT) = (u(\tT))^{\#}$. In this way one obtains
a $*$-homomorphism $\Psi\dd C^*_0(\Tt) \to C_0(\Omega,\aaA)$. What is more, \LEM{norm} asserts that
$\Psi$ is isometric.\par
Let $\bbB$ be the family of all sets $B(\nu)$ defined by \eqref{eqn:B} (where $\nu$ runs over all
finite sequences of positive integers). \LEM{Borel} yields that $\bbB$ is a countable collection
of pairwise disjoint Borel subsets of $\Omega$ that cover $\Omega$. Fix $u \in C^*_0(\Tt)$ for which
($*$') holds. Instead of showing that $u$ belongs to the uniform closure of $\EeE$, it suffices
to check that $w \df \Psi(u)$ belongs to the uniform closure of $V \df \Psi(\EeE)$. We now employ
\THM{C0}. Let $L \subset \Omega$ satisfy ($**$). Our only task is to show that $w\bigr|_L$ belongs
to the uniform closure of $V\bigr|_L$. Equivalently, we only have to show that
\begin{itemize}
\item[(\club)] there is a sequence $v_1,v_2,\ldots$ of elements of $\EeE$ such that
 \begin{equation*}
 \lim_{n\to\infty} \sup\{\|v_n(\tT) - u(\tT)\|\dd\ \tT \in L\} = 0.
 \end{equation*}
\end{itemize}
Let $K$ consist of all $\tT \in \core(\Tt)$ for which there exists $\sS \in L$ with $\tT
\preccurlyeq \sS$. We claim that
\begin{equation}\label{eqn:aux13}
K \textup{ is compact}.
\end{equation}
We can prove this as follows. We infer from ($**$) that there are only a finite number of (finite)
sequences $\nu$ for which $L \cap B(\nu)$ is nonempty. Let $\nu^{(1)},\ldots,\nu^{(p)}$ denote all
such sequences. For $j \in \{1,\ldots,p\}$, put $L_j \df L \cap B(\nu^{(j)})$ and denote by $K_j$
the set of all $\tT \in \core(\Tt)$ for which there is $\sS \in L_j$ with $\tT \preccurlyeq \sS$.
Then $L = \bigcup_{j=1}^p L_j$ and each of $L_j$ is compact (see ($**$)). Observe that $K =
\bigcup_{j=1}^p K_j$ and thus, to conclude \eqref{eqn:aux13}, it suffices to show that the sets
$K_j$ are compact. To this end, fix $j$, put (for simplicity) $\nu \df \nu^{(j)}$, $F \df \{U .
\tT\dd\ U \in \uuU_{|\nu|},\ \tT \in L_j\}$ and express $\nu$ in the form $(\nu_1,\ldots,\nu_p)$.
Note that $F$ is a compact subset of $B(\nu)$. Further, let $\Delta$ consist of all $(\tT_1,\ldots,
\tT_p) \in \Tt_{\nu_1} \times \ldots \times \Tt_{\nu_p}$ for which $\bigoplus_{k=1}^p \tT_k \in F$.
It follows from (T1) that $\Delta$ is compact. Finally, since $F \subset B(\nu)$, \PRO{pd} implies
that $\Delta \subset (\core(\Tt))^p$. So, we conclude that $K_j = \bigcup_{k=1}^p \pr_k(\Delta)$
(see \COR{pd}) where $\pr_k\dd \Delta \to \Tt$ is the projection onto the $k$th coordinate, which
finishes the proof of \eqref{eqn:aux13}.\par
We come back to the main part of the proof. Observe that
\begin{equation}\label{eqn:aux14}
\|f\|_L = \|f\|_K
\end{equation}
for any $f \in C^*_0(\Tt))$ (where, for $A \subset \Tt$, $\|f\|_A \df \sup\{\|f(\aA)\|\dd\ \aA \in
A\}$). Indeed, for each $\xX \in K$ there are $\yY, \zZ \in \Tt$ such that $\xX \oplus \yY \equiv
\zZ \in L$ and then $\|f(\xX)\| \leqsl \|f(\zZ)\| \leqsl \|f\|_L$ for any such $f$; conversely,
if $\zZ \in L$, there are irreducible elements $\xX_1,\ldots,\xX_s \in \Tt$ for which $\zZ \equiv
\bigoplus_{j=1}^s \xX_j$, and then $\xX_1,\ldots,\xX_s \in K$ and $\|f(\zZ)\| \leqsl
\max(\|f(\xX_1)\|,\ldots,\|f(\xX_s)\|) \leqsl \|f\|_K$ for all $f \in C^*_0(\Tt)$.\par
It follows from \eqref{eqn:aux13} that there is $N > 0$ such that $K \subset \bigcup_{n=1}^N \Tt_n$.
Put $R \df N!$ and define a $C^*$-algebra $\ddD$ as $\mmM_R$. For each $f \in C^*_0(\Tt)$ let
$\Phi(f)\dd K \to \ddD$ be given by $(\Phi(f))(\xX) \df (R / d(\xX)) \odot f(\xX)$. It is easy
to see that in this way one obtains a $*$-homomorphism $\Phi\dd C^*_0(\Tt) \to C(K,\ddD)$. Moreover,
$\|\Phi(f)\| = \|f\|_K$ for any $f \in C^*_0(\Tt)$ which, combined with \eqref{eqn:aux14}, yields
\begin{equation}\label{eqn:aux15}
\|\Phi(f)\| = \|f\|_L \qquad (f \in C^*(\Tt)).
\end{equation}
The above connection lead us to a conclusion that (\club) is equivalent to
\begin{itemize}
\item[(\spade)] $g$ belongs to the uniform closure (in $C(K,\ddD)$) of $\LlL$
\end{itemize}
where $g \df \Phi(u)$ and $\LlL \df \Phi(\EeE)$. To show (\spade), we employ \THM{S-W}. To this end,
let us check that for any $\xX, \yY \in K$ one of (SW1) or (SW2) holds. We know that $\xX, \yY \in
\core(\Tt)$. So, either $\xX \equiv \yY$ (and in that case (SW2) holds) or $\xX \perp \yY$.
In the latter case, either both the conditions (1$_{\xX}$) and (1$_{\yY}$) are fulfilled and then
(SW2) holds, or at least one of conditions (2$_{\xX}$) and (2$_{\yY}$) is satisfied and then (SW1)
is fulfilled. So, we infer from \THM{S-W} that (\spade) holds provided $g \in \Delta_2(\LlL)$.
Finally, that $g$ belongs to $\Delta_2(\LlL)$ may simply be deduced from ($*$'). Thus, the proof is
complete.
\end{proof}

\begin{dfn}{zero}
The \textit{vanishing tower} of an elc m-tower $\Tt$, denoted by $\Tt_{(0)}$, is defined as the set
of all $\tT \in \Tt$ such that $f(\tT) = 0$ for any $f \in C^*_0(\Tt)$. It is easy to see that
$\Tt_{(0)}$ is indeed a (possibly empty) subtower.\par
Since $C^*_0(\Tt) = C^*(\Tt)$ for proper m-towers of finite height $\Tt$, \COR{dist} implies that
$\Tt_{(0)} = \varempty$ for all such m-towers.
\end{dfn}

The next consequence of \THM{SW} shall be used in the proof of the first part of \THM{subh1}.

\begin{cor}{dense}
A $*$-subalgebra $\EeE$ of $C^*_0(\Tt)$ \textup{(}where $\Tt$ is an elc m-tower\textup{)} is dense
in $C^*_0(\Tt)$ iff for any $\tT \in \core(\Tt) \setminus \Tt_{(0)}$:
\begin{enumerate}[\upshape(d1)]
\item $\{f(\tT)\dd\ f \in \EeE\} = \mmM_{d(\tT)}$; and
\item if $\sS \in \core(\Tt) \setminus \Tt_{(0)}$ is disjoint from $\tT$, then there exists
 $g \in \EeE$ for which $g(\tT) = I_{d(\tT)}$ and $g(\sS) = 0$.
\end{enumerate}
\end{cor}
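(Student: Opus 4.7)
The plan is to reduce the corollary to \THM{SW} by exploiting the ideal structure of $C^*_0(\Tt)$ inside $C^*(\Tt)$. The crucial auxiliary observation I would establish first is: for each $\tT \in \core(\Tt) \setminus \Tt_{(0)}$ one has $\pi_{\tT}(C^*_0(\Tt)) = \mmM_{d(\tT)}$. Indeed, $C^*_0(\Tt)$ is a (closed, two-sided) ideal of $C^*(\Tt)$ since $\|(fg)(\xX)\| \leqsl \|f\|\,\|g(\xX)\|$, and by \COR{repr} the representation $\pi_{\tT}$ is irreducible on $C^*(\Tt)$; thus $\pi_{\tT}(C^*_0(\Tt))$ is an ideal of the simple algebra $\mmM_{d(\tT)}$, nonzero by the very definition of $\Tt_{(0)}$, and hence all of $\mmM_{d(\tT)}$.

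For sufficiency, assuming (d1) and (d2), I would verify the alternative (1$_{\tT}$) or (2$_{\tT}$) of \THM{SW} for every irreducible $\tT \in \Tt$. If $\tT \in \Tt_{(0)}$, every $g \in \EeE \subset C^*_0(\Tt)$ vanishes at $\tT$, yielding (1$_{\tT}$). Otherwise, for an irreducible $\sS \perp \tT$ there are two subcases: if $\sS \in \Tt_{(0)}$, then (d1) supplies $g \in \EeE$ with $g(\tT) = I_{d(\tT)}$, and $g(\sS) = 0$ holds automatically; if $\sS \notin \Tt_{(0)}$, (d2) applies directly. \THM{SW} then identifies the closure of $\EeE$ with the set of $u \in C^*_0(\Tt)$ fulfilling ($*$), and ($*$) is automatic: for $\xX \in \Tt_{(0)}$ one has $u(\xX) = 0$ and $v = 0$ works, while for $\xX \in \core(\Tt) \setminus \Tt_{(0)}$ the required $v$ is delivered by (d1).

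For necessity, assume $\EeE$ is dense and fix $\tT \in \core(\Tt) \setminus \Tt_{(0)}$. Density together with the auxiliary observation yields that $\pi_{\tT}(\EeE)$ is dense in $\mmM_{d(\tT)}$; being a $*$-subalgebra of a finite-dimensional $C^*$-algebra it is automatically closed, giving (d1). For (d2), with $\sS \in \core(\Tt) \setminus \Tt_{(0)}$ disjoint from $\tT$, I would first build $f_0 \in C^*_0(\Tt)$ satisfying $f_0(\tT) = I_{d(\tT)}$ and $f_0(\sS) = 0$: \COR{dist}(A) produces $\phi \in C^*(\Tt)$ with these prescribed values, and choosing $h \in C^*_0(\Tt)$ with $h(\tT) = I_{d(\tT)}$ (via the auxiliary observation applied to $\tT$), the product $f_0 \df \phi h$ does the job because $C^*_0(\Tt)$ is an ideal. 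Approximating $f_0$ by $g_n \in \EeE$, the image of $\EeE$ under the $*$-homomorphism $f \mapsto (f(\tT),f(\sS)) \in \mmM_{d(\tT)} \oplus \mmM_{d(\sS)}$ is a $*$-subalgebra of a finite-dimensional $C^*$-algebra, hence closed, and contains the limit $(I_{d(\tT)},0)$, so this exact value is attained by some $g \in \EeE$. The main technical point in both directions is precisely this finite-dimensional closedness, which lets one upgrade density-style approximations to the exact equalities demanded by (d1) and (d2).
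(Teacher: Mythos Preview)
Your proof is correct and follows precisely the route the paper intends: the paper leaves the proof to the reader with the hint ``use \COR{dist} and the fact that $C^*_0(\Tt)$ is an ideal in $C^*(\Tt)$,'' and your argument---the auxiliary observation that $\pi_{\tT}(C^*_0(\Tt)) = \mmM_{d(\tT)}$ for $\tT \in \core(\Tt) \setminus \Tt_{(0)}$, the reduction of sufficiency to \THM{SW}, and the finite-dimensional closedness trick for necessity---is exactly the intended completion of that sketch.
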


The proof of \COR{dense} is left to the reader (to show the necessity of (d1), use \COR{dist} and
the fact that $C^*_0(\Tt)$ is an ideal in $C^*(\Tt)$).

\begin{cor}{ideal}
For any ideal $\JjJ$ in $C^*_0(\Tt)$ \textup{(}where $\Tt$ is an elc m-tower\textup{)} there exists
a unique subtower $\Ss$ of $\Tt$ such that $\Ss \supset \Tt_{(0)}$ and $\JjJ$ coincides with
the ideal $\JjJ_{\Ss}$ of all maps $f \in C^*_0(\Tt)$ that vanish at each point of $\Ss$.
\end{cor}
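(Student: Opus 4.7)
The plan is to define $\Ss$ as the common zero set $\Ss \df \{\tT \in \Tt\dd\ f(\tT) = 0 \text{ for all } f \in \JjJ\}$, verify it is a subtower containing $\Tt_{(0)}$, and then apply \THM{SW} to identify $\JjJ$ with $\JjJ_{\Ss}$. Closedness of $\Ss$ follows from the closedness of each $\{f = 0\}$ (with \LEM{para} giving continuity on all of $\Tt$ from continuity on the $\Tt_n$'s). The axioms $\uuU . \Ss \subset \Ss$ and $\tT_1 \oplus \tT_2 \in \Ss \iff \tT_1, \tT_2 \in \Ss$ are immediate from $f(U.\tT) = U.f(\tT)$ and $f(\tT_1 \oplus \tT_2) = f(\tT_1) \oplus f(\tT_2)$, while $\Tt_{(0)} \subset \Ss$ is immediate from $\JjJ \subset C^*_0(\Tt)$. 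The inclusion $\JjJ \subset \JjJ_{\Ss}$ is then tautological.

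For the reverse inclusion I would apply \THM{SW} to the $*$-subalgebra $\JjJ$ of $C^*_0(\Tt)$. Fix an irreducible $\tT$. If $\tT \in \Ss$, condition $(1_{\tT})$ holds; otherwise $\pi_{\tT}(\JjJ)$ (see \COR{repr}) is a nonzero ideal in the simple algebra $\mmM_{d(\tT)}$, hence equals $\mmM_{d(\tT)}$, so some $g_1 \in \JjJ$ satisfies $g_1(\tT) = I_{d(\tT)}$. For any irreducible $\sS \perp \tT$, \COR{dist}(A) supplies $h \in C^*(\Tt)$ with $h(\tT) = I_{d(\tT)}$ and $h(\sS) = 0$. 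A quick $\epsi/\|h\|$-estimate shows that $C^*_0(\Tt)$ is an ideal in $C^*(\Tt)$, and then a standard bounded approximate identity argument (pick $(e_{\lambda}) \subset \JjJ$; observe $e_{\lambda} h \in C^*_0(\Tt)$ and $g_1 (e_{\lambda} h) \in \JjJ$, then pass to the limit) upgrades $\JjJ$ to an ideal in $C^*(\Tt)$; consequently $g_1 h \in \JjJ$ witnesses $(2_{\tT})$. By \THM{SW}, $\JjJ$ (being closed) coincides with the set of $u \in C^*_0(\Tt)$ such that for every $\xX \in \core(\Tt)$ some $v \in \JjJ$ agrees with $u$ at $\xX$. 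If $u \in \JjJ_{\Ss}$, this is realized by $v = 0$ when $\xX \in \Ss$, and by surjectivity $\pi_{\xX}(\JjJ) = \mmM_{d(\xX)}$ when $\xX \notin \Ss$. Conversely, any such $u$ vanishes at every irreducible point of $\Ss$ (evaluate the promised $v$), and then on all of $\Ss$ by decomposing via \PRO{pd} and using that $\Ss$ is a subtower, so each irreducible constituent lies in $\Ss$.

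For uniqueness, assume $\Ss'$ is another such subtower and, without loss of generality, $\tT \in \Ss \setminus \Ss'$. Decomposing $\tT \equiv \bigoplus_j \tT_j$ into irreducibles via \PRO{pd} and applying the subtower axioms to both $\Ss$ and $\Ss'$, I may reduce to the case where $\tT$ itself is irreducible. Since $\Tt_{(0)} \subset \Ss'$, we have $\tT \notin \Tt_{(0)}$, so some $f_0 \in C^*_0(\Tt)$ satisfies $f_0(\tT) \neq 0$; \COR{dist}(B) applied to the subtower $\Ss'$ yields $h \in C^*(\Tt)$ with $h\bigr|_{\Ss'} = 0$ and $h(\tT) = I_{d(\tT)}$. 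Then $f_0 h \in C^*_0(\Tt)$ (again because $C^*_0(\Tt)$ is an ideal in $C^*(\Tt)$) vanishes on $\Ss'$ while $(f_0 h)(\tT) = f_0(\tT) \neq 0$, contradicting $\JjJ_{\Ss} = \JjJ_{\Ss'}$ together with $\tT \in \Ss$.

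The main obstacle will be verifying condition $(2_{\tT})$ of \THM{SW}: three ingredients have to be combined, namely the simplicity of $\mmM_{d(\tT)}$ (to upgrade nonvanishing of $\JjJ$ at $\tT$ to surjectivity of $\pi_{\tT}\bigr|_{\JjJ}$), the extension result \COR{dist}(A) (to separate $\tT$ from a disjoint irreducible $\sS$), and the nontrivial observation that $\JjJ$ is actually an ideal in $C^*(\Tt)$, not merely in $C^*_0(\Tt)$, so that the product $g_1 h$ remains in $\JjJ$.
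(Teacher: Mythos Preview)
Your proof is correct and follows essentially the same route as the paper: define $\Ss$ as the common zero set, observe that $C^*_0(\Tt)$ is an ideal in $C^*(\Tt)$ so that $\JjJ$ is too, use simplicity of $\mmM_{d(\tT)}$ to get $g_1(\tT)=I_{d(\tT)}$, separate via \COR{dist}, and invoke \THM{SW}; uniqueness via an irreducible $\tT\in\Ss\setminus\Ss'$ and the product of a nonvanishing $C^*_0$-element with an extension vanishing on $\Ss'$ is exactly the paper's argument (the paper cites \THM{extend} directly where you cite \COR{dist}(B), which is the same content). The only cosmetic difference is that the paper states ``$\JjJ$ is an ideal in $C^*(\Tt)$'' up front as a standard $C^*$-fact, whereas you place the approximate-identity justification after first using the surjectivity of $\pi_{\tT}\bigr|_{\JjJ}$; reordering those two sentences would make the logic cleaner.
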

\begin{proof}
Let $\Ss$ be the set of all points $\tT \in \Tt$ which each map from $\JjJ$ vanishes at. It is
easily seen that $\Ss \supset \Tt_{(0)}$ and $\Ss$ is a subtower; and that $\JjJ \subset
\JjJ_{\Ss}$. We shall now show the reverse inclusion. To this end, observe that $C^*_0(\Tt)$ is
an ideal in $C^*(\Tt)$ and, consequently, $\JjJ$ is an ideal in $C^*(\Tt)$ as well. Fix $\tT \in
\core(\Tt) \setminus \Ss$. We shall check that condition (2$_{\tT}$) of \THM{SW} if fulfilled (with
$\JjJ$ in place of $\EeE$). We know from \COR{dist} that the map $C^*(\Tt) \ni f \mapsto f(\tT) \in
\mmM_{d(\tT)}$ is surjective. Thus, $\JjJ(\tT) \df \{f(\tT)\dd\ f \in \JjJ\}$ is an ideal
in $\mmM_{d(\tT)}$. Since this ideal is nonzero (because $\tT \notin \Ss$), we infer that
\begin{equation}\label{eqn:aux16}
\JjJ(\tT) = \mmM_{d(\tT)}.
\end{equation}
So, there exists $g_0 \in \JjJ$ such that $g_0(\tT) = I_{d(\tT)}$. Now let $\sS \in \core(\Tt)$ be
disjoint from $\tT$. Then it follows from \COR{dist} that there exists $g_1 \in C^*(\Tt)$ with
$g_1(\tT) = I_{d(\tT)}$ and $g_1(\sS) = 0$. It suffices to put $g \df g_0 g_1$ to obtain a map from
$\JjJ$ such that $g(\tT) = I_{d(\tT)}$ and $g(\sS) = 0$. Hence, \THM{SW} yields that each map $u \in
C^*_0(\Tt)$ for which condition ($*$) holds with $\EeE$ replaced by $\JjJ$ belongs to $\JjJ$ (which
is closed). But \eqref{eqn:aux16} holds for any $\tT \in \core(\Tt) \setminus \Ss$ and therefore
($*$) is (trivially) fulfilled for all $u \in \JjJ_{\Ss}$. Consequently, $\JjJ_{\Ss} = \JjJ$.\par
To show the uniqueness of $\Ss$, assume $\Ss_1$ and $\Ss_2$ are two subtowers of $\Tt$ that contain
$\Tt_{(0)}$ and satisfy $\Ss_1 \not\subset \Ss_2$. Then there exists $\tT \in \Ss_1 \setminus \Ss_2$
which is irreducible. Now \THM{extend} implies that there exists $u \in C^*(\Tt)$ such that $u$
vanishes at each point of $\Ss_2$ and $u(\tT) = I_{d(\tT)}$. Moreover, since $\tT \notin \Tt_{(0)}$,
there is $v \in C^*_0(\Tt)$ which does not vanish at $\tT$. Then $uv \in \JjJ_{\Ss_2} \setminus
\JjJ_{\Ss_1}$ and we are done.
\end{proof}

\begin{cor}{nondeg}
For every nondegenerate finite-dimensional representation $\pi$ of $C^*_0(\Tt)$ \textup{(}where
$\Tt$ is an elc m-tower\textup{)} there exists a unique element $\sS \in \Tt$ for which $\pi(f) =
f(\sS)$ for any $f \in C^*_0(\Tt)$. What is more, $\sS \perp \zZ$ for each $\zZ \in \Tt_{(0)}$.\par
Conversely, for any $\tT \in \Tt$ which is disjoint from any element of $\Tt_{(0)}$,
the representation $C^*_0(\Tt) \ni f \mapsto f(\tT) \in \mmM_{d(\tT)}$ is nondegenerate.
\end{cor}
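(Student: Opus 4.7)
I would handle the converse first (easier), then reduce existence to irreducible $\pi$, and finally derive uniqueness; the key tools are \COR{ideal}, \COR{repr}, \PRO{pd}, and axioms (mT3)--(mT4). For the converse, given $\tT \in \Tt$ disjoint from every $\zZ \in \Tt_{(0)}$, apply \PRO{pd} to decompose $\tT \equiv \bigoplus_i \tT_i$ with $\tT_i$ irreducible. Each $\tT_i$ must also avoid $\Tt_{(0)}$ (because $\Tt_{(0)}$ is a subtower, and an irreducible component of $\tT$ equivalent to one in $\Tt_{(0)}$ would contradict $\tT \perp \Tt_{(0)}$). By \COR{repr} the evaluation $\pi_{\tT_i}\dd C^*(\Tt) \to \mmM_{d(\tT_i)}$ is surjective, and since $C^*_0(\Tt)$ is an ideal of $C^*(\Tt)$, the image $\pi_{\tT_i}(C^*_0(\Tt))$ is a nonzero two-sided ideal of the simple algebra $\mmM_{d(\tT_i)}$, hence all of it. In particular $I_{d(\tT_i)}$ is in the image, so $\pi_{\tT_i}\bigr|_{C^*_0(\Tt)}$ is unital, hence nondegenerate; the finite direct sum $\pi_\tT$ inherits nondegeneracy.

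For existence, decompose the nondegenerate $\pi\dd C^*_0(\Tt) \to \mmM_n$ into irreducibles and treat each separately. An irreducible $\rho\dd C^*_0(\Tt) \to \mmM_m$ is automatically surjective, so $\ker\rho$ is a maximal closed ideal; \COR{ideal} then yields a unique subtower $\Ss \supsetneq \Tt_{(0)}$ with $\ker\rho = \JjJ_\Ss$. Since $\Tt_{(0)}$ itself is a subtower (closed under $\oplus$) and $\Ss \neq \Tt_{(0)}$, an irreducible $\aA \in \Ss \setminus \Tt_{(0)}$ can be extracted from any element of $\Ss \setminus \Tt_{(0)}$ via its irreducible decomposition. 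The converse argument shows $\pi_\aA\dd C^*_0(\Tt) \to \mmM_{d(\aA)}$ is surjective, and $\ker\pi_\aA \supset \ker\rho$, so $\pi_\aA$ descends to a surjective $*$-homomorphism $\mmM_m \to \mmM_{d(\aA)}$; by simplicity this is an isomorphism, giving $d(\aA) = m$. Composing $\rho$ with its inverse yields an inner automorphism $A \mapsto UAU^{-1}$ of $\mmM_m$, whence $\rho(f) = U \cdot f(\aA) = f(U.\aA) = \pi_{U.\aA}(f)$. Gluing the irreducible pieces and absorbing into a single conjugating unitary the identification of the intrinsic with the external direct-sum decomposition of $\CCC^n$, we obtain $\sS \in \Tt$ with $\pi = \pi_\sS$; then $\sS \perp \Tt_{(0)}$ follows from $\uuU$-invariance of $\Tt_{(0)}$ and the fact that every irreducible constituent of $\sS$ is unitarily equivalent to some $\aA$ outside $\Tt_{(0)}$.

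For uniqueness, suppose $\pi_{\sS_1} = \pi_{\sS_2}$. The unique-decomposition clause of \PRO{pd}, applied to the irreducible decomposition of $\pi$, forces $\sS_1 \equiv \sS_2$, so $\sS_2 = V.\sS_1$ for some $V \in \uuU_n$. The identity $f(V.\sS_1) = V \cdot f(\sS_1) \cdot V^{-1}$ shows that $V$ commutes with $\pi_{\sS_1}(C^*_0(\Tt))$, which by the surjectivity argument of the converse equals $\pi_{\sS_1}(C^*(\Tt))$; axioms (mT3)--(mT4) identify the unitary commutant of the latter with $\stab(\sS_1)$. Hence $V \in \stab(\sS_1)$ and $\sS_1 = V.\sS_1 = \sS_2$. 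The main obstacle is the existence step: pinning down that the subtower corresponding to the maximal ideal $\ker\rho$ necessarily contains an irreducible outside $\Tt_{(0)}$, and then using simplicity of $\mmM_m$ to identify $\rho$ rigidly with an evaluation; once this is in place, the assembly into $\pi = \pi_\sS$ and the uniqueness check are straightforward manipulations with \PRO{pd} and the mT-axioms.
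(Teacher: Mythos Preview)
Your argument is correct and shares the paper's overall architecture (converse via the ideal property of $C^*_0(\Tt)$ in $C^*(\Tt)$; existence for irreducible $\pi$ via \COR{ideal}; general existence by decomposing $\pi$). Two steps genuinely diverge from the paper.

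\emph{Irreducible existence.} The paper, after writing $\ker\rho = \JjJ_{\Ss}$, selects a \emph{maximal} system of mutually disjoint irreducibles $\sS_1,\ldots,\sS_n \in \Ss \setminus \Tt_{(0)}$, proves $\JjJ_{\Ss} = \bigcap_j \ker\pi_{\sS_j}$, and then uses $\mmM_m \cong \prod_j \mmM_{d(\sS_j)}$ to force $n=1$. Your shortcut---pick a single irreducible $\aA \in \Ss \setminus \Tt_{(0)}$, note $\ker\rho \subset \ker\pi_{\aA}$, and invoke simplicity of $\mmM_m$ to get an isomorphism $\mmM_m \to \mmM_{d(\aA)}$---is cleaner and avoids the maximality detour entirely.

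\emph{Uniqueness.} The paper shows directly that distinct points of $\Tt$ are separated by $C^*_0(\Tt)$: it multiplies a separating map from $C^*(\Tt)$ (supplied by \COR{dist}) by a map $v \in C^*_0(\Tt)$ with $v(\sS) = I$. Your route---first deduce $\sS_1 \equiv \sS_2$, then show the intertwiner $V$ lies in $\stab(\sS_1)$---is more structural. Two points deserve to be made explicit, however. First, ``$\pi_{\sS_1} = \pi_{\sS_2}$ forces $\sS_1 \equiv \sS_2$ by \PRO{pd}'' tacitly uses the irreducible case of uniqueness (to convert $\pi_{\tT} \cong \pi_{\sS}$ on $C^*_0(\Tt)$ into $\tT \equiv \sS$); you should settle that case first, which is easy since $\pi_{\tT}(C^*_0(\Tt)) = \mmM_{d(\tT)}$ forces any intertwining unitary to be scalar. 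Second, identifying the unitary commutant of $\pi_{\sS_1}(C^*(\Tt))$ with $\stab(\sS_1)$ needs more than (mT3)--(mT4): you must also know (via \COR{dist}(A)) that $\pi_{\sS_1}(C^*(\Tt))$ contains $U_1 . (\bigoplus_j \alpha_j \odot A_j) . U_1^{-1}$ for \emph{arbitrary} $A_j$, so that its commutant can be computed block-wise. Once these two points are spelled out, your uniqueness proof is complete and arguably more conceptual than the paper's explicit separation argument.
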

\begin{proof}
Let $\tT \in \core(\Tt) \setminus \Tt_{(0)}$. Since $C^*_0(\Tt)$ is an ideal in $C^*(\Tt)$,
\COR{dist} yields that
\begin{equation}\label{eqn:non0}
\{f(\tT)\dd\ f \in C^*_0(\Tt)\} = \mmM_{d(\tT)}
\end{equation}
(since the left-hand side of \eqref{eqn:non0} is a nonzero ideal in $\mmM_{d(\tT)}$). Now combining
the above formula again with \COR{dist}, one infers that for any system $\tT_1,\ldots,\tT_n$
of mutually disjoint elements of $\core(\Tt) \setminus \Tt_{(0)}$,
\begin{equation}\label{eqn:2non0}
\{(f(\tT_1),\ldots,f(\tT_n))\dd\ f \in C^*_0(\Tt)\} = \mmM_{d(\tT_1)} \times \ldots \times
\mmM_{d(\tT_n)},
\end{equation}
and consequently,
\begin{equation}\label{eqn:values}
\Bigl\{f\Bigl(\bigoplus_{j=1}^n (\alpha_j \odot \tT_j)\Bigr)\dd\ f \in C^*_0(\Tt)\Bigr\} =
\Bigl\{\bigoplus_{j=1}^n (\alpha_j \odot A_j)\dd\ A_j \in \mmM_{d(\tT_j)}\Bigr\}
\end{equation}
for any positive integers $\alpha_1,\ldots,\alpha_n$. This shows, thanks to \PRO{pd}, the second
claim of the corollary.\par
Now let $\pi\dd C^*_0(\Tt) \to \mmM_N$ be a nonzero irreducible representation. \COR{ideal} implies
that there is a subtower $\Ss$ of $\Tt$ that contains $\Tt_{(0)}$ and satisfies
\begin{equation}\label{eqn:ker}
\ker(\pi) = \JjJ_{\Ss}.
\end{equation}
Further, since $\Tt_{(0)} \subsetneq \Ss$ and $\Tt_{(0)}$ is a subtower, we conclude that $\Ss
\setminus \Tt_{(0)}$ contains irreducible elements. Further, one deduces from \eqref{eqn:2non0} that
every system of mutually disjoint irreducible elements of $\Ss \setminus \Tt_{(0)}$ is finite
(because the quotient algebra $C^*_0(\Tt) / \JjJ_{\Ss}$ is finite-dimensional). Let $\sS_1,\ldots,
\sS_n$ be a maximal such system. We claim that
\begin{equation}\label{eqn:aux17}
\JjJ_{\Ss} = \{f \in C^*_0(\Tt)\dd\ (f(\sS_1),\ldots,f(\sS_n)) = (0,\ldots,0)\}.
\end{equation}
The inclusion ``$\subset$'' is clear. To show the reverse, it suffices to observe that for any
element $\tT$ of $\Ss$ there are irreducible elements $\tT_1,\ldots,\tT_k \in \Tt_{(0)} \cup
\{\sS_1,\ldots,\sS_n\}$ such that $\tT \equiv \bigoplus_{j=1}^k \tT_j$.\par
Further, since $\pi$ is surjective (being irreducible and nonzero), a combination
of \eqref{eqn:ker}, \eqref{eqn:aux17} and \eqref{eqn:2non0} yields that $\mmM_N \cong C^*_0(\Tt) /
\JjJ_{\Ss} \cong \prod_{j=1}^n \mmM_{d(\sS_j)}$, which is possible only when $n = 1$ and $d(\sS_1) =
N$. So,
\begin{equation}\label{eqn:aux18}
\ker(\pi) = \{f \in C^*_0(\Tt)\dd\ f(\sS_1) = 0\}.
\end{equation}
The above connection enables us to define correctly a $*$-homomorphism $\kappa\dd \mmM_N \to \mmM_N$
by the rule $\kappa(A) \df f(\sS_1)$ provided $f \in C^*_0(\Tt)$ is such that $\pi(f) = A$. We infer
from \eqref{eqn:aux18} that $\kappa$ is one-to-one. So, $\kappa\dd \mmM_N \to \mmM_N$ is
a $*$-isomorphism and hence there is $U \in \uuU_N$ for which $\kappa(A) = U . A$ for any $A \in
\mmM_N$ (in the algebra of matrices this is quite an elementary fact; however, this follows also
from Corollary~2.9.32 in \cite{sak}). But then $U . \pi(f) = \kappa(\pi(f)) = f(\sS_1)$ and
consequently $\pi(f) = f(U^{-1} . \sS_1)$. So, we have shown that each nonzero irreducible
finite-dimensional representation $\pi$ of $C^*_0(\Tt)$ has the form $\pi(f) = f(\sS)$ for some
$\sS \in \core(\Tt) \setminus \Tt_{(0)}$.\par
Finally, when $\pi$ is an arbitrary nondegenerate finite-dimensional representation of $C^*_0(\Tt)$,
there are a finite number of nonzero irreducible finite-dimensional representations $\pi_1,\ldots,
\pi_k$ and a unitary matrix $U$ (of a respective degree) such that $\pi(f) = U . (\bigoplus_{j=1}^k
\pi_j(f))$. It follows from the previous part of the proof that for each $j \in \{1,\ldots,k\}$
there is $\sS_j \in \core(\Tt) \setminus \Tt_{(0)}$ such that $\pi_j(f) = f(\sS_j)$. Then
we conclude that $\pi(f) = f(\sS)$ where $\sS \df U . (\bigoplus_{j=1}^k \sS_j)$. Noticing that such
$\sS$ is disjoint from any element of $\Tt_{(0)}$, it remains to show that $\sS$ is unique. To this
end, take an arbitrary $\sS' \in \Tt$ that is different from $\sS$. We need to find a map $f \in
C^*_0(\Tt)$ such that $f(\sS') \neq f(\sS)$. We conclude from \eqref{eqn:values} that there is
$v \in C^*_0(\Tt)$ for which $v(\sS) = I_{d(\sS)}$. If $v(\sS') \neq v(\sS)$, we are done. Hence,
we assume $v(\sS') = I_{d(\sS)}$. \COR{dist} implies that there exists $u \in C^*(\Tt)$ with
$u(\sS') \neq u(\sS)$. Then $f \df uv \in C^*_0(\Tt)$ is such that $f(\sS') \neq f(\sS)$.
\end{proof}

It turns out that all $C^*$-algebras of the form $C^*_0(\Tt)$ (where $\Tt$ is an elc m-tower) are
CCR (that is, liminal). Even more:

\begin{thm}{irr}
Every irreducible representation of $C^*_0(\Tt)$ \textup{(}where $\Tt$ is an elc m-tower\textup{)}
is finite-dimensional.
\end{thm}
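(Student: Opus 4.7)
My plan is to identify $\ker\pi$ as a maximal ideal of the form $\JjJ_\Ss$ where $\Ss$ is generated above $\Tt_{(0)}$ by the unitary orbit of a single irreducible element, and thereby show the primitive quotient $C^*_0(\Tt)/\ker\pi$ is a single matrix algebra, forcing $H$ to be finite-dimensional. Let $\pi\dd C^*_0(\Tt) \to B(H)$ be a nonzero irreducible representation. The argument splits into two stages: first, establish the CCR property $\pi(C^*_0(\Tt)) \subset K(H)$; then, exploit the resulting maximality of $\ker\pi$ via \COR{ideal} and \LEM{sub}.

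For the CCR step, take $0 \leqsl f \in C^*_0(\Tt)$ with $\|f\| > 0$. Since $\|f(\tT)\| \to 0$ on $\overline{\core}(\Tt)$, the non-zero part of the spectrum of $f$ consists of isolated eigenvalues accumulating only at $0$; hence for each $\epsi \in (0, \|f\|)$ the spectral projection $p_\epsi \df \chi_{[\epsi, \|f\|]}(f)$ belongs to $C^*_0(\Tt)$ (applying continuous functional calculus to a function vanishing at $0$) and has support contained in $\bigcup_{n=1}^{N}\Tt_n \cap \overline{\core}(\Tt)$ for some integer $N = N(\epsi)$. The hereditary subalgebra $p_\epsi C^*_0(\Tt) p_\epsi$ embeds isometrically (by \LEM{norm}) into $\prod_{\tT \in \core(\Tt)} p_\epsi(\tT)\mmM_{d(\tT)}p_\epsi(\tT)$, a product of matrix algebras of size $\leqsl N$, so it is $N$-subhomogeneous. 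The compression of the irreducible $\pi$ to $\pi(p_\epsi) H$ remains irreducible, whence $\dim \pi(p_\epsi) H \leqsl N$. Consequently $\pi(f)\pi(p_\epsi)$ is finite rank and $\|\pi(f)-\pi(f)\pi(p_\epsi)\| \leqsl \|f(1-p_\epsi)\| \leqsl \epsi$, giving $\pi(f) \in K(H)$.

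Combined with irreducibility of $\pi$ and the classical fact that an irreducible $C^*$-subalgebra of $B(H)$ meeting $K(H)$ non-trivially contains all of $K(H)$, I conclude $\pi(C^*_0(\Tt)) = K(H)$. Simplicity of $K(H)$ makes $\ker\pi$ a maximal ideal. \COR{ideal} gives $\ker\pi = \JjJ_\Ss$ for a unique subtower $\Ss \supsetneq \Tt_{(0)}$, and \PRO{pd} yields an irreducible $\sS \in \Ss \setminus \Tt_{(0)}$ (take an irreducible summand of any $\xX \in \Ss \setminus \Tt_{(0)}$; they cannot all lie in $\Tt_{(0)}$ because $\Tt_{(0)}$ is itself a subtower). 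The subtower $\Ss' \df \grp{\Tt_{(0)} \cup (\uuU . \sS)}$, well-defined via \LEM{sub}, strictly contains $\Tt_{(0)}$ and is contained in $\Ss$; maximality of $\ker\pi$ rules out any subtower strictly between $\Tt_{(0)}$ and $\Ss$, so $\Ss = \Ss'$.

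The explicit description in \LEM{sub} now writes every $\xX \in \Ss$ as $V.(\bigoplus_j \dD_j)$ with each $\dD_j$ either in $\Tt_{(0)} \cap \core(\Tt)$ (where $f(\dD_j) = 0$ for all $f \in C^*_0(\Tt)$) or of the form $U_j . \sS$ (where $f(\dD_j) = U_j . f(\sS)$); hence $f|_\Ss$ is determined by $f(\sS)$ alone. The evaluation $\rho\dd f \mapsto f(\sS)$ into $\mmM_{d(\sS)}$ therefore has kernel exactly $\JjJ_\Ss = \ker\pi$, and its image is a non-zero ideal of $\{g(\sS)\dd g \in C^*(\Tt)\} = \mmM_{d(\sS)}$ (surjectivity of the latter from \COR{dist}, ideal property since $C^*_0(\Tt)$ is an ideal in $C^*(\Tt)$, non-vanishing since $\sS \notin \Tt_{(0)}$), hence equals $\mmM_{d(\sS)}$ by simplicity. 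So $C^*_0(\Tt)/\ker\pi \cong \mmM_{d(\sS)}$, and $\pi$ acts as the essentially unique faithful irreducible representation of this matrix algebra, giving $\dim H = d(\sS) < \infty$. The principal obstacle is the CCR step---producing genuinely $C^*_0$-valued spectral projections supported on finitely many layers and verifying subhomogeneity of the compressed hereditary subalgebra; once this is in hand, the rest is a clean application of the structural results already established.
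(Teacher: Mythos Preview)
Your CCR step has a real error. The claim that the nonzero spectrum of a positive $f\in C^*_0(\Tt)$ consists of isolated points accumulating only at $0$ is false: vanishing at infinity on $\overline{\core}(\Tt)$ controls $\|f(\tT)\|$ only as $\tT$ leaves compact sets, not the spectral values inside a fixed layer. For a proper m-tower of height $1$ with $\core(\Tt)\cap\Tt_1$ homeomorphic to $[0,1]$ one has $C^*_0(\Tt)=C^*(\Tt)\cong C([0,1])$, and the identity function there has spectrum $[0,1]$. Hence $\chi_{[\epsi,\|f\|]}$ is in general discontinuous on $\sigma(f)$, continuous functional calculus does not produce it, and your projection $p_\epsi$ need not belong to $C^*_0(\Tt)$. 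The repair is easy: use a continuous $g$ with $g\equiv 0$ on $[0,\epsi/2]$ and $g\equiv 1$ on $[\epsi,\infty)$, so that $h\df g(f)\in C^*_0(\Tt)$ and $h$ vanishes on $\core(\Tt)\cap\bigcup_{k>N}\Tt_k$ for some $N$; the hereditary subalgebra $\overline{h\,C^*_0(\Tt)\,h}$ is then $N$-subhomogeneous by your evaluation argument and \LEM{norm}, so $\pi(h)$ has finite rank and $\|\pi(f)-\pi(f)\pi(h)\|\leqsl\epsi$ yields $\pi(f)\in K(H)$. After this correction the rest of your argument goes through.

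The paper's proof is different and far shorter: it embeds $C^*_0(\Tt)$ as a subalgebra of the $c_0$-direct sum over $n$ of the $n$-subhomogeneous algebras of bounded $\mmM_n$-valued maps on $\Tt_n$, notes that every irreducible representation of such a sum factors through a single summand, and then invokes the extension of irreducibles from subalgebras to overalgebras (Proposition~2.10.2 in \cite{di2}). Your route, once repaired, is substantially longer, and its second half essentially reproves the primitive-quotient identification already carried out in \COR{nondeg}.
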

\begin{proof}
For each $n > 0$, denote by $\aaA_n$ the $C^*$-algebra of all bounded maps of $\Tt_n$ into $\mmM_n$.
Since $\aaA_n$ has sufficiently many $n$-dimensional representations, it is subhomogeneous
(to convince oneself of that, consult Proposition~IV.1.4.6 in \cite{bla}). Further, denote by $\aaA$
the \textit{direct product} of all $\aaA_n$; that is, $\aaA$ is the $C^*$-algebra of all sequences
$(f_n)_{n=1}^{\infty} \in \prod_{n=1}^{\infty} \aaA_n$ for which $\lim_{n\to\infty} \|f_n\| = 0$.
Since any irreducible representation of $\aaA$ is of the form $(A_n)_{n=1}^{\infty} \mapsto
\pi(A_k)$ where $k > 0$ is arbitrarily fixed and $\pi$ is an irreducible representation of $\aaA_k$,
we see that each irreducible representation of $\aaA$ is finite-dimensional. Finally, the assignment
$f \mapsto (f\bigr|_{\Tt_n})_{n=1}^{\infty}$ correctly defines a one-to-one $*$-homomorphism
of $C^*_0(\Tt)$ into $\aaA$ and therefore $C^*_0(\Tt)$ is $*$-isomorphic to a subalgebra of $\aaA$.
So, since irreducible representations of $C^*$-subalgebras always admit extensions to irreducible
representations (possibly in greater Hilbert spaces; consult Proposition~2.10.2 in \cite{di2}),
the assertion follows.
\end{proof}

\begin{cor}{irr}
Any nonzero irreducible representation of $C^*_0(\Tt)$ \textup{(}for an elc m-tower $\Tt$\textup{)}
has the form $f \mapsto f(\sS)$ where $\sS \in \core(\Tt) \setminus \Tt_{(0)}$.
\end{cor}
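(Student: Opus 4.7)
The proof is essentially a one-line consequence of the two immediately preceding results, so the plan is to explain how they combine. The main observation is that \THM{irr} removes the dimensionality hypothesis needed to invoke \COR{nondeg}, and nonzero irreducible representations are automatically nondegenerate.

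More concretely, I would argue as follows. Let $\pi\dd C^*_0(\Tt) \to B(H)$ be a nonzero irreducible representation. By \THM{irr}, $H$ is finite-dimensional, so $\pi$ is a nonzero irreducible finite-dimensional representation. Because $\pi$ is nonzero and irreducible, the closed subspace $[\pi(C^*_0(\Tt)) H]$ is a nonzero $\pi$-invariant subspace of $H$, hence equals $H$; thus $\pi$ is nondegenerate. Applying \COR{nondeg}, there exists a unique $\sS \in \Tt$ disjoint from every element of $\Tt_{(0)}$ such that $\pi(f) = f(\sS)$ for all $f \in C^*_0(\Tt)$.

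It remains to verify that $\sS \in \core(\Tt)$. If $\sS$ were reducible, then $\sS \equiv \aA \oplus \bB$ for some $\aA, \bB \in \Tt$, and by compatibility of any $f \in C^*_0(\Tt)$ with the unitary action and addition we would have $f(\sS) = U . (f(\aA) \oplus f(\bB))$ for a fixed unitary $U$, exhibiting a nontrivial $\pi$-invariant subspace (namely the image of $U$ applied to $\CCC^{d(\aA)} \oplus 0$), contradicting irreducibility of $\pi$. Alternatively one may appeal directly to the portion of the proof of \COR{nondeg} that treats the irreducible case, where exactly this conclusion is drawn (the argument there shows that an irreducible $\pi$ forces the list $\sS_1,\ldots,\sS_n$ of irreducible summands to have $n = 1$ with $d(\sS_1) = \dim H$).

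There is essentially no obstacle: everything nontrivial has already been done in \THM{irr} (which reduces to the finite-dimensional case) and in \COR{nondeg} (whose proof already separated out the nonzero irreducible case and explicitly placed the representing point in $\core(\Tt) \setminus \Tt_{(0)}$). The only small point worth being explicit about is the automatic nondegeneracy of a nonzero irreducible representation, which is the single bridge between the hypothesis of \COR{irr} and that of \COR{nondeg}.
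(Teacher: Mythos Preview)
Your proposal is correct and matches the paper's approach exactly: the paper's proof is literally the one line ``Just apply \THM{irr} and \COR{nondeg},'' and you have faithfully unpacked this. Your additional remark that the conclusion $\sS \in \core(\Tt)$ is established inside the proof of \COR{nondeg} (rather than in its statement) is accurate and worth making explicit.
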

\begin{proof}
Just apply \THM{irr} and \COR{nondeg}.
\end{proof}

\begin{rem}{shr}
According to the terminology of Section~7, $C^*_0(\Tt)$ (for an elc m-tower $\Tt$) is
a \textit{shrinking} $C^*$-algebra (and each such an algebra has all irreducible representations
finite-dimensional). In the aforementioned part we shall describe models for all such algebras.
\end{rem}

Another strong consequence of \THM{SW} is formulated below.

\begin{thm}{ext}
Let $\Tt$ be an elc m-tower and $\Ss$ its arbitrary subtower. A necessary and sufficient condition
for a map $u \in C^*(\Ss)$ to be extendable to a map $v \in C^*_0(\Tt)$ is that $u \in C^*_0(\Ss)$
and $u$ vanishes at each point of $\Ss \cap \Tt_{(0)}$. What is more, if $u \in C^*_0(\Ss)$ vanishes
at each point of $\Ss \cap \Tt_{(0)}$, then for any $M > \|u\|$ there is an extension $v \in
C^*_0(\Tt)$ of $u$ with $\|v\| < M$.
\end{thm}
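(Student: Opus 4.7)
The plan is to exploit the restriction $*$-homomorphism $\rho\dd C^*_0(\Tt) \to C^*_0(\Ss)$, $v \mapsto v\bigr|_\Ss$, and identify its image as precisely the ideal $\FfF \df \{w \in C^*_0(\Ss)\dd\ w\bigr|_{\Ss \cap \Tt_{(0)}} = 0\}$. Well-definedness of $\rho$ (and hence the necessity half of the theorem) rests on two observations: the subtower axiom \eqref{eqn:oplus} together with unitary invariance forces $\core(\Ss) = \core(\Tt) \cap \Ss$, hence $\overline{\core}(\Ss) \subset \overline{\core}(\Tt)$, so restriction preserves vanishing at infinity on the core; and every $v \in C^*_0(\Tt)$ vanishes on $\Tt_{(0)}$ by the very definition of the vanishing tower.

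For sufficiency, I would apply \THM{SW} to the elc m-tower $\Ss$ and to the $*$-subalgebra $\EeE \df \rho(C^*_0(\Tt)) \subset C^*_0(\Ss)$ to show that the uniform closure of $\EeE$ equals $\FfF$. Since $\rho$ is a $*$-homomorphism of $C^*$-algebras, its image is automatically norm-closed, so this will force $\EeE = \FfF$. To verify the hypothesis of \THM{SW}, fix an irreducible $\tT \in \Ss$. If $\tT \in \Tt_{(0)}$, condition (1$_\tT$) holds trivially. Otherwise, equation \eqref{eqn:non0} furnishes some $h \in C^*_0(\Tt)$ with $h(\tT) = I_{d(\tT)}$; for any irreducible $\sS \in \Ss$ disjoint from $\tT$, item (A) of \COR{dist} supplies $g_0 \in C^*(\Tt)$ with $g_0(\tT) = I_{d(\tT)}$ and $g_0(\sS) = 0$, and then $g \df g_0 h$ lies in $C^*_0(\Tt)$ (because $C^*_0(\Tt)$ is an ideal of $C^*(\Tt)$), so $\rho(g)$ witnesses condition (2$_\tT$).

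\THM{SW} now identifies $\overline{\EeE}$ with the set of $u \in C^*_0(\Ss)$ satisfying ($*$). Every $u \in \FfF$ meets this requirement: at irreducibles lying in $\Tt_{(0)}$ both sides of ($*$) vanish, and at irreducibles $\xX \notin \Tt_{(0)}$ the prescribed value $u(\xX)$ can be matched by some $\rho(f)$ once more via \eqref{eqn:non0}. Combined with the trivial inclusion $\overline{\EeE} \subset \FfF$ (holding because $\FfF$ is closed), this yields $\EeE = \FfF$. The norm assertion follows automatically: the factorization of $\rho$ through $C^*_0(\Tt)/\ker\rho$ is an isometric $*$\hyp{}isomorphism onto $\FfF$, so $\|u\|$ equals $\inf\{\|v\|\dd\ \rho(v) = u\}$, and for any $M > \|u\|$ a representative with $\|v\| < M$ is automatic. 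I expect the main obstacle to be producing the separating map $g$ required in (2$_\tT$) inside $\EeE$ rather than inside the more accessible $C^*(\Tt)$: one has to marry the separation of \COR{dist} with the ideal property of $C^*_0(\Tt)$ to stay within $\EeE$.
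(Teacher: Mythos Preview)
Your proposal is correct and follows essentially the same route as the paper: define the restriction $*$-homomorphism, use that its image is automatically closed, and identify that image with the ideal $\FfF$ via \THM{SW}. The paper's proof is terser (it writes ``this conclusion simply follows from \THM{SW} (we skip the details)''), whereas you have carefully filled in the verification of conditions (1$_{\tT}$)/(2$_{\tT}$) and condition ($*$); your marriage of \COR{dist} with the ideal property of $C^*_0(\Tt)$ to land the separating map in $\EeE$ is exactly the intended mechanism, and your norm argument via the quotient isomorphism is equivalent to the paper's remark that $*$-homomorphisms map open unit balls onto open unit balls.
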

\begin{proof}
The necessity is clear (note that $\overline{\core}(\Ss) \subset \overline{\core}(\Tt)$). To prove
the sufficiency, we employ \THM{SW}. Define a $*$-homomorphism $\Phi\dd C^*_0(\Tt) \to C^*_0(\Ss)$
by $\Phi(f) \df f\bigr|_{\Ss}$. Since the images of $*$-homomorphisms between $C^*$-algebras are
always closed, it suffice to check that the closure of $\EeE \df \Phi(C^*_0(\Tt))$ in $C^*_0(\Ss)$
coincides with all maps $u \in C^*_0(\Ss)$ that vanish at each point of $\Ss \cap \Tt_{(0)}$. But
this conclusion simply follows from \THM{SW} (we skip the details). Finally, since each
$*$-homomorphism sends the open unit ball onto the open unit ball (of the range), the additional
claim of the theorem follows.
\end{proof}

\begin{rem}{strong}
To convince oneself of the strength of \THM{ext}, the reader may try to construct an extension
directly. The first difficulty with which one needs to face is to show that the restriction to $\Ss
\cap \overline{\core}(\Tt)$ of each map from $C^*_0(\Ss)$ that vanishes at each point of $\Ss \cap
\Tt_{(0)}$ vanishes at infinity. The author has no idea how to prove it directly.
\end{rem}

The results of this section prove that it is relevant to know the description of the vanishing
tower. For proper m-towers, this shall be done in Section~6 (see \PRO{T0} and \COR{T0}).\par
We conclude the section with the variation of the Stone-Weierstrass theorem announced earlier.

\begin{thm}[Stone-Weierstrass Theorem for ELC M-Towers]{SW!}
Let $\EeE$ be a $*$-subalgebra of $C^*_0(\Tt)$ for some elc m-tower $\Tt$. The uniform closure
of $\EeE$ in $C^*_0(\Tt)$ consists of all functions $u \in C^*_0(\Tt)$ such that
\begin{equation}\label{eqn:S-W}
(u(\xX),u(\yY)) \in \{(v(\xX),v(\yY))\dd\ v \in \EeE\}
\end{equation}
for all $\xX, \yY \in \core(\Tt) \setminus \Tt_{(0)}$.
\end{thm}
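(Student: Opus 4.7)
The inclusion of $\overline{\EeE}$ in the set of $u \in C^*_0(\Tt)$ satisfying \eqref{eqn:S-W} is immediate from the finite-dimensionality of the pair evaluation. Fix $\xX, \yY \in \core(\Tt) \setminus \Tt_{(0)}$ and consider the continuous $*$-homomorphism $\Xi\dd C^*_0(\Tt) \to \mmM_{d(\xX)} \oplus \mmM_{d(\yY)}$, $v \mapsto (v(\xX), v(\yY))$. Its image $\Xi(\EeE)$ is a $*$-subalgebra of a finite-dimensional $C^*$-algebra, hence closed; so if $v_n \in \EeE$ and $v_n \to u$ uniformly, then $(u(\xX), u(\yY)) = \lim_n \Xi(v_n) \in \Xi(\EeE)$.

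For the reverse inclusion, given $u \in C^*_0(\Tt)$ satisfying \eqref{eqn:S-W}, I would mimic the proof of \THM{SW} verbatim, with \eqref{eqn:S-W} playing the role of condition $(*')$ throughout. First, embed $C^*_0(\Tt)$ isometrically into $C_0(\Omega, \aaA)$ via the $*$-homomorphism $\Psi$ of the proof of \THM{SW} (where $\Omega = \overline{\core}(\Tt)$ and $\aaA = \prod_{n=1}^{\infty} \mmM_n$). Then invoke \THM{C0} with the countable Borel partition $\bbB = \{B(\nu)\}$ from \LEM{Borel} to reduce the problem to: for each $L \subseteq \Omega$ satisfying $(**)$, show $\Psi(u)\bigr|_L$ lies in the uniform closure of $\Psi(\EeE)\bigr|_L$. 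For such $L$, construct the compact set $K \subset \core(\Tt)$ of all irreducible $\tT$ with $\tT \preccurlyeq \sS$ for some $\sS \in L$, and the $*$-homomorphism $\Phi\dd C^*_0(\Tt) \to C(K, \mmM_R)$ with $R = N!$ for a uniform degree-bound $N$ on $K$, exactly as in the proof of \THM{SW}; by $\|\Phi(f)\| = \|f\|_L$ this further reduces the problem to showing $\Phi(u)\bigr|_K \in \overline{\LlL}$, where $\LlL \df \Phi(\EeE)\bigr|_K$.

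The hypothesis \eqref{eqn:S-W} then provides, for each pair $\xX, \yY \in K$, a $v \in \EeE$ with $(v(\xX), v(\yY)) = (u(\xX), u(\yY))$; consequently $\Phi(v)\bigr|_K \in \LlL$ matches $\Phi(u)\bigr|_K$ exactly at $\xX, \yY$, so $\Phi(u)\bigr|_K \in \Delta_2(\LlL)$. To conclude, I would invoke the more general Stone-Weierstrass theorem from \cite{pn2} (of which \THM{S-W} is only an explicit special case), which characterizes the uniform closure of an arbitrary $*$-subalgebra of $C(K, \mmM_R)$ by the $\Delta_2$ pair-approximability condition without requiring a dichotomy. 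Unwinding the reductions then yields $u \in \overline{\EeE}$. The main obstacle is precisely this final step: \THM{S-W} as stated cannot be applied directly, because its dichotomy (SW1)/(SW2) need not hold for an arbitrary $\EeE$ and the associated $\LlL = \Phi(\EeE)\bigr|_K$; one must rely on the stronger Stone-Weierstrass result from \cite{pn2} (or, alternatively, on the Longo--Popa theorem for CCR $C^*$-algebras, consistent with the fact, established earlier via \COR{irr}, that $C^*_0(\Tt)$ is CCR).
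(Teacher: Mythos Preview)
Your approach is genuinely different from the paper's, and the obstacle you flag at the end is real and not merely cosmetic. The paper does not try to mimic the proof of \THM{SW}. Instead, it lets $\FfF$ denote the set of all $u \in C^*_0(\Tt)$ satisfying \eqref{eqn:S-W}, notes that $\FfF$ is a $C^*$-subalgebra of $C^*_0(\Tt)$ (the pair-image $\{(v(\xX),v(\yY))\colon v\in\EeE\}$ is a $*$-subalgebra of a finite-dimensional algebra, hence closed), and then verifies directly that $\bar{\EeE}$ is a \emph{rich} subalgebra of $\FfF$ in Dixmier's sense. The point is that every nonzero irreducible representation $\pi$ of $\FfF$ extends to an irreducible representation of $C^*_0(\Tt)$, which by \COR{irr} is a point evaluation at some $\xX\in\core(\Tt)\setminus\Tt_{(0)}$; so $\pi(u)$ is a fixed $k\times k$ corner of $u(\xX)$. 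Condition \eqref{eqn:S-W} with $\yY=\xX$ then gives $\pi(\FfF)=\pi(\EeE)$, so $\pi\bigr|_{\bar{\EeE}}$ remains irreducible; and \eqref{eqn:S-W} with the relevant pair $\xX,\yY$ shows that inequivalent irreducibles of $\FfF$ stay inequivalent upon restriction. Since $\FfF$, as a subalgebra of the liminal algebra $C^*_0(\Tt)$ (\THM{irr}), is itself liminal, Dixmier's Proposition~11.1.6 yields $\bar{\EeE}=\FfF$ directly.

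Your route through \THM{C0} and the embedding $\Phi$ into $C(K,\mmM_R)$ is thus an unnecessary detour: it leaves you needing an unconditional ``$\overline{\LlL}=\Delta_2(\LlL)$'' for arbitrary $*$-subalgebras of $C(K,\mmM_R)$, and the cleanest way to supply that is precisely the Dixmier rich-subalgebra criterion for liminal algebras---but it is both simpler and more natural to apply that argument \emph{once}, at the level of $\bar{\EeE}\subset\FfF\subset C^*_0(\Tt)$, rather than after the reduction. The scaffolding that made the proof of \THM{SW} work (the dichotomy $(1_{\tT})/(2_{\tT})$ feeding into (SW1)/(SW2) of \THM{S-W}) has no analogue for a general $\EeE$, and reproducing that proof's structure buys nothing here.
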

\begin{proof}
Denote by $\FfF$ the family of all functions $u \in C^*_0(\Tt)$ that satisfy \eqref{eqn:S-W} for
all $\xX, \yY \in \core(\Tt) \setminus \Tt_{(0)}$. It is obvious that $\FfF$ is a $C^*$-algebra that
contains the closure $\bar{\EeE}$ of $\EeE$. To show that $\FfF = \bar{\EeE}$, we shall check that
$\bar{\EeE}$ is a \textit{rich} subalgebra of $\FfF$ in the sense of Dixmier (see Definition~11.1.1
in \cite{di2}). That is, we have to verify the following two conditions:
\begin{enumerate}[(r1)]
\item for every irreducible representation $\pi$ of $\FfF$, $\pi\bigr|_{\bar{\EeE}}$ is irreducible
 as well;
\item if $\pi$ and $\pi'$ are two unitarily inequivalent irreducible representations of $\FfF$, then
 $\pi\bigr|_{\bar{\EeE}}$ and $\pi'\bigr|_{\bar{\EeE}}$ are also inequivalent.
\end{enumerate}
To this end, let $\pi$ be a nonzero irreducible representation of $\FfF$. Since $\pi$ extends
to an irreducible representation of $C^*_{\Tt}$, we infer from \COR{irr} that there are $\xX \in
\core(\Tt) \setminus \Tt_{(0)}$ and a positive integer $k \leqsl d(\xX)$ such that for any $u \in
\FfF$, the top left $k \times k$ submatrix of $u(\xX)$ coincides with $\pi(u)$. Then we infer from
\eqref{eqn:S-W} that $\{u(\xX)\dd\ u \in \FfF\} = \{v(\xX)\dd\ v \in \EeE\}$ and therefore
$\pi\bigr|_{\bar{\EeE}}$ is a nonzero irreducible representation. So, (r1) holds, and it suffices
to check (r2) only for nonzero $\pi$ and $\pi'$. But, if $\pi \not\equiv 0$ and $\pi' \not\equiv 0$
are as specified in (r2), then there is $u_0 \in \FfF$ for which $\pi(u_0) \neq 0$ and $\pi'(u_0) =
0$. Moreover, the previous argument shows that there are two elements $\xX,\yY \in \core(\Tt)
\setminus \Tt_{(0)}$ and two positive integers $k \leqsl d(\xX)$ and $\ell \leqsl d(\yY)$ such that
for any $u \in \FfF$, $\pi(u)$ and $\pi'(u)$ coincide with, respectively, the top left $k \times k$
submatrix of $u(\xX)$ and the top left $\ell \times \ell$ submatrix of $u(\yY)$. Now taking into
account \eqref{eqn:S-W} for these $\xX$ and $\yY$, we see that there is $v_0 \in \EeE$ with
$\pi(v_0) = \pi(u_0)$ and $\pi'(v_0) = \pi'(u_0)$. These two connections imply that the restrictions
of $\pi$ and $\pi'$ to $\bar{\EeE}$ are unitarily inequivalent, which proves (r2).\par
Finally, $C^*_0(\Tt)$ is liminal, by \THM{irr}, and thus also $\FfF$ is liminal,
as a $C^*$-subalgebra of $C^*_0(\Tt)$. So, $\bar{\EeE}$ is a rich subalgebra of the liminal
$C^*$-algebra $\FfF$, and hence $\bar{\EeE} = \FfF$, thanks to Proposition~11.1.6 in \cite{di2}.
\end{proof}

\section{Proper m-towers of finite height}

In this section we prove \THM[s]{subh1} and \THM[]{subh0} and describe $*$-homomorphisms between
subhomogeneous $C^*$-algebras by means of morphisms between corresponding to them m-towers.
The reader interested in subhomogeneous $C^*$-algebras is referred to Subsection~IV.1.4
of \cite{bla}.\par
The first result of this section summarizes (in a special case) some of results of the previous
section.

\begin{pro}{hgt}
Let $\Tt$ be a proper tower with $n \df \hgt(\Tt) < \infty$ and let $\aaA = C^*(\Tt)$.
\begin{enumerate}[\upshape(A)]
\item $\aaA$ is a unital $n$-subhomogeneous $C^*$-algebra \textup{(}here ``\,$0$-subhomogeneous''
 means ``$\aaA = \{0\}$''\textup{)}.
\item The assigment $\Ss \mapsto \JjJ_{\Ss}$ establishes a one-to-one correspondence between all
 subtowers of $\Tt$ and all ideals in $\aaA$.
\item Every unital finite-dimensional representation of $\aaA$ has the form $\pi_{\sS}\dd \aaA \ni f
 \mapsto f(\sS) \in \mmM_{d(\sS)}$ where $\sS \in \Tt$. The assignment $\tT \mapsto \pi_{\tT}$ is
 one-to-one and $\pi_{\tT}$ is irreducible iff $\tT \in \core(\Tt)$.
\end{enumerate}
\end{pro}
\begin{proof}
As noted in the previous section, $\Tt$ is elc and $C^*(\Tt) = C^*_0(\Tt)$. Moreover, $\Tt_{(0)}$ is
empty (according to \COR{dist}). Consequently, (B) follows from \COR{ideal}, (C) from
\COR[s]{nondeg} and \COR[]{irr}, whereas (A) is an immediate consequence of (C) (and the definition
of $n$).
\end{proof}

We recall that for any (unital) $C^*$-algebra $\aaA$, the concrete tower $\Zz(\aaA)$ (resp.\
$\Xx(\aaA)$) was defined in Introduction and it consists of all (resp.\ all unital)
finite-dimensional representations of $\aaA$.

\begin{lem}{concr}
For any \textup{(}unital\textup{)} $C^*$-algebra $\aaA$, $\Zz(\aaA)$ \textup{(}resp.\
$\Xx(\aaA)$\textup{)} is a proper m-tower.
\end{lem}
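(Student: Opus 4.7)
The plan is to exhibit $\Xx(\aaA)$ (resp.\ $\Zz(\aaA)$) as a standard tower up to isomorphism. Once this is achieved, properness is automatic---every standard tower $\ttT$ has $\ttT_n$ sitting as a closed subset of the product $\{X\in\mmM_n\dd \|X\|\leqsl 1\}^{\Lambda}$, which is compact by the Tychonoff theorem---so property $(\sigma)$ holds by taking $K_n \df d^{-1}(\{1,\ldots,n\})$, and \PRO{std-m-tower} delivers axioms (mT1)--(mT4).

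I would take $\Lambda$ to be the closed unit ball of $\aaA$ and define $\Phi\dd \Xx(\aaA)\to \mmM[\Lambda]$ by $\Phi(\pi) \df (\pi(a))_{a\in\Lambda}$; the contraction estimate $\|\pi(a)\|\leqsl\|a\|\leqsl 1$ keeps the image inside the bounded tuples required by (ST1). In a single pass one checks that $\Phi$ preserves the degree and commutes with $\oplus$ and with the unitary action (conditions (M1)--(M3)), is injective (any $a\in\aaA$ rescales into $\Lambda$, so a representation is determined by its restriction), and is a homeomorphism onto its image, since both source and target carry the initial topology with respect to the evaluations at points of $\Lambda$.

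The main step is to show $\Phi(\Xx(\aaA))$ is a standard tower. Condition (ST1) has already been noted; (ST2) is the stability of $\Xx(\aaA)$ under the unitary action; and (ST3) follows from the observation that a block-diagonal compression of a $*$-homomorphism is a $*$-homomorphism in each coordinate. The substantive condition is (ST0). Here I would characterize the image as the set of $(X_a)_{a\in\Lambda}\in\mmM_n^{\Lambda}$ satisfying the pointwise identities $X_{\alpha a+\beta b}=\alpha X_a+\beta X_b$ (whenever $\alpha a+\beta b\in\Lambda$), $X_{ab}=X_a X_b$, and $X_{a^*}=X_a^*$, augmented by $X_1=I_n$ in the $\Xx(\aaA)$-case and omitted in the $\Zz(\aaA)$-case (so that non-unital and zero representations are included). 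Each identity is a closed condition in the pointwise convergence topology, and any such tuple extends uniquely to a $*$-homomorphism $\aaA\to\mmM_n$ via $\pi(a)\df \|a\|\cdot X_{a/\|a\|}$ for $a\neq 0$, so the image is precisely this closed set.

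Since $\Phi$ is a tower isomorphism onto a standard tower, property $(\sigma)$ and axioms (mT1)--(mT4) transfer directly to $\Xx(\aaA)$ and to $\Zz(\aaA)$, which is exactly what is required. I expect the only real bookkeeping issue to arise in verifying (ST0): the linearity equation must be phrased only on pairs $(a,b)$ with $\alpha a+\beta b\in\Lambda$ so that both sides lie in $\mmM_n^{\Lambda}$, and passing from this partial data back to a representation of the whole of $\aaA$ requires the brief extension-by-homogeneity argument indicated above.
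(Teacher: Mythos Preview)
Your proposal is correct and follows essentially the same route as the paper: both take $\Lambda$ to be the closed unit ball of $\aaA$, send $\pi$ to $(\pi(a))_{a\in\Lambda}\in\mmM[\Lambda]$, verify that this is a one-to-one morphism and a homeomorphism onto its image, and then check (ST0)--(ST3) for the image. The only difference is expository: where the paper simply invokes the compactness of the space of (unital) $n$-dimensional representations in the pointwise topology to get (ST0), you spell this out by characterizing the image via closed algebraic relations and recovering the representation by homogeneity---this is exactly the standard proof of that compactness, so the arguments coincide.
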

\begin{proof}
Denote by $\Lambda$ the closed unit ball of $\aaA$. Put $\Tt \df \Zz(\aaA)$ (resp.\ $\Tt \df
\Xx(\aaA)$). To explain that $\Tt$ is a proper m-tower, it suffices to show that $\Tt$ is isomorphic
to a standard m-tower. Consider a function $\Psi\dd \Tt \ni \pi \mapsto \pi\bigr|_{\Lambda} \in
\mmM[\Lambda]$ and observe that $\Psi$ is one-to-one and that $d(\Psi(\pi)) = d(\pi)$, $\Psi(U .
\pi) = U . \Psi(\pi)$ and $\Psi(\pi \oplus \pi') = \Psi(\pi) \oplus \Psi(\pi')$ for any $\pi, \pi'
\in \Tt$ and $U \in \uuU_{d(\pi)}$. What is more, since the space of all (resp.\ all unital)
$n$-dimensional representations of $\aaA$ is compact in the pointwise convergence topology, we see
that condition (ST0) for $\ttT \df \Psi(\Tt)$ is fulfilled. Further, (ST1) follows from
the definition of $\Lambda$ (and the fact that each representation has norm not greater than $1$),
whereas (ST2) and (ST3) are straightforward. So, $\ttT$ is a standard tower, and $\Psi$ is
a homeomorphism (which follows from the very definitions of the topologies on $\Tt$ and $\ttT$).
Thus, $\Tt$ is a proper m-tower.
\end{proof}

\begin{proof}[Proof of \THM{subh1}]
It follows from the previous lemma that $\Tt \df \Xx(\aaA)$ is a proper m-tower. Moreover,
a finite-dimensional representation $\pi$ belongs to the core of $\Tt$ \iaoi{} $\pi$ is
an irreducible representation, which implies that $\hgt(\Tt) < \infty$.\par
To show the main part of the theorem, namely, that $J_{\aaA}$ is surjective, it suffices to employ
\COR{dense} (recall that $\Tt$ is elc and $C^*(\Tt) = C^*_0(\Tt)$) for $\EeE \df J_{\aaA}(\Tt)$.
Conditions (d1)--(d2) transform into:
\begin{enumerate}[(d1')]
\item each unital irreducible representation $\pi$ of $\aaA$ is surjective;
\item if $\pi_1$ and $\pi_2$ are two inequivalent unital irreducible representations of $\aaA$, then
 $\pi_1(a) = I$ and $\pi_2(a) = 0$ for some $a \in \aaA$ (where $I$ denotes the unit matrix
 of a respective degree).
\end{enumerate}
But both the conditions (d1') and (d2') are already known (they are special cases
of Proposition~4.2.5 in \cite{di2}) and thus the proof of this part is finished.\par
Now assume $\Tt$ is an arbitrary proper m-tower of finite height. According to \PRO{hgt}, we only
need to prove that the assignment $\tT \mapsto \pi_{\tT}$ defines a homeomorphism between $\Tt$ and
$\Xx(C^*(\Tt))$. But this simply follows from the fact that each of $\Tt_n$ is compact (and this
assignment is bijective).
\end{proof}

\begin{proof}[Proof of \THM{subh0}]
This result is actually an immediate consequence of \THM{subh1} and may be shown as follows. Let
$\aaA$ be an arbitrary subhomogeneous $C^*$-algebra (with or without unit). Let $\aaA_1$ denote
the unitization of $\aaA$ (which may be constructed even for unital $\aaA$). Notice that $\aaA_1$ is
a subhomogeneous $C^*$-algebra in which $\aaA$ is an ideal such that the quotient space $\aaA_1 /
\aaA$ is one-dimensional. Moreover, there is a natural one-to-one correspondence between all
finite-dimensional representations of $\aaA$ and all unital finite-dimensional representations
of $\aaA_1$. This means that $\Zz(\aaA)$ may naturally be identified with $\Xx(\aaA_1)$. Under such
an identification, the restriction of the $*$-isomorphism $\JjJ_{\aaA_1}\dd \aaA_1 \to
C^*(\Zz(\aaA))$ to $\aaA$ coincides with $\JjJ_{\aaA}$ and sends $\aaA$ onto a subspace $\EeE$
of $C^*(\Zz(\aaA),\theta_{\aaA})$ such that the quotient space $C^*(\Zz(\aaA)) / \EeE$ is
one-dimensional. This implies that $\EeE = C^*(\Zz(\aaA),\theta_{\aaA})$ and we are done.
(The details are left to the reader.)\par
The second part of the theorem may be proved in the same manner (using the fact that $C^*(\Zz)$ is
the unitization of $C^*(\Zz,\theta)$) and is left to the reader.
\end{proof}

Our next aim is to characterize $*$-homomorphisms between subhomogeneous $C^*$-algebras. As before,
we start from unital algebras.

\begin{thm}{homo1}
Let $\Tt$ and $\Ss$ be two proper m-towers of finite height. For every unital $*$-homomorphism
$\Phi\dd C^*(\Tt) \to C^*(\Ss)$ there exists a unique morphism $\tau\dd \Ss \to \Tt$ such that
$\Phi = \Phi_{\tau}$ where
\begin{equation}\label{eqn:Phi-tau}
\Phi_{\tau}\dd C^*(\Tt) \ni f \mapsto f \circ \tau \in C^*(\Ss).
\end{equation}
Conversely, if $\tau\dd \Ss \to \Tt$ is an arbitrary morphism, then \eqref{eqn:Phi-tau} correctly
defines a unital $*$-homomorphism $\Phi_{\tau}$.
\end{thm}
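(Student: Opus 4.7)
The plan is to build $\tau$ from $\Phi$ pointwise, using \THM{subh1} to identify each $\tT \in \Tt$ with the unital finite-dimensional representation $\pi_\tT$ of $C^*(\Tt)$. The verification is mostly bookkeeping; the only genuine technicality is continuity, which will follow from properness together with the separation statement in item (C) of \COR{dist}.

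For the converse direction, given a morphism $\tau\dd \Ss \to \Tt$ and $f \in C^*(\Tt)$, I would check the five defining axioms of $C^*(\Ss)$ for $f \circ \tau$: (M1) forces $(f\circ\tau)(\sS) \in \mmM_{d(\sS)}$ and makes $\tau$ restrict to a continuous map $\Ss_n \to \Tt_n$, so $(f\circ\tau)\bigr|_{\Ss_n}$ is continuous; axioms (M2) and (M3) translate unitary- and sum-compatibility of $f$ into the corresponding properties of $f\circ\tau$; and $\|f\circ\tau\| \leqsl \|f\|$ gives boundedness. Pointwise operations then make $\Phi_\tau$ a $*$-homomorphism, and it is unital because the pullback of $\xX \mapsto I_{d(\xX)}$ is $\sS \mapsto I_{d(\sS)}$.

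For the main direction, given $\Phi$, I would set $\tau(\sS)$ to be the unique point of $\Tt$ produced by \THM{subh1} from the unital finite-dimensional representation $\pi_\sS \circ \Phi$ of $C^*(\Tt)$ (here $\pi_\sS\dd C^*(\Ss) \to \mmM_{d(\sS)}$ is the evaluation representation of \COR{repr}). Then by construction $\pi_\sS \circ \Phi = \pi_{\tau(\sS)}$, i.e., $(\Phi(f))(\sS) = f(\tau(\sS))$, which gives $\Phi = \Phi_\tau$; uniqueness of $\tau$ follows from \COR{dist}(C). The identities $\pi_{U.\sS} = U.\pi_\sS$ and $\pi_{\sS_1 \oplus \sS_2}(g) = g(\sS_1) \oplus g(\sS_2)$, combined with the uniqueness clause of \THM{subh1}, immediately yield the morphism axioms (M1)--(M3) for $\tau$.

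The one step I expect to be the main obstacle is continuity of $\tau$, though in this setting it becomes short. Each $\Ss_n$ is clopen in $\Ss$ (since $d$ is a map), so it suffices to prove each $\tau\bigr|_{\Ss_n}\dd \Ss_n \to \Tt_n$ is continuous. Given a net $\sS_\alpha \to \sS$ in $\Ss_n$, for every $f \in C^*(\Tt)$ the function $\Phi(f) \in C^*(\Ss)$ yields $f(\tau(\sS_\alpha)) = (\Phi(f))(\sS_\alpha) \to (\Phi(f))(\sS) = f(\tau(\sS))$. Since $\Tt_n$ is compact (by properness), any subnet of $(\tau(\sS_\alpha))$ has a further subnet converging to some $\tT' \in \Tt_n$ with $f(\tT') = f(\tau(\sS))$ for all $f \in C^*(\Tt)$; \COR{dist}(C) then forces $\tT' = \tau(\sS)$, so the entire net converges to $\tau(\sS)$, as required.
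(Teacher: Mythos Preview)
Your proof is correct and follows essentially the same route as the paper: define $\tau(\sS)$ via the unique point of $\Tt$ representing $\pi_\sS \circ \Phi$, check (M1)--(M3) from uniqueness, and verify continuity. The only cosmetic difference is that the paper handles continuity in one line by invoking the isomorphism $\Tt \cong \Xx(C^*(\Tt))$ from \THM{subh1} (since the latter carries the pointwise convergence topology, $f(\tau(\sS_\alpha)) \to f(\tau(\sS))$ for all $f$ is precisely convergence in $\Tt$), whereas you unpack this via the compactness-plus-separation subnet argument; these are the same idea.
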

\begin{proof}
For any $\sS \in \Ss$, the function $C^*(\Tt) \ni f \mapsto (\Phi(f))(\sS) \in \mmM_{d(\sS)}$ is
a unital finite-dimensional $*$-representation of $\aaA$. So, it follows from \COR{nondeg} that
there is a unique point $(\tau(\sS) \df\,)\ \tT \in \Tt$ such that $(\Phi(f))(\sS) = f(\tT)$ for any
$f \in C^*(\Tt)$. In this way one obtains a function $\tau\dd \Ss \to \Tt$ such that $\Phi(f) = f
\circ \tau$. Moreover, it is immediate that $\tau$ is unique and satisfies all axioms of a morphism,
apart from continuity, which in turn follows from the fact that $\Tt$ is naturally isomorphic
(as described in \THM{subh1}) to $\Xx(C^*(\Tt))$. The second claim is much simpler and is left
as an exercise.
\end{proof}

Under the notation introduced in \eqref{eqn:Phi-tau}, it is obviously seen that $\Phi_{\tau} \circ
\Phi_{\tau'} = \Phi_{\tau' \circ \tau}$ for any two morphisms $\tau\dd \Tt \to \Tt'$ and $\tau'\dd
\Tt' \to \Tt''$ (and $\Phi_{\id} = \id$ where $\id$ stands for the identity map on an appropriate
space). In particular, $*$-isomorphisms between unital subhomogeneous $C^*$-algebras correspond
to isomorphisms between m-towers.

\begin{cor}{isom}
Two unital subhomogeneous $C^*$-algebras are $*$-isomorphic iff their m-towers of unital
finite-dimensional representations are isomorphic.
\end{cor}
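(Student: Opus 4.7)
The plan is to deduce this immediately from \THM{subh1} combined with \THM{homo1} and the functoriality remark recorded just after \THM{homo1} (namely $\Phi_{\tau} \circ \Phi_{\tau'} = \Phi_{\tau' \circ \tau}$ and $\Phi_{\id} = \id$). Throughout, by \THM{subh1}, for any unital subhomogeneous $C^*$-algebra $\aaA$ we may identify $\aaA$ with $C^*(\Xx(\aaA))$ via the $*$-isomorphism $J_{\aaA}$, so that statements about $*$-homomorphisms between $\aaA$ and $\bbB$ translate into statements about unital $*$-homomorphisms between $C^*(\Xx(\aaA))$ and $C^*(\Xx(\bbB))$.

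For the ``only if'' direction, suppose $\Phi\dd \aaA \to \bbB$ is a $*$-isomorphism between two unital subhomogeneous $C^*$-algebras. Under the above identifications we may view $\Phi$ as a unital $*$-isomorphism $C^*(\Xx(\aaA)) \to C^*(\Xx(\bbB))$, and $\Phi^{-1}$ as a unital $*$-isomorphism going the other way. By \THM{homo1} applied to both $\Phi$ and $\Phi^{-1}$, there exist unique morphisms $\tau\dd \Xx(\bbB) \to \Xx(\aaA)$ and $\sigma\dd \Xx(\aaA) \to \Xx(\bbB)$ with $\Phi = \Phi_{\tau}$ and $\Phi^{-1} = \Phi_{\sigma}$. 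Then
\begin{equation*}
\Phi_{\sigma \circ \tau} = \Phi_{\tau} \circ \Phi_{\sigma} = \Phi \circ \Phi^{-1} = \id = \Phi_{\id},
\end{equation*}
and similarly $\Phi_{\tau \circ \sigma} = \Phi_{\id}$. Uniqueness in \THM{homo1} forces $\sigma \circ \tau = \id$ and $\tau \circ \sigma = \id$, so $\tau$ is an isomorphism of m-towers.

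For the ``if'' direction, suppose $\tau\dd \Xx(\bbB) \to \Xx(\aaA)$ is an isomorphism of m-towers with inverse $\tau^{-1}$. By the second part of \THM{homo1}, $\Phi_{\tau}\dd C^*(\Xx(\aaA)) \to C^*(\Xx(\bbB))$ and $\Phi_{\tau^{-1}}\dd C^*(\Xx(\bbB)) \to C^*(\Xx(\aaA))$ are unital $*$-homomorphisms, and the functoriality yields $\Phi_{\tau} \circ \Phi_{\tau^{-1}} = \Phi_{\id} = \id$ and $\Phi_{\tau^{-1}} \circ \Phi_{\tau} = \id$, so $\Phi_{\tau}$ is a $*$-isomorphism. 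Translating back via the $*$-isomorphisms $J_{\aaA}$ and $J_{\bbB}$ from \THM{subh1}, we obtain a $*$-isomorphism $\aaA \to \bbB$, completing the proof.

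There is essentially no obstacle: the entire content lies in \THM{subh1} (to pass from abstract subhomogeneous algebras to $C^*(\Xx(\cdot))$) and \THM{homo1} (which supplies the contravariant bijection between unital $*$-homomorphisms and tower morphisms); the corollary is then a purely formal consequence of uniqueness plus the elementary composition identity $\Phi_{\tau} \circ \Phi_{\tau'} = \Phi_{\tau' \circ \tau}$.
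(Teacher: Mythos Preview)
Your proof is correct and follows exactly the approach the paper indicates: the paper does not give a separate proof of this corollary but simply notes, in the paragraph immediately preceding it, that the identities $\Phi_{\tau} \circ \Phi_{\tau'} = \Phi_{\tau' \circ \tau}$ and $\Phi_{\id} = \id$ make $*$-isomorphisms correspond to tower isomorphisms. Your write-up just spells this out explicitly via \THM{subh1} and \THM{homo1}, which is precisely the intended argument.
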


\begin{cor}{homo0}
Let $(\Tt,\theta)$ and $(\Ss,\kappa)$ be two proper m-towers of finite height. For every
$*$-homomorphism $\Phi\dd C^*(\Tt,\theta) \to C^*(\Ss,\kappa)$ there exists a unique morphism
$\tau\dd (\Ss,\kappa) \to (\Tt,\theta)$ such that $\Phi = \Phi_{\tau}$ where
\begin{equation}\label{eqn:Phi-tau0}
\Phi_{\tau}\dd C^*(\Tt,\theta) \ni f \mapsto f \circ \tau \in C^*(\Ss,\kappa).
\end{equation}
Conversely, if $\tau\dd (\Ss,\kappa) \to (\Tt,\theta)$ is an arbitrary morphism, then
\eqref{eqn:Phi-tau0} correctly defines a $*$-homomorphism $\Phi_{\tau}$.
\end{cor}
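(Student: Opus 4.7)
The plan is to reduce the statement to \THM{homo1} via unitization, exactly in the spirit of the proof of \THM{subh0}. Since $C^*(\Tt,\theta)$ is an ideal of codimension one in $C^*(\Tt)$ (with the complement spanned by $j_{\Tt}$, because $j_{\Tt}(\theta) = I_1 \neq 0$), $C^*(\Tt)$ is the $C^*$-algebraic unitization of $C^*(\Tt,\theta)$, and similarly for $\Ss$. Hence $\Phi$ admits a unique extension to a unital $*$-homomorphism $\tilde\Phi\dd C^*(\Tt) \to C^*(\Ss)$ given by $\tilde\Phi(f + \lambda j_{\Tt}) \df \Phi(f) + \lambda j_{\Ss}$.

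Applying \THM{homo1} to $\tilde\Phi$ produces a unique morphism $\tau\dd \Ss \to \Tt$ with $\tilde\Phi(g) = g \circ \tau$ for every $g \in C^*(\Tt)$. The main verification is that $\tau(\kappa) = \theta$. Since morphisms preserve degree, $d(\tau(\kappa)) = 1$. Suppose for contradiction $\tau(\kappa) \neq \theta$. By \COR{dist}(C) there is $g \in C^*(\Tt)$ with $g(\tau(\kappa)) \neq g(\theta)$; then $f \df g - g(\theta) j_{\Tt} \in C^*(\Tt,\theta)$ satisfies
\begin{equation*}
\Phi(f)(\kappa) = \tilde\Phi(f)(\kappa) = f(\tau(\kappa)) = g(\tau(\kappa)) - g(\theta) \neq 0,
\end{equation*}
contradicting $\Phi(f) \in C^*(\Ss,\kappa)$. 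Thus $\tau$ is a morphism of pointed towers.

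Uniqueness of $\tau$ as a pointed morphism is inherited from \THM{homo1}: any $\tau'\dd (\Ss,\kappa) \to (\Tt,\theta)$ with $\Phi_{\tau'} = \Phi$ would, by the formula $g \mapsto g \circ \tau'$, extend $\Phi$ to a unital $*$-homomorphism on $C^*(\Tt)$; since the unital extension of $\Phi$ is unique, this equals $\tilde\Phi$, and \THM{homo1} then forces $\tau' = \tau$. For the converse, given a morphism $\tau\dd (\Ss,\kappa) \to (\Tt,\theta)$, \THM{homo1} already makes $g \mapsto g \circ \tau$ a unital $*$-homomorphism $C^*(\Tt) \to C^*(\Ss)$; the hypothesis $\tau(\kappa) = \theta$ guarantees that its restriction to $C^*(\Tt,\theta)$ lands in $C^*(\Ss,\kappa)$, yielding $\Phi_{\tau}$ of \eqref{eqn:Phi-tau0}.

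The only genuine obstacle is the step $\tau(\kappa) = \theta$; everything else is bookkeeping about unitizations and a direct application of \THM{homo1}. The separation tool \COR{dist}(C) dispatches this step cleanly, so no deeper analysis of the tower structure is needed beyond what was already developed for the unital case.
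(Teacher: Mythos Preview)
Your proof is correct and follows exactly the route the paper takes: extend $\Phi$ uniquely to a unital $*$-homomorphism $C^*(\Tt)\to C^*(\Ss)$ and invoke \THM{homo1}. The paper's own proof is a single sentence stating this strategy and leaves the verification that $\tau(\kappa)=\theta$ implicit; your use of \COR{dist}(C) to establish this point is a clean way to fill that gap.
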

\begin{proof}
Use the fact that each $*$-homomorphism between $C^*(\Tt,\theta)$ and $C^*(\Ss,\kappa)$ extends
uniquely to a unital $*$-homomorphism between $C^*(\Tt)$ and $C^*(\Ss)$ and then apply \THM{homo1}.
\end{proof}

\begin{pro}{sur}
Let $\tau\dd \Ss \to \Tt$ be a morphism between two proper m-towers of finite height.
The $*$-homomorphism $\Phi_{\tau}$ \textup{(}given by \eqref{eqn:Phi-tau}\textup{)} is surjective
iff $\tau$ is one-to-one. If this happens, $\tau(\Ss)$ is a subtower of $\Tt$ and $\tau$ is
an isomorphism from $\Ss$ onto $\tau(\Ss)$.
\end{pro}
\begin{proof}
If $\Phi_{\tau}$ is surjective, we infer from \COR{dist} that $\tau$ is one-to-one (because maps
from $C^*(\Ss)$ separate points of $\Ss$). Conversely, if $\tau$ is one-to-one, then
$\stab(\tau(\sS)) = \stab(\sS)$ for any $\sS \in \Ss$. Consequently,
\begin{equation}\label{eqn:cor-cor}
\tau(\core(\Ss)) \subset \core(\Tt)
\end{equation}
(by \LEM{core}) and $\tau$ is a closed embedding (because $\Ss$ is proper). So, to convince oneself
that $\Ss' \df \tau(\Ss)$ is a subtower of $\Tt$, it suffices to show that if $\tau(\sS) = \aA
\oplus \bB$ for some $\sS \in \Ss$ and $\aA, \bB \in \Tt$, then both $\aA$ and $\bB$ belong
to the image of $\tau$. This can be shown as follows (cf.\ the proof of \PRO{char}). We infer from
\PRO{pd} that there are $\sS_1,\ldots,\sS_n \in \core(\Ss)$ such that $\sS \equiv \bigoplus_{j=1}^n
\sS_j$. Then also $\aA \oplus \bB = \tau(\sS) \equiv \bigoplus_{j=1}^n \tau(\sS_j)$ and $\tau(\sS_j)
\in \core(\Tt)$ (by \eqref{eqn:cor-cor}). So, it follows from \COR{pd} that there are nonempty
subsets $J_1$ and $J_2$ of $\{1,\ldots,n\}$ such that $\aA \equiv \bigoplus_{j \in J_1} \tau(\sS_j)$
and $\bB \equiv \bigoplus_{j \in J_2} \tau(\sS_j)$, from which one deduces that $\aA, \bB \in
\tau(\Ss)$.\par
Further, since $\tau$ is an embedding, we see that $\tau$ is an isomorphism of $\Ss$ onto $\Ss'$.
So, if $u \in C^*(\Ss)$, then $u \circ \tau^{-1} \in C^*(\Ss')$. Now \THM{extend} yields that there
is $v \in C^*(\Tt)$ which extends $u \circ \tau^{-1}$. Then $\Phi_{\tau}(v) = u$ and hence
$\Phi_{\tau}$ is surjective.
\end{proof}

\begin{rem}{inj}
Under the notation of \PRO{sur}, one may also (simply) check that $\Phi_{\tau}$ is one-to-one iff
$\Tt$ coincides with the smallest subtower of $\Tt$ that contains $\tau(\Ss)$. However,
if $\Phi_{\tau}$ is one-to-one but not surjective, then $\tau(\Ss)$ may \textit{not} be a subtower
of $\Tt$, which causes problems in studying ASH algebras by means of m-towers.
\end{rem}

\begin{rem}{homo}
If $\aaA$ is a unital $n$-homogeneous $C^*$-algebra, then $\Hh \df \core(\Xx(\aaA))$ is compact and
coincides with $\Xx_n(\aaA)$. It is then easy to show that the assignment $f \mapsto f\bigr|_{\Hh}$
defines a $*$-isomorphism from $C^*(\Xx(\aaA))$ onto $C^*(\Hh,.) \df \{f \in C(\Hh,\mmM_n)\dd\
f(U . \hH) = U . f(\hH)\ (U \in \uuU_n,\ \hH \in \Hh)\}$ (to establish surjectivity, apply
\LEM{semi}). Similarly, if $\aaA$ is nonunital $n$-homogeneous $C^*$-algebra, then $\Hh \df
\core(\Zz(\aaA))$ is locally compact and $\Zz_n(\aaA) = \Hh \cup \{n \odot \theta_{\aaA}\}$
($\theta_{\aaA}$ denotes the zero one-dimensional representation of $\aaA$). In that case
the assignment $f \mapsto f\bigr|_{\Hh}$ defines a $*$-isomorphism from
$C^*(\Zz(\aaA),\theta_{\aaA})$ onto $C^*(\Hh,.)$ where $C^*(\Hh,.)$ is the set of all maps $f\dd \Hh
\to \mmM_n$ that vanish at infinity and satisfy $f(U . \hH) = U . \hH$ for all $U \in \uuU_n$ and
$\hH \in \Hh$. In this way one obtains less complicated (and more transparent) models for all
homogeneous $C^*$-algebras, which was first shown in \cite{fe3} and \cite{t-t} (see also
\cite{pn2}). The main disadvantage of these models is that there is no `bridge' between
$n$-homogeneous and $m$-homogeneous $C^*$-algebras for different $n$ and $m$ (and hence these models
are hardly applicable in investigations of approximately homogeneous $C^*$-algebras).
\end{rem}

And now the result announced in \REM{superfl}.

\begin{thm}{inv}
For a unital $C^*$-algebra $\aaA$ \tfcae
\begin{enumerate}[\upshape(i)]
\item there exists an m-tower $\Tt$ such that $C^*(\Tt)$ is $*$-isomorphic to $\aaA$;
\item $\aaA$ is $*$-isomorphic to the inverse limit of subhomogeneous $C^*$-algebras.
\end{enumerate}
\end{thm}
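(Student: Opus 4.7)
For (ii) $\Rightarrow$ (i), suppose $\aaA \cong \varprojlim_n \aaA_n$ with each $\aaA_n$ unital subhomogeneous and connecting $*$-homomorphisms $\phi_n \dd \aaA_{n+1} \to \aaA_n$; we may assume the index set is countable and that each $\phi_n$ has dense image (replacing $\aaA_n$ by the closure of the image of $\aaA$ if necessary). By \THM{subh1}, $\aaA_n \cong C^*(\Tt_n)$ for a proper m-tower $\Tt_n$ of finite height, and \THM{homo1} presents each $\phi_n$ as the morphism dual to some $\sigma_n \dd \Tt_n \to \Tt_{n+1}$. The canonical projections $\rho_n \dd \aaA \to \aaA_n$ yield compatible morphisms $\rho_n^* \dd \Tt_n \to \Xx(\aaA)$ satisfying $\rho_n^* = \rho_{n+1}^* \circ \sigma_n$, so that the set $\Tt \df \bigcup_n \rho_n^*(\Tt_n) \subset \Xx(\aaA)$ inherits tower operations from each $\Tt_n$. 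I equip $\Tt$ with the final topology induced by the family $(\rho_n^* \dd \Tt_n \to \Tt)$, with $K_n \df \rho_n^*(\Tt_n \cap d^{-1}(\{1,\ldots,n\}))$---compact, since $\Tt_n$ is proper---supplying the ingredients of property $(\sigma)$. Hausdorffness of $\Tt$ follows from its injection into $\Xx(\aaA)$ (using density of $\rho_n(\aaA)$ in $\aaA_n$), and axioms (T1)--(T6) and (mT1)--(mT4) are inherited pointwise from each $\rho_n^*(\Tt_n)$. The isomorphism $C^*(\Tt) \cong \aaA$ is then given by $f \mapsto (f \circ \rho_n^*)_n \in \varprojlim C^*(\Tt_n) \cong \aaA$, with surjectivity coming from the universal property of the final topology: any compatible sequence $(g_n) \in \varprojlim C^*(\Tt_n)$ glues to a single tower-compatible continuous function on $\Tt$.

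For (i) $\Rightarrow$ (ii), let $\aaA = C^*(\Tt)$ and write $\Tt = \bigcup_n K_n$ via property $(\sigma)$; after replacing $K_n$ by $\uuU . K_n$ (compact by \LEM{unitary}) we may assume each $K_n$ is unitarily invariant. Set $I_n \df \{f \in \aaA \dd f|_{K_n} = 0\}$, a decreasing chain of ideals with $\bigcap_n I_n = 0$, and $\aaA_n \df \aaA / I_n$. The restriction map embeds $\aaA_n$ into the $C^*$-algebra $\bbB_n$ of bounded continuous maps $g \dd K_n \to \mmM$ with $g(\tT) \in \mmM_{d(\tT)}$; since degrees on $K_n$ are bounded by $m_n \df \max\{d(\tT) \dd \tT \in K_n\}$, $\bbB_n$ is $m_n$-subhomogeneous (its irreducible representations being evaluations at points), hence so is $\aaA_n$ as a $C^*$-subalgebra. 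The natural map $\aaA \to \varprojlim_n \aaA_n$, $f \mapsto (f + I_n)_n$, is injective because $\bigcap_n I_n = 0$, and I verify surjectivity by gluing: a bounded compatible sequence $(a_n)$ with $a_{n+1}|_{K_n} = a_n$ defines a function $f \dd \Tt \to \mmM$ via $f|_{K_n} \df a_n$, which is continuous by \LEM{para}(a). The tower axioms for $f$---in particular the $\oplus$-axiom---are verified by observing that for any $\tT, \sS \in \Tt$, the three elements $\tT$, $\sS$, and $\tT \oplus \sS$ together lie in a common $K_m$, where the required relation is inherited from the fact that $a_m$ is the restriction of some element of $\aaA = C^*(\Tt)$.

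The main obstacle in (ii) $\Rightarrow$ (i) is handling potential non-injectivity of the $\sigma_n$, which could spoil Hausdorffness of a naive abstract direct limit; injecting into $\Xx(\aaA)$ circumvents this neatly. In (i) $\Rightarrow$ (ii), the delicate point is $\oplus$-compatibility of the glued function $f$ when $K_n$ is not itself $\oplus$-closed: this is precisely the eventual-containment-in-$K_m$ observation that lets the proof proceed without needing $\oplus$-closed $K_n$'s (which would necessarily have unbounded degree and hence fail to be compact).
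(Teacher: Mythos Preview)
Your overall strategy is the same as the paper's, and both directions are on the right track. The (i)~$\Rightarrow$~(ii) argument is essentially correct; using the $K_n$ from property~$(\sigma)$ (made unitarily invariant) instead of the paper's choice $\bigcup_{k\leqslant n}\Tt_k$ works just as well, and your gluing argument for surjectivity of $\aaA\to\varprojlim\aaA_n$ is fine.

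There are two issues with (ii)~$\Rightarrow$~(i). First, a minor one: your remark that ``non-injectivity of the $\sigma_n$'' is the main obstacle is misplaced. Once you have arranged that each $\rho_n$ has dense image, it has closed image (being a $*$-homomorphism), hence is surjective; then by \PRO{sur} the dual morphism $\sigma_n$ is automatically \emph{injective}, and $\sigma_n(\Tt_n)$ is a subtower of $\Tt_{n+1}$. So the embedding into $\Xx(\aaA)$, while harmless, is not actually doing any work beyond what the nested-subtower picture already gives you; the paper simply identifies $\Tt_n$ with its image in $\Tt_{n+1}$ and takes $\Tt=\bigcup_n\Tt_n$.

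Second, and more seriously, ``axioms (T1)--(T6) and (mT1)--(mT4) are inherited pointwise'' is not an argument. The algebraic axioms (T2)--(T6), (mT1)--(mT4) are indeed pointwise, but (T1) is a topological statement: for each $N$, the map $\Tt_N\times\Tt\to\Tt$, $(\xX,\yY)\mapsto\xX\oplus\yY$, must be a \emph{closed embedding}. Verifying this for the final topology requires real work: you must show that $\Tt_N\times\Tt$ (with the product of two final topologies) again has property~$(\sigma)$ with respect to products of your $K_n$'s, so that continuity and the inverse's continuity can be checked on compacta; and you must show the image $\Tt_N\oplus\Tt$ is closed in $\Tt$. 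The paper isolates exactly this in \LEM{sigma2} (products and closed subspaces inherit property~$(\sigma)$) and then carefully runs the verification. Without something like this lemma your argument has a genuine gap: a product of final topologies is not in general the final topology on the product, and property~$(\sigma)$ is precisely what rescues this. You should state and prove the analogue of \LEM{sigma2} for your $K_n$'s, then verify (T1) explicitly.
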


In the proof we shall apply the next two lemmas.

\begin{lem}{char}
For a $C^*$-algebra $\aaA$ \tfcae
\begin{enumerate}[\upshape(i)]
\item $\aaA$ is $*$-isomorphic to the inverse limit of subhomogeneous $C^*$-algebras;
\item there exists a descending sequence $\JjJ_1,\JjJ_2,\ldots$ of ideals in $\aaA$ such that:
 \begin{enumerate}[\upshape({i}d1)]
 \item $\bigcap_{n=1}^{\infty} \JjJ_n = \{0\}$; and
 \item the quotient $C^*$-algebra $\qqQ_n \df \aaA / \JjJ_n$ is subhomogeneous for any $n > 0$; and
 \item whenever $(z_n)_{n=1}^{\infty}$ is a bounded sequence of elements of $\aaA$ such that
  $\pi_k(z_n)$ converges in $\qqQ_k$ as $n \to \infty$ for any $k > 0$ where $\pi_k\dd \aaA \to
  \qqQ_k$ is the canonical projection, then there is $z \in \aaA$ such that $\lim_{n\to\infty}
  \pi_k(z_n) = \pi(z)$ for all $k$.
 \end{enumerate}
\end{enumerate}
\end{lem}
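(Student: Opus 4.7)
The plan is to prove the two implications by directly translating between an abstract inverse system and the concrete sequence of kernels of the canonical projections. Throughout, I use that for $C^*$-algebras an inverse limit of a sequence $\ddD_1 \xleftarrow{\phi_1} \ddD_2 \xleftarrow{\phi_2} \cdots$ of contractive $*$\hyp{}homomorphisms is the $C^*$-algebra of bounded coherent sequences $(x_n)$ with $\phi_n(x_{n+1}) = x_n$, equipped with the supremum norm, and that any injective $*$-homomorphism between $C^*$-algebras is automatically isometric.

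For (i)$\Rightarrow$(ii): assume $\aaA = \varprojlim \ddD_n$ with $\ddD_n$ subhomogeneous, and let $\pi_n\dd\aaA\to\ddD_n$ be the canonical maps. Set $\JjJ_n \df \ker\pi_n$. Since $\pi_n = \phi_n\circ\pi_{n+1}$, the sequence is descending. Condition (id1) is the injectivity built into the inverse limit. For (id2), $\aaA/\JjJ_n$ is $*$-isomorphic to $\pi_n(\aaA)$, a $C^*$-subalgebra of the subhomogeneous $\ddD_n$, hence itself subhomogeneous. For (id3), given a bounded sequence $(z_n)$ with $\pi_k(z_n)$ Cauchy for each $k$, set $y_k \df \lim_n \pi_k(z_n) \in \qqQ_k$; by continuity of the connecting maps the system $(y_k)$ is coherent and its norms are uniformly bounded by $\sup_n\|z_n\|$, so it defines an element $z$ of the inverse limit $\aaA$ with $\pi_k(z) = y_k$.

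For (ii)$\Rightarrow$(i): the inclusions $\JjJ_{n+1}\subset \JjJ_n$ induce natural surjections $\psi_n\dd\qqQ_{n+1}\to\qqQ_n$, so $(\qqQ_n,\psi_n)$ is an inverse system of subhomogeneous $C^*$-algebras. Form $\ppP \df \varprojlim\qqQ_n$ and consider $\Psi\dd\aaA\to\ppP$, $\Psi(a) \df (\pi_n(a))_{n}$. Injectivity of $\Psi$ is (id1), and then $\Psi$ is automatically isometric. For surjectivity, fix $(x_n)\in\ppP$ (so $\sup_n\|x_n\|<\infty$), and choose $z_n\in\aaA$ with $\pi_n(z_n)=x_n$ and $\|z_n\|\leqsl\|x_n\|+1/n$; coherence gives $\pi_k(z_n)=x_k$ for all $n\geqsl k$, so the hypothesis of (id3) is (trivially) satisfied and yields $z\in\aaA$ with $\pi_k(z)=x_k$ for every $k$, i.e.\ $\Psi(z) = (x_n)$.

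The routine portion is the translation between kernels of canonical projections and ideals of the inverse system; the genuine content, and what I expect to be the main technical point to formulate carefully, is condition (id3). It is exactly the completeness statement needed to lift a coherent sequence in the inverse limit back to $\aaA$, and in the opposite direction it is a consequence of the fact that an inverse-limit representative is determined by its coherent image-sequence. Care is needed in (ii)$\Rightarrow$(i) to ensure the chosen pre-images $z_n$ stay uniformly bounded (so that the bounded-sequence hypothesis of (id3) applies); this is handled by selecting $z_n$ with $\|z_n\|\leqsl \|x_n\|+1/n$ and using that $\|x_n\|$ is bounded by the norm of $(x_n)$ in $\ppP$.
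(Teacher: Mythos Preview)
Your argument for (ii)$\Rightarrow$(i) is essentially the paper's, including the trick of lifting each $x_n$ to a preimage $z_n$ of controlled norm so that (id3) applies.

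There is, however, a genuine gap in your (i)$\Rightarrow$(ii). You assume from the outset that the inverse system is indexed by $\NNN$, but condition~(i) as stated in the paper covers inverse systems over an \emph{arbitrary} directed set $(\Sigma,\leqsl)$. Your construction $\JjJ_n \df \ker\pi_n$ simply has no meaning if there is no $n$th projection, and there is no general reduction of inverse limits over directed sets to sequential inverse limits (cofinality arguments fail for uncountable $\Sigma$). The content of the lemma is precisely that one \emph{can} pass to a sequence here, and this requires an idea specific to subhomogeneity.

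The paper supplies that idea: for each $\sigma\in\Sigma$ choose $\nu(\sigma)$ with $\aaA_\sigma$ $\nu(\sigma)$-subhomogeneous, set $\Sigma_n \df \{\sigma\in\Sigma : \nu(\sigma)\leqsl n\}$, and define $\JjJ_n$ to consist of all $(a_\sigma)_{\sigma\in\Sigma}\in\aaA$ with $a_\sigma=0$ for every $\sigma\in\Sigma_n$. This produces a descending sequence of ideals regardless of the cardinality of $\Sigma$; (id2) holds because $\aaA/\JjJ_n$ embeds in $\prod_{\sigma\in\Sigma_n}\aaA_\sigma$, which is $n$-subhomogeneous; and (id3) follows coordinatewise as in your sequential argument. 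In short, the subhomogeneity hypothesis is what allows the directed system to be replaced by the filtration by degree, and this step is missing from your proposal.
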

\begin{proof}
To show that (ii) follows from (i), we may and do assume that $\aaA$ is the inverse limit
of subhomogeneous $C^*$-algebras. So, let
$\{(\aaA_{\sigma};\{\varphi_{\sigma,\sigma'}\})\}_{\sigma\in\Sigma}$ be an inverse system
of subhomogeneous $C^*$-algebras. That is:
\begin{itemize}
\item $(\Sigma,\leqsl)$ is a directed set;
\item $\aaA_{\sigma}$ is subhomogeneous for any $\sigma \in \Sigma$;
\item whenever $\sigma \leqsl \sigma'$, there is defined a $*$-homomorphism
 $\varphi_{\sigma,\sigma'}\dd \aaA_{\sigma'} \to \aaA_{\sigma}$;
\item $\varphi_{\sigma,\sigma'} \circ \varphi_{\sigma',\sigma''} = \varphi_{\sigma,\sigma''}$
 whenever $\sigma \leqsl \sigma' \leqsl \sigma''$.
\end{itemize}
Then the inverse limit $(\aaA;\{\pi_{\sigma}\})$ of the above system consists of
\begin{itemize}
\item a $C^*$-algebra $\aaA$ that coincides with the $C^*$-algebra of all bounded systems
 $(a_{\sigma})_{\sigma\in\Sigma} \in \prod_{\sigma\in\Sigma} \aaA_{\sigma}$ such that
 $\varphi_{\sigma,\sigma'}(a_{\sigma'}) = a_{\sigma}$ whenever $\sigma \leqsl \sigma'$; and
\item $*$-homomorphisms $\pi_{\sigma}\dd \aaA \to \aaA_{\sigma}$ (for each $\sigma$), given by
 $\pi_{\sigma}((a_{\tau})_{\tau\in\Sigma}) = a_{\sigma}$.
\end{itemize}
For each $\sigma \in \Sigma$, let $\nu(\sigma)$ be a positive integer such that $\aaA_{\sigma}$ is
$\nu(\sigma)$-subhomogeneous. Put $\Sigma_n \df \{\sigma \in \Sigma\dd\ \nu(\sigma) \leqsl n\}$ and
define $\JjJ_n$ as the set of all $(a_{\sigma})_{\sigma\in\Sigma} \in \aaA$ such that $a_{\sigma} =
0$ for any $\sigma \in \Sigma_n$. It is transparent that $\JjJ_n$ is an ideal and that (id1) holds.
To show (id2), observe that $\aaA / \JjJ_n$ is $*$-isomorphic to the $C^*$-subalgebra
$\{(a_{\sigma})_{\sigma\in\Sigma_n}\dd\ (a_{\sigma})_{\sigma\in\Sigma} \in \aaA\}$
of $\prod_{\sigma\in\Sigma_n} \aaA_{\sigma}$ and the latter algebra is $n$-subhomogeneous. Finally,
we turn to (id3). Let elements $z_n = (a^{(n)}_{\sigma})_{\sigma\in\Sigma}$ of $\aaA$ form
a bounded sequence with the property specified in (id3). It then follows that for any $\sigma$,
the sequence $(a^{(n)}_{\sigma})_{n=1}^{\infty}$ converges in $\aaA_{\sigma}$, say to $a_{\sigma}$.
Then $z \df (a_{\sigma})_{\sigma\in\Sigma}$ is a bounded system which belongs to $\aaA$ and
satisfies the assertion of (id3).\par
Now let $\JjJ_n$, $\qqQ_n$ and $\pi_n$ (for any $n > 0$) be as specified in (ii). For each $n$,
define $\varphi_n\dd \qqQ_{n+1} \to \qqQ_n$ by $\varphi_n(a + \JjJ_{n+1}) \df a + \JjJ_n\ (a \in
\aaA)$. Then $\varphi_n$ is a well defined surjective $*$-homomorphism. Let $\tilde{\aaA}$ stand for
the inverse limit of the inverse sequence $\{(\qqQ_n,\varphi_n)\}_{n=1}^{\infty}$. We define
$\Phi\dd \aaA \to \tilde{\aaA}$ by $\Phi(a) \df (\pi_n(a))_{n=1}^{\infty}$. It is straightforward
to show that $\Phi$ is a well defined $*$-homomorphism. Moreover, it follows from (id1) that $\Phi$
is one-to-one. Thus, it remains to check that $\Phi$ is onto. To this end, let
$(w_n)_{n=1}^{\infty}$ be an arbitrary element of $\tilde{\aaA}$. This means that $w_n \in \qqQ_n$,
\begin{equation}\label{eqn:aux19}
\varphi_n(w_{n+1}) = w_n \qquad (n > 0)
\end{equation}
and $\sup_{n\geqsl1} \|w_n\| < \infty$. For each $n$, take $z_n \in \aaA$ such that $\pi_n(z_n) =
w_n$ and $\|z_n\| < \|w_n\| + 1$. Then the sequence $(z_n)_{n=1}^{\infty}$ satisfies all assumptions
of (id3) and hence there is $z \in \aaA$ such that $\pi_k(z) = \lim_{n\to\infty} \pi_k(z_n) = w_k$
for all $k$ ($\pi_k(z_n) = w_k$ for $k \geqsl n$, thanks to \eqref{eqn:aux19}). Consequently,
$\Phi(z) = (w_n)_{n=1}^{\infty}$ and we are done.
\end{proof}

\begin{lem}{sigma2}
If topological spaces $X$ and $Y$ have property $(\sigma)$ with respect to, respectively,
$(K_n)_{n=1}^{\infty}$ and $(L_n)_{n=1}^{\infty}$, then the space $X \times Y$, equipped with
the product topology, has property $(\sigma)$ with respect to $(K_n \times L_n)_{n=1}^{\infty}$; and
each closed set $A$ in $X$ has property $(\sigma)$ with respect to $(K_n \cap A)_{n=1}^{\infty}$.
\end{lem}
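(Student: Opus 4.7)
The lemma has two independent claims; I address them separately.

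For the claim about a closed subspace $A \subset X$: each $K_n \cap A$ is a closed subset of the compact Hausdorff $K_n$ (since $A$ is closed in $X$), hence is itself compact Hausdorff, yielding $(\sigma 1)$ and $(\sigma 2)$. For $(\sigma 3)$, observe that $B \subset A$ forces $B \cap K_n = B \cap (K_n \cap A)$; and, since $A$ is closed in $X$, $B$ is closed in $A$ iff $B$ is closed in $X$. Combining these with $(\sigma 3)$ applied to $X$ yields the corresponding property for $A$.

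For the product claim, $(\sigma 1)$ is immediate (each of $(K_n)$, $(L_n)$ is increasing with union $X$, $Y$). Property $(\sigma 2)$ follows from Tychonoff (compactness and Hausdorffness of $K_n \times L_n$) together with the standard coincidence of the subspace topology on $K_n \times L_n$ inherited from $X \times Y$ with the product of the individual subspace topologies. The forward direction of $(\sigma 3)$ is immediate. The substantive content is the reverse direction: given $F \subset X \times Y$ with $F \cap (K_n \times L_n)$ closed in $K_n \times L_n$ for every $n$, show $F$ is closed in the product topology. Given $(x_0, y_0) \notin F$ and $N$ with $(x_0, y_0) \in K_N \times L_N$, the plan is to construct, inductively for $n \geq N$, closed subsets $C_n \subset K_n$ and $D_n \subset L_n$ satisfying
\begin{enumerate}[\upshape(i)]
\item $x_0 \notin C_n$ and $y_0 \notin D_n$;
\item $F \cap (K_n \times L_n) \subset (C_n \times L_n) \cup (K_n \times D_n)$;
\item $C_{n+1} \cap K_n = C_n$ and $D_{n+1} \cap L_n = D_n$.
\end{enumerate}
Granting this, $C := \bigcup_n C_n$ satisfies $C \cap K_m = C_m$ by (iii), hence is closed in $X$ via $(\sigma 3)$ for $X$; likewise $D := \bigcup_n D_n$ is closed in $Y$. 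The product $(X \setminus C) \times (Y \setminus D)$ is then an open neighborhood of $(x_0, y_0)$ disjoint from $F$, by (ii) together with $F = \bigcup_n F \cap (K_n \times L_n)$.

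The base case uses normality of the compact Hausdorff $K_N \times L_N$: separate $(x_0, y_0)$ from $F \cap (K_N \times L_N)$ by an open box and take $C_N$, $D_N$ to be the complements of its coordinates. The main obstacle is the inductive step, specifically enforcing the compatibility (iii). The natural target of separation is the closed set
\begin{equation*}
G_{n+1} := [F \cap (K_{n+1} \times L_{n+1})] \cup (C_n \times L_{n+1}) \cup (K_{n+1} \times D_n)
\end{equation*}
in $K_{n+1} \times L_{n+1}$ (closed because $C_n$, $D_n$ are closed in $K_n$, $L_n$, and $K_n$, $L_n$ are closed in the ambient $K_{n+1}$, $L_{n+1}$). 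A naive normality-based separation of $(x_0, y_0)$ from $G_{n+1}$ produces an open box but does not a priori respect (iii). The fix is to anchor on the canonical extensions $\tilde U := K_{n+1} \setminus C_n$ and $\tilde V := L_{n+1} \setminus D_n$—open in $K_{n+1}$, $L_{n+1}$ with the correct restrictions $\tilde U \cap K_n = K_n \setminus C_n$ and $\tilde V \cap L_n = L_n \setminus D_n$—and then to shrink them only in the annuli $K_{n+1} \setminus K_n$, $L_{n+1} \setminus L_n$ by subtracting closed sets extracted from the residual $F \cap (\tilde U \times \tilde V)$, which by (ii) lives entirely outside $K_n \times L_n$. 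These annular closed sets are built via the projections $\pi_X, \pi_Y$ restricted to $K_{n+1} \times L_{n+1}$ (closed maps, as the fibers $L_{n+1}$, $K_{n+1}$ are compact). The technical subtlety is to ensure these extracted sets do not spill from the annuli into $K_n$ or $L_n$—any such spillover would violate (iii); this is handled by showing, via the inductive hypothesis (ii), that any potential spillover point would already be absorbed into $C_n$ or $D_n$, so an alternating (double) construction of the carving sets in the two annuli yields the required $C_{n+1}$ and $D_{n+1}$.
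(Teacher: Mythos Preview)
Your treatment of the closed-subspace claim is fine and matches the paper. The product claim, however, has a genuine gap in the inductive step.

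The problem is that the strict compatibility condition (iii), $C_{n+1}\cap K_n=C_n$, cannot in general be enforced once $C_n,D_n$ have been chosen. Take $X=Y=[0,\infty)$ with $K_n=L_n=[0,n]$ and $F=\{(t,1/t):t>0\}$, which is closed; let $(x_0,y_0)=(0,0)$. Starting at $N=1$, the box $[0,\tfrac12)\times[0,\tfrac12)$ separates $(0,0)$ from $F\cap[0,1]^2=\{(1,1)\}$, giving $C_1=D_1=[\tfrac12,1]$. At stage $2$ the minimal (and perfectly admissible) choice is $C_2=D_2=[\tfrac12,1]$, since $F\cap[0,2]^2=\{(t,1/t):\tfrac12\le t\le 2\}$ is already covered. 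At stage $3$, for $t\in(2,3]$ one has $1/t\in[\tfrac13,\tfrac12)$, and (iii) forces $D_3\cap[0,2]=D_2=[\tfrac12,1]$, so $1/t\notin D_3$; hence $t\in C_3$ for all such $t$, i.e.\ $(2,3]\subset C_3$. Closedness then gives $2\in C_3$, contradicting $C_3\cap[0,2]=C_2=[\tfrac12,1]$. Thus no $C_3,D_3$ exist. Your ``spillover is absorbed into $C_n$ or $D_n$'' heuristic fails here: with $\tilde U=\tilde V=[0,3]\setminus[\tfrac12,1]$, the residual $F\cap(\tilde U\times\tilde V)$ projects onto $[\tfrac13,\tfrac12)\cup(2,3]$, whose closure meets $K_2$ outside $C_2$.

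The paper avoids this rigidity by working dually with \emph{open} boxes and a weaker nesting: one builds relatively open $V_n\subset K_{N+n}$, $W_n\subset L_{N+n}$ with $\bar V_{n-1}\times\bar W_{n-1}\subset V_n\times W_n$ and $\bar V_n\times\bar W_n\subset U$, using the tube-lemma fact that in compact Hausdorff products any open set containing a compact box $A\times B$ contains $\bar G\times\bar H$ for some open $G\supset A$, $H\supset B$. Then $V=\bigcup V_n$, $W=\bigcup W_n$ are open simply because $V\cap K_m$ is a union of sets of the form $V_j\cap K_m$, each relatively open. The key asymmetry is that arbitrary unions of open sets are open, so one does not need the strict trace condition $V_n\cap K_{N+n-1}=V_{n-1}$; your closed-set formulation forces that condition and thereby fails.
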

\begin{proof}
We start from the Cartesian product. The closure of a set $B$ (in the whole respective space) will
be denoted by $\bar{B}$. It is readily seen that conditions ($\sigma$1)--($\sigma$2) are fulfilled
and that $F \cap (K_n \times L_n)$ is closed for each $n$ provided $F$ is closed in $X \times Y$.
So, we only need to prove the `if' part in ($\sigma$3), which is equivalent to the following
statement: if $U \subset X \times Y$ is a set such that $U \cap (K_n \times L_n)$ is relatively open
in $K_n \times L_n$ for all $n$, then $U$ is open in $X \times Y$. Let $U$ be a set that satisfies
the assumptions of this statement. Fix a pair $(x,y) \in U$. It is enough to construct open sets $V$
and $W$ in $X$ and $Y$ (respectively) such that $(x,y) \in V \times W \subset U$. Take $N > 0$ such
that $(x,y) \in K_N \times L_N$. Below we shall use the following well-known (and simple)
topological property:
\begin{itemize}
\item[($\star$)] if $C$ and $D$ are compact Hausdorff spaces, $A \subset C$ and $B \subset D$
 are compact, and $E$ is an open subset of $C \times D$ (in the product topology) that includes
 $A \times B$, then there are open sets $G \subset C$ and $H \subset D$ for which $A \times B
 \subset G \times H$ and $\bar{G} \times \bar{H} \subset E$.
\end{itemize}
Using ($\star$) (for $C = K_N$, $D = L_N$, $A = \{x\}$, $B = \{y\}$ and $E = (K_N \times L_N) \cap
U$), we find two sets $V_0 (\subset K_N)$ and $W_0 (\subset L_N)$ that are relatively open
neighbourhoods of $x$ and $y$ in $K_N$ and $L_N$ (respectively) and satisfy $\bar{V}_N \times
\bar{W}_N \subset U$. Now assume we have constructed $V_{n-1} \subset K_{N+n-1}$ and $W_{n-1}
\subset L_{N+n-1}$ for some $n > 0$. Using again ($\star$) (for $C = K_{N+n}$, $D = L_{N+n}$, $A =
\bar{V}_{n-1}$, $B = \bar{W}_{n-1}$ and $E = (K_{N+n} \times L_{N+n}) \cap U$), we find two sets
$V_n$ and $W_n$ that are relatively open in $K_{N+n}$ and $L_{N+n}$ (respectively) and satisfy
$\bar{V}_{n-1} \times \bar{W}_{n-1} \subset V_n \times W_n$ and $\bar{V}_n \times \bar{W}_n \subset
U$. In this way we obtain two ascending sequences $V_0,V_1,\ldots$ and $W_0,W_1,\ldots$ of subsets
of $X$ and $Y$. We claim that the sets $V \df \bigcup_{n=0}^{\infty} V_n$ and $W \df
\bigcup_{n=0}^{\infty} W_n$ are the sets we searched for. It is clear that $(x,y) \in V \times W
\subset U$. So, we only need to verify that $V$ and $W$ are open in $X$ and $Y$. To this end,
it suffices to check that $V \cap K_n$ if relatively open in $K_n$ for any $n > N$ (and similarly
for $W$). But $V \cap K_n = \bigcup_{j=n-N}^{\infty} (V_j \cap K_n)$ and since $V_j$ is relatively
open in $K_{N+j}$ and $K_{N+j} \supset K_n$ for all $j \geqsl n-N$, the conclusion follows.\par
The claim about closed subsets is trivial.
\end{proof}

\begin{proof}[Proof of \THM{inv}]
First take an arbitrary tower $\Tt$. We shall show that $\aaA \df C^*(\Tt)$ satisfies condition (ii)
of \LEM{char}. To this end, define $\JjJ_n$ as the set of all $f \in \aaA$ that vanish at each point
of $\Dd_n \df \bigcup_{k=1}^n \Tt_k$. Since $\aaA / \JjJ_n$ is $*$-isomorphic
to $\{f\bigr|_{\Dd_n}\dd\ f \in \aaA\}$, we see that this quotient algebra is $n$-subhomogeneous.
Further, (id1) is trivial, whereas (id3) transforms into: if $(f_n)_{n=1}^{\infty}$ is a uniformly
bounded sequence of maps from $\aaA$ that converge uniformly on each of $\Dd_n$, then there is
a single map from $\aaA$ such that the maps $f_n$ converge uniformly to $f$ on each of $\Dd_n$,
which is readily seen. This shows that $C^*(\Tt)$ is $*$-isomorphic to the inverse limit
of subhomogeneous $C^*$-algebras for any tower $\Tt$. We shall now derive (ii) from (i).\par
Let $\aaA$ be a unital $C^*$-algebra which is $*$-isomorphic to the inverse limit of subhomogeneous
algebras. The proof of \LEM{char} shows that then $\aaA$ is $*$-isomorphic to the inverse limit
of an inverse sequence $\{(\aaA_n,\varphi_n)\}_{n=1}^{\infty}$ where each $\aaA_n$ is unital (since
$\aaA$ is so) and subhomogeneous, and each $\varphi_n\dd \aaA_{n+1} \to \aaA_n$ is a surjective
unital $*$-homomorphism. Represent $\aaA_n$ as $C^*(\Tt^{(n)})$ where $\Tt^{(n)}$ is a proper
m-tower of finite height. Then, according to \THM{homo1}, $\varphi_n$ has the form $\Phi_{\tau_n}$
where $\tau_n\dd \Tt^{(n+1)} \to \Tt^{(n)}$ is a morphism and $\Phi_{\tau_n}$ is as specified
in \eqref{eqn:Phi-tau}. We now infer from \PRO{sur} that $\Ss^{(n)} \df \tau_n(\Tt^{(n)})$ is
a subtower of $\Tt^{(n+1)}$ and $\tau_n$ is an isomorphism between $\Tt^{(n)}$ and $\Ss^{(n)}$.
Thus, with no loss of generality, we may and do assume that $\Tt^{(n)} \subset \Tt^{(n+1)}$ and
$\tau_n$ is the inclusion map for each $n$. In these settings, $\varphi_n$ operates on maps from
$C^*(\Tt^{(n+1)})$ by restricting them to $\Tt^{(n)}$. Put $\Tt \df \bigcup_{n=1}^{\infty}
\Tt^{(n)}$. $\Tt$ admits unique ingredients (that is, a degree $d$, an addition ``$\oplus$'' and
a unitary action ``$.$'') which extend the ingredients of all $\Tt^{(n)}$. It is straightforward
to check that $(\Tt,d,\oplus,.)$ has all properties, apart from topological, specified in axioms
(T1)--(T6) and (mT1)--(mT4). We shall now define a topology on $\Tt$ which will make $\Tt$
an m-tower: a set $\Aa \subset \Tt$ is closed iff $\Aa \cap \Tt^{(n)}$ is closed in $\Tt^{(n)}$ for
any $n$. It is easily seen that each of $\Tt^{(n)}$ is a closed set in $\Tt$ and
\begin{itemize}
\item[($\Delta$)] the original topology of $\Tt^{(n)}$ coincides with the topology of a subspace
 inherited from $\Tt$.
\end{itemize}
For each $n > 0$, we put
\begin{equation*}
\Kk_n \df \bigcup_{j=1}^n \bigcup_{k=1}^n \Tt^{(k)}_j\ (\subset \Tt^{(n)}).
\end{equation*}
It follows from ($\Delta$) that $\Kk_n$, equipped with the topology inherited from $\Tt$, is
a compact Hausdorff space. Further, since $\Tt^{(m)}$ has property $(\sigma)$ with respect
to $(\Kk_n \cap \Tt^{(m)})_{n=1}^{\infty}$, one deduces that $\Tt$ has property $(\sigma)$ with
respect to $(\Kk_n)_{n=1}^{\infty}$ (and, consequently, it is a Hausdorff space, by \LEM{para}). So,
to show that $\Tt$ is an m-tower, it suffices to check that the degree and the unitary action are
continuous and the addition is a closed embedding on each of the sets $\Tt_n \times \Tt$. To this
end, we employ \LEM[s]{para} and \LEM[]{sigma2}. The continuity of $d$ is very simple: since
$d\bigr|_{\Tt^{(n)}}$ is continuous, so is $d\bigr|_{\Kk_n}$ and we are done. Further, when $N$ is
fixed, $\Tt_N$ is a closed subspace of $\Tt$ and hence it has property $(\sigma)$ with respect
to $(\Kk_n \cap \Tt_N)_{n=1}^{\infty}$. So, we conclude from \LEM{sigma2} that $\uuU_N \times \Tt_N$
has property $(\sigma)$ with respect to $(\uuU_N \times (\Kk_n \cap \Tt_N))_{n=1}^{\infty}$ and thus
the unitary action on $\Tt_N$ is continuous iff it is so on $\uuU_N \times (\Kk_n \cap \Tt_N)$ (for
each $n > 0$), which is immediate, because $\Kk_n \cap \Tt_N \subset \Tt^{(n)}_N$ (see also
($\Delta$)). Similarly, $\Tt \times \Tt$ has property $(\sigma)$ with respect to $(\Kk_n \times
\Kk_n)_{n=1}^{\infty}$ and therefore ``$\oplus$'' is continuous (as continuous on $\Tt^{(n)} \times
\Tt^{(n)}$ for each $n$). Further, to convince oneself that the set $\Hh_N \df \Tt_N \oplus \Tt$ is
closed, we use the definition of the topology on $\Tt$: it suffices to check that $\Hh_N \cap
\Tt^{(n)}$ is closed in $\Tt^{(n)}$ (for all $n$). But $\Hh_N \cap \Tt^{(n)} = \Tt^{(n)}_N \oplus
\Tt^{(n)}$ (because $\Tt^{(n)}$ is a subtower of $\Tt$) and the restriction of the addition
of $\Tt^{(n)}$ to $\Tt^{(n)}_N \times \Tt^{(n)}$ is a closed embedding, from which we conclude
the closedness of $\Hh_N$. So, $\Hh_N$ has property $(\sigma)$ with respect to $(\Kk_n \cap
\Hh_N)_{n=1}^{\infty}$. Finally, since the addition on $\Tt$ is one-to-one, to complete the proof
that $\Tt$ is an m-tower, it remains to show that the function $\Psi\dd \Hh_N \ni \xX \oplus \yY
\mapsto (\xX,\yY) \in \Tt_N \times \Tt$ is continuous. Again, we use \LEM{para}, according to which
it is enough that $\Psi\bigr|_{\Hh_N \cap \Kk_n}$ is continuous (for any $n$). But $\Kk_n \subset
\Tt^{(n)}$, $\Hh_N \cap \Tt^{(n)} = \Hh'_N \df \Tt^{(n)}_N \oplus \Tt^{(n)}$ and
$\Psi\bigr|_{\Hh'_N}$ is continuous (by (T1) for $\Tt^{(n)}$).\par
To finish the proof of the whole theorem, it remains to check that $C^*(\Tt)$ is ($*$-isomorphic to)
the inverse limit of the algebras $C^*(\Tt^{(n)})$, which we shall now briefly do. Let $f_n \in
C^*(\Tt^{(n)})$ be such that $\sup_{n\geqsl1} \|f_n\| < \infty$ and $f_{n+1}$ extends $f_n$. Then
there is a (unique) function $f\dd \Tt \to \mmM$ which extends all maps $f_n$. We see that $f$ is
bounded. Moreover, $f\bigr|_{\Tt_m}$ is continuous, because $f\bigr|_{\Kk_n \cap \Tt_m} =
f_n\bigr|_{\Kk_n \cap \Tt_m}$ is so (for any $n$ and $m$). This implies that $f \in C^*(\Tt)$. So,
the assignment $f \mapsto (f\bigr|_{\Tt^{(n)}})_{n=1}^{\infty}$ correctly defines a $*$-isomorphism
from $C^*(\Tt)$ onto the inverse limit of the algebras $C^*(\Tt^{(n)})$, which is in turn
$*$-isomorphic to $\aaA$.
\end{proof}

\begin{rem}{nonuniq}
There is a delicate difference, specified in \LEM{char}, between residually finite-dimensional (RFD)
$C^*$-algebras and inverse limits of subhomogeneous $C^*$-algebras. Indeed, it is well-known that
in any RFD $C^*$-algebra $\aaA$ there always exists a descending sequence $\JjJ_1,\JjJ_2,\ldots$
of ideals such that conditions (id1) and (id2) hold. So, (id3) is the property that distinguishes
inverse limits among RFD algebras.\par
It is also worth noting that, in general, the m-tower $\Tt$ which appears in item (i) of \THM{inv}
is nonunique (up to isomorphism).
\end{rem}

\section{Shrinking algebras}

\begin{dfn}{shrink}
A $C^*$-algebra $\aaA$ is said to be \textit{shrinking} if:
\begin{enumerate}[(SH1)]
\item $\aaA$ is residually finite-dimensional; that is, finite-dimensional representations of $\aaA$
 separate points of $\aaA$; and
\item whenever $\pi_1,\pi_2,\ldots$ is a sequence of finite-dimensional irreducible representations
 of $\aaA$ such that $\lim_{n\to\infty} d(\pi_n) = \infty$, then $\lim_{n\to\infty} \|\pi(a)\| = 0$
 for all $a \in \aaA$.
\end{enumerate}
\end{dfn}

Note that each subhomogeneous $C^*$-algebra is shrinking. (Conversely, if a shrinking $C^*$-algebra
is unital, then it is subhomogeneous.)\par
The proof of the following simple characterization of shrinking $C^*$-algebras is left
to the reader.

\begin{lem}{shrink}
For any element $x$ of a $C^*$-algebra $\aaA$ and each $n > 0$ put $c_n(x) \df \sup\{\|\pi(x)\|\dd\
\pi \in \core(\Zz(\aaA)) \cap \Zz_n(\aaA)\}$ \textup{(}where $\sup(\varempty) \df 0$\textup{)}. Then
$\aaA$ is shrinking iff $\aaA$ is residually finite-dimensional and $\lim_{n\to\infty} c_n(x) = 0$
for each $x \in \aaA$.
\end{lem}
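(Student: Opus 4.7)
The plan is to observe that this characterization is essentially a direct translation between the two formulations, so the proof reduces to matching quantifiers carefully against the definitions.

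First I would verify that $\core(\Zz(\aaA))\cap\Zz_n(\aaA)$ is exactly the set of $n$-dimensional irreducible representations of $\aaA$ in the usual sense. This is immediate from \LEM{core}: an element of the tower $\Zz(\aaA)$ is irreducible iff its stabilizer consists only of scalar multiples of the identity, and for a representation $\pi$ this stabilizer equals $\pi(\aaA)'\cap\uuU_n$, which reduces to scalars precisely when $\pi(\aaA)'=\CCC I_n$, i.e.\ when $\pi$ is irreducible in the Schur sense. With this identification, $c_n(x)$ is the supremum of $\|\pi(x)\|$ over all $n$-dimensional irreducible representations $\pi$ of $\aaA$ (taken to be $0$ if there are no such representations).

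For the forward direction I would argue by contradiction: assume $\aaA$ is shrinking and there is $x\in\aaA$ with $c_n(x)\not\to 0$. Then some $\epsi>0$ and integers $n_1<n_2<\ldots$ satisfy $c_{n_k}(x)>\epsi$, so by the definition of the supremum I can pick an irreducible representation $\pi_k$ of degree $n_k$ with $\|\pi_k(x)\|>\epsi/2$. This is a sequence of irreducible finite\hyp{}dimensional representations with $d(\pi_k)\to\infty$ but $\|\pi_k(x)\|\not\to 0$, contradicting (SH2). For the reverse direction, assume (SH1) and that $c_n(x)\to 0$ for every $x\in\aaA$, and take an arbitrary sequence $\pi_1,\pi_2,\ldots$ of irreducible finite\hyp{}dimensional representations of $\aaA$ with $d(\pi_n)\to\infty$. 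By the identification above, $\|\pi_n(x)\|\leqsl c_{d(\pi_n)}(x)$ for every $n$, and the right-hand side tends to $0$ as $d(\pi_n)\to\infty$, so (SH2) is satisfied.

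There is really no main obstacle here beyond being careful about the correspondence between ``irreducible element of $\Zz(\aaA)$'' and ``irreducible representation of $\aaA$''; once that is pinned down via \LEM{core} and Schur's lemma, both implications are one\hyp{}line arguments. The fact that the supremum defining $c_n(x)$ may not be attained is handled by the standard trick of passing from $c_{n_k}(x)>\epsi$ to a choice with $\|\pi_k(x)\|>\epsi/2$, and the convention $\sup(\varempty)=0$ makes the degenerate case of degrees with no irreducible representations harmless.
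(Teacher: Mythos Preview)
Your argument is correct and is exactly the routine verification the paper has in mind; indeed, the paper omits the proof entirely (``The proof of the following simple characterization of shrinking $C^*$-algebras is left to the reader''), so there is nothing to compare against beyond noting that your identification of $\core(\Zz(\aaA))\cap\Zz_n(\aaA)$ with the $n$-dimensional irreducible representations, followed by the obvious $\epsi$--subsequence argument in each direction, is precisely what is expected.
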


The aim of this section is (in a sense) the converse to the following simple

\begin{pro}{shrink}
For any elc m-tower $\Tt$, $C^*_0(\Tt)$ is shrinking.
\end{pro}
\begin{proof}
Just use the definition of $C^*_0(\Tt)$ and apply \COR{irr}.
\end{proof}

\begin{dfn}{p-solid}
A pointed tower $(\Tt,\theta)$ is said to be \textit{solid} if $\Tt$ is a proper m-tower and for any
$\tT \in \core(\Tt)$ different from $\theta$ there exists $g \in C^*_0(\Tt,\theta)$ such that
$g(\tT) \neq 0$. (Recall that proper m-towers are elc.)
\end{dfn}

It is an easy exercise that a pointed subtower of a solid pointed tower is a solid pointed tower
as well.

\begin{thm}{shrink}
For every shrinking $C^*$-algebra $\aaA$, $(\Zz(\aaA),\theta_{\aaA})$ is a solid pointed tower and
$J_{\aaA}\dd \aaA \to C^*_0(\Zz(\aaA),\theta_{\aaA})$ is a well defined $*$-isomorphism.\par
Conversely, if $(\Ss,\kappa)$ is a solid pointed tower, then $C^*_0(\Ss,\kappa)$ is a shrinking
$C^*$-algebra and for every its finite-dimensional representation $\pi$ there exists a unique point
$\sS \in \Ss$ such that $\pi = \pi_{\sS}$ where
\begin{equation}\label{eqn:pit}
\pi_{\sS}\dd C^*_0(\Ss,\kappa) \ni u \mapsto u(\sS) \in \mmM_{d(\sS)}.
\end{equation}
Moreover, the assignment $\sS \mapsto \pi_{\sS}$ correctly defines an isomorphism between
the pointed towers $(\Ss,\kappa)$ and $\Zz(C^*_0(\Ss,\kappa),\theta_{C^*_0(\Ss,\kappa)})$.
\end{thm}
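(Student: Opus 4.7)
My plan is to follow the template of \THM[s]{subh1} and \THM[]{subh0}, adapted to the shrinking setting. For the first assertion, \LEM{concr} already shows that $\Zz(\aaA)$ is a proper m-tower, and since $d(\theta_{\aaA}) = 1$, $(\Zz(\aaA),\theta_{\aaA})$ is a proper pointed m-tower. First I check that $J_{\aaA}(a) \in C^*_0(\Zz(\aaA),\theta_{\aaA})$: it vanishes at $\theta_{\aaA}$ tautologically, and its restriction to $\overline{\core}(\Zz(\aaA))$ vanishes at infinity once I combine \LEM{shrink} with the compactness of each $\Zz_n(\aaA)$, which places every sublevel set $\{\pi \in \core(\Zz(\aaA))\dd\ \|\pi(a)\| \geqsl \epsi\}$ inside $\bigcup_{n \leqsl N}\Zz_n(\aaA)$ for a sufficiently large $N$. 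Injectivity of $J_{\aaA}$ is precisely (SH1). For solidity I note that any $\tT \in \core(\Zz(\aaA))$ different from $\theta_{\aaA}$ is a nonzero irreducible finite-dimensional representation $\pi$ of $\aaA$, which is surjective onto $\mmM_{d(\pi)}$, so any $a$ with $\pi(a) \neq 0$ makes $J_{\aaA}(a) \in C^*_0(\Zz(\aaA),\theta_{\aaA})$ nonzero at $\tT$.

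Surjectivity of $J_{\aaA}$ is the main ingredient and I derive it from \THM{SW} applied to $\EeE \df J_{\aaA}(\aaA) \subset C^*_0(\Zz(\aaA))$. Condition $(1_{\theta_{\aaA}})$ is trivial. For each nonzero irreducible $\pi \in \core(\Zz(\aaA))$ and each irreducible $\sigma \perp \pi$ I verify $(2_{\pi})$: when $\sigma = \theta_{\aaA}$, surjectivity of $\pi$ onto $\mmM_{d(\pi)}$ yields $a \in \aaA$ with $\pi(a) = I_{d(\pi)}$ and automatically $\sigma(a) = 0$; when $\sigma$ is nonzero, Proposition~4.2.5 in \cite{di2} produces $a$ with $\pi(a) = I$ and $\sigma(a) = 0$. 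Hence \THM{SW} identifies the closure of $\EeE$ in $C^*_0(\Zz(\aaA))$ with those $u$ satisfying $(*)$; the case $\xX = \theta_{\aaA}$ forces $u(\theta_{\aaA}) = 0$, while at any other irreducible $\xX = \pi$ surjectivity of $\pi$ makes $(*)$ automatic. Thus the closure equals $C^*_0(\Zz(\aaA),\theta_{\aaA})$, and since the image of a $*$-homomorphism between $C^*$-algebras is norm closed, $J_{\aaA}(\aaA) = C^*_0(\Zz(\aaA),\theta_{\aaA})$.

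For the converse I fix a solid pointed tower $(\Ss,\kappa)$ and set $\aaA \df C^*_0(\Ss,\kappa)$, which is an ideal in $C^*_0(\Ss)$ as the kernel of evaluation at $\kappa$. (SH1) follows from \LEM{norm}: any nonzero $u \in \aaA$ is nonzero at some $\tT \in \core(\Ss)$, necessarily different from $\kappa$, so $\pi_{\tT}$ does not kill $u$. For (SH2) I observe that any irreducible finite-dimensional representation $\pi$ of $\aaA$ of dimension greater than $1$ is nonzero, hence nondegenerate, and extends uniquely to an irreducible representation $\tilde{\pi}$ of $C^*_0(\Ss)$; by \COR{irr}, $\tilde{\pi} = \pi_{\sS}$ for some $\sS \in \core(\Ss) \setminus \Ss_{(0)}$, so $d(\pi) = d(\sS)$. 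A sequence of such $\pi_n$ with $d(\pi_n) \to \infty$ then yields $\sS_n \in \core(\Ss)$ leaving every $\bigcup_{k \leqsl N} \Ss_k$; properness of $d$ (axiom (T1)) implies every compact subset of $\overline{\core}(\Ss)$ lies in such a union, so $\sS_n$ escapes every compact set and $\|u(\sS_n)\| \to 0$ because $u \in C^*_0(\Ss)$.

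It remains to match finite-dimensional representations of $\aaA$ with points of $\Ss$. Given $\pi\dd \aaA \to \mmM_N$, I will write $\pi = W . (\pi_1 \oplus 0_{N-k})$ with $\pi_1$ nondegenerate of dimension $k$, extend $\pi_1$ uniquely to a nondegenerate representation $\tilde{\pi}_1$ of $C^*_0(\Ss)$, and invoke \COR{nondeg} to obtain the unique $\sS_1 \in \Ss$ with $\tilde{\pi}_1(f) = f(\sS_1)$; since the zero $(N-k)$-dimensional representation of $\aaA$ equals $\pi_{(N-k)\odot\kappa}$, this produces $\pi = \pi_{\sS}$ with $\sS \df W . (\sS_1 \oplus ((N-k) \odot \kappa))$. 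The main obstacle will be uniqueness of $\sS$: if $\pi_{\sS} = \pi_{\sS'}$, the plan is to use \PRO{pd} to decompose both $\sS$ and $\sS'$ into $\kappa$-summands and non-$\kappa$ irreducible components, and then exploit solidity to construct elements of $\aaA$ that isolate any particular non-$\kappa$ irreducible component, thereby forcing the non-$\kappa$ prime decompositions to agree, while the $\kappa$-multiplicity is pinned down by $N$ minus the dimension of the nondegenerate part (the residual ambiguity in $W$ is absorbed by $\stab(\sS_1) = \stab(\tilde{\pi}_1)$ combined with $\stab((N-k) \odot \kappa) = \uuU_{N-k}$). Finally, $\sS \mapsto \pi_{\sS}$ visibly respects $d$, $\oplus$ and the unitary action, sends $\kappa$ to $\theta_{C^*_0(\Ss,\kappa)}$, and restricts to a continuous bijection between the compact Hausdorff spaces $\Ss_n$ and $\Zz_n(\aaA)$ for each $n$, which is automatically a homeomorphism, giving an isomorphism of pointed towers.
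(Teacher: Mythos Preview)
Your proof of the first assertion and your existence argument in the converse direction track the paper's proof closely (the paper likewise invokes \LEM{concr}, \LEM{shrink}, and \THM{SW} for the forward direction, and decomposes $\pi$ into irreducibles---your nondegenerate-plus-zero splitting is an equivalent reorganization). The genuine divergence is in the uniqueness of $\sS$ for the converse. The paper proves injectivity of $\sS \mapsto \pi_\sS$ by explicitly constructing, for any two distinct points of $\Ss$, a separating function in $C^*_0(\Ss,\kappa)$, reworking the case analysis of \COR{dist}(C) under the constraint that only maps vanishing at $\kappa$ are available; the key technical step is that for any finite system of mutually disjoint non-$\kappa$ irreducibles one can prescribe values arbitrarily within $C^*_0(\Ss,\kappa)$. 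Your plan instead reads off the $\kappa$-multiplicity from the dimension of the degenerate part of $\pi$, reduces the non-$\kappa$ part to the uniqueness clause already in \COR{nondeg} (applied on $C^*_0(\Ss)$ to the unique nondegenerate extension of $\pi_1$), and absorbs the remaining unitary freedom via (mT3)--(mT4) together with $\stab((N-k)\odot\kappa) = \uuU_{N-k}$. Both routes are valid; yours is more structural and sidesteps the case analysis, but it still rests on the fact that $\pi_\aA$ is nondegenerate on $C^*_0(\Ss,\kappa)$ whenever $\aA \perp \kappa$, which in turn needs exactly the ideal observation the paper isolates (solidity forces $\{u(\tT)\dd u \in C^*_0(\Ss,\kappa)\}$ to be a nonzero---hence full---ideal in $\mmM_{d(\tT)}$ for each non-$\kappa$ irreducible $\tT$), so the two arguments share that one technical ingredient.
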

\begin{proof}
First assume $\aaA$ is a shrinking $C^*$-algebra. It follows from \LEM[s]{concr} and \LEM[]{shrink}
that $(\Zz(\aaA),\theta_{\aaA})$ is a solid pointed tower and that $J_{\aaA}$ takes values
in $C^*_0(\Zz(\aaA),\theta_{\aaA})$ and is one-to-one. So, we only need to check that the closure
of $\EeE \df J_{\aaA}(\aaA)$ in $C^*_0(\Zz(\aaA))$ coincides with $C^*_0(\Zz(\aaA),\theta_{\aaA})$,
which immediately follows from \THM{SW} (cf.\ the proof of \THM{subh1}).\par
Now assume $(\Ss,\kappa)$ is a solid pointed tower. It follows from \PRO{shrink} that
$C^*_0(\Ss,\kappa)$ is shrinking (use the fact that every nonzero irreducible representation
of $C^*_0(\Ss,\kappa)$ extends uniquely to an irreducible representation of $C^*_0(\Ss)$). Take
an arbitrary finite-dimensional representation $\pi$ of $C^*_0(\Ss,\kappa)$. Then there are
a unitary matrix $U$ (of a respective degree) and a finite system $\pi^{(1)},\ldots,\pi^{(k)}$
of irreducible (possibly zero) representations such that $\pi = U . (\bigoplus_{j=1}^n \pi^{(j)})$.
If $\pi^{(j)} = 0$, it coincides with $\pi_{\kappa}$ (see \eqref{eqn:pit}); and if $\pi^{(j)}$ is
nonzero, we infer from \COR{irr} that there is $\sS_j \in \core(\Ss)$ different from $\kappa$ such
that $\pi^{(j)} = \pi_{\sS_j}$ (since every nonzero irreducible representation
of $C^*_0(\Ss,\kappa)$ extends uniquely to an irreducible representation of $C^*_0(\Ss)$). Thus,
we see that $\pi = \pi_{\sS}$ for some $\sS \in \Ss$. We turn to the uniqueness of $\sS$. Let
$\sS_1$ and $\sS_2$ be two elements of $\Ss$ such that $d(\sS_1) = d(\sS_2)$. In order to construct
a map $u \in C^*_0(\Ss,\kappa)$ with $u(\sS_1) \neq u(\sS_2)$, we shall improve the argument
presented in the proof of \COR{dist}. First observe that for any mutually disjoint irreducible
elements $\tT_0,\ldots,\tT_k \in \Ss$ with $\tT_0 \neq \kappa$ and each $A \in \mmM_{d(\tT_0)}$
there is $g \in C^*_0(\Ss,\kappa)$ such that $g(\tT_0) = A$ and $g(\tT_j) = 0$ for $j=1,\ldots,k$.
Indeed, there is $u_0 \in C^*_0(\Ss,\kappa)$ such that $u_0(\tT_0) = I_{d(\tT_0)}$ (since
$\{u(\tT_0)\dd\ u \in C^*_0(\Ss,\kappa)\}$ is a nonzero ideal in $\{u(\tT_0)\dd\ u \in C^*(\Ss)\} =
\mmM_{d(\tT_0)}$). Further, there is $u_1 \in C^*(\Ss)$ such that $u(\tT_0) = A$ and $u(\tT_j) = 0$
for $j=1,\ldots,k$. Then $g \df u_0 u_1 \in C^*_0(\Ss,\kappa)$ is a map we searched for.
It immediately follows from this property that for any system $\tT_1,\ldots,\tT_k$ of mutually
disjoint irreducible elements of $\Ss$ different from $\kappa$ and any system $A_1,\ldots,A_k$
of matrices such that $d(A_j) = d(\tT_j)$ for each $j$, there exists a map $g \in C^*_0(\Ss,\kappa)$
such that $g(\tT_j) = A_j$ for any $j$.\par
First assume that $\sS_1 \not\equiv \sS_2$. Let $\aA_1^{(1)},\ldots,\aA_{p_1}^{(1)}$ and
$\aA_1^{(2)},\ldots,\aA_{p_2}^{(2)}$ be two systems of mutually disjoint irreducible elements
of $\Ss$ such that, for $\epsi=1,2$,
\begin{equation*}
\sS_{\epsi} = U_{\epsi} . \Bigl(\bigoplus_{j=1}^{p_{\epsi}} (\alpha_j^{(\epsi)} \odot
\aA_j^{(\epsi)})\Bigr)
\end{equation*}
for some positive integers $\alpha_1^{(\epsi)},\ldots,\alpha_{p_{\epsi}}^{(\epsi)}$ and a unitary
matrix $U_{\epsi}$. Since $\sS_1 \not\equiv \sS_2$, as in the proof of \COR{dist}, we may and do
assume (with no loss of generality) that either $\aA^{(1)}_1$ is disjoint from each of $\aA_k^{(2)}$
or $\aA_1^{(1)} \equiv \aA_1^{(2)}$ and $\alpha_1^{(1)} \neq \alpha_1^{(2)}$. In the former case
we deduce that there is a map $u \in C^*_0(\Ss,\kappa)$ such that
\begin{itemize}
\item either $u$ vanishes at any $\aA_k^{(2)}$ and $\aA_j^{(1)}$ for $j > 1$, and $u(\aA_1^{(1)})
 = I_{d(\aA_1^{(1)})}$ (this is possible when $\aA_1^{(1)} \neq \kappa$); or
\item $u(\aA_j^{(\epsi)}) = I_{d(\aA_j^{(\epsi)})}$ for any $(\epsi,j) \neq (1,1)$ and
 $u(\aA_1^{(1)}) = 0$ (this is possible when $\aA_1^{(1)} = \kappa$).
\end{itemize}
In both the above situations one easily deduces that $u(\sS_1) \neq u(\sS_2)$. In the latter case
(when $\aA_1^{(1)} \equiv \aA_1^{(2)}$ and $\alpha_1^{(1)} \neq \alpha_1^{(2)}$) we proceed
similarly: there is a map $u \in C^*_0(\Ss,\kappa)$ such that
\begin{itemize}
\item either $u$ vanishes at any $\aA_k^{(2)}$ and $\aA_j^{(1)}$ for $j > 1$ and $k > 1$, and
 $u(\aA_1^{(1)}) = u(\aA_1^{(2)}) = I_{d(\aA_1^{(1)})}$ (this is possible when $\aA_1^{(1)} \neq
 \kappa$); or
\item $u(\aA_j^{(\epsi)}) = I_{d(\aA_j^{(\epsi)})}$ for any $(\epsi,j) \notin \{(1,1),(2,1)\}$ and
 $u(\aA_1^{(1)}) = u(\aA_1^{(2)}) = 0$ (this is possible when $\aA_1^{(1)} = \kappa$).
\end{itemize}
Again, we see that $u(\sS_1) \neq u(\sS_2)$ in both the above situations.\par
Finally, we assume $\sS_1 \equiv \sS_2$. Then we can find two unitary matrices $U_1$ and $U_2$, and
finite systems $\aA_1,\ldots,\aA_p$ and $\alpha_1,\ldots,\alpha_p$ of, respectively, mutually
disjoint irreducible elements of $\Ss$ and positive integers such that $\sS_{\epsi} = U_{\epsi} .
(\bigoplus_{j=1}^p (\alpha_j \odot \aA_j))$ for $\epsi=1,2$. Since $\sS_1 \neq \sS_2$, we see that
$U_2^{-1} U_1 \notin \stab(\bigoplus_{j=1}^p (\alpha_j \odot \aA_j))$. Axioms (mT1)--(mT4) imply
that $\stab(\bigoplus_{j=1}^p (\alpha_j \odot \aA_j)) = \{\bigoplus_{j=1}^p (I_{d(\aA_j)} \otimes
V_j)\dd\ V_j \in \uuU_{\alpha_j}\}$. Now take a system $A_1,\ldots,A_p$ of mutually disjoint (that
is, mutually unitarily inequivalent) irreducible matrices such that $d(A_j) = d(\aA_j)$, and $A_j =
0$ provided $\aA_j = \kappa$. There exists $g \in C^*_0(\Ss,\kappa)$ such that $g(\aA_j) = A_j$ for
each $j$ (by the previous part of the proof). Then $g(\sS_1) \neq g(\sS_2)$ (cf.\ the proof
of \COR{dist}).\par
It is now clear that the assignment $\sS \mapsto \pi_{\sS}$ correctly defines a bijective morphism
$\Phi$ from $(\Ss,\kappa)$ onto $(\Zz(C^*_0(\Ss,\kappa)),\theta_{C^*_0(\Ss,\kappa)})$. Since $\Ss$
is proper, we infer that $\Phi$ is a homeomorphism, and we are done.
\end{proof}

\begin{cor}{CCR}
Every irreducible representation of a shrinking $C^*$-algebra is finite-dimensional.
\end{cor}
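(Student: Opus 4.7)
The plan is to reduce immediately to \THM{irr}, which already handles the case of $C^*_0(\Tt)$ for an elc m-tower $\Tt$. By \THM{shrink}, any shrinking $C^*$-algebra $\aaA$ is $*$-isomorphic to $C^*_0(\Ss,\kappa)$ for some solid pointed tower $(\Ss,\kappa)$, where $\Ss$ is a proper m-tower (hence elc). Since $C^*_0(\Ss,\kappa)$ is an ideal in $C^*_0(\Ss)$ (being, by definition, the intersection of the ideal $C^*(\Ss,\kappa) \subset C^*(\Ss)$ with $C^*_0(\Ss)$), I may pass the problem up to $C^*_0(\Ss)$.

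So, first I identify $\aaA$ with $C^*_0(\Ss,\kappa)$ via $J_{\aaA}$. Next I invoke the standard fact that every irreducible representation $\pi$ of an ideal $\JjJ$ in a $C^*$-algebra $\bbB$ either is the zero representation or extends uniquely to an irreducible representation of $\bbB$ on the same Hilbert space (consult, e.g., Proposition 2.10.4 in \cite{di2}). Applying this with $\JjJ = C^*_0(\Ss,\kappa)$ and $\bbB = C^*_0(\Ss)$, every irreducible representation $\pi$ of $\aaA$ extends to an irreducible representation $\tilde{\pi}$ of $C^*_0(\Ss)$ (the zero case is trivially finite-dimensional). Finally, \THM{irr} guarantees that $\tilde{\pi}$ is finite-dimensional, hence so is $\pi$.

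There is no real obstacle here: both key ingredients (\THM{shrink} and \THM{irr}) are already in hand, and the extension-of-representations-from-an-ideal fact is classical. The only minor subtlety is to remark that a nonzero irreducible representation of an ideal really does extend (equivalently, is nondegenerate as a representation of the ideal acting on its essential subspace), but this is precisely the content of the cited proposition of Dixmier. Thus the proof can be written in just a few lines.
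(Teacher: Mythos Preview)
Your proof is correct and follows essentially the same route as the paper: identify the shrinking algebra with $C^*_0(\Ss,\kappa)$ via \THM{shrink}, note this is an ideal in $C^*_0(\Ss)$, extend any nonzero irreducible representation to $C^*_0(\Ss)$, and conclude by \THM{irr}. The paper's version is slightly terser (it remarks that the ideal has codimension at most one rather than explicitly citing Dixmier's extension result), but the substance is the same.
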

\begin{proof}
We may think of a shrinking $C^*$-algebra as of $C^*_0(\Tt,\theta)$ (where $(\Tt,\theta)$ is a solid
pointed tower). We know from \THM{irr} that all irreducible representations of $C^*_0(\Tt)$ are
finite-dimensional. So, the assertion of the corollary follows from the fact that
$C^*_0(\Tt,\theta)$ either coincides with $C^*_0(\Tt)$ or is an ideal in $C^*_0(\Tt)$ of codimension
equal to $1$.
\end{proof}

\begin{cor}{sub-quo}
Quotients and $C^*$-subalgebras of shrinking $C^*$-algebras as well as direct products of their
collections \textup{(}of arbitrary size\textup{)} are shrinking $C^*$-algebras.
\end{cor}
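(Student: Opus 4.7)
The corollary splits into three independent statements, one for each operation; in each case the plan is to verify (SH1) and (SH2) directly, and the crucial external input will be \COR{CCR} (every irreducible representation of a shrinking $C^*$-algebra is finite-dimensional). For the quotient $\qqQ \df \aaA/\JjJ$, I would first observe that irreducible representations of $\qqQ$ correspond bijectively (by composition with the quotient map $q\dd \aaA \to \qqQ$) to irreducible representations of $\aaA$ vanishing on $\JjJ$, so by \COR{CCR} they are all finite-dimensional; since irreducible representations separate points in any $C^*$-algebra, this gives (SH1). For (SH2), I would take a sequence $(\pi_n)$ of irreducible representations of $\qqQ$ with $d(\pi_n) \to \infty$, form the lifts $\pi_n \circ q$ in $\aaA$ (which have the same dimensions), and transfer the decay of $\|(\pi_n \circ q)(a)\| \to 0$ to $\|\pi_n(b)\| \to 0$ by choosing any lift $a \in \aaA$ of a given $b \in \qqQ$.

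The subalgebra case is the main obstacle. For $\bbB \subset \aaA$, I would invoke Proposition~2.10.2 in \cite{di2}: every irreducible representation $\pi$ of $\bbB$ on a Hilbert space $H$ extends to an irreducible representation $\tilde\pi$ of $\aaA$ on some $K \supset H$ with $H$ invariant under $\tilde\pi(\bbB)$ and $\tilde\pi(b)\bigr|_H = \pi(b)$ for every $b \in \bbB$. By \COR{CCR}, $\tilde\pi$ is finite-dimensional, hence so is $\pi$, and $d(\pi) \leqsl d(\tilde\pi)$; this handles (SH1). For (SH2), a sequence $(\pi_n)$ in $\bbB$ with $d(\pi_n) \to \infty$ yields extensions $(\tilde\pi_n)$ in $\aaA$ with $d(\tilde\pi_n) \to \infty$, and then (SH2) applied to $\aaA$ gives $\|\tilde\pi_n(b)\| \to 0$, whence $\|\pi_n(b)\| \leqsl \|\tilde\pi_n(b)\| \to 0$.

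Finally, for direct products $\aaA = \bigoplus^{c_0}_{\alpha} \aaA_\alpha$ (using the $c_0$-interpretation consistent with the proof of \THM{irr}), (SH1) would follow at once by pulling finite-dimensional representations of each summand back through the canonical projections $q_\alpha\dd \aaA \to \aaA_\alpha$. For (SH2), I would first argue that every irreducible representation of $\aaA$ factors through exactly one $q_\alpha$, because the summands sit inside $\aaA$ as mutually annihilating ideals whose sum is dense. Writing any given sequence as $\pi_n = \rho_n \circ q_{\alpha_n}$ with $d(\rho_n) = d(\pi_n) \to \infty$, and fixing $a = (a_\alpha) \in \aaA$ and $\epsi > 0$, I would argue by dichotomy: either $\{n\dd \|a_{\alpha_n}\| \geqsl \epsi\}$ is finite (and then $\|\pi_n(a)\| \leqsl \|a_{\alpha_n}\| < \epsi$ eventually, using that $a$ lies in the $c_0$-sum), or some single index $\alpha_0$ appears infinitely often among the $\alpha_n$, in which case (SH2) for $\aaA_{\alpha_0}$ applied along that subsequence finishes the job. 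Beyond the appeal to Dixmier's extension theorem in the subalgebra case, all steps are essentially formal.
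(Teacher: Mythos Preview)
Your arguments for quotients and subalgebras are correct and coincide with the paper's: the paper also reduces both cases to \COR{CCR}, and for (SH2) in the subalgebra case the implicit mechanism is exactly the Dixmier extension you spell out (the paper already used Proposition~2.10.2 of \cite{di2} in the proof of \THM{irr}). One cosmetic difference: for (SH1) in the subalgebra case the paper simply restricts the separating family of finite-dimensional representations of $\aaA$ to $\bbB$, which is more direct than going through \COR{CCR}.

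For the direct product the paper takes a genuinely different and somewhat more general route. Instead of analysing the $c_0$-sum directly, it proves: if $\aaA$ contains a family $\{\JjJ_\xi\}$ of shrinking ideals such that $\aaA$ is the only ideal containing their union, then $\aaA$ is shrinking. The two ingredients are (a) every irreducible $\pi$ of $\aaA$ is nonzero on some $\JjJ_\xi$, hence $\pi\bigr|_{\JjJ_\xi}$ is irreducible and finite-dimensional by \COR{CCR}; and (b) for any sequence $\pi_n$ with $d(\pi_n)\to\infty$, the set $\{a\in\aaA: \|\pi_n(a)\|\to 0\}$ is a closed ideal containing each $\JjJ_\xi$, hence equals $\aaA$. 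This buys the parenthetical remark about composition series with subhomogeneous quotients, and avoids any case analysis.

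Your direct approach is fine in spirit but the dichotomy as written does not close. In your Case~2 you produce a single $\alpha_0$ occurring infinitely often and apply (SH2) for $\aaA_{\alpha_0}$ along that subsequence; this controls only that subsequence, not the full sequence. The clean fix is either to argue by contradiction (if $\|\pi_n(a)\|\not\to 0$, pass to a subsequence with $\|\pi_{n_k}(a)\|\geqsl\epsi$, note that the finitely many indices $\alpha$ with $\|a_\alpha\|\geqsl\epsi$ are then hit infinitely often, and extract a further subsequence with constant $\alpha_{n_k}=\alpha_0$ to contradict (SH2) for $\aaA_{\alpha_0}$), or to observe that for each $\epsi>0$ there are only finitely many indices $\alpha^{(1)},\ldots,\alpha^{(m)}$ with $\|a_{\alpha^{(j)}}\|\geqsl\epsi$ and handle each of the $m$ corresponding subsequences separately. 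Either way the repair is routine.
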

\begin{proof}
For quotients condition (SH1) follows from \COR{CCR}, whereas (SH2) is immediate. In the opposite,
for subalgebras (SH1) is trivial, whereas (SH2) is implied by \COR{CCR}. Instead of proving that
direct products of shrinking $C^*$-algebras are shrinking as well, we shall show much more general
property, which reads as follows. If a $C^*$-algebra $\aaA$ admits a collection of ideals
$\{\JjJ_{\xi}\}_{\xi\in\Xi}$ each of which is a shrinking $C^*$-algebra and $\aaA$ is a unique
(closed) ideal that includes their union, then $\aaA$ itself is shrinking. This property follows
from the following two facts:
\begin{itemize}
\item for each irreducible representation $\pi$ of $\aaA$ there exists $\xi \in \Xi$ such that
 $\pi|_{\JjJ_{\xi}}$ is an irreducible representation of $\JjJ_{\xi}$, and thus $\pi$ is
 finite-dimensional, by \COR{CCR} (this implies that $\aaA$ is residually finite-dimensional);
\item for any sequence $\pi_1,\pi_2,\ldots$ of irreducible representations of $\aaA$ such that
 $\lim_{n\to\infty} d(\pi_n) = \infty$, the set of all $a \in \aaA$ for which $\lim_{n\to\infty}
 \|\pi_n(a)\| = 0$ is a closed ideal that contains $\bigcup_{\xi\in\Xi} \JjJ_{\xi}$.
\end{itemize}
(In particular, if $\aaA$ coincides with the closure of $\bigcup_{n=0}^{\infty} \JjJ_n$ where
$\JjJ_0 = \{0\}$ and, for $n > 0$, $\JjJ_n \supset \JjJ_{n-1}$ is an ideal in $\aaA$ such that
$\JjJ_n / \JjJ_{n-1}$ is subhomogeneous, then $\aaA$ is shrinking.)
\end{proof}

\begin{cor}{shrink}
For a $C^*$-algebra $\aaA$ \tfcae
\begin{enumerate}[\upshape(i)]
\item $\aaA$ is shrinking;
\item there is a collection $\rrR$ of finite-dimensional representations of $\aaA$ that separate
 points of $\aaA$ and satisfy the following condition: if $\pi_1,\pi_2,\ldots \in \rrR$ are such
 that $\lim_{n\to\infty} d(\pi_n) = 0$, then $\lim_{n\to\infty} \pi_n(a) = 0$ for each $a \in \aaA$;
\item there is a collection $\jjJ$ of ideals in $\aaA$ such that:
 \begin{enumerate}[\upshape(s1)]
 \item $\bigcap_{\JjJ\in\jjJ} \JjJ = \{0\}$; and
 \item for any $\JjJ \in \jjJ$, the quotiont algebra $\aaA / \JjJ$ is shrinking; and
 \item for any $a \in \aaA$ and $\epsi > 0$ there is a finite set $\jjJ_0 \subset \jjJ$ such that
  $\|\pi_{\JjJ}(a)\| \leqsl \epsi$ for each $\JjJ \in \jjJ \setminus \jjJ_0$ where $\pi_{\JjJ}\dd
  \aaA \to \aaA / \JjJ$ is the canonical projection.
 \end{enumerate}
\end{enumerate}
\end{cor}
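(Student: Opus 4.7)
The plan is to prove the cycle (i)$\Rightarrow$(ii)$\Rightarrow$(iii)$\Rightarrow$(i), with \COR{sub-quo} (quotients, $C^*$-subalgebras and direct products of shrinking $C^*$-algebras are shrinking) as the main tool.

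For (i)$\Rightarrow$(ii) I would take $\rrR$ to be the family of all finite-dimensional irreducible representations of $\aaA$. Any finite-dimensional representation $\pi$ decomposes (up to unitary equivalence) as a direct sum of finite-dimensional irreducible ones---its image is a semisimple finite-dimensional $C^*$-algebra---so (SH1) implies that $\rrR$ separates points of $\aaA$. The decay condition required in (ii) is then exactly (SH2).

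For (ii)$\Rightarrow$(iii), given $\rrR$ as in (ii), I would set
\[
\JjJ_k \df \bigcap_{\pi \in \rrR,\ d(\pi) > k} \ker \pi \qquad (k = 0,1,2,\ldots)
\]
and put $\jjJ \df \{\JjJ_k\}_{k \geqsl 0}$. The canonical $*$-homomorphism $\aaA/\JjJ_k \to \prod_{\pi \in \rrR,\ d(\pi) > k} \mmM_{d(\pi)}$ is injective by definition of $\JjJ_k$, hence isometric, yielding the identification $\|a + \JjJ_k\| = \sup\{\|\pi(a)\| \dd \pi \in \rrR,\ d(\pi) > k\}$. The target is a direct product of subhomogeneous (hence shrinking) $C^*$-algebras, so it is shrinking by \COR{sub-quo}; consequently $\aaA/\JjJ_k$ is shrinking as a $C^*$-subalgebra of it, giving (s2). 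Property (s1) is immediate from $\rrR$ separating points, while (s3) follows because $\sup_{d(\pi) > k} \|\pi(a)\| \to 0$ as $k \to \infty$: otherwise one could extract a sequence $\pi_j \in \rrR$ with $d(\pi_j) \to \infty$ and $\|\pi_j(a)\|$ bounded away from zero, contradicting (ii).

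For (iii)$\Rightarrow$(i) I would observe that $a \mapsto (a + \JjJ)_{\JjJ \in \jjJ}$ is a well-defined $*$-homomorphism from $\aaA$ into $\prod_{\JjJ \in \jjJ} \aaA/\JjJ$ (since each $\|a + \JjJ\| \leqsl \|a\|$), injective by (s1), and therefore isometric. The target is shrinking by \COR{sub-quo} and (s2), so $\aaA$ is shrinking as a $C^*$-subalgebra of it, again by \COR{sub-quo}. I expect the only real (and quite mild) obstacle to be the bookkeeping in (ii)$\Rightarrow$(iii), specifically the identification of the quotient norm $\|a+\JjJ_k\|$ with the supremum of representation norms, which rests on the fact that injective $*$-homomorphisms between $C^*$-algebras are automatically isometric. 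It is also worth noting that property (s3) is not actually needed for (iii)$\Rightarrow$(i): (s1) and (s2) alone already exhibit $\aaA$ as a $C^*$-subalgebra of a shrinking product; however, (s3) is naturally provided by the construction used to derive (iii) from (ii) and makes (iii) a sharper reformulation of shrinking.
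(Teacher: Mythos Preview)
Your cycle is the right one, and (i)$\Rightarrow$(ii) is fine, but there is a genuine gap that affects both (ii)$\Rightarrow$(iii) and (iii)$\Rightarrow$(i): you are implicitly treating the full $\ell^\infty$-product of shrinking $C^*$-algebras as shrinking, and it is not. In this paper the phrase ``direct product'' (see the proof of \THM{irr}) means the $c_0$-sum, i.e.\ the algebra of families $(x_\xi)_\xi$ with $\|x_\xi\|\to 0$; \COR{sub-quo} is proved only for that object. The full product $\prod_{n\geqsl1}\mmM_n$ is a counterexample: it contains the unit $e=(I_n)_n$, the coordinate projections $p_n$ are irreducible of degree $n$, and $\|p_n(e)\|=1$ for all $n$, so (SH2) fails. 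Hence your final remark is false: condition (s3) is precisely what guarantees that $a\mapsto(a+\JjJ)_{\JjJ\in\jjJ}$ lands in the $c_0$-direct product, and without it (iii)$\Rightarrow$(i) does not go through.

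The same confusion breaks your (ii)$\Rightarrow$(iii). With $\JjJ_k=\bigcap_{d(\pi)>k}\ker\pi$ the embedding $\aaA/\JjJ_k\hookrightarrow\prod_{d(\pi)>k}\mmM_{d(\pi)}$ lands in the full product (there may be infinitely many $\pi\in\rrR$ of any fixed degree, so the image need not lie in the $c_0$-sum), and that full product is not shrinking, so (s2) is not established. Worse, $\JjJ_0=\{0\}$, so (s2) for $k=0$ is literally the statement that $\aaA$ is shrinking, which is what the whole corollary is about. The fix is the paper's choice: set $\JjJ_n=\bigcap\{\ker\pi:\pi\in\rrR,\ d(\pi)=n\}$. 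Then $\aaA/\JjJ_n$ embeds in a product of copies of $\mmM_n$, hence is $n$-subhomogeneous and a fortiori shrinking, giving (s2); (s1) is clear; and (s3) follows since $\|a+\JjJ_n\|=\sup_{d(\pi)=n}\|\pi(a)\|\to 0$ by the hypothesis in (ii). For (iii)$\Rightarrow$(i) one then uses (s3) to map $\aaA$ into the $c_0$-direct product of the $\aaA/\JjJ$ and invokes \COR{sub-quo} exactly as you intended.
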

\begin{proof}
First assume $\rrR$ is a collection as specified in (ii). For any $n > 0$, denote by $\JjJ_n$
the set of all points $a \in \aaA$ which each $n$-dimensional representations belonging to $\rrR$
vanishes at. It is easy to see that then the family $\jjJ \df \{\JjJ_n\dd\ n > 0\}$ satisfies all
conditions (s1)--(s3). So, (iii) follows from (ii). Now let $\jjJ$ be as specified in (i). Let
$\ddD$ denote the direct product of all algebras $\aaA / \JjJ$ (where $\JjJ$ runs over all elements
of $\jjJ$) and let $\Phi\dd \aaA \to \ddD$ be given by $\Phi(a) \df (\pi_{\JjJ}(a))_{\JjJ\in\jjJ}$.
Condition (s3) implies that $\Phi$ is a well defined $*$-homomorphism, whereas (s1) shows that
$\Phi$ is one-to-one. So, $\aaA$ is $*$-isomorphic to a $C^*$-subalgebra of the direct product
of shrinking $C^*$-algebras (by (s2)), and thus $\aaA$ itself is shrinking, thanks
to \COR{sub-quo}.\par
Since (ii) is trivially implied by (i), the proof is complete.
\end{proof}

\begin{rem}{extens}
Since all unital shrinking $C^*$-algebras are subhomogeneous, we see that the unitization
of a shrinking $C^*$-algebra is not shrinking in general. So, the class of shrinking $C^*$-algebras
is not closed under extensions.
\end{rem}

\begin{pro}{ext}
If $(\Ss,\theta)$ is a pointed subtower of a solid pointed tower $(\Tt,\theta)$, then each map
$u \in C^*_0(\Ss,\theta)$ extends to a map $v \in C^*_0(\Tt,\theta)$ whose norm is arbitrarily close
to $\|u\|$.
\end{pro}
\begin{proof}
Just apply \THM{ext}.
\end{proof}

The next three results are consequences of \THM{shrink} (and \PRO{ext}) and may be shown similarly
as their counterparts for subhomogeneous $C^*$-algebras (which were proved in Section~5) and
therefore their proofs are skipped and left to the reader.

\begin{cor}{id}
Let $(\Tt,\theta)$ be a solid pointed tower. For any ideal $\JjJ$ in $C^*_0(\Tt,\theta)$ there is
a unique pointed subtower $(\Ss,\theta)$ of $(\Tt,\theta)$ such that $\JjJ = \JjJ_{\Ss}$ where
$\JjJ_{\Ss}$ consists of all maps from $C^*_0(\Tt,\theta)$ that vanish at each point of $\Ss$.
\end{cor}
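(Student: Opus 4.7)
The plan is to mimic the proof of \COR{ideal} in the pointed setting, with \THM{SW} providing the muscle. First I would define $\Ss \df \{\tT \in \Tt\dd f(\tT) = 0 \textup{ for every } f \in \JjJ\}$. Since every $f \in \JjJ \subset C^*_0(\Tt,\theta)$ vanishes at $\theta$, one has $\theta \in \Ss$; closedness of $\Ss$ and the subtower axioms (unitary invariance and the $\oplus$-decomposition property) follow, as usual, from continuity and the compatibility of maps in $\JjJ$. Thus $(\Ss,\theta)$ is a pointed subtower, and $\JjJ \subset \JjJ_{\Ss}$ is immediate.

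For the reverse inclusion I would first invoke the classical $C^*$-algebraic fact that a closed ideal in a closed ideal is a closed ideal (proved by an approximate-unit argument); hence $\JjJ$ is actually a closed ideal of the larger algebra $C^*_0(\Tt)$, and \THM{SW} becomes available for $\EeE \df \JjJ$ viewed inside $C^*_0(\Tt)$. For each irreducible $\tT \in \Tt$ I would verify one of $(1_{\tT})$ or $(2_{\tT})$. If $\tT \in \Ss$ (which covers $\tT = \theta$), condition $(1_{\tT})$ holds by construction. Otherwise $\tT \neq \theta$, and solidness yields $h \in C^*_0(\Tt,\theta)$ with $h(\tT) \neq 0$; this forces $\tT \notin \Tt_{(0)}$, and then (since any $\zZ \in \Tt_{(0)}$ with $\tT \preccurlyeq \zZ$ would drag $\tT$ into $\Tt_{(0)}$) the element $\tT$ is disjoint from every point of $\Tt_{(0)}$. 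By \COR{nondeg}, the representation $\pi_{\tT}\dd C^*_0(\Tt) \to \mmM_{d(\tT)}$ is then irreducible and surjective, so $\JjJ(\tT) \df \{f(\tT)\dd f \in \JjJ\}$ is a nonzero ideal of $\mmM_{d(\tT)}$, and therefore equals $\mmM_{d(\tT)}$. In particular some $g_0 \in \JjJ$ satisfies $g_0(\tT) = I_{d(\tT)}$.

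For the separation requirement in $(2_{\tT})$, fix an irreducible $\sS \in \Tt$ disjoint from $\tT$. If $\sS = \theta$ or $\sS \in \Tt_{(0)}$, then $g_0(\sS) = 0$ automatically and $g \df g_0$ suffices. Otherwise $\sS$ and $\tT$ are two mutually disjoint irreducible elements of $\core(\Tt) \setminus \Tt_{(0)}$, and the surjectivity formula displayed in the proof of \COR{nondeg} produces $h \in C^*_0(\Tt)$ with $h(\tT) = I_{d(\tT)}$ and $h(\sS) = 0$; then $g \df h g_0 \in \JjJ$ (since $\JjJ$ is an ideal of $C^*_0(\Tt)$) does the job. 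With $(1_{\tT})$ or $(2_{\tT})$ established for every irreducible $\tT$, \THM{SW} identifies $\JjJ$ (which is closed in $C^*_0(\Tt)$) with the set of $u \in C^*_0(\Tt)$ satisfying $(*)$. For any $u \in \JjJ_{\Ss}$ this condition is trivial on $\Ss$ (take $v=0$) and, elsewhere, follows from $\JjJ(\xX) = \mmM_{d(\xX)}$; hence $\JjJ_{\Ss} \subset \JjJ$.

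For uniqueness, if pointed subtowers $\Ss_1, \Ss_2$ both satisfy $\JjJ = \JjJ_{\Ss_1} = \JjJ_{\Ss_2}$ but $\Ss_1 \not\subset \Ss_2$, I would pick an irreducible $\tT \in \Ss_1 \setminus \Ss_2$; necessarily $\tT \neq \theta$ and hence $\tT \notin \Tt_{(0)}$ by solidness. \PRO{ext} then supplies $u \in C^*_0(\Tt,\theta)$ vanishing on $\Ss_2$ with $u(\tT) \neq 0$, yielding $u \in \JjJ_{\Ss_2} \setminus \JjJ_{\Ss_1}$, a contradiction. The main technical subtlety to keep track of is the interplay between $\Tt_{(0)}$, the base point $\theta$, and the codimension-at-most-one inclusion $C^*_0(\Tt,\theta) \subset C^*_0(\Tt)$ during the verification of $(2_{\tT})$; solidness is exactly the hypothesis that prevents irreducible elements other than $\theta$ from lying in $\Tt_{(0)}$, and this is what makes the template of \COR{ideal} transfer with only cosmetic adjustments.
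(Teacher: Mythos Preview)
Your proof is correct and follows exactly the template the paper intends (it explicitly says the result ``may be shown similarly as'' its counterpart, which is \COR{ideal}). Two small comments. First, once you observe that $\JjJ$ is an ideal in $C^*_0(\Tt)$, you can invoke \COR{ideal} itself rather than re-running its proof via \THM{SW}: you obtain a unique subtower $\Ss \supset \Tt_{(0)}$ with $\JjJ = \JjJ_{\Ss}$ (computed in $C^*_0(\Tt)$); the inclusion $\JjJ \subset C^*_0(\Tt,\theta)$ forces $\theta \in \Ss$, and solidness (via \LEM{solid}) guarantees that conversely any pointed subtower automatically contains $\Tt_{(0)}$, so the two parametrizations match. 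Second, in your uniqueness paragraph the citation of \PRO{ext} is not quite right: that result extends maps \emph{from} a subtower, whereas you need a map on $\Tt$ vanishing on $\Ss_2$ but not at $\tT$. The fix is the same as in \COR{ideal}: take $v \in C^*(\Tt)$ from \THM{extend} (or \COR{dist}(B)) with $v\bigr|_{\Ss_2}=0$ and $v(\tT)=I_{d(\tT)}$, then multiply by some $w \in C^*_0(\Tt,\theta)$ with $w(\tT)\neq 0$ (which exists by solidness) to obtain $u = vw \in \JjJ_{\Ss_2} \setminus \JjJ_{\Ss_1}$.
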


\begin{cor}{*homo}
For every $*$-homomorphism $\Phi\dd C^*_0(\Tt,\theta) \to C^*_0(\Ss,\kappa)$ \textup{(}where
$(\Tt,\theta)$ and $(\Ss,\kappa)$ are solid pointed towers\textup{)} there exists a unique morphism
$\tau\dd (\Ss,\kappa) \to (\Tt,\theta)$ such that $\Phi = \Phi_{\tau}$ where $\Phi_{\tau}\dd
C^*_0(\Tt,\theta) \ni f \mapsto f \circ \tau \in C^*_0(\Ss,\kappa)$.\par
In particular, two shrinking $C^*$-algebras are isomorphic iff their pointed towers of all
finite-dimensional representations are isomorphic.
\end{cor}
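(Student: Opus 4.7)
The plan is to mimic the proof of \THM{homo1}, replacing \COR{nondeg} with the existence and uniqueness clause of \THM{shrink}. For each $\sS \in \Ss$, the map
\[
\rho_\sS\dd C^*_0(\Tt,\theta) \ni f \mapsto (\Phi(f))(\sS) \in \mmM_{d(\sS)}
\]
is a finite-dimensional representation of the shrinking algebra $C^*_0(\Tt,\theta)$, so by \THM{shrink} there exists a unique $\tT \in \Tt$ with $\rho_\sS = \pi_\tT$; I set $\tau(\sS) \df \tT$. By construction $\Phi(f) = f \circ \tau$ for every $f$, and uniqueness of $\tau$ satisfying this identity follows from the uniqueness of $\tT$ in \THM{shrink} applied pointwise.

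Next I would verify that $\tau$ is a morphism of pointed towers. Axiom (M1) is immediate since $d(\tau(\sS)) = d(\rho_\sS) = d(\sS)$. For (M2), a short calculation using compatibility of $\Phi(f) \in C^*_0(\Ss,\kappa)$ gives
\[
\rho_{U.\sS}(f) = (\Phi(f))(U.\sS) = U . (\Phi(f))(\sS) = U . f(\tau(\sS)) = f(U.\tau(\sS)),
\]
and uniqueness in \THM{shrink} forces $\tau(U.\sS) = U.\tau(\sS)$. Axiom (M3) follows analogously from the additivity of $\Phi(f)$. Since every $f \in C^*_0(\Tt,\theta)$ vanishes at $\theta$ and every $\Phi(f) \in C^*_0(\Ss,\kappa)$ vanishes at $\kappa$, the representations $\rho_\kappa$ and $\pi_\theta$ both send $f$ to $0$, so uniqueness yields $\tau(\kappa) = \theta$.

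The main obstacle is the continuity of $\tau$. Here I invoke the final clause of \THM{shrink}: the assignments $\sS \mapsto \pi_\sS$ and $\tT \mapsto \pi_\tT$ define homeomorphisms from $(\Ss,\kappa)$ onto $\Zz(C^*_0(\Ss,\kappa),\theta_{C^*_0(\Ss,\kappa)})$ and from $(\Tt,\theta)$ onto $\Zz(C^*_0(\Tt,\theta),\theta_{C^*_0(\Tt,\theta)})$, where the concrete pointed towers carry the pointwise convergence topology. If $\sS_\alpha \to \sS$ in $\Ss$, then for each $f \in C^*_0(\Tt,\theta)$, continuity of $\Phi(f)$ on each $\Ss_n$ gives
\[
f(\tau(\sS_\alpha)) = (\Phi(f))(\sS_\alpha) \longrightarrow (\Phi(f))(\sS) = f(\tau(\sS)).
\]
Hence $\pi_{\tau(\sS_\alpha)} \to \pi_{\tau(\sS)}$ pointwise on $C^*_0(\Tt,\theta)$, and transporting back through the homeomorphism above yields $\tau(\sS_\alpha) \to \tau(\sS)$ in $\Tt$.

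For the converse, given a morphism $\tau\dd (\Ss,\kappa) \to (\Tt,\theta)$, the map $\Phi_\tau(f) = f \circ \tau$ is clearly a compatible bounded $\mmM$-valued map whose restriction to each $\Ss_n$ is continuous and which vanishes at $\kappa$; that it lies in $C^*_0(\Ss,\kappa)$ rather than just $C^*(\Ss,\kappa)$ is the delicate point, and I would verify it by transporting to the concrete picture via \THM{shrink}, where $\Phi_\tau$ becomes simply the pullback of representations and hence automatically respects the shrinking structure. Finally, the ``in particular'' statement is immediate: a $*$-isomorphism $\Phi$ with inverse $\Phi^{-1}$ produces morphisms $\tau$ and $\tau'$ with $f = f \circ \tau \circ \tau'$ for every $f \in C^*_0(\Tt,\theta)$; separation of points of $\Tt$ by $C^*_0(\Tt,\theta)$ (a consequence of the injectivity of $J_{C^*_0(\Tt,\theta)}$ in \THM{shrink}) forces $\tau \circ \tau' = \id_\Tt$, and symmetrically $\tau' \circ \tau = \id_\Ss$, so $\tau$ is an isomorphism of pointed towers.
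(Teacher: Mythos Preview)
Your argument for the main direction is correct and is exactly what the paper intends: it explicitly says this result ``may be shown similarly as [its] counterparts for subhomogeneous $C^*$-algebras'' using \THM{shrink}, and you do precisely this, replacing the appeal to \COR{nondeg} in the proof of \THM{homo1} by the existence--uniqueness clause of \THM{shrink}. The verification of (M1)--(M3), of $\tau(\kappa)=\theta$, and of continuity via the isomorphism $\Tt\cong\Zz(C^*_0(\Tt,\theta))$ are all sound.

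Two remarks. First, note that the stated corollary, unlike \THM{homo1} and \COR{homo0}, contains \emph{no} converse: it does not assert that an arbitrary morphism $\tau\dd(\Ss,\kappa)\to(\Tt,\theta)$ yields a $*$-homomorphism $\Phi_\tau$ landing in $C^*_0(\Ss,\kappa)$. Your paragraph on the converse is therefore proving something not asked for, and your sketch there (``transport to the concrete picture'') is too vague to be convincing; indeed it is not clear that $f\circ\tau$ must lie in $C^*_0(\Ss)$ rather than merely $C^*(\Ss)$ for a general morphism $\tau$, since $\tau$ need not send $\core(\Ss)$ into $\core(\Tt)$. Second, your treatment of the ``in particular'' clause handles only the implication ($*$-isomorphic $\Rightarrow$ isomorphic towers). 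The reverse implication does need a converse, but only for \emph{isomorphisms} $\tau$, and there it is immediate: a tower isomorphism is a homeomorphism preserving all ingredients, so $f\mapsto f\circ\tau$ plainly carries $C^*_0(\Tt,\theta)$ bijectively onto $C^*_0(\Ss,\kappa)$; alternatively, invoke \THM{shrink} directly to write both algebras as $C^*_0$ of isomorphic towers. You should add a line to this effect.
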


\begin{cor}{sur}
Let $\tau\dd (\Ss,\kappa) \to (\Tt,\theta)$ be a morphism between two solid pointed m-towers.
The $*$-homomorphism $\Phi_{\tau}\dd C^*_0(\Tt,\theta) \to C^*_0(\Ss,\kappa)$ is surjective iff
$\tau$ is one-to-one. If this happens, $\tau(\Ss)$ is a subtower of $\Tt$ and $\tau$ is
an isomorphism from $(\Ss,\kappa)$ onto $(\tau(\Ss),\theta)$.
\end{cor}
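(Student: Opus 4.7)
The plan is to mimic the proof of \PRO{sur} almost verbatim, substituting \THM{shrink} for \THM{subh1} (to guarantee that $C^*_0(\Tt,\theta)$ separates points of $\Tt$) and \PRO{ext} for \THM{extend} (to furnish extensions of vanishing-at-infinity maps). The structural parallel is tight because in a solid pointed tower the pointed $C^*$-algebra plays the role that $C^*(\Tt)$ played in the subhomogeneous setting.

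I would begin with the ``only if'' direction. If $\Phi_\tau$ is surjective and $\sS_1\neq\sS_2$ in $\Ss$, \THM{shrink} produces $u \in C^*_0(\Ss,\kappa)$ with $u(\sS_1) \ne u(\sS_2)$; choosing $v \in C^*_0(\Tt,\theta)$ with $\Phi_\tau(v)=u$ yields $v(\tau(\sS_1)) \ne v(\tau(\sS_2))$, so $\tau$ is injective.

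Now suppose $\tau$ is injective. Axiom (M2) plus injectivity gives $\stab(\tau(\sS)) = \stab(\sS)$ for every $\sS$, so \LEM{core} yields $\tau(\core(\Ss)) \subset \core(\Tt)$. Since $\Ss$ is proper, $\tau$ restricted to each $\Ss_n$ is a continuous injection from a compact space into a Hausdorff space, hence a closed embedding; consequently $\tau$ is globally a closed embedding. To see that $\tau(\Ss)$ is a subtower of $\Tt$, the only nontrivial clause is \eqref{eqn:oplus}: assuming $\tau(\sS) = \aA \oplus \bB$, \PRO{pd} writes $\sS \equiv \bigoplus_{j=1}^n \sS_j$ with each $\sS_j \in \core(\Ss)$, whence $\aA \oplus \bB \equiv \bigoplus_{j=1}^n \tau(\sS_j)$ with each $\tau(\sS_j) \in \core(\Tt)$; \COR{pd} then realises $\aA$ and $\bB$ as unitary transforms of direct subsums of the $\tau(\sS_j)$'s, so both $\aA$ and $\bB$ lie in $\tau(\Ss)$. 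Since $\tau(\kappa) = \theta$, the pair $(\tau(\Ss),\theta)$ is a pointed subtower of $(\Tt,\theta)$, and $\tau$ is an isomorphism of pointed towers onto it.

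For the surjectivity of $\Phi_\tau$, take $u \in C^*_0(\Ss,\kappa)$ and set $u' \df u \circ \tau^{-1}$ on $\tau(\Ss)$. Because $\tau$ is a homeomorphism of $\Ss$ onto $\tau(\Ss)$ identifying $\kappa$ with $\theta$ and $\overline{\core}(\Ss)$ with $\overline{\core}(\tau(\Ss))$, the map $u'$ belongs to $C^*_0(\tau(\Ss),\theta)$. Applying \PRO{ext} to the pointed subtower $(\tau(\Ss),\theta)$ of the solid pointed tower $(\Tt,\theta)$ produces an extension $v \in C^*_0(\Tt,\theta)$, and then $\Phi_\tau(v) = v \circ \tau = u' \circ \tau = u$, finishing the argument.

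The main obstacle I expect is not conceptual but bookkeeping: I must verify carefully that $\tau$ identifies $\overline{\core}(\Ss)$ with $\overline{\core}(\tau(\Ss))$ so that the ``vanishing at infinity'' clause truly transfers, and that $(\tau(\Ss),\theta)$ satisfies the definition of a pointed subtower. Both follow from the fact that $\tau$ is a closed embedding preserving stabilizers and sending $\kappa$ to $\theta$, but this is the only spot where the pointed case requires slightly more attention than its subhomogeneous analogue.
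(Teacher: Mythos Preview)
Your proposal is correct and matches the paper's intended approach exactly: the paper omits the proof, stating only that this corollary (along with the two preceding it) ``may be shown similarly as their counterparts for subhomogeneous $C^*$-algebras'' using \THM{shrink} and \PRO{ext}, which is precisely the substitution you carry out when mimicking \PRO{sur}.
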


In the next two results we establish two typical properties.

\begin{pro}{weight}
For any solid pointed tower $(\Tt,\theta)$, the topological spaces $\Tt$ and $C^*_0(\Tt,\theta)$
have the same topological weight. In particular, $\Tt$ is metrizable iff $C^*_0(\Tt,\theta)$ is
separable.
\end{pro}
\begin{proof}
For simplicity, we shall denote the weight of a topological space $Y$ by $w(Y)$ (here the weights
are understood as infinite cardinal numbers, even for finite topological spaces). Let $\hat{\Tt} =
\Tt \sqcup \{\infty\}$ and $\ddD$ denote, respectively, a one-point compactification of $\Tt$ and
the direct product the $C^*$-algebras $\mmM_n$; that is, $\ddD$ consists of all sequences
$(X_n)_{n=1}^{\infty} \in \prod_{n=1}^{\infty} \mmM_n$ such that $\lim_{n\to\infty} \|X_n\| = 0$.
For each $A \in \mmM_k$, we denote by $A^{\#}$ the element $(X_n)_{n=1}^{\infty}$ of $\ddD$ such
that $X_k = A$ and $X_n = 0$ for $n \neq k$. Further, for any $u \in C^*_0(\Tt,\theta)$ we define
a function $\Psi(u)\dd \hat{\Tt} \to \ddD$ by $\Psi(u)(\tT) = (u(\tT))^{\#}$ for $\tT \in \Tt$ and
$\Psi(u)(\infty) = 0 \in \ddD$ (cf.\ the proof of \THM{SW}). In this way one obtains an isometric
$*$-homomorphism $\Psi\dd C^*_0(\Tt,\theta) \to C(\hat{\Tt},\ddD)$. Consequently,
$w(C^*_0(\Tt,\theta)) \leqsl w(C(\hat{\Tt},\ddD))$. But $w(C(\hat{\Tt},\ddD)) \leqsl w(\hat{\Tt})$,
thanks to Theorem~3.4.16 in \cite{eng} (because $\ddD$ is separable), and hence
$w(C^*_0(\Tt,\theta)) \leqsl w(\Tt) (= w(\hat{\Tt}))$. To prove the reverse inequality, take a dense
subset $\Lambda$ of the closed unit ball in $\aaA \df C^*_0(\Tt,\theta)$ whose cardinality does not
exceed $w(\aaA)$. Now it suffices to repeat the reasoning presented in the proof of \LEM{concr}
to conclude that $\Zz(\aaA)$ is isomorphic (and hence homeomorphic) to a standard tower $\ttT$
in $\mmM[\Lambda]$. One concludes that then $w(\Zz(\aaA)) = w(\ttT) \leqsl \card(\Lambda)$. Finally,
\THM{shrink} implies that $\Zz(\aaA)$ is homeomorphic to $\Tt$ and therefore $w(\Tt) = w(\Zz(\aaA))
\leqsl w(\aaA)$. The additional claim readily follows from the main part of the proposition.
\end{proof}

For the purpose of the next result, for any $\ueX = (X_1,\ldots,X_N) \in \mmM[\{1,\ldots,N\}]$,
we shall denote by $\|\ueX\|$ the number $\max(\|X_1\|,\ldots,\|X_N\|)$. Additionally, the subspace
of $\mmM[\Lambda]$ consisting of all $N$-tuples of selfadjoint matrices will be denoted
by $\mmM_s(\{1,\ldots,N\})$.

\begin{pro}{gener}
A shrinking $C^*$-algebra $\aaA$ is generated by $N$ elements \textup{(}resp.\ $N$ selfadjoint
elements\textup{)} iff $(\Zz(\aaA),\theta_{\aaA})$ is isomorphic to a standard pointed tower
$(\ttT,\theta_{\ttT}) \subset \mmM[\{1,\ldots,N\}]$ \textup{(}resp.\ $(\ttT,\theta_{\ttT}) \subset
\mmM_s[\{1,\ldots,N\}]$\textup{)} such that
\begin{equation*}
\lim_{n\to\infty} \sup\{\|\ueX\|\dd\ \ueX \in \core(\ttT) \cap \ttT_n\} = 0.
\end{equation*}
\end{pro}
\begin{proof}
First assume $x_1,\ldots,x_N \in \aaA$ are (selfadjoint) elements which have norms not greater than
$1$ and generate $\aaA$. We leave it as a simple exercise that then the set $\ttT \df \{(\pi(x_1),
\ldots,\pi(x_N))\dd\ \pi \in \Zz(\aaA)\} \subset \mmM[\{1,\ldots,N\}]$ contains $\theta_{\ttT} \df
(0,\ldots,0) \in \mmM_1^N$ and $(\ttT,\theta_{\ttT})$ is a standard tower isomorphic
to $(\Zz(\aaA),\theta_{\aaA})$ which has all desired properties.\par
Now assume $(\Zz(\aaA),\theta_{\aaA})$ is isomorphic to $(\ttT,\theta_{\ttT})$ where
$(\ttT,\theta_{\ttT})$ is as specified in the proposition. Then $\aaA$ is $*$-isomorphic
to $\ddD \df C^*_0(\ttT,\theta_{\ttT})$ (by \THM{shrink}) and the projections $p_1,\ldots,p_N\dd
\ttT \to \mmM$ onto respective coordinates belong to $\ddD$. It suffices to check that
the $*$-algebra $\EeE$ generated by $p_1,\ldots,p_N$ is dense in $\ddD$. To this end, fix two
disjoint irreducible elements $\ueX, \ueY \in \core(\ttT)$ different from $\theta_{\ttT}$. Since
$\WwW'(\ueX)$ contains only scalar multiples of the unit matrix, we infer from von Neumann's double
commutant theorem that $\mmM_{d(\ueX)}$ coincides with the smallest von Neumann algebra generated
by all entries of $\ueX$. Since $\ueX \neq \theta_{\ttT}$, it follows that the smallest $*$-algebra
containing all entries of $\ueX$ has codimension (in $\mmM_{d(\ueX)}$) not greater than $1$ and thus
is a (nonzero) ideal in $\mmM_{d(\ueX)}$. Consequently,
\begin{equation}\label{eqn:aux21}
\{p(\ueX)\dd\ p \in \EeE\} = \mmM_{d(\ueX)} \qquad (\textup{and similarly }
\{p(\ueY)\dd\ p \in \EeE\} = \mmM_{d(\ueY)}).
\end{equation}
Further, since $\ueX \perp \ueY$, the matrix $P \df I_{d(\ueX)} \oplus 0_{d(\ueY)}$ is a central
projection in $\WwW'(\ueX \oplus \ueY)$ (cf.\ the proof of \PRO{std2}). This implies that $P$
belongs to the smallest von Neumann algebra $\wwW$ generated by all entries of $\ueX$ and $\ueY$.
This property, combined with \eqref{eqn:aux21}, yields that $\wwW = \mmM_{d(\ueX\oplus\ueY)}$. So,
arguing as before, we infer that $\{p(\ueX \oplus \ueY)\dd\ p \in \EeE\} =
\mmM_{d(\ueX \oplus \ueY)}$, which implies that there is $p \in \EeE$ such that $p(\ueX) =
I_{d(\ueX)}$ and $p(\ueY) = 0$. Now it suffices to apply \THM{SW} to conclude that $\EeE$ is dense.
\end{proof}

The rest of the section is devoted to investigations of towers which are isomorphic to some
$\Zz(\aaA)$ where $\aaA$ runs over all shrinking $C^*$-algebras. We begin with a simple

\begin{lem}{solid}
For a proper m-tower $\Tt$ \tfcae
\begin{enumerate}[\upshape(i)]
\item there exists $\kappa \in \Tt_1$ such that $(\Tt,\kappa)$ is a solid pointed tower;
\item exactly one of the following two conditions holds:
 \begin{itemize}
 \item[(reg)] either $\Tt_1 \neq \varempty$ and $\Tt_{(0)} = \varempty$ \textup{(}in that case
  $C^*_0(\Tt,\theta)$ is a solid pointed tower and $C^*_0(\Tt) / C^*_0(\Tt,\theta)$ is
  one-dimensional for any $\theta \in \Tt_1$\textup{)}; or
 \item[(sng)] $\Tt_{(0)} = \{n \odot \kappa\dd\ n >0 \}$ for some $\kappa \in \Tt_1$
  \textup{(}in that case $C^*_0(\Tt) = C^*_0(\Tt,\kappa)$ and $\kappa$ is a unique point $\theta \in
  \Tt_1$ such that $(\Tt,\theta)$ is solid\textup{)}.
 \end{itemize}
\end{enumerate}
\end{lem}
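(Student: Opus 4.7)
The plan is to analyze the structure of the vanishing subtower $\Tt_{(0)}$. The starting observation is that $\Tt_{(0)}$ is itself a subtower (as noted in \DEF{zero}), so \PRO{pd} forces each of its elements to be equivalent to a direct sum of irreducibles all lying in $\Tt_{(0)}$; in particular $\Tt_{(0)}$ is completely determined by $\Tt_{(0)} \cap \core(\Tt)$. I will also use repeatedly that $C^*_0(\Tt)$ is an ideal of $C^*(\Tt)$ (immediate from the definition, since vanishing at infinity on $\overline{\core}(\Tt)$ persists under multiplication by bounded compatible maps) and that for any $\theta \in \Tt_1$ every irreducible $\tT \neq \theta$ satisfies $\tT \perp \theta$ (since $\uuU_1$ acts trivially on $\mmM_1$, a one-dimensional irreducible is equivalent only to itself, hence disjoint from any distinct irreducible).

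To prove (i) $\Rightarrow$ (ii), I would first observe that solidity of $(\Tt,\kappa)$ forces $\Tt_{(0)} \cap \core(\Tt) \subseteq \{\kappa\}$, because any irreducible $\tT \neq \kappa$ admits, by the definition of solid, some $g \in C^*_0(\Tt,\kappa) \subset C^*_0(\Tt)$ with $g(\tT) \neq 0$. Two subcases then arise. If $\kappa \notin \Tt_{(0)}$, the first-paragraph observation forces $\Tt_{(0)} = \varempty$, giving (reg). If $\kappa \in \Tt_{(0)}$, axiom (mT4) applied to the irreducible $\kappa$ with $p = d(\kappa) = 1$ gives $\stab(n \odot \kappa) = \{I_1 \otimes U\dd\ U \in \uuU_n\} = \uuU_n$; hence the $\uuU_n$-orbit of $n \odot \kappa$ collapses to $\{n \odot \kappa\}$, so every $\tT \in \Tt_{(0)}$, being equivalent to $n \odot \kappa$ for some $n$, must actually equal it, while compatibility of maps in $C^*_0(\Tt)$ shows each $n \odot \kappa$ lies in $\Tt_{(0)}$, yielding (sng). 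The two cases are mutually exclusive and exhaustive by construction.

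For (ii) $\Rightarrow$ (i), case (sng) is immediate: every $f \in C^*_0(\Tt)$ vanishes at $\kappa \in \Tt_{(0)}$, so $C^*_0(\Tt) = C^*_0(\Tt,\kappa)$, and for irreducible $\tT \neq \kappa$ the explicit description of $\Tt_{(0)}$ provides $g \in C^*_0(\Tt) = C^*_0(\Tt,\kappa)$ with $g(\tT) \neq 0$; the uniqueness of $\kappa$ follows by applying the already established direction to any competitor $\theta \in \Tt_1$ with $(\Tt,\theta)$ solid, which forces $\kappa \in \Tt_{(0)} \cap \Tt_1 = \{\theta\}$. For case (reg) I would pick any $\theta \in \Tt_1$ and, given irreducible $\tT \neq \theta$, use $\tT \perp \theta$ together with \THM{extend} (or part (B) of \COR{dist}) to produce $h \in C^*(\Tt)$ with $h(\tT) = I_{d(\tT)}$ and $h(\theta) = 0$, while $\Tt_{(0)} = \varempty$ supplies $f \in C^*_0(\Tt)$ with $f(\tT) \neq 0$; then $g \df hf$ lies in the ideal $C^*_0(\Tt)$, vanishes at $\theta$, and has $g(\tT) = f(\tT) \neq 0$, verifying solidity. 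The one-dimensional quotient claim is a direct consequence, as evaluation at $\theta$ is a surjective $*$-homomorphism $C^*_0(\Tt) \to \mmM_1 = \CCC$ with kernel $C^*_0(\Tt,\theta)$.

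The principal technical obstacle is precisely the last construction: \THM{extend} only delivers the separating function $h$ in the ambient algebra $C^*(\Tt)$, whereas solidity demands a witness in the smaller algebra $C^*_0(\Tt)$. The multiplicative trick $g \df hf$, made available by the ideal property of $C^*_0(\Tt)$ in $C^*(\Tt)$ and the existence of $f$ guaranteed by $\Tt_{(0)} = \varempty$, is exactly what bridges this gap.
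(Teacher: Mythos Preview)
Your proof is correct and follows essentially the same line as the paper's. The paper's argument is a two-sentence sketch: it observes that $\Ss_\theta \df \{n \odot \theta\dd\ n > 0\}$ is a subtower (via (mT4)) and that solidity of $(\Tt,\theta)$ forces $\Tt_{(0)} \subset \Ss_\theta$, then leaves the rest as an exercise; your proposal simply spells out those details, including the reverse implication and the parenthetical claims, using the same ingredients (the subtower structure of $\Tt_{(0)}$, axiom (mT4), and the ideal property of $C^*_0(\Tt)$ in $C^*(\Tt)$).
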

\begin{proof}
All we need to observe is that for any $\theta \in \Tt_1$, the set $\Ss_{\theta} \df \{n \odot
\theta\dd\ n > 0\}$ is a subtower of $\Tt$ (thanks to (mT4)), and that $\Tt_{(0)} \subset
\Ss_{\theta}$ provided $(\Tt,\theta)$ is solid. The assertion of the lemma may now readily be
derived. We leave the details as an exercise.
\end{proof}

\begin{dfn}{solid}
A tower $\Tt$ is said to be \textit{solid} if $\Tt$ is a proper m-tower and either $\Tt_{(0)}$ is
empty or $\Tt_{(0)} = \{n \odot \theta\}$ for some $\theta \in \Tt_1$. In the former case $\Tt$ is
called \textit{regular}, whereas in the latter \textit{singular}.
\end{dfn}

The usage of the adjectives ``regular'' and ``singular'' in the above naming shall be explained
in the next section.\par
These are easy observations that for each unital subhomogeneous $C^*$-algebra $\aaA$, the tower
$\Xx(\aaA)$ (of all unital finite-dimensional representations) is solid and regular; and that
the tower $\Zz(\ddD)$ of any shrinking $C^*$-algebra $\ddD$ is solid.\par
The following is an immediate consequence of \THM{shrink} and \LEM{solid} (we skip the proof).

\begin{cor}{uniq-solid}
If $\aaA$ is a shrinking $C^*$-algebra whose tower $\Zz(\aaA)$ is singular, then $\aaA$ is naturally
$*$-isomorphic to $C^*_0(\Zz(\aaA))$.
\end{cor}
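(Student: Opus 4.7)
The plan is to combine \THM{shrink} with the singular case of \LEM{solid} and deduce the result with essentially no additional work. By \THM{shrink}, the map $J_{\aaA}\dd \aaA \to C^*_0(\Zz(\aaA),\theta_{\aaA})$ is a $*$-isomorphism and $(\Zz(\aaA),\theta_{\aaA})$ is a solid pointed tower. So the whole task reduces to identifying $C^*_0(\Zz(\aaA),\theta_{\aaA})$ with $C^*_0(\Zz(\aaA))$ as subsets of the function algebra on $\Zz(\aaA)$.

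This identification is exactly what the singular branch (sng) of \LEM{solid} provides. Namely, under the hypothesis that $\Zz(\aaA)$ is singular, \DEF{solid} gives a point $\theta \in \Zz(\aaA)_1$ with $\Zz(\aaA)_{(0)} = \{n \odot \theta\dd\ n>0\}$; and the (sng) clause of \LEM{solid} asserts on the one hand that $C^*_0(\Zz(\aaA)) = C^*_0(\Zz(\aaA),\theta)$, and on the other hand that $\theta$ is the \emph{unique} element of $\Zz(\aaA)_1$ for which $(\Zz(\aaA),\theta)$ is solid. Since $(\Zz(\aaA),\theta_{\aaA})$ was already shown to be solid and $\theta_{\aaA} \in \Zz(\aaA)_1$ by construction, this uniqueness forces $\theta_{\aaA} = \theta$.

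Combining the last two observations yields $C^*_0(\Zz(\aaA),\theta_{\aaA}) = C^*_0(\Zz(\aaA))$, and so $J_{\aaA}$ is a $*$-isomorphism of $\aaA$ onto $C^*_0(\Zz(\aaA))$, which is the desired conclusion. There is no genuine obstacle here: the argument is a direct bookkeeping check, and the only point that requires any thought is invoking the uniqueness of the base point of a solid singular tower to align the distinguished element $\theta_{\aaA}$ coming from \THM{shrink} with the canonical point $\kappa$ produced by \LEM{solid}.
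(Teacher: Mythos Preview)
Your proof is correct and follows exactly the route the paper indicates: the paper states that the corollary ``is an immediate consequence of \THM{shrink} and \LEM{solid}'' and skips the proof, and you have correctly unpacked that immediate consequence, including the key observation that the uniqueness clause in (sng) of \LEM{solid} forces $\theta_{\aaA}$ to coincide with the distinguished point $\kappa$ coming from the singularity hypothesis.
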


For the purpose of the next result, let us call each set of the form $\core(\Tt) \cap
\bigcup_{k=N}^{\infty} \Tt_k$ (where $\Tt$ is a tower and $N > 0$) a \textit{tail} of the core.

\begin{pro}{T0}
Let $\Tt$ be a proper m-tower. For an element $\tT \in \Tt$ \tfcae
\begin{enumerate}[\upshape(i)]
\item $\tT \in \Tt_{(0)}$;
\item every subtower of $\Tt$ that includes a tail of the core contains $\tT$;
\item whenever $g \in C^*_0(\Tt)$ vanishes at each point of some tail of the core, then
 $g(\tT) = 0$.
\end{enumerate}
\end{pro}
\begin{proof}
First assume (ii) holds. Take arbitrary $g \in C^*_0(\Tt)$ and $\epsi > 0$. Observe that the set
$\{\sS \in \Tt\dd\ \|g(\sS)\| \leqsl \epsi\}$ is a subtower that includes a tail of the core, and
thus $\|g(\tT)\| \leqsl \epsi$. This proves (i). Since (iii) immediately follows from (i), we only
need to check that (ii) follows from (iii). We argue by a contradiction. Assume there is a subtower
$\Ss$ of $\Tt$ that includes a tail of the core and excludes $\tT$. Then there exists $\sS \in
\core(\Tt)$ such that $\sS \notin \Ss$ and $\sS \preccurlyeq \tT$. Now it follows from \COR{dist}
that there is $g \in C^*(\Tt)$ that vanishes at each point of $\Ss$, but not in $\sS$. Then also
$g(\tT) \neq 0$. Since then automatically $g \in C^*_0(\Tt)$, we obtain a contradiction with (iii).
\end{proof}

\begin{dfn}{T[n]}
For any tower $\Tt$ and a positive integer $n$, we denote by $\Tt[n]$ the smallest subtower of $\Tt$
that contains $\bigcup_{k=n}^{\infty} \Tt_k$. Additionally, we put $\Tt[\infty] \df
\bigcap_{n=1}^{\infty} \Tt[n]$.
\end{dfn}

\begin{cor}{T0}
For any proper m-tower $\Tt$, $\Tt_{(0)} = \Tt[\infty]$.
\end{cor}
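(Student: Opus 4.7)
The plan is to reduce the claim to Proposition~\ref{pro:T0}, which characterizes $\Tt_{(0)}$ as the intersection of all subtowers of $\Tt$ that contain some tail of the core. With that characterization in hand, both inclusions will be derived from the minimality definition of $\Tt[n]$.

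For the inclusion $\Tt_{(0)} \subseteq \Tt[\infty]$, I will observe that each $\Tt[n]$ is by definition a subtower containing $\bigcup_{k \geq n} \Tt_k$, and therefore in particular contains the $n$-th tail of the core $\core(\Tt) \cap \bigcup_{k \geq n} \Tt_k$. Proposition~\ref{pro:T0}(ii) then forces every $\tT \in \Tt_{(0)}$ to belong to $\Tt[n]$ for each $n$, and consequently $\tT \in \bigcap_n \Tt[n] = \Tt[\infty]$.

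For the reverse inclusion $\Tt[\infty] \subseteq \Tt_{(0)}$, I will fix $\tT \in \Tt[\infty]$ and use the characterization (ii) of Proposition~\ref{pro:T0} in the other direction: it suffices to show that $\tT$ lies in every subtower $\Ss$ of $\Tt$ which contains some tail of the core, say $\core(\Tt) \cap \bigcup_{k \geq N} \Tt_k$. The strategy is to argue the inclusion $\Tt[N] \subseteq \Ss$; once this is in hand, the conclusion $\tT \in \Tt[\infty] \subseteq \Tt[N] \subseteq \Ss$ follows at once. The inclusion $\Tt[N] \subseteq \Ss$ will be derived from the minimality of $\Tt[N]$ as the smallest subtower containing the prescribed set, by verifying that $\Ss$ itself is a subtower containing that generating set.

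The main obstacle is precisely this last verification: passing from the assumption that $\Ss$ contains only the irreducible $N$-th tail of the core to the stronger statement that $\Ss$ contains the full set generating $\Tt[N]$. Here I expect to use Proposition~\ref{pro:pd} on unique decomposition into irreducibles together with the biconditional closure of $\Ss$ under $\oplus$ prescribed by \eqref{eqn:oplus} (both combining and factoring), which should piece together the required elements of $\Ss$ from the irreducibles already present in the tail.
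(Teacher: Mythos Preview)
Your overall plan is exactly what the paper has in mind (its proof is the single line ``Just apply \PRO{T0}''), and the inclusion $\Tt_{(0)}\subset\Tt[\infty]$ is handled correctly. The trouble is with the reverse inclusion: the resolution you sketch for your ``main obstacle'' does not go through. If $\sS\in\Tt_k$ with $k\geqslant N$ is decomposed via \PRO{pd} as $\sS\equiv\bigoplus_j\tT_j$ with $\tT_j$ irreducible, the degrees $d(\tT_j)$ may all be strictly smaller than $N$ (take $\sS=N\odot\aA$ for any irreducible $\aA$ with $d(\aA)=1$). Those $\tT_j$ then lie outside the $N$-th tail of the core, so you cannot rebuild $\sS$ inside $\Ss$ from irreducibles already known to be in $\Ss$, and the inclusion $\bigcup_{k\geqslant N}\Tt_k\subset\Ss$ fails.

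In fact this same observation shows that, read literally, \DEF{T[n]} forces $\Tt[n]=\Tt$ for every $n$: for each irreducible $\aA$ one has $m\odot\aA\in\bigcup_{k\geqslant n}\Tt_k$ for $m$ large, and the biconditional \eqref{eqn:oplus} then drags $\aA$ into $\Tt[n]$. That would make the corollary false, so the obstacle you isolate is really a defect in \DEF{T[n]} rather than a gap in your reasoning. The paper's later uses of $\Tt[n]$ (the recursive description in \REM{T0}, the formula \eqref{eqn:TN}, and \PRO{solid}) make clear that the intended generating set is the $n$-th tail of the core (or of $\overline{\core}(\Tt)$), not all of $\bigcup_{k\geqslant n}\Tt_k$. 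Under that intended reading your strategy works with no extra effort: $\Tt[N]$ is then by definition the smallest subtower containing the $N$-th tail, hence $\Tt[N]\subset\Ss$ by minimality, and both inclusions follow directly from \PRO{T0}.
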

\begin{proof}
Just apply \PRO{T0}.
\end{proof}

\begin{pro}{solid}
A proper m-tower $\Tt$ is solid and regular iff for any $N > 0$ there is $n$ such that $\Tt[n] \cap
(\bigcup_{j=1}^N \Tt_j) = \varempty$.
\end{pro}
\begin{proof}
The `if' part follows from \COR{T0}. To show the `only if' part, assume $\Tt$ is solid and regular.
Using \COR{T0}, for any $\tT \in \bigcup_{j=1}^N \Tt_j$ take $\nu(\tT) > 0$ such that $\tT \in \Tt
\setminus \Tt[\nu(\tT)]$. Then, it follows from the compactness argument that there is a finite
system $\tT_1,\ldots,\tT_s$ of elements of $\Kk \df \bigcup_{j=1}^N \Tt_j$ for which $\Kk \subset
\bigcup_{k=1}^s (\Tt \setminus \Tt[\nu(\tT_k)])$. Now it suffices to define $n$ as the maximum
of $\nu(\tT_1),\ldots,\nu(\tT_s)$.
\end{proof}

Recall that if $(\Ss,\kappa)$ is a solid pointed tower, then $\Ss$ is solid as well. If $\Ss$ is
singular, $C^*_0(\Ss,\kappa)$ coincides with $C^*_0(\Ss)$ (by \COR{uniq-solid}). So, the case when
$\Ss$ is regular seems to be more interesting.

\begin{pro}{solid-solid}
If $\Tt$ is a regular solid tower and $(\Ss,\kappa)$ is a solid pointed tower such that $C^*_0(\Tt)$
is $*$-isomorphic to $C^*_0(\Ss,\kappa)$, then $\Ss$ is regular as well. What is more, there exists
$g \in C^*_0(\Ss)$ such that $g(\kappa) = I_1$ and $g$ vanishes at each point of $\core(\Ss)$
different from $\kappa$.
\end{pro}
\begin{proof}
(It follows from \PRO{shrink} and \THM{shrink} that such a pointed tower $(\Ss,\kappa)$ exists.)
Observe that $\Ss$ is regular if $\kappa \notin \Ss_{(0)}$. Let $\Psi\dd C^*_0(\Ss,\kappa) \to
C^*_0(\Tt)$ be a $*$-isomorphism. For any $\tT \in \Tt$ there exists a unique point $\tau(\tT) \in
\Ss$ such that $\Psi(u)(\tT) = u(\tau(\tT))$. It follows from \COR{nondeg} and the second part
of \THM{shrink} (and the fact that $\Tt$ is proper) that $\tau\dd \Tt \to \Ss$ is a one-to-one
morphism. We conclude that $\Tt' \df \tau(\Tt)$ is a subtower of $\Ss$ (see the proof of \PRO{sur}).
Now \COR{irr} shows that $\Tt'$ includes $\core(\Ss) \setminus \{\kappa\}$. Finally, it follows from
\COR{dist} that there is $g \in C^*(\Ss)$ that vanishes at each point of $\Tt'$ and sends $\kappa$
onto $I_1$. Then automatically $g \in C^*_0(\Ss)$ and hence $\kappa \notin \Ss_{(0)}$.
\end{proof}

\begin{rem}{T0}
\COR{T0} and \LEM{sub} enable giving a transparent description of $\Tt[N]$ for each $N > 0$ as well
as $\Tt_{(0)}$. Namely, when $N > 0$ is fixed, for each $n \geqsl N$, put $\Dd_N(n) \df \Tt_n \cap
\overline{\core}(\Tt)$. Further, if $k < N$ is such that $\Dd_N(n)$ is defined for all $n > k$, let
$\Dd_N(k)$ be the closure of all $\tT \in \core(\Tt) \cap \Tt_k$ for which there is $\sS \in
\bigcup_{n>k} \Dd_N(n)$ with $\tT \preccurlyeq \sS$. In this way we define $\Dd_N(n-1),\ldots,
\Dd_N(1)$. It follows from the construction that all sets $\Dd_N \df \bigcup_{n=1}^{\infty}
\Dd_N(n)$ as well as $\Dd_{\infty} \df \bigcap_{N=1}^{\infty} \Dd_N$ satisfy the assumption
of \LEM{sub}, from which one readily infers that $\Tt[N] = \grp{\Dd_N}$ and $\Tt_{(0)} =
\grp{\Dd_{\infty}}$ (see \eqref{eqn:subtower} and \COR{T0}).
\end{rem}

\section{Shrinking multiplier algebras}

In this section we would like to propose a new concept, specific only for shrinking $C^*$-algebras,
the so-called \textit{shrinking multiplier algebras} of shrinking $C^*$-algebras. We assume that
the reader is familiar with the basics on multiplier algebras of (arbitrary) $C^*$-algebras
(consult, e.g., \S{}II.7.3 in \cite{bla}). For any $C^*$-algebra $\aaA$, $M(\aaA)$ will stand for
the multiplier algebra of $\aaA$. We shall think of $\aaA$ as an ideal in $M(\aaA)$. Additionally,
for any nonzero irreducible representation $\pi$ of $\aaA$, we shall denote by $\tilde{\pi}$
the unique extension of $\pi$ to an irreducible representation of $M(\aaA)$ (see Proposition~2.10.4
in \cite{di2}).

\begin{dfn}{multi}
For any shrinking $C^*$-algebra $\aaA$, the \textit{shrinking multiplier algebra} of $\aaA$, denoted
by $S(\aaA)$, is the set of all points $x \in M(\aaA)$ such that $\lim_{n\to\infty} \tilde{\pi}_n(x)
= 0$ for any sequence $\pi_1,\pi_2,\ldots$ of irreducible representations of $\aaA$ with
$\lim_{n\to\infty} d(\pi_n) = \infty$. It is easy to see that $S(\aaA)$ is a (closed) ideal
in $M(\aaA)$.
\end{dfn}

Our first aim is to show that $S(\aaA)$ is indeed shrinking (provided $\aaA$ is so). To this end,
we need

\begin{lem}{multi}
Let $(\Tt,\theta)$ be a solid pointed tower, $\aaA = C^*_0(\Tt,\theta)$ and $\Dd = \core(\Tt)
\setminus \{\theta\}$. There exists a one-to-one $*$-homomorphism $\Phi\dd M(\aaA) \to
C^*_{\Tt}(\Dd)$ such that $\Phi(f) = f\bigr|_{\Dd}$ for any $f \in \aaA$.
\end{lem}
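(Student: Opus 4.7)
The plan is to build $\Phi$ from the unique extensions of nondegenerate irreducible representations of $\aaA$ to $M(\aaA)$. For each $\tT \in \Dd$, the map $\pi_{\tT}\dd \aaA \ni a \mapsto a(\tT) \in \mmM_{d(\tT)}$ is a nonzero irreducible representation of $\aaA$; this is clear in the singular case (where $\aaA = C^*_0(\Tt)$ and we can invoke \COR{irr}), and in the regular case the identification $\Tt \cong \Zz(\aaA)$ supplied by \THM{shrink} shows that the nonzero irreducible finite-dimensional representations of $\aaA$ are precisely the $\pi_{\tT}$ with $\tT \in \Dd$. By Burnside, $\pi_{\tT}$ is surjective onto $\mmM_{d(\tT)}$, hence nondegenerate, so it extends uniquely to an irreducible representation $\tilde{\pi}_{\tT}\dd M(\aaA) \to \mmM_{d(\tT)}$ (Proposition~2.10.4 in \cite{di2}). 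I would then set
\[
\Phi(x)(\tT) \df \tilde{\pi}_{\tT}(x), \qquad x \in M(\aaA),\ \tT \in \Dd.
\]

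The bound $\|\Phi(x)(\tT)\| \leqsl \|x\|$ and the degree condition are immediate. Since $\Dd$ consists solely of irreducible elements, the compatibility condition for $\Phi(x)$ reduces to $\Phi(x)(U . \tT) = U . \Phi(x)(\tT)$ for $\tT \in \Dd$ and $U \in \uuU_{d(\tT)}$. This follows from the identity $\pi_{U . \tT}(a) = a(U . \tT) = U . a(\tT) = U . \pi_{\tT}(a)$ for $a \in \aaA$, combined with the uniqueness of the extension, which forces $\tilde{\pi}_{U . \tT} = U . \tilde{\pi}_{\tT}$. The main obstacle is the continuity of $\Phi(x)$ on each slice $\Dd \cap \Tt_n$; I would handle it locally. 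Given $\tT_0 \in \Dd \cap \Tt_n$, the surjectivity of $\pi_{\tT_0}$ produces $a \in \aaA$ with $a(\tT_0) = I_n$, and the continuity of $a$ on $\Tt_n$ provides a neighbourhood $V$ of $\tT_0$ in $\Dd \cap \Tt_n$ on which $a(\tT)$ remains invertible in $\mmM_n$. On $V$, the identity
\[
\Phi(x)(\tT) \cdot a(\tT) = \tilde{\pi}_{\tT}(xa) = (xa)(\tT)
\]
(valid because $xa \in \aaA$, and $\tilde{\pi}_{\tT}$ extends $\pi_{\tT}$) yields $\Phi(x)(\tT) = (xa)(\tT) \cdot a(\tT)^{-1}$, a continuous matrix-valued function.

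Since $\tilde{\pi}_{\tT}(a) = \pi_{\tT}(a) = a(\tT)$ for $a \in \aaA$, we have $\Phi(a) = a\bigr|_{\Dd}$; it is routine that $\Phi$ is a $*$-homomorphism. For injectivity, suppose $\Phi(x) = 0$. Then for every $a \in \aaA$ and every $\tT \in \Dd$,
\[
(xa)(\tT) = \tilde{\pi}_{\tT}(x)\pi_{\tT}(a) = 0,
\]
while $(xa)(\theta) = 0$ automatically, because $xa \in \aaA = C^*_0(\Tt,\theta)$. Thus $xa$ vanishes on $\core(\Tt) = \Dd \cup \{\theta\}$, so \LEM{norm} forces $xa = 0$. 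As $\aaA$ is an essential ideal of $M(\aaA)$ (by definition of the multiplier algebra), this gives $x = 0$, completing the proof.
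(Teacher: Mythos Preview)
Your proof is correct and follows essentially the same route as the paper's: define $\Phi(z)(\tT)$ as the unique extension of the point evaluation $\pi_{\tT}$ to $M(\aaA)$ (the paper realizes this via an approximate-unit limit, you via Proposition~2.10.4 of \cite{di2}), then establish continuity near each $\tT_0 \in \Dd$ using an element $a \in \aaA$ with $a(\tT_0) = I_{d(\tT_0)}$. Your continuity step is marginally slicker---you invert $a(\tT)$ directly to write $\Phi(x)(\tT) = (xa)(\tT)\,a(\tT)^{-1}$, whereas the paper invokes \THM{extend} to manufacture an exact local unit---but the underlying idea and the injectivity argument are identical.
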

\begin{proof}
Fix $z \in M(\aaA)$. Since $\aaA$ has an approximate unit (and is an ideal in $M(\aaA)$),
we conclude that there exists a uniformly bounded net $\{v_{\sigma}\}_{\sigma\in\Sigma} \subset
\aaA$ such that $z u = \lim_{\sigma\in\Sigma} v_{\sigma} u$ for any $u \in \aaA$. For any $\tT \in
\Dd$ there is $u_{\tT} \in \aaA$ with $u_{\tT}(\tT) = I_{d(\tT)}$. Then $(z u_{\tT})(\tT) =
\lim_{\sigma\in\Sigma} v_{\sigma}(\tT)$. This shows that the formula $f_z(\tT) \df
\lim_{\sigma\in\Sigma} v_{\sigma}(\tT)$ correctly defines a (necessarily bounded and compatible)
function $f_z\dd \Dd \to \mmM$ such that
\begin{equation}\label{eqn:left}
(z u)(\tT) = f_z(\tT) u(\tT) \qquad (\tT \in \Dd).
\end{equation}
It is also clear that $f_z = z\bigr|_{\Dd}$ for each $z \in \aaA$. We shall now show that $f_z$ is
continuous. To this end, fix $\tT \in \Dd$. It follows from \LEM{core} that $\Dd$ is open in $\Tt$.
So, there is a compact unitarily invariant neighbourhood $\Uu$ of $\tT$ contained in $\Tt_{d(\tT)}
\cap \Dd$ such that $\|u_{\tT}(\sS) - I_{d(\tT)}\| \leqsl \frac12$ for any $\sS \in \Uu$. Then $\Uu$
is a semitower and thus, by \THM{extend}, there is a map $w \in C^*(\Tt)$ such that $w(\sS) =
(u_{\tT}(\sS))^{-1}$ for any $\sS \in \Uu$. Observe that $u \df w \cdot u_{\tT}$ belongs to $\aaA$
and $u(\sS) = I_{d(\tT)}$ for any $\sS \in \Uu$. We now infer from \eqref{eqn:left} that
$f_z$ is continuous on $\Uu$. So, $f_z \in C^*_{\Tt}(\Dd)$. We put $\Phi\dd M(\aaA) \ni z \mapsto
f_z \in C^*_{\Tt}(\Dd)$. It follows from \eqref{eqn:left} that $\Phi$ is linear and multiplicative.
It is also one-to-one, because $\|z\| = \sup\{\|zu\|\dd\ u \in \aaA,\ \|u\| \leqsl 1\}$ for any
$z \in M(\aaA)$. Finally, for any $z \in M(\aaA)$ and $\tT \in \Dd$,
\begin{equation*}
f_{z^*}(\tT) = (u^*_{\tT} z^* u^*_{\tT})(\tT) = (u_{\tT} z u_{\tT})^*(\tT) =
((u_{\tT} z u_{\tT})(\tT))^* = (f_z(\tT))^*,
\end{equation*}
which shows that $f_{z^*} = f_z^*$, and we are done.
\end{proof}

\begin{cor}{multi}
Let $\aaA$ be a shrinking $C^*$-algebra. Then:
\begin{enumerate}[\upshape(a)]
\item $M(\aaA)$ is residually finite-dimensional;
\item if $\aaA$ is $n$-subhomogeneous, so is $M(\aaA)$ and $M(\aaA) = S(\aaA)$;
\item $S(\aaA)$ is shrinking;
\item $S(S(\aaA)) = S(\aaA)$ and $M(S(\aaA)) = M(\aaA)$.
\end{enumerate}
\end{cor}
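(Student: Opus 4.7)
The plan is to exploit the one-to-one $*$-homomorphism $\Phi\dd M(\aaA) \to C^*_{\Tt}(\Dd)$ supplied by \LEM{multi}, where $(\Tt,\theta) \df (\Zz(\aaA),\theta_{\aaA})$ and $\Dd \df \core(\Tt) \setminus \{\theta\}$. First I would establish the identification $\Phi(z)(\tT) = \tilde{\pi}_{\tT}(z)$ for every $z \in M(\aaA)$ and $\tT \in \Dd$: choose $u_{\tT} \in \aaA$ with $u_{\tT}(\tT) = I_{d(\tT)}$ (as in the proof of \LEM{multi}), apply $\pi_{\tT}$ to $z u_{\tT}$, and use \eqref{eqn:left}. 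Item (a) is then immediate, since the family $\{\tilde{\pi}_{\tT}\}_{\tT \in \Dd}$ consists of finite-dimensional representations of $M(\aaA)$ separating points (by injectivity of $\Phi$).

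For (b), $n$-subhomogeneity of $\aaA$ forces $d(\tT) \leqsl n$ for every $\tT \in \Dd$, so each $\tilde{\pi}_{\tT}$ has dimension at most $n$, and since they separate points, $M(\aaA)$ is $n$-subhomogeneous. Moreover, no sequence $\pi_1, \pi_2, \ldots$ of irreducible representations of $\aaA$ with $d(\pi_n) \to \infty$ can exist, so the condition in \DEF{multi} is vacuous and $S(\aaA) = M(\aaA)$.

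For (c), I would invoke item (ii) of \COR{shrink} applied to $\rrR \df \{\tilde{\pi}_{\tT}\bigr|_{S(\aaA)}\dd\ \tT \in \Dd\}$: these are finite-dimensional representations of $S(\aaA)$ that separate its points (by the same argument as in (a) restricted to $S(\aaA) \subset M(\aaA)$), and the required asymptotic vanishing $\tilde{\pi}_{\tT_n}(z) \to 0$ (whenever $d(\tT_n) \to \infty$) for $z \in S(\aaA)$ is precisely the defining condition of $S(\aaA)$.

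For (d), first I would prove $M(S(\aaA)) = M(\aaA)$. The inclusion $M(\aaA) \subset M(S(\aaA))$ comes from a direct estimate: for $z \in M(\aaA)$ and $y \in S(\aaA)$ one has $\|\tilde{\pi}_{\tT}(zy)\| \leqsl \|z\| \cdot \|\tilde{\pi}_{\tT}(y)\|$, which tends to $0$ as $d(\tT) \to \infty$, so $zy \in S(\aaA)$ (similarly $yz \in S(\aaA)$). Hence $S(\aaA)$ is an ideal in $M(\aaA)$ containing $\aaA$ as an essential ideal, and the universality of the multiplier algebra (Proposition~2.10.4 in \cite{di2}) yields the reverse embedding $M(S(\aaA)) \hookrightarrow M(\aaA)$. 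Having identified $M(S(\aaA)) = M(\aaA)$, the inclusion $S(\aaA) \subset S(S(\aaA))$ follows from the shrinking property of $S(\aaA)$ established in (c), because $\tilde{\rho}(z) = \rho(z)$ for $z \in S(\aaA)$ and any irreducible representation $\rho$ of $S(\aaA)$; conversely, $S(S(\aaA)) \subset S(\aaA)$ follows by taking any irreducible $\pi_n$ of $\aaA$ with $d(\pi_n) \to \infty$ and setting $\rho_n \df \tilde{\pi}_n\bigr|_{S(\aaA)}$---these are irreducible representations of $S(\aaA)$ (as $\tilde{\pi}_n$ is irreducible on $M(\aaA)$ and nonzero on the ideal $S(\aaA)$), whose unique extensions satisfy $\tilde{\rho}_n = \tilde{\pi}_n$, so $\tilde{\pi}_n(z) = \tilde{\rho}_n(z) \to 0$ for any $z \in S(S(\aaA))$. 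The most delicate point in the whole plan is the bookkeeping in (d): making sure that the uniqueness of irreducible extensions is invoked consistently across the tower of essential inclusions $\aaA \subset S(\aaA) \subset M(\aaA) = M(S(\aaA))$.
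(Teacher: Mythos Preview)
Your proposal is correct and follows essentially the same line as the paper: embed $M(\aaA)$ into $C^*_{\Tt}(\Dd)$ via \LEM{multi}, identify $\Phi(z)(\tT)$ with $\tilde{\pi}_{\tT}(z)$, and read off (a)--(c) (the paper too invokes condition (ii) of \COR{shrink} for (c)). For (d) the paper constructs the map $M(S(\aaA)) \ni z \mapsto b_z \in M(\aaA)$ by hand via the essentiality of $\aaA$---which is exactly the universality embedding you invoke---and then simply says ``we readily get (d)'' for $S(S(\aaA)) = S(\aaA)$; your two-inclusion argument makes that last step explicit but is the natural unpacking of what the paper leaves to the reader.
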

\begin{proof}
It follows from \THM{shrink} that $\aaA$ is $*$-isomorphic to $C^*_0(\Tt,\theta)$ for some solid
pointed tower $(\Tt,\theta)$. So, we may and do assume that $\aaA = C^*_0(\Tt,\theta)$. Let $\Dd$
and $\Phi\dd M(\aaA) \to C^*_{\Tt}(\Dd)$ be as specified in \LEM{multi}. Since $C^*_{\Tt}(\Dd)$ is
readily seen to be residually finite-dimensional, we get (a). Point (b) is well-known and follows
from the same argument (if $\aaA$ is $n$-subhomogeneous, then $\core(\Tt) \subset \bigcup_{k=1}^n
\Tt_n$ and, consequently, $C^*_{\Tt}(\Dd)$ is $n$-subhomogeneous; it is also trivial that $S(\aaA) =
M(\aaA)$). Now we turn to (c). We shall identify $M(\aaA)$ with $\ddD \df \Phi(\aaA) \subset
C^*_{\Tt}(\Dd)$. Fix $\tT \in \Tt$ and denote by $e_{\tT}$ and $\pi_{\tT}$ the representations
of $\ddD$ and $\aaA$, respectively, which assign to each map $u$ its value at $\tT$. We see that
$e_{\tT}$ is irreducible. So, we infer that $e_{\tT}$ coincides with $\tilde{\pi}_{\tT}$, and hence
$u \in S(\aaA)$ iff $\lim_{n\to\infty} \|e_{\tT_n}(u)\| = 0$ for any sequence $\tT_n \in \Dd$ such
that $\lim_{n\to\infty} d(\tT_n) = \infty$. This shows that the collection $\rrR = \{e_{\tT}\dd \tT
\in \Dd\}$ satisfies condition (ii) of \COR{shrink} (with $S(\aaA)$ in place of $\aaA$).
Consequently, $S(\aaA)$ is shrinking. Finally, $\ddD$ may naturally be identified with $M(S(\aaA))$,
which follows from the fact that $\aaA \subset S(\aaA) \subset M(\aaA)$ and $S(\aaA)$ is an ideal
in $M(\aaA)$, and may be shown as follows. For any $z \in M(S(\aaA))$ denote by $b_z$ a unique point
of $M(\aaA)$ such that $z u = b_z u$ for any $u \in \aaA$. Then $b_z v \in S(\aaA)$ for each $v \in
S(\aaA)$ and hence $z v = b_z v$ (for any $v \in S(\aaA)$), because $\aaA$ is an essential ideal
of $S(\aaA)$. So, $z \mapsto b_z$ defines a $*$-isomorphism of $M(S(\aaA))$ onto $M(\aaA)$ which
leaves the points of $S(\aaA)$ fixed.\par
Knowing that $M(S(\aaA)) = \ddD$, we readily get (d).
\end{proof}

\begin{dfn}{reg-sng}
A shrinking $C^*$-algebra $\aaA$ is \textit{regular} (resp.\ \textit{singular}) if $\Zz(\aaA)$ is
so. $\aaA$ is said to be \textit{closed} if $S(\aaA) = \aaA$.
\end{dfn}

As for (full) multiplier algebras, we have

\begin{pro}{ext-multi}
If $\aaA$ is a shrinking $C^*$-algebra and $\JjJ$ is an essential ideal in $\aaA$, then there exists
a unique one-to-one $*$-homomorphism $\Phi\dd \aaA \to S(\JjJ)$ which leaves the points of $\JjJ$
fixed.
\end{pro}
\begin{proof}
There exists a unique one-to-one $*$-homomorphism $\Phi\dd \aaA \to M(\JjJ)$ which leaves the points
of $\JjJ$ fixed. It suffices to show that $\Phi(\aaA) \subset S(\aaA)$, which simply follows from
the definition of $S(\aaA)$ and the fact that $\aaA$ is shrinking (and $\Phi$ is isometric).
\end{proof}

The above result asserts that $S(\aaA)$ is the largest shrinking $C^*$-algebra that contains $\aaA$
as an essential ideal.

\begin{pro}{closed}
For any regular solid tower $\Tt$, $M(C^*_0(\Tt))$ is $*$-isomorphic to $C^*(\Tt)$. In particular,
$C^*_0(\Tt)$ is closed.
\end{pro}
\begin{proof}
It suffices to prove the first claim. The proof of \COR{multi} shows that for any $z \in
M(C^*_0(\Tt))$ there is a uniformly bounded net $\{v_{\sigma}\}_{\sigma\in\Sigma} \subset
C^*_0(\Tt)$ such that $z u = \lim_{\sigma\in\Sigma} v_{\sigma} u$ for any $u \in C^*_0(\Tt)$. Since
$\Tt_{(0)} = \varempty$, for any $\tT \in \Tt$ there is $u_{\tT} \in C^*_0(\Tt)$ with $u_{\tT}(\tT)
= I_{d(\tT)}$. We conclude that the limit $f_z(\tT) \df \lim_{\sigma\in\Sigma} v_{\sigma}(\tT)$
exists. In this way we have obtained a bounded compatible function $f\dd \Tt \to \mmM$. Arguing
similarly as in the proof of \COR{shrink}, we see that for each $\tT \in \Tt$, there is a map $v \in
C^*_0(\Tt)$ which is constantly equal to $I_{d(\tT)}$ on some neighbourhood of $\tT$ (it suffices
to apply \THM{extend} to the function $\sS \mapsto (u_{\tT}(\sS))^{-1}$ defined on a subtower $\{\sS
\in \Tt\dd\ \|u(\sS) - I_{d(\sS)}\| \leqsl \frac12\}$), and therefore $f_z$ is continuous.
We conclude that the formula $\Phi(z) \df f_z$ correctly defines a function $\Phi\dd M(C^*_0(\Tt))
\to C^*(\Tt)$. It is easy to show that $\Phi$ is a one-to-one $*$-homomorphism such that $(z u)(\tT)
= (\Phi(z))(\tT) u(\tT)$ for any $z \in M(C^*_0(\Tt))$, $\tT \in \Tt$ and $u \in C^*_0(\Tt)$.
The surjectivity of $\Phi$ is left as a simple exercise.
\end{proof}

A partial converse to \PRO{closed} may be formulated as follows.

\begin{pro}{reg-clo}
Every closed regular shrinking $C^*$-algebra is $*$-isomorphic to $C^*_0(\Tt)$ for some regular
solid tower $\Tt$.
\end{pro}
\begin{proof}
Let $\aaA$ be a closed regular shrinking $C^*$-algebra. We know from \THM{shrink} that $\aaA$ is
$*$-isomorphic to $C^*_0(\Zz(\aaA),\theta_{\aaA})$. Since $\aaA$ is regular, so is $\Zz(\aaA)$. Now
put $\Tt \df \{\pi \in \Zz(\aaA)\dd\ \pi \perp \theta_{\aaA}\}$. We claim that $\Tt$ is closed.
Indeed, if $\Tt$ is not closed, one readily concludes that $C^*_0(\Zz(\aaA),\theta_{\aaA})$ is
an essential ideal in $C^*_0(\Zz(\aaA))$. But, since $\Zz(\aaA)$ is regular,
$C^*_0(\Zz(\aaA),\theta_{\aaA}) \neq C^*_0(\Zz(\aaA))$ and thus $\aaA$ is nonclosed (see
\PRO{ext-multi}).\par
Knowing that $\Tt$ is closed, we see that $\Tt$ is a subtower of $\Zz(\aaA)$. Hence $\Tt$ is
regular. Finally, it is easy to see that each $g \in C^*_0(\Tt)$ admits a unique extension to a map
in $C^*_0(\Zz(\aaA),\theta_{\aaA})$ (the extension exists by \THM{extend}) and therefore
the function $C^*_0(\Zz(\aaA),\theta_{\aaA}) \ni f \mapsto f\bigr|_{\Tt} \in C^*_0(\Tt)$ is
a $*$-isomorphism, which finishes the proof.
\end{proof}

The following is a consequence of \PRO[s]{closed} and \PRO[]{reg-clo}. Its proof is left
as an exercise.

\begin{cor}{reg-clo}
There is a one-to-one correspondence between all closed regular shrinking $C^*$-algebras
\textup{(}up to $*$-isomorphims\textup{)} and all regular solid towers
\textup{(}up to isomorphism\textup{)}.
\end{cor}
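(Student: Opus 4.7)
The plan is to combine \PRO[s]{closed} and \PRO[]{reg-clo} with the description of nondegenerate finite-dimensional representations from \COR{nondeg}, and promote them to a bijection of iso-classes. On the tower side I would send each iso-class of regular solid towers $[\Tt]$ to the $*$-isomorphism class $[C^*_0(\Tt)]$; \PRO{closed} guarantees this lands among closed regular shrinking $C^*$-algebras, and tower isomorphisms trivially induce $*$-isomorphisms by composition. On the algebra side, to each closed regular shrinking $\aaA$ I would associate $[\Tt_{\aaA}]$, where $\Tt_{\aaA} \df \{\pi \in \Zz(\aaA)\dd\ \pi \perp \theta_{\aaA}\}$ is the subtower built in the proof of \PRO{reg-clo}. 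Its well-definedness up to isomorphism follows because any $*$-isomorphism $\aaA \to \aaA'$ induces a pointed tower isomorphism $(\Zz(\aaA),\theta_{\aaA}) \cong (\Zz(\aaA'),\theta_{\aaA'})$ by pulling back representations (cf.\ the uniqueness part of \COR{*homo}), and the condition of being disjoint from the basepoint is preserved.

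Next I would verify that the two assignments are mutually inverse. The composition $[\aaA] \mapsto [\Tt_{\aaA}] \mapsto [C^*_0(\Tt_{\aaA})]$ equals $[\aaA]$ essentially by \PRO{reg-clo}: restriction from $C^*_0(\Zz(\aaA),\theta_{\aaA})$ (which is $*$-isomorphic to $\aaA$ by \THM{shrink}) to $\Tt_{\aaA}$ is a $*$-isomorphism onto $C^*_0(\Tt_{\aaA})$. For the reverse composition I would start with a regular solid tower $\Tt$, form $\aaA \df C^*_0(\Tt)$, and identify $\Tt_{\aaA}$ with $\Tt$ via $\sS \mapsto \pi_{\sS}$. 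Because $\Tt_{(0)} = \varempty$, \COR{nondeg} provides a bijection between $\Tt$ and the set of all nondegenerate finite-dimensional representations of $\aaA$. Since the zero one-dimensional representation $\theta_{\aaA}$ is irreducible, the condition $\pi \perp \theta_{\aaA}$ is equivalent to $\pi$ being nondegenerate, so this bijection is precisely $\Tt \to \Tt_{\aaA}$. It is a morphism by construction, and since $\Tt$ is proper, each $\Tt_n$ is compact, so the continuous injection $\Tt_n \to \Zz_n(\aaA)$ (into a Hausdorff space) is automatically a homeomorphism onto its image; thus the map is an isomorphism of m-towers.

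The main obstacle I foresee is not in either direction individually---each is bookkeeping once \PRO[s]{closed} and \PRO[]{reg-clo} are in hand---but rather in being careful about the two distinct roles played by a basepoint: on the algebra side $\theta_{\aaA}$ is the zero one-dimensional representation (always present in $\Zz(\aaA)$), while on the tower side a regular solid $\Tt$ satisfies $\Tt_{(0)} = \varempty$, so no point of $\Tt$ ``is'' the zero representation. The equivalence ``$\pi \perp \theta_{\aaA}$'' iff ``$\pi$ nondegenerate'' is the hinge bridging these viewpoints; I would pay particular attention to it, and in particular verify that the subtower $\Tt_{\aaA}$ produced from a closed regular $\aaA$ is genuinely regular solid (not merely solid), which is where the regularity hypothesis on $\aaA$ is actually used, via the fact that $\theta_{\aaA} \notin \Tt_{\aaA}$.
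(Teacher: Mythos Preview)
The paper leaves this proof as an exercise, citing only \PRO[s]{closed} and \PRO[]{reg-clo}; your approach---setting up the two assignments $[\Tt]\mapsto[C^*_0(\Tt)]$ and $[\aaA]\mapsto[\Tt_{\aaA}]$ and verifying via \COR{nondeg} and \PRO{reg-clo} that they are mutual inverses---is precisely what is intended, and the bookkeeping you outline is correct.

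One small imprecision is worth flagging: you write that ``\PRO{closed} guarantees this lands among closed regular shrinking $C^*$-algebras,'' but \PRO{closed} only yields that $C^*_0(\Tt)$ is \emph{closed} (and \PRO{shrink} that it is shrinking). Regularity of $C^*_0(\Tt)$---i.e., that $\Zz(C^*_0(\Tt))$ is a regular tower---is not addressed by \PRO{closed}. For this you should invoke \PRO{solid-solid}: by \THM{shrink} we have $C^*_0(\Tt)\cong C^*_0(\Zz(C^*_0(\Tt)),\theta_{C^*_0(\Tt)})$, so taking $(\Ss,\kappa)=(\Zz(C^*_0(\Tt)),\theta_{C^*_0(\Tt)})$ in \PRO{solid-solid} gives that $\Zz(C^*_0(\Tt))$ is regular. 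Without this step your map $F$ is not known to land in the correct codomain, and formally $G\circ F=\id_B$ together with $(F\circ G)|_A=\id_A$ does not by itself force $F(B)\subset A$. With that one citation added, the argument is complete.
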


It is an easy observation that a shrinking $C^*$-algebra is both closed and subhomogeneous iff it is
unital (and then it is automatically regular). Furthermore, for subhomogeneous $C^*$-algebras,
the operations ``unitization of a nonunital algebra'' and ``one-dimensional essential extension
of a nonclosed algebra to a closed one'' are equivalent. The advantage of the latter statement is
that it makes (at least theoretical) sense for all shrinking $C^*$-algebras. One may also think
of this operation as of a ``one-point compactificafion'' of the solid pointed tower that corresponds
to a nonclosed algebra. (More generally, one may think of any closed shrinking $C^*$-algebra that
contains a fixed shrinking $C^*$-algebra $\aaA$ as an essential ideal as of a ``compactification''
of the pointed solid tower of $\aaA$). As the following result shows, regular shrinking
$C^*$-algebras behave (in this matter) in an expected way.

\begin{cor}{one-reg}
For every nonclosed regular shrinking $C^*$-algebra $\aaA$ there is a unique closed shrinking
$C^*$-algebra $\ddD$ that contains $\aaA$ as an essential ideal of codimension $1$. Moreover, $\ddD$
is regular.\par
Conversely, if $\ddD$ is a closed regular shrinking $C^*$-algebra and $\JjJ$ is an essential ideal
in $\ddD$ of codimension $1$, then $\JjJ$ is a nonclosed regular shrinking $C^*$-algebra.
\end{cor}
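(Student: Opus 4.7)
The plan is to reduce the corollary to a single sub-lemma about one regular solid tower, by exploiting the correspondence between closed regular shrinking $C^*$-algebras and regular solid towers. Every candidate $\ddD$ in the forward direction is, by \PRO{reg-clo}, isomorphic to $C^*_0(\Tt')$ for some regular solid $\Tt'$, and any codimension-one ideal in such an algebra must be the kernel of a one-dimensional irreducible representation $f\mapsto f(\theta')$ for a unique $\theta'\in\Tt'_1$ (by \COR{irr}), hence has the form $C^*_0(\Tt',\theta')$. The uniqueness part of \THM{shrink} then identifies $(\Tt',\theta')$ with $(\Zz(\aaA),\theta_{\aaA})$, so uniqueness in the forward direction is automatic. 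The content of the corollary therefore lies in the following sub-lemma: for a regular solid tower $\Tt$ and $\theta\in\Tt_1$, the algebra $\aaA:=C^*_0(\Tt,\theta)$ is essential in $C^*_0(\Tt)$ \iaoi{} $\theta$ is not isolated in $\core(\Tt)$, and this is in turn equivalent to $\aaA$ being nonclosed in the sense of \DEF{reg-sng}.

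I would prove the essentiality half by direct construction. If $\theta$ is isolated, then $\{\theta\}$ is clopen in $\Tt_1\subset\core(\Tt)$, so assigning $I_1$ at $\theta$ and $0$ on $\Tt_1\setminus\{\theta\}$ and extending by compatibility produces a central projection $g\in C^*_0(\Tt)$ with $g\cdot\aaA=0$; so $\aaA$ is not essential. Conversely, if $d\in C^*_0(\Tt)\setminus\{0\}$ annihilates $\aaA$, then for every irreducible $\tT\neq\theta$ one may use \THM{ext} to produce an element $a\in\aaA$ with $a(\tT)=I$ (working on the subtower generated by $\{\theta,\tT\}$, on which vanishing at infinity is vacuous), which forces $d(\tT)=0$. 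Hence $d$ vanishes throughout $\core(\Tt)\setminus\{\theta\}$, and \LEM{norm} leaves $\theta$ as the only possible support; but continuity of $d|_{\Tt_1}$ forces $d(\theta)=0$ whenever $\theta$ is not isolated, contradicting $d\neq 0$.

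For the closedness half, \LEM{multi} embeds $M(\aaA)$ into $C^*_\Tt(\Dd)$ with $\Dd:=\core(\Tt)\setminus\{\theta\}$. When $\theta$ is isolated, $\Dd$ is clopen in $\core(\Tt)$, so any bounded compatible $f$ on $\Dd$ extends first to the subtower $\grp{\Dd}$ by \LEM{semi} and then through the additional irreducible $\theta$ (with value $0$) by part~(B) of \COR{dist} to a map in $C^*(\Tt)$; tracking compatibility through the block-diagonal structure shows that $f$'s vanishing at infinity in degree propagates to vanishing at infinity on $\overline{\core}(\Tt)$, giving an element of $\aaA$. Thus restriction to $\Dd$ identifies $\aaA$ with $S(\aaA)$, making $\aaA$ closed. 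When $\theta$ is not isolated, the essentiality half already gives $\aaA\subsetneq C^*_0(\Tt)\subset S(\aaA)$, so $\aaA$ is nonclosed. With the sub-lemma in hand, the forward direction takes $\ddD:=C^*_0(\Zz(\aaA))$, which \PRO{shrink} and \PRO{closed} certify as closed regular shrinking and which \LEM{solid} makes into a codimension-one extension of $\aaA$; the sub-lemma then supplies essentiality. The converse is routine: \PRO{reg-clo} gives $\ddD=C^*_0(\Tt')$ with $\Tt'$ regular solid, the character argument gives $\JjJ=C^*_0(\Tt',\theta')$, \COR{sub-quo} makes $\JjJ$ shrinking, its tower $\Zz(\JjJ)\cong\Tt'$ is regular, and essentiality forces $\theta'$ non-isolated, so the sub-lemma delivers non-closedness.

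The main obstacle is the technical verification inside the closedness half of the sub-lemma: I must check that the extension-by-zero at $\theta$ of a function $f\in C^*_\Tt(\Dd)$ vanishing at infinity in degree really does vanish at infinity on all of $\overline{\core}(\Tt)$ once propagated by compatibility to reducible elements, and dually that the restriction map $\aaA\to C^*_\Tt(\Dd)$ surjects onto the subspace of such functions. Everything else is a direct appeal to the machinery of Sections~3--7.
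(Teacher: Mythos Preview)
Your uniqueness argument has a genuine gap. You write that ``every candidate $\ddD$ in the forward direction is, by \PRO{reg-clo}, isomorphic to $C^*_0(\Tt')$ for some regular solid $\Tt'$'', but \PRO{reg-clo} applies only to closed \emph{regular} shrinking algebras, and the uniqueness clause in the statement quantifies over all closed shrinking algebras $\ddD$ containing $\aaA$ as an essential codimension-one ideal --- regularity of $\ddD$ is part of the conclusion (the ``Moreover'' clause), not a hypothesis. So you cannot invoke \PRO{reg-clo} at that point; you must first establish that any such $\ddD$ is necessarily regular.

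This is exactly where the paper spends its effort. It represents the rival extension $\ddD'$ as $C^*_0(\Ss,\kappa)$ via \THM{shrink} (which needs no regularity), identifies the codimension-one ideal as $\{u:u(\mu)=0\}$ for a single extra point $\mu\in\Ss_1$, and then uses the morphism $\tau\dd(\Ss,\kappa)\to(\Tt,\theta)$ induced by the isomorphism $\JjJ\cong\aaA$ together with \PRO{solid} to show $\Ss[N]\cap\bigcup_{j\leqslant n}\Ss_j=\varempty$ for suitable $N$, hence $\Ss$ is regular. Only then does \PRO{reg-clo} kick in and your \THM{shrink} identification of pointed towers finishes the job. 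Your sub-lemma framework (essential $\Leftrightarrow$ $\theta$ non-isolated $\Leftrightarrow$ nonclosed) is a clean way to organise the existence and converse parts, and is a genuinely different packaging from the paper's more ad hoc cross-references; but it does not by itself supply the missing regularity step for uniqueness. One natural repair is to argue via \PRO{ext-multi} that any candidate $\ddD$ embeds in $S(\aaA)$ over $\aaA$, and then show that $S(\aaA)/\aaA$ is already one-dimensional (equivalently $S(\aaA)=C^*_0(\Zz(\aaA))$ in the regular case) --- but that again requires work comparable to what the paper does.
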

\begin{proof}
We start from the first part. We think of $\aaA$ as of $C^*_0(\Tt,\theta)$ for some solid pointed
tower $(\Tt,\theta)$. Since $\aaA$ is regular, so is $\Tt$ and therefore $\ddD \df C^*(\Tt)$ is
a closed regular shrinking $C^*$-algebra in which $\aaA$ is an ideal of codimension $1$. Moreover,
since $\aaA$ is nonclosed, $C^*_0(\Tt,\theta)$ is an essential ideal in $C^*_0(\Tt)$ (cf.\ the proof
of \PRO{reg-clo}). To establish the uniqueness of $\aaA$, take a solid pointed tower $(\Ss,\kappa)$
such that $\ddD' \df C^*_0(\Ss,\kappa)$ is closed and contains an essential ideal $\JjJ$
of codimension $1$ which is $*$-isomorphic to $\aaA$. First of all, we want to show that $\ddD'$ is
regular. We infer from \COR{ideal} that $\JjJ = \JjJ_{\Zz}$ where $\Zz$ is a subtower of $\Ss$ that
contains $\kappa$. Further, \THM{ext} implies that the quotient algebra $C^*_0(\Ss,\kappa) /
\JjJ_{\Zz}$ is $*$-isomorphic to $C^*_0(\Zz,\kappa)$. So, $C^*_0(\Zz,\kappa)$ is one-dimensional and
hence we deduce from \COR{dist} that $\core(\Zz)$ coincides with $\Zz_1$ and contains a single
element different from $\kappa$, say $\mu$. This implies that
\begin{equation}\label{eqn:aux23}
\JjJ = \{u \in C^*_0(\Ss,\kappa)\dd\ u(\mu) = 0\}.
\end{equation}
Further, \COR{*homo} provides the existence of a morphism $\tau\dd (\Ss,\kappa) \to (\Tt,\theta)$
such that the $*$-homomorphism $\Phi\dd C^*_0(\Tt,\theta) \ni u \mapsto u \circ \tau \in
C^*_0(\Ss,\kappa)$ is one-to-one and $\Phi(C^*_0(\Tt,\theta)) = \JjJ$. To conclude that $\Ss$ is
regular, we apply \PRO{solid}. Fix $n > 0$. Since $\Tt$ is regular, there is $N > 1$ such that
$\Tt[N] \cap \bigcup_{j=1}^n \Tt_j = \varempty$. Put $\Ss' \df \tau^{-1}(\Tt[N])$. It is readily
seen that $\Ss'$ is a subtower of $\Ss$ which is disjoint from $\bigcup_{j=1}^n \Ss_j$. We claim
that $\Ss[N] \subset \Ss'$ (which will imply that $\Ss$ is regular). To prove this, it suffices
to check that $\core(\Ss) \cap \bigcup_{k=N}^{\infty}\Ss_k \subset \Ss'$. To this end, take $\sS \in
\core(\Ss) \cap \bigcup_{k=N}^{\infty}\Ss_k$. Since $d(\mu) = 1 < d(\sS)$, we infer from
\eqref{eqn:aux23} that there is $u \in \JjJ$ with $u(\sS) = X$ where $X$ is a fixed irreducible
matrix of degree $d(\sS)$. Since $\Phi(C^*_0(\Tt,\theta)) = \JjJ$, we see that there is $v \in
C^*_0(\Tt,\theta)$ for which $\Phi(v) = u$. This means that $v(\tau(\sS)) = X$ and therefore
$\stab(\tau(\sS))$ consists of scalars multiples of the unit matrix (because $X$ is irreducible).
So, $\tau(\sS) \in \core(\Ss)$ (by \LEM{core}) and consequently $\sS \in \Ss'$.\par
Knowing that $\Ss$ is regular, we conclude that $\ddD'$ is so. But, in addition, $\ddD'$ is closed.
So, \PRO{reg-clo} yields that $\ddD'$ is $*$-isomorphic to $C^*_0(\Xx)$ for some regular solid tower
$\Xx$. Then $\JjJ$ corresponds to $C^*_0(\Xx,\lambda)$ for some $\lambda \in \Xx_1$. So,
$C^*_0(\Tt,\theta)$ and $C^*_0(\Xx,\lambda)$ are $*$-isomorphic and therefore $\Tt$ and $\Xx$ are
isomorphic, by \THM{shrink}. So, $\ddD'$ and $\ddD$ are $*$-isomorphic and we are done.\par
The second claim of the corollary is very simple and therefore we leave it to the reader.
\end{proof}

The next result enables characterizing all \textit{separable} closed shrinking $C^*$-algebras.

\begin{thm}{closed}
For a metrizable solid pointed tower $(\Tt,\theta)$ \tfcae
\begin{enumerate}[\upshape(i)]
\item $C^*_0(\Tt,\theta)$ is closed;
\item for all $N > 0$, every point of the closure of $\core(\Tt) \setminus \Tt[N]$ is disjoint from
 $\theta$.
\end{enumerate}
\end{thm}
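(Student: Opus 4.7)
The plan is to transfer everything through the embedding $\Phi\dd M(\aaA) \hookrightarrow C^*_{\Tt}(\Dd)$ of \LEM{multi}, where $\aaA = C^*_0(\Tt,\theta)$ and $\Dd = \core(\Tt) \setminus \{\theta\}$. Combining \COR{irr} and \COR{nondeg} with the identification $\tilde{\pi}_{\tT} = e_{\tT}$ from the proof of \COR{multi}, the image of $S(\aaA)$ under $\Phi$ is exactly the subalgebra of those $f \in C^*_{\Tt}(\Dd)$ for which $\|f(\tT_n)\| \to 0$ whenever $\tT_n \in \Dd$ and $d(\tT_n) \to \infty$; the image of $\aaA$ itself consists of those $f$ whose compatible extension to $\Tt$ (defined irreducible-wise via \PRO{pd}) is continuous, vanishes at $\theta$, and vanishes at infinity on $\overline{\core}(\Tt)$. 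Statement \textup{(i)} thus becomes: every such $f$ admits such a compatible continuous extension.

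For $\neg(\textup{ii}) \Rightarrow \neg(\textup{i})$, I would pick $N > 0$ and $\xX$ in the closure of $\core(\Tt) \setminus \Tt[N]$ with $\xX \not\perp \theta$; \PRO{pd} lets me write $\xX \equiv \theta \oplus \yY$ for some $\yY \in \Tt$. Metrizability provides a sequence $\tT_n \in \core(\Tt) \setminus \Tt[N]$ with $\tT_n \to \xX$, and since $d$ is continuous and $\NNN$-valued the $\tT_n$ have constant degree $d(\xX) < N$ eventually. Using \COR{dist} I would prescribe $f \in C^*_{\Tt}(\Dd)$ so that $f(\tT_n)$ is a fixed irreducible matrix of degree $d(\xX)$ for each $n$ and $f$ vanishes on $\Dd \cap \bigcup_{k \geqsl N} \Tt_k$; the shrinking-degree property is then trivially satisfied. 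However, the compatible value $U . (0 \oplus f(\yY))$ which any extension of $f$ to $\aaA$ would have to take at $\xX$ has a nontrivial zero block, contradicting the choice that the $f(\tT_n)$ remain irreducible matrices; so $f$ represents an element of $S(\aaA) \setminus \aaA$.

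For $(\textup{ii}) \Rightarrow (\textup{i})$, given $f$ in the image of $S(\aaA)$ and $\epsi > 0$, I would choose $N$ with $\|f(\tT)\| < \epsi$ for all $\tT \in \Dd$ of degree $\geqsl N$. The only irreducibles where $f$ is non-negligible then lie in $\core(\Tt) \setminus \Tt[N]$; by \textup{(ii)} the closure of this set in $\Tt$ contains no element having $\theta$ as an irreducible summand, so the compatible extension of a suitable truncation of $f$ has no continuity obstruction near points carrying a $\theta$-block. A degree-by-degree application of \THM{extend} to appropriate semitowers (agreeing with the truncated $f$ on the closure of its effective support and with $0$ on a neighbourhood of $\theta$) then produces an element of $\aaA$ within $\epsi$ of $f$; since $\aaA$ is closed inside $S(\aaA)$, letting $\epsi \to 0$ gives $f \in \aaA$. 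The main obstacle is in this direction: while the algebraic extension from $\Dd$ to $\Tt$ is immediate via compatibility, establishing topological continuity at reducible points of $\Tt$ requires careful use of metrizability, $\sigma$-compactness (\LEM{para}, \LEM{sigma2}), and the simultaneous-extension power of \THM{extend}, since one must control limits of irreducibles of bounded degree approaching points whose decomposition might otherwise force a $\theta$-block.
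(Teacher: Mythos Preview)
Your identification of the image of $S(\aaA)$ inside $C^*_{\Tt}(\Dd)$ is incorrect, and this is the fatal gap. The homomorphism $\Phi$ of \LEM{multi} is not surjective: a map $u \in C^*_{\Tt}(\Dd)$ lies in the image of $M(\aaA)$ only if it satisfies the multiplier condition
\[
\textup{(ext)}\qquad v\bigr|_{\Dd}\,u \ \textup{ and }\ u\,v\bigr|_{\Dd}\ \textup{ extend to maps in } C^*_0(\Tt,\theta)\ \textup{ for every } v \in C^*_0(\Tt,\theta),
\]
and hence membership in $S(\aaA)$ is characterized by (ext) together with the shrinking condition, not by the shrinking condition alone. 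The paper states this explicitly at the start of its proof.

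This wrecks your $\neg(\textup{ii})\Rightarrow\neg(\textup{i})$ argument. If you prescribe $f(\tT_n) = X$ with $X$ irreducible and the $\tT_n \to \xX = U.(\theta \oplus \yY)$ with $d(\yY) \geqsl 1$, then (ext) fails outright: choose $v \in \aaA$ with $v(\yY) = I_{d(\yY)}$, so $v(\xX) = U.(0 \oplus I_{d(\yY)})$; the product $(v f)(\tT_n) \to v(\xX)\,X$, and for $vf$ to lie in $\aaA$ this limit must have block form $U.(0 \oplus *)$, which forces one off-diagonal block of $U^{-1}XU$ to vanish. The symmetric condition from $fv$ kills the other off-diagonal block, contradicting irreducibility of $X$. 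Thus your $f$ is not a multiplier at all, let alone an element of $S(\aaA)\setminus\aaA$. The paper's construction in this direction is elaborate for exactly this reason: one must build $u$ satisfying (ext) while simultaneously forcing $\bar{u}$ to be discontinuous at $\zZ$. The paper does this by an inductive procedure on degree, using compatible \emph{multifunctions} that are ``indefinite at $\theta$'' (so that the value at $\theta$ is a whole disc rather than a point), extending them via a $\uuU_N$-equivariant version of Michael's selection theorem (\PRO{select}) and \LEM{multi-ext}, and then reading off $u$ from the single-valued part. Metrizability enters here in a serious way, to obtain equivariant metrics and to run the selection argument.

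Your sketch for $(\textup{ii})\Rightarrow(\textup{i})$ is also off, though less dramatically: the paper does not approximate by truncations and invoke \THM{extend}. Instead it shows directly that the compatible extension $\bar{u}$ of any $u \in S(\aaA)$ is continuous on $\Tt$, using the criterion of \LEM{contin} and, crucially, the condition (ext) to propagate continuity to points $\tT$ with $\theta \preccurlyeq \tT$. The argument that the sublevel sets $\{\sS:\|\bar{u}(\sS)\|\leqsl\epsi\}$ are closed subtowers containing some $\Tt[N]$, combined with (ii), is what forces $\bar{u}(\tT_\sigma)\to 0 = \bar{u}(\tT)$ along irreducible nets approaching such $\tT$. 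This direction does not require metrizability.
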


We do not know whether the assumption of metrizability in the above result may be dropped.
In \EXM{clo} we give an example of a singly generated singular shrinking $C^*$-algebra which is
closed.\par
Our proof of \THM{closed} is complicated and based on a certain extension of one of celebrated
selection theorems due to Michael \cite{mi1,mi2,mi3} (consult also \S7 in \cite{b-p}, especially
Theorem~7.1 there). For its formulation, we recall some necessary definitions.
A \textit{multifunction} from a topological space $X$ into a topological space $Y$ is any function
defined on $X$ which assigns to each point of $X$ a nonempty subset of $Y$. The fact that $F$ is
a multifunction from $X$ into $Y$ shall be denoted by $F\dd X \to 2^Y$. A multifunction $F\dd X \to
2^Y$ is \textit{closed} (resp.\ \textit{convex}) if $F(x)$ is so for any $x \in X$. $F$ is said
to be \textit{lower semicontinuous} if for any open set $U$ in $Y$, the set
\begin{equation*}
F^{-1}(U) \df \{x \in X\dd\ F(x) \cap U \neq \varempty\}
\end{equation*}
is open (in $X$). A function $f\dd X \to Y$ is called a \textit{selection} for $F$ if $f(x) \in
F(x)$ for all $x \in X$.\par
The next result may be already known. However, we could not find anything about it
in the literature.

\begin{pro}{select}
Let $X$ be a paracompact topological space and
\begin{equation*}
\uuU_N \times X \ni (U,x) \mapsto U . x \in X
\end{equation*}
be a continuous action of $\uuU_N$ on $X$. For any closed convex lower semicontinuous multifunction
$F\dd X \to 2^{\mmM_N}$ such that
\begin{equation}\label{eqn:multi-unit}
F(U . x) = U . F(x) \qquad (x \in X,\ U \in \uuU_N)
\end{equation}
there exists a continuous selection $f\dd X \to \mmM_N$ such that
\begin{equation}\label{eqn:select-unit}
f(U . x) = U . f(x) \qquad (x \in X,\ U \in \uuU_N).
\end{equation}
\end{pro}
\begin{proof}
(To avoid misunderstandings, we explain that, for any set $\bbB \subset \mmM_N$ and $U \in \uuU_N$,
$U . \bbB$ is defined as the set of all matrices of the form $U . X$ where $X \in \bbB$. Similarly,
for any set $\bbB' \subset \mmM_N$, $\bbB - \bbB'$ is the set of all matrices of the form $X - Y$
where $X \in \bbB$ and $Y \in \bbB'$.) We shall improve the proof of Theorem~7.1 in \cite{b-p}. Let
$\lambda$ denote the probabilistic Haar measure on $\uuU_N$. Lemma~7.1 in \cite{b-p} shows that for
every convex lower semicontinuous (not necessarily closed) multifunction $\Phi\dd X \to 2^{\mmM_N}$
and any open ball $\vvV$ around zero of $\mmM_N$ (with respect to the classical operator norm
on $\mmM_N$) there exists a map $f_{\Phi,\vvV}\dd X \to \mmM_N$ such that the set
$(f_{\Phi,\vvV}(x)+\vvV) \cap \Phi(x)$ is nonempty for each $x \in X$. We claim that if,
in addition, condition \eqref{eqn:multi-unit} holds for $F = \Phi$, then the above function
$f_{\Phi,\vvV}$ may be chosen so that \eqref{eqn:select-unit} holds for $f = f_{\Phi,\vvV}$. Indeed,
it suffices to replace the above $f_{\Phi,\vvV}$ by $f_{\Phi,\vvV}'\dd X \to \mmM_N$ given
by $f_{\Phi,\vvV}'(x) \df \int_{\uuU_N} U^{-1} . f_{\Phi,\vvV}(U . x) \dint{\lambda(x)}$. Let us
briefly check that
\begin{equation}\label{eqn:aux22}
(\Psi(x) \df\,) (f_{\Phi,\vvV}'(x)+\vvV) \cap \Phi(x) \neq \varempty
\end{equation}
for any $x \in X$. Since $f_{\Phi,\vvV}(U . x) \in \Phi(U . x) - \vvV = U . (\Phi(x) - \vvV)$
(by \eqref{eqn:multi-unit} for $\Phi$), we see that $U^{-1} . f_{\Phi,\vvV}(U . x) \in \Phi(x)
- \vvV$ for any $U \in \uuU_N$. Further, the set $\kkK \df \{U^{-1} . f_{\Phi,\vvV}(U . x)\dd\ U \in
\uuU_N\}$ is compact and therefore its convex hull $\llL$ is compact as well. Consequently,
$f_{\Phi,\vvV}'(x) \in \llL \subset \Phi(x) - \vvV$, which is equivalent to \eqref{eqn:aux22}.
We leave it as a simple exerice that $\Psi\dd X \to 2^{\mmM_N}$ (given by \eqref{eqn:aux22}) is
a convex lower semicontinuous multifunction such that \eqref{eqn:multi-unit} holds for $F =
\Psi$.\par
Now we repeat the reasoning presented in \cite{b-p} (see pages 85--86 there). Let $\vvV_n$ denote
the open ball around zero of $\mmM_N$ of radius $2^{-n}$. Using the above argument, we define
inductively sequences $F_1,F_2,\ldots\dd X \to 2^{\mmM_N}$ and $f_1,f_2,\ldots\dd X \to \mmM_N$ of,
respectively, convex lower semicontinuous multifunctions and maps such that $f_n =
f_{F_{n-1},\vvV_n}$ (with $F_0 \df F$), $F_n(x) = (f_n(x) + \vvV_n) \cap F_{n-1}(x)$ (for any $x \in
X$ and $n > 0$) and respective conditions \eqref{eqn:multi-unit} and \eqref{eqn:select-unit} for
$F_n$ and $f_n$ hold. In a similar manner as in \cite{b-p} one shows that for each $x \in X$,
$(f_n(x))_{n=1}^{\infty}$ is a Cauchy sequence in $\mmM_N$ and the formula $f(x) \df
\lim_{n\to\infty} f_n(x)$ correctly defines a map $f\dd X \to \mmM_N$ which is a selection for $F$.
What is more, it follows from our construction that $f$ satisfies \eqref{eqn:select-unit} and we are
done.
\end{proof}

For the purpose of the proof of \THM{closed}, let us introduce

\begin{dfn}{multif}
Let $(\Ss,\theta)$ be a pointed tower and $\Aa$ be an arbitrary subset of $\Ss$ that contains
$\theta$. A multifunction $F\dd \Aa \to 2^{\mmM}$ is said to be \textit{compatible} if
\begin{itemize}
\item $F(\aA)$ is a compact convex subset of $\mmM_{d(\aA)}$ for any $\aA \in \Aa$; and
\item $F(U . (\bigoplus_{j=1}^n \aA_j)) = U . (\bigoplus_{j=1}^n F(\aA_j))\
 (= \{U . (\bigoplus_{j=1}^n X_j)\dd\ X_j \in F(\aA_j)\})$ for all $\aA_1,\ldots,\aA_n \in \Aa$ and
 $U \in \uuU_N$ with $N = \sum_{j=1}^n d(\aA_j)$ such that $U . (\bigoplus_{j=1}^n \aA_j)$ belongs
 to $\Aa$.
\end{itemize}
A compatible multifunction $F\dd \Aa \to 2^{\mmM}$ is \textit{bounded} if there is a positive real
constant $R$ such that $\|X\| \leqsl R$ for any $X \in \bigcup_{\aA\in\Aa} F(\aA)$. The infimum over
all such numbers $R$ is denoted by $\|F\|$. $F$ is called \textit{indefinite at $\theta$}
if $\card(F(\aA)) = 1$ for any $\aA \in \Aa$ disjoint from $\theta$. Finally, $F$ is said to be
\textit{upper semicontinuous} if for any open set $\vvV$ in $\mmM$, the set $\{\aA \in \Aa\dd\
F(\aA) \subset \vvV\}$ is relatively open in $\Aa$.
\end{dfn}

We need a result similar to \LEM{semi}, on extending compatible multifunctions that are indefinite
at $\theta$. Namely,

\begin{lem}{multi-ext}
Let $\Dd$ be a semitower of a solid pointed tower $(\Ss,\theta)$ such that $\theta \in \Dd$ and let
$F\dd \Dd \to 2^{\mmM}$ be a bounded compatible multifunction that is upper semicontinuous and
indefinite at $\theta$. Then $F$ \textup{(}uniquely\textup{)} extends to a compatible multifunction
$\bar{F}\dd \grp{\Dd} \to 2^{\mmM}$. Moreover, also $\bar{F}$ is bounded, upper semicontinuous and
indefinite at $\theta$.
\end{lem}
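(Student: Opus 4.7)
My plan is to mirror the proof of LEM{semi} in the multivalued setting. By LEM{sub}, the set $\grp{\Dd}$ is a subtower of $\Ss$ that contains $\Dd$, and the candidate extension is
\begin{equation*}
\bar{F}\Bigl(U . \Bigl(\bigoplus_{j=1}^n \dD_j\Bigr)\Bigr) \df U . \Bigl(\bigoplus_{j=1}^n F(\dD_j)\Bigr)
\end{equation*}
for $\dD_j \in \Dd \cap \core(\Ss)$ and $U \in \uuU_N$ with $N = \sum_j d(\dD_j)$. Uniqueness is immediate, since any compatible extension must satisfy this formula.

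The principal task is well\hyp{}definedness. If two such representations coincide, say $U . (\bigoplus_j \tT_j) = V . (\bigoplus_j \sS_j)$, then \PRO{pd} furnishes a permutation $\tau$ with $\sS_j \equiv \tT_{\tau(j)}$; choosing $W_j \in \uuU_{d(\sS_j)}$ with $W_j . \sS_j = \tT_{\tau(j)}$ and setting $W \df \bigoplus_j W_j$ yields $(W V^{-1} U) . (\bigoplus_j \tT_j) = \bigoplus_j \tT_{\tau(j)}$. Exactly as in LEM{semi}, everything reduces to a multifunction version of LEM{key}: for irreducibles $\tT_1,\ldots,\tT_n$ and compact convex sets $F_j \subset \mmM_{d(\tT_j)}$ satisfying the equivariance $V . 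F_j = F_k$ whenever $V . \tT_j = \tT_k$, together with the condition that $F_j$ is a singleton whenever $\tT_j \perp \theta$ (which is exactly indefiniteness at $\theta$), the equality $U' . (\bigoplus_j F_j) = \bigoplus_j F_{\tau(j)}$ holds for every $U' \in \uuU_N$ with $U' . (\bigoplus_j \tT_j) = \bigoplus_j \tT_{\tau(j)}$. The proof of this variant tracks LEM{key} step by step; the key invariance \eqref{eqn:aux7} must be replayed set\hyp{}wise. For equivalence classes $[\tT_{\nu(s)}]$ disjoint from $\theta$, $F_{\nu(s)}$ is a singleton and the original argument transfers unchanged; for the at most one class with $\tT_{\nu(s)} \equiv \theta$, the one\hyp{}dimensionality $d(\theta) = 1$ forces the block $\beta_s \odot F(\theta)$ to consist of matrices of the form $x I_{\beta_s}$ with $x \in F(\theta)$, so invariance under $\stab(\beta_s \odot \theta) = \uuU_{\beta_s}$ is automatic, and the Kronecker computation of LEM{key} goes through.

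Once well\hyp{}definedness is secured, the remaining properties follow quickly. Compatibility of $\bar{F}$ on $\grp{\Dd}$ comes from reducing any decomposition to its prime factorization via \PRO{pd}; boundedness, with $\|\bar{F}\| = \|F\|$, is immediate from the norm\hyp{}preservation of direct sums and unitary conjugation; indefiniteness at $\theta$ is inherited because \COR{pd} guarantees that any $\xX \in \grp{\Dd}$ disjoint from $\theta$ has a prime decomposition consisting entirely of irreducibles $\dD_j \not\equiv \theta$, each of which carries a singleton value of $F$. For upper semicontinuity, I shall write each layer $\grp{\Dd} \cap \Ss_N$ as a finite union of closed sets of the form $\uuU . (\bigoplus_j (\Dd \cap \Ss_{\nu_j} \cap \core(\Ss)))$, exactly as in the proof of LEM{sub}, and observe that on each piece $\bar{F}$ is realized as the image of the upper semicontinuous multifunction $F$ under the jointly continuous sum\hyp{}and\hyp{}action map; upper semicontinuity is preserved under pushforwards along such compact\hyp{}fibered continuous maps, giving the result on $\grp{\Dd}$. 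The hardest step will be the multifunction LEM{key}: ensuring that the possibly multivalued $F(\theta)$ does not obstruct the stabilizer invariance \eqref{eqn:aux7} is the point where the one\hyp{}dimensionality of $\theta$, combined with indefiniteness away from $\theta$, plays its crucial role.
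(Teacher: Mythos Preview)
Your plan mirrors the paper's closely: define $\bar{F}$ by the displayed formula, reduce well\hyp{}definedness to a set\hyp{}valued analogue of \LEM{key}, and verify the remaining properties afterward. (For upper semicontinuity the paper works directly with the net criterion (usc), passing to a subnet in which the number of irreducible summands is constant and the unitaries, the summands, and the chosen matrix entries all converge, rather than through your pushforward formulation; that is a minor stylistic difference.)

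The genuine gap is in your treatment of the $\theta$\hyp{}block. By the paper's convention for direct sums of sets, $\beta_s \odot F(\theta)$ consists of all diagonal matrices $\operatorname{diag}(x_1,\ldots,x_{\beta_s})$ with \emph{independent} entries $x_i \in F(\theta)$; it is not the set $\{x I_{\beta_s}: x\in F(\theta)\}$ of scalar matrices that you claim. When $F(\theta)$ has more than one element---as in the intended application inside the proof of \THM{closed}, where $F(\theta)$ is the closed unit disc in $\mmM_1$---this diagonal set is not invariant under $\stab(\beta_s\odot\theta)=\uuU_{\beta_s}$: conjugating $\operatorname{diag}(x,y)$ with $x\neq y$ by a generic unitary yields a non\hyp{}diagonal matrix. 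Hence your step ``invariance under $\stab(\beta_s\odot\theta)$ is automatic'' fails, and the displayed formula does not obviously assign a single well\hyp{}defined value at $k\odot\theta$ for $k\geqsl 2$. The paper is itself extremely terse at precisely this point, invoking only \LEM{key}; but hypothesis (ax2) of \LEM{key} already forces all $\theta$\hyp{}slots to carry the \emph{same} matrix, so a bare appeal to that lemma likewise does not cover arbitrary elements of $\beta_s\odot F(\theta)$. The issue you have put your finger on is real and is not resolved by either your sketch or the paper's one\hyp{}line citation.
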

\begin{proof}
Recall that $\grp{\Dd}$ is given by \eqref{eqn:subtower}. Uniqueness of $\bar{F}$ is readily seen,
whereas its existence follows from \LEM{key} (cf.\ the proof of \LEM{semi}). $\bar{F}$ operates
as follows:
\begin{equation*}
\bar{F}\Bigl(U . \Bigl(\bigoplus_{j=1}^n \dD_j\Bigr)\Bigr) = U . \Bigl(\bigoplus_{j=1}^n
F(\dD_j)\Bigr)
\end{equation*}
where $n > 0$, $\dD_1,\ldots,\dD_n \in \Dd$ are irreducible, and $U \in \uuU_N$ with $N =
\sum_{j=1}^n d(\dD_j)$. Indefiniteness at $\theta$ and boundedness are immediate. Here we shall
focus only on showing that $\bar{F}$ is upper semicontinuous. Observe that, since $\bar{F}(\dD)$ is
closed for any $\dD \in \grp{\Dd}$ and $\bar{F}$ is bounded (and $\mmM_N$ is locally compact), upper
semicontinuity of $\bar{F}$ is equivalent to
\begin{itemize}
\item[(usc)] if $(\aA_{\sigma})_{\sigma\in\Sigma}$ is a net in $\grp{\Dd}$ that converges to $\aA
 \in \grp{\Dd}$ and $(X_{\sigma})_{\sigma\in\Sigma}$ is a convergent net of matrices such that
 $X_{\sigma} \in \bar{F}(\aA_{\sigma})$ for any $\sigma \in \Sigma$ and $\lim_{\sigma\in\Sigma}
 X_{\sigma} = X \in \mmM$, then $X \in \bar{F}(\aA)$.
\end{itemize}
So, let $(\aA_{\sigma})_{\sigma\in\Sigma}$, $\aA$, $(X_{\sigma})_{\sigma\in\Sigma}$ and $X$ be
as specified in (usc). Passing to a subnet (if applicable), we may and do assume that $\aA_{\sigma}
= U_{\sigma} . (\bigoplus_{j=1}^s \aA^{(\sigma)}_j)$ and $X_{\sigma} = U_{\sigma} .
(\bigoplus_{j=1}^s X^{(\sigma)}_j)$ where $s$ is independent of $\sigma$, $U_{\sigma} \in
\uuU_{d(\aA)}$, $\aA^{(\sigma)}_j \in \Dd$, $X^{(\sigma)}_j \in F(\aA^{(\sigma)}_j)$ and
$U_{\sigma}$, $\aA^{(\sigma)}_j$ and $X^{(\sigma)}_j$ converge (when $\sigma$ runs over elements
of $\Sigma$) to, respectively, $U \in \uuU_{d(\aA)}$, $\aA_j \in \Dd$ and $X_j \in \mmM$. Then $\aA
= U . (\bigoplus_{j=1}^s \aA_j)$ and $X = U . (\bigoplus_{j=1}^s X_j)$. Furthermore, since $F$ is
upper semicontinuous, we infer from (usc) that $X_j \in F(\aA_j)$ for $j=1,\ldots,s$, which simply
yields $X \in \bar{F}(\aA)$.
\end{proof}

\begin{lem}{contin}
Let $w\dd \Ss \to \mmM$ be a bounded compatible function on a solid tower $\Ss$. A necessary and
sufficient condition for the continuity of $w$ is that
\begin{itemize}
\item[(cc)] whenever $(\sS_{\sigma})_{\sigma\in\Sigma} \in \core(\Ss) \cap \Ss_N$ is a net that
 converges to $\sS \in \Ss$ and satisfies $\lim_{\sigma\in\Sigma} w(\sS_{\sigma}) = X \in \mmM_N$,
 then $w(\sS) = X$.
\end{itemize}
\end{lem}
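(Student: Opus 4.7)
Necessity is immediate: if $w$ is continuous, then for any net $\sS_{\sigma}\to\sS$ in $\Ss_N$ we have $w(\sS_{\sigma})\to w(\sS)$, so any limit $X$ of $w(\sS_{\sigma})$ equals $w(\sS)$. We therefore focus on sufficiency. Since $\Ss$ is proper, the sets $\Ss_N$ are clopen and compact; it suffices to show that $w\bigr|_{\Ss_N}$ is continuous for each $N>0$, which we do by induction on $N$.

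\medskip

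For $N=1$ there is nothing to check, since every $\sS\in\Ss_1$ is irreducible (the only matrices in $\uuU_1$ are scalar multiples of $I_1$), so $\Ss_1\subset\core(\Ss)$ and (cc) together with boundedness of $w$ and compactness of $\Ss_1$ forces $w(\sS_{\sigma})\to w(\sS)$ for any net $\sS_{\sigma}\to\sS$ in $\Ss_1$: passing to an arbitrary convergent subnet of $(w(\sS_{\sigma}))$, its limit must be $w(\sS)$ by (cc).

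\medskip

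Assume $N>1$ and that $w\bigr|_{\Ss_k}$ is continuous for every $k<N$. Fix a net $\sS_{\sigma}\to\sS$ in $\Ss_N$; by boundedness of $w$ it suffices to show that every subnet of $(w(\sS_{\sigma}))$ which converges to some $X\in\mmM_N$ satisfies $X=w(\sS)$. Now
\begin{equation*}
\Ss_N=(\core(\Ss)\cap\Ss_N)\cup\bigcup_{k=1}^{N-1}\uuU_N.(\Ss_k\oplus\Ss_{N-k}),
\end{equation*}
so, replacing the subnet by a further subnet, we may assume the whole net lies in one of these finitely many sets. If it lies in $\core(\Ss)\cap\Ss_N$, then (cc) gives $X=w(\sS)$. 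Otherwise, for some $k\in\{1,\ldots,N-1\}$ we may write $\sS_{\sigma}=U_{\sigma}.(\aA_{\sigma}\oplus\bB_{\sigma})$ with $U_{\sigma}\in\uuU_N$, $\aA_{\sigma}\in\Ss_k$, $\bB_{\sigma}\in\Ss_{N-k}$. By compactness of $\uuU_N$, $\Ss_k$ and $\Ss_{N-k}$ (the latter two by properness of $\Ss$), we pass to a further subnet so that $U_{\sigma}\to U$, $\aA_{\sigma}\to\aA$, $\bB_{\sigma}\to\bB$; continuity of the tower ingredients then yields $\sS=U.(\aA\oplus\bB)$. By the induction hypothesis $w(\aA_{\sigma})\to w(\aA)$ and $w(\bB_{\sigma})\to w(\bB)$, and compatibility of $w$ together with continuity of $\oplus$ and the unitary action gives
\begin{equation*}
X=\lim_{\sigma}w(\sS_{\sigma})=\lim_{\sigma}U_{\sigma}.\bigl(w(\aA_{\sigma})\oplus w(\bB_{\sigma})\bigr)=U.\bigl(w(\aA)\oplus w(\bB)\bigr)=w(\sS),
\end{equation*}
as required.

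\medskip

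The only delicate point is the reducible case in the inductive step: one must extract compatible decompositions $\sS_{\sigma}=U_{\sigma}.(\aA_{\sigma}\oplus\bB_{\sigma})$ along a subnet whose three ingredients converge jointly. This is where properness of $\Ss$ (ensuring compactness of each $\Ss_j$) and axiom (T1) (ensuring that $\oplus$ and the unitary action are jointly continuous and closed) are essential; (cc) itself is used only to handle the irreducible limit.
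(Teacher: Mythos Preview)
Your proof is correct and follows essentially the same strategy as the paper's: pass to subnets using compactness of $\uuU_N$ and of the $\Ss_k$ (properness), decompose reducible elements, and invoke compatibility of $w$ together with (cc) at the irreducible level. The only organizational difference is that you proceed by induction on $N$, splitting a reducible element into two summands of strictly smaller degree and applying the induction hypothesis to each, whereas the paper decomposes each $\sS_{\sigma}$ directly into its full list of irreducible summands (via \PRO{pd}), passes to a subnet on which the number and degrees of the summands are constant, and then applies (cc) to each irreducible piece in one shot. Both arguments are equivalent in content; your inductive version avoids explicitly invoking the prime decomposition, at the mild cost of an extra layer of bookkeeping.
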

\begin{proof}
Necessity of (cc) is clear. To see the sufficiency, we argue by a contradiction. We assume (cc)
holds, but for some net $(\sS_{\sigma})_{\sigma\in\Sigma}$ of elements of $\Ss$ that converges
to $\sS \in \Ss$, the net $(w(\sS_{\sigma}))_{\sigma\in\Sigma}$ of matrices does not converge
to $w(\sS)$. Passing to a subnet (and using boundedness of $w$ and compactness of $\Ss_{d(\sS)}$),
we may and do assume that $\sS_{\sigma} = U_{\sigma} . (\bigoplus_{j=1}^k \sS^{(\sigma)}_j)$ where
$k$ is independent of $\sigma$, $U_{\sigma} \in \uuU_{d(\sS)}$, $\sS^{(\sigma)}_j \in \Ss$ are
irreducible, and the nets $U_{\sigma}$, $\sS^{(\sigma)}_j$, $w(\sS_{\sigma})$ and
$w(\sS^{(\sigma)}_j)$ (for $j=1,\ldots,k$) converge (when $\sigma$ runs over elements of $\Sigma$)
to, respectively, $U \in \uuU_N$, $\sS_j \in \Ss$, $X \neq w(\sS)$ and $X_j \in \mmM$. We now infer
from (cc) that $w(\sS_j) = X_j$ for $j = 1,\ldots,k$. Consequently, $X = U . (\bigoplus_{j=1}^k X_j)
= U . (\bigoplus_{j=1}^k w(\sS_j)) = w(\sS)$, a contradiction.
\end{proof}

\begin{proof}[Proof of \THM{closed}]
First we shall show that (i) follows from (ii) for any (unnecessarily metrizable) solid pointed
tower $(\Tt,\theta)$ and then that (i) does not hold provided (ii) is not satisfied and $\Tt$ is
metrizable. So, assume (ii) holds. Let $\Dd \df \core(\Tt) \setminus \{\theta\}$. According
to \LEM{multi}, we may identify $M(\aaA)$ with a $*$-subalgebra of $C^*_{\Tt}(\Dd)$. Under such
an identification, it is easy to show that a map $u \in C^*_{\Tt}(\Dd)$ belongs to $S(\aaA)$ iff
\begin{itemize}
\item[(ext)] both $v\bigr|_{\Dd} u$ and $u v\bigr|_{\Dd}$ extend to maps from $C^*_0(\Tt,\theta)$
 for any $v \in C^*_0(\Tt,\theta)$,
\end{itemize}
and
\begin{equation}\label{eqn:u}
\lim_{n\to\infty} \sup\{\|u(\tT)\|\dd\ \tT \in \Dd \cap \Tt_n\} = 0.
\end{equation}
We shall now verify that each such a map $u$ extends to a map from $C^*_0(\Tt,\theta)$. It is easy
to see that $u$ extends uniquely to a compatible function $\bar{u}\dd \Tt \to \mmM$ that vanishes
at $\theta$. Moreover, $\bar{u}$ is bounded. So, it suffices to check that $\bar{u}$ is continuous.
To this end, we employ \LEM{contin}. Let $(\tT_{\sigma})_{\sigma\in\Sigma}$ be a net of irreducible
elements in $\Tt$ that converge to $\tT \in \Tt$. If $\tT \perp \theta$, then there is $v \in
C^*_0(\Tt,\theta)$ with $v(\tT) = I_{d(\tT)}$. So, arguing similarly as in the proof
of \COR{shrink}, we see that there exists $w \in C^*_0(\Tt,\theta)$ that is constantly equal
to $I_{d(\tT)}$ on a neighbourhood of $\tT$. Since $w\bigr|_{\Dd} u$ extends to a map from
$C^*_0(\Tt,\theta)$, one readily concludes that $\bar{u}$ is continuous at $\tT$ (and hence
$\lim_{\sigma\in\Sigma} \bar{u}(\tT_{\sigma}) = \bar{u}(\tT)$). Finally, assume $\theta \preccurlyeq
\tT$. We claim that then $\bar{u}(\tT) = 0$ and that for any $\epsi > 0$ there is $\sigma_{\epsi}
\in \Sigma$ with
\begin{equation}\label{eqn:aux24}
\|\bar{u}(\tT_{\sigma})\| \leqsl \epsi \qquad (\sigma \geqsl \sigma_{\epsi}).
\end{equation}
To this end, we employ condition (ii). Let $\Ss \df \{\sS \in \Tt\dd\ \|\bar{u}(\sS)\| \leqsl
\epsi\}$. Since $\bar{u}$ is compatible, we see that $\Ss$ is a subtower of $\Tt$ provided $\Ss$ is
closed. Let us now show that $\Ss$ is indeed closed. So, let $\{\sS_{\lambda}\}_{\lambda\in\Lambda}$
be a net of elements of $\Ss$ that converges to $\sS \in \Tt$. If $\sS \perp \theta$, then
$\lim_{\lambda\in\Lambda} \bar{u}(\sS_{\lambda}) = \bar{u}(\sS)$ (as shown above). And if $\theta
\preccurlyeq \sS$, we may and do assume that $\sS$ is not of the form $n \odot \theta$ (because
$n \odot \theta \in \Ss$ for all $n$) and consequently we may write $\sS = U . ((k \odot \theta)
\oplus \xX)$ where $U$ is a unitary matrix (of a respective degree), $k > 0$ and $\xX \perp \theta$.
Then there exists $v \in C^*_0(\Tt,\theta)$ with $v(\sS) = U . (0_k \oplus I_{d(\xX)})$ (where $0_k$
denotes the zero of $\mmM_k$) and $\|v\| \leqsl 1$. Since $v\bigr|_{\Dd} u$ extends to a map
on $\Ss$, we conclude that $v \bar{u}$ is continuous and therefore $\|\bar{u}(\sS)\| = \|v(\sS)
\bar{u}(\sS)\| = \lim_{\lambda\in\Lambda} \|v(\sS_{\lambda}) \bar{u}(\sS_{\lambda})\| \leqsl \epsi$,
which gives $\sS \in \Ss$. So, $\Ss$ is a subtower of $\Tt$. This, combined with \eqref{eqn:u},
implies that $\Tt[N] \subset \Ss$ for some $N$. Since $\tT$ is a limit of irreducible elements and
$\theta \preccurlyeq \tT$, condition (ii) implies that there exists $\sigma_{\epsi}$ such that
$\tT_{\sigma} \in \Tt[N]$ for all $\sigma \geqsl \sigma_{\epsi}$. Consequently, \eqref{eqn:aux24}
holds and $\|\bar{u}(\tT)\| \leqsl \epsi$ (because $\Ss$ is closed), which finishes the proof
of (i).\par
Now we assume $\Tt$ is metrizable and (ii) does not hold. We shall construct a function $u \in
C^*_{\Tt}(\Dd)$ which satisfies (ext) and \eqref{eqn:u}, but is not extendable to a map from
$C^*_0(\Tt,\theta)$ (which will mean that $C^*_0(\Tt,\theta)$ is not closed). Let $N > 0$ be such
that the closure of $\core(\Tt) \setminus \Tt[N]$ contains an element $\zZ \in \Tt$ with $\theta
\preccurlyeq \zZ$. The function $u$ we shall construct will have no limit at $\zZ$. Let $\zZ_1,
\zZ_2,\ldots$ be a sequence of elements of $\core(\Tt) \cap \Tt_{d(\zZ)} \setminus \Tt[N]$ that
converges to $\zZ$. We may and do assume that the sets $\uuU . \{\zZ_n\}$ are pairwise disjoint. For
simplicity, we put $\ell \df d(\zZ)\ (< N)$ and $\Tt_q^{\perp} \df \{\tT \in \Tt_q\dd\ \tT \perp
\theta\}$ (for $q > 0$). For any $\tT \in \Dd \cap (\Tt[N] \cup \bigcup_{j=1}^{\ell-1} \Tt_j)$
(where $\Dd = \core(\Tt) \setminus \{\theta\}$) we put $F_0''(\tT) \df \{0_{d(\tT)}\}$; and let
$F_0''(\theta) = \{\beta I_1\dd\ |\beta| \leqsl 1\} \subset \mmM_1$. It is easy to check that
$F_0''$ extends uniquely to a compatible multifunction $F_0'\dd \Ff_0' \to 2^{\mmM}$ where $\Ff_0'
\df \grp{\Tt[N] \cup \bigcup_{j=1}^{\ell-1} \Tt_j}$ (see \eqref{eqn:subtower}). Moreover, $\|F_0'\|
\leqsl 1$, $F_0'$ is indefinite at $\theta$ and upper semicontinuous. Notice that $\Tt_{\ell}
\setminus \core(\Tt) \subset \Ff_0'$ and $\card(F_0'(\zZ)) > 1$. So, there is nonzero $P \in
F_0'(\zZ)$. It is easy to check that all cluster points of the set $\Ee_0 \df \uuU_{\ell} .
\{\zZ_n\dd\ n > 0\}$ belong to $\Ee_0 \cup (\uuU . \{\zZ\})$ and therefore there is a bounded
compatible multifunction $F_0\dd \Ff_0 \to 2^{\mmM}$ from $\Ff_0 \df \grp{\Ff_0' \cup \Ee_0}$ which
extends $F_0'$, satisfies $F_0(U . \zZ_n) = \{U . P\}$ (for all $n > 0$ and $U \in \uuU_{\ell}$) and
is upper semicontinuous and indefinite at $\theta$ (consult \LEM{multi-ext}). It is also readily
seen that $\|F_0\| \leqsl 1$. It is worth noting here that:
\begin{itemize}
\item[(sel)] if $u_0\dd \Ff_0 \to \mmM$ is a compatible function such that $u_0(\theta) = 0$ and
 $u_0(\tT) \in F_0(\tT)$ for any $\tT \in \Ff_0$, then $\lim_{n\to\infty} u_0(\zZ_n) \neq 0 =
 u_0(\zZ)$
\end{itemize}
(the above property follows from the above construction). Now we run an inductive procedure. Assume
for some $k \in \{0,\ldots,N-\ell-1\}$ we have defined a compatible multifunction $F_k\dd \Ff_k \to
2^{\mmM}$ such that:
\begin{enumerate}[\upshape(F1$_k$)]
\item $\Ff_k$ is a subtower of $\Tt$ that contains $\Tt[N] \cup \bigcup_{j=1}^{\ell+k-1} \Tt_j$;
\item $F_k$ extends $F_0$;
\item $\|F_k\| \leqsl 1$, $F_k$ is indefinite at $\theta$ and upper semicontinuous.
\end{enumerate}
Our aim is to construct a compatible multifunction $F_{k+1}\dd \Ff_{k+1} \to 2^{\mmM}$ with
roperties (F1$_{k+1}$)--(F3$_{k+1}$). To this end, we put $q \df \ell+k$ and take
a $\uuU_q$-invariant metric $\varrho$ on $\Tt_q$ which induces the topology of $\Tt_q$; that is,
$\varrho(U . \xX,U . \yY) = \varrho(\xX,\yY)$ for any $\xX, \yY \in \Tt_q$ and $U \in \uuU_q$
(the existence of such a metric in that case is almost trivial, since $\Tt_q$ and $\uuU_q$ are
compact; for a similar result on existence of invariant metrics in more general cases consult, e.g.,
\cite{amn}). Now we define a multifunction $G\dd \Tt_q^{\perp} \to 2^{\mmM_q}$ as follows: $G(\tT)
\df F_k(\tT)$ for $\tT \in \Tt_q^{\perp} \cap \Ff_k$ and for $\tT \in \Tt_q^{\perp} \setminus
\Ff_k$,
\begin{equation}\label{eqn:G}
G(\tT) \df \overline{\conv}\Bigl(\bigcup\{F_k(\sS)\dd\ \sS \in \Ff_k \cap \Tt_q,\ \varrho(\tT,\sS) <
2 \dist_{\varrho}(\tT,\Ff_k \cap \Tt_q)\}\Bigr)
\end{equation}
where, for a nonempty set $\Aa \subset \Tt_q$, $\dist_{\varrho}(\tT,\Aa) \df \inf_{\aA\in\Aa}
\varrho(\tT,\aA)$ ($\Ff_k \cap \Tt_q$ is nonempty, because it contains $q \odot \theta$) and
$\overline{\conv}(\ssS)$ stands for the closed convex hull of a set $\ssS \subset \mmM_q$. Since
$\Ff_k$ is unitarily invariant, $F_k$ is compatible and $\varrho$ is $\uuU_q$-invariant, it is easy
to see that condition \eqref{eqn:multi-unit} holds for $F = G$. Of course, $G$ is closed and convex.
We shall now check that $G$ is lower semicontinuous. First of all, since $F_k$ is indefinite
at $\theta$ and upper semicontinuous, there is a continuous function $\varphi\dd \ddD \to \mmM_q$
such that $F_k(\sS) = \{\varphi(\sS)\}$ for each $\sS \in \ddD \df \Ff_k \cap \Tt_q^{\perp}$. Now
let $\vvV$ be an arbitrary open set in $\mmM_q$ and $\tT \in Tt_q^{\perp}$ be such that $G(\tT) \cap
\vvV \neq \varempty$. We consider two cases. First assume $\tT \in \Ff_k$. It follows from (T1) and
\LEM{unitary} that $\Tt_q^{\perp}$ is an open set in $\Tt_q$. So, we infer from the continuity
of $\varphi$ that there is $r > 0$ such that $B_{\varrho}(\tT,2r) \subset \Tt_q^{\perp}$ and
$\varphi(\ddD \cap B_{\varrho}(\tT,2r)) \subset \vvV$ where $B_{\varrho}(\tT,\delta)$, for each
$\delta > 0$, denotes the open $\varrho$-ball in $\Tt_q$ around $\tT$ of radius $\delta$. Let $\tT'
\in \Tt_q^{\perp}$ be an arbitrary element such that $\varrho(\tT',\tT) < r$. If $\tT' \in \ddD$,
$G(\tT') = \{\varphi(\tT')\} \subset \vvV$. And if $\tT' \notin \ddD$, then
$\dist_{\varrho}(\tT',\Ff_k \cap \Tt_q) < r$, which means that there is $\sS \in \Ff_k \cap \Tt_q$
for which $\varrho(\tT',\sS) < r$. Then $\varrho(\sS,\tT) < 2r$ and hence $\sS \in \ddD$.
Consequently, $\varphi(\sS) \in G(\tT') \cap \vvV$. So, in that case we are done. Now assume $\tT
\notin \ddD$. Since $\ddD$ is closed in $\Tt_q^{\perp}$, at each point of a small neighbourhood
of $\tT$ the multifunction $G$ is given by \eqref{eqn:G}. Further, since $G(\tT) \cap \vvV$ is
nonempty, there are a finite number of elements $\sS_1,\ldots,\sS_p$ of $\Ff_k \cap \Tt_q$ such that
$\varrho(\tT,\sS_j) < 2 \dist_{\varrho}(\tT,\Ff_k \cap \Tt_q)$ for $j=1,\ldots,p$ and the convex
hull $\wwW$ of $\bigcup_{j=1}^p F_k(\sS_j)$ intersects $\vvV$. Then, for any point $\tT'$ of a small
neighbourhood of $\tT$ we have $\varrho(\tT',\sS_j) < 2 \dist_{\varrho}(\tT',\Ff_k \cap \Tt_q)$ for
each $j \in \{1,\ldots,p\}$ and hence $G(\tT') \cap \vvV \neq \varempty$, since $\wwW \subset
G(\tT')$. So, we have shown that $G$ is lower semicontinuous. It follows from \PRO{select} that
there is a continuous selection $g\dd \Tt_q^{\perp} \to \mmM_q$ for $G$ such that $g(U . \tT) = U .
g(\tT)$ for any $\tT \in \Tt_q^{\perp}$ and $U \in \uuU_q$. Observe that $\Ff_k \cup \Tt_q^{\perp}
\supset \Tt_q$ (by (F1$_k$)) and $\{g(\tT)\} = F_k(\tT)$ for any $\tT \in \Ff_k \cap \Tt_q^{\perp}$.
These two remarks imply that we may correctly define a multifunction $F_{k+1}'\dd \Ff_k \cup \Tt_q
\to 2^{\mmM}$ by $F_{k+1}'(\tT) \df F_k(\tT)$ for $\tT \in \Ff_k$ and $F_{k+1}'(\tT) \df \{g(\tT)\}$
for any $\tT \in \Tt_q^{\perp}$. One readily verifies that $F_k$ is compatible (because $\Tt_q
\setminus \core(\Tt_q) \subset \Ff_k$) and indefinite at $\theta$, and $\|F_{k+1}'\| \leqsl 1$. Our
main claim on $F_{k+1}'$ is that it is upper semicontinuous. To check this property, we use
criterion (usc). So, let $(\tT_n)_{n=1}^{\infty}$ and $(X_n)_{n=1}^{\infty}$ be sequences of,
respectively, elements of $\Ff_k \cup \Tt_q$ and matrices such that $\lim_{n\to\infty} \tT_n = \tT
\in \Ff_k \cup \Tt_q$, $X_n \in F_{k+1}'(\tT_n)$ and $\lim_{n\to\infty} X_n = X \in \mmM$ (recall
that $\Tt$ is metrizable). We only need to show that $X \in F_{k+1}'(\tT)$. Passing, if applicable,
to two subsequences, we may assume that either $\tT_n \in \Ff_k$ for all $n$ or $\tT_n \in
\Tt_q^{\perp} \setminus \Ff_k$ for all $n$. In the former case we have nothing to do: the conclusion
follows from the upper semicontinuity of $F_k$ (recall that $\Ff_k$ is closed). In the latter case
we may assume $\tT \notin \Tt_q^{\perp}$ (because otherwise $F_{k+1}'(\tT) = \{g(\tT)\}$ and $g(\tT)
= \lim_{n\to\infty} g(\tT_n) = \lim_{n\to\infty} X_n = X$). So, $\tT \in \Ff_k \cap \Tt_q$ and
$F_{k+1}'(\tT) = F_k(\tT)$. It suffices to check that
\begin{equation}\label{eqn:aux25}
\lim_{n\to\infty} \dist_{\|\cdot\|}(X_n,F_k(\tT)) = 0.
\end{equation}
Since $F_k\bigr|_{\Ff_k \cap \Tt_q}$ is upper semicontinuous, we infer that
\begin{itemize}
\item[(usc')] for any $\epsi > 0$ there is $\delta > 0$ such that $\dist_{\|\cdot\|}(T,F_k(\tT))
 \leqsl \epsi$ whenever $T \in F_k(\sS)$ where $\sS \in \Ff_k \cap \Tt_q$ satisfies
 $\varrho(\sS,\tT) \leqsl \delta$.
\end{itemize}
For a fixed $\epsi > 0$ let $\delta$ be as specified in (usc'). Fix $n > 0$ such that
$\varrho(\tT_n,\tT) \leqsl \frac13 \delta$. Since $X_n = g(\tT_n) \in G(\tT_n)$, we infer from
\eqref{eqn:G} that there are $\sS_1,\ldots,\sS_p \in \Ff_k \cap \Tt_q$, $T_j \in F_k(\sS_j)$ and
$\alpha_j \in [0,1]$ (for $j=1,\ldots,p$) such that $\|X_n - W_n\| \leqsl \epsi$ where $W_n \df
\sum_{j=1}^p \alpha_j T_j$ and $\varrho(\sS_j,\tT_n) < 2 \dist_{\varrho}(\tT_n,\Ff_k \cap \Tt_q)$.
Then $\varrho(\sS_j,\tT) \leqsl 3 \varrho(\tT_n,\tT) \leqsl \delta$ and therefore
$\dist_{\|\cdot\|}(T_j,F_k(\tT)) \leqsl \epsi$, by (usc'). Since $F_k(\tT)$ is a convex set, so is
the set $\{T \in \mmM_q\dd\ \dist_{\|\cdot\|}(T,F_k(\tT)) \leqsl \epsi\}$, from which we infer that
$\dist_{\|\cdot\|}(W_n,F_k(\tT)) \leqsl \epsi$. Consequently, $\dist_{\|\cdot\|}(X_n,F_k(\tT))
\leqsl 2 \epsi$ (provided $\varrho(\tT_n,\tT) \leqsl \frac13 \delta$), which yields
\eqref{eqn:aux25}.\par
Now taking into account the fact that $\Ff_k \cup \Tt_q$ is a semitower in $\Tt$, we conclude from
\LEM{multi-ext} that $F_{k+1}'$ extends to a compatible multifunction $F_{k+1}\dd \Ff_{k+1} \to
2^{\mmM}$ where $\Ff_{k+1} \df \grp{\Ff_k \cup \Tt_q}$ for which conditions
(F1$_{k+1}$)--(F3$_{k+1}$) are fulfilled.\par
Finally, when we substitute $k \df N - \ell - 1$, we shall obtain a bounded upper semicontinuous
compatible multifunction $F \df F_{k+1}\dd \Tt \to 2^{\mmM}$ that extends $F_0$ and is indefinite
at $\theta$. We define $u\dd \Dd \to \mmM$ by the rule $\{u(\tT)\} = F(\tT)$. It is easy to see that
$u$ extends to a unique compatible function $\bar{u}\dd \Tt \to \mmM$ that vanishes at $\theta$ and
satisfies $\bar{u}(\tT) \in F(\tT)$ for any $\tT \in \Tt$. We deduce from (sel) that $u$ does not
extend to a map from $C^*_0(\Tt,\theta)$. Moreover, it follows from the definition of $F_0$ that
$u(\tT) = 0$ for any $\tT \in \core(\Tt) \cap \bigcup_{j=N}^{\infty} \Tt_j$. So, condition
\eqref{eqn:u} is trivially satisfied. Thus, to complete the proof, it is enough to check (ext).
To this end, we fix $v \in C^*_0(\Tt,\theta)$. We shall apply \LEM{contin} to deduce that both
$v \bar{u}$ and $\bar{u} v$ are continuous. Since the proof for $\bar{u} v$ is quite similar,
we shall only check that $v \bar{u}$ is continuous. Let $\tT_1,\tT_2,\ldots \in \core(\Tt)$ be such
that $\lim_{n\to\infty} \tT_n = \tT \in \Tt$ and $\lim_{n\to\infty} v(\tT_n) \bar{u}(\tT_n) = X \in
\mmM$. We need to show that $X = v(\tT) \bar{u}(\tT)$. Passing to a subsequence, we may additionally
assume that $\lim_{n\to\infty} \bar{u}(\tT_n) = Y$. Then
\begin{equation}\label{eqn:XY}
X = v(\tT) Y.
\end{equation}
We may also assume $\tT_n \neq \theta$ for all $n$. We consider three brief cases. If $\tT \perp
\theta$, then $\tT_n \perp \theta$ as well for almost all $n$ and hence $Y = \lim_{n\to\infty}
\bar{u}(\tT_n) = \bar{u}(\tT)$ (because in that case $F(\tT_n) = \{\bar{u}(\tT_n)\}$, $F(\tT) =
\{\bar{u}(\tT)\}$ and $F$ is upper semicontinuous). So, $X = v(\tT) \bar{u}(\tT)$. Next, assume $\tT
= n \odot \theta$ for some $n > 0$. Then, since $v(n \odot \theta) = 0$ and $u$ is bounded,
we readily get $\lim_{n\to\infty} v(\tT_n) \bar{u}(\tT_n) = 0 = v(\tT) u(\tT)$. Finally, we assume
that $\theta \preccurlyeq \tT$ and $\tT$ is not of the form $n \odot \theta$. Then there is
a unitary matrix $U$, a positive integer $k$ and an element $\sS \in \Tt$ such that $\tT = U . ((k
\odot \theta) \oplus \sS)$ and $\sS \perp \theta$. Note that $F(\tT) = U . (F(k \odot \theta) \oplus
\{\bar{u}(\sS)\})$, $\bar{u}(\tT) = U . (0_k \oplus \bar{u}(\sS))$ and $v(\tT) = U . (0_k \oplus
v(\sS))$. Since $\lim_{n\to\infty} \tT_n = \tT$, we conclude from the upper semicontinuity of $F$
that $Y \in F(\tT)$. So, $Y$ has the form $Y = A \oplus \bar{u}(\sS)$ where $A \in \mmM_k$. Finally,
thanks to \eqref{eqn:XY}, we obtain $X = (U . (0_k \oplus v(\sS))) \cdot (U . (A \oplus
\bar{u}(\sS))) = U . (0_k \oplus (v(\sS)\bar{u}(\sS))) = v(\sS) \bar{u}(\sS)$, which finishes
the proof.
\end{proof}

\begin{prb}{clo}
Characterize all closed singular shrinking $C^*$-algebras.
\end{prb}

We finally come to the point where the undertaken naming---``regular'' and ``singular'' (for solid
towers or shrinking $C^*$-algebras)---is explained.

\begin{pro}{non-one}
There exists a singly generated nonclosed singular shrinking $C^*$-algebra which cannot be embedded
into a closed shrinking $C^*$-algebra as an essential ideal of codimension $1$.
\end{pro}
\begin{proof}
Let $\ttT$ be the subspace of $\mmM$ consisting of all matrices whose norms do not exceed $1$. It is
clear that $\ttT$ is a standard tower. For each $n > 1$, let $X_n \in \ttT_n$ be an irreducible
matrix whose norm is $1$. Further, let $\theta \in \mmM_1$ denote the zero matrix. Now put $\Dd \df
\{n \odot \theta\dd\ n > 0\} \cup \{2^{-k} (U . X_n)\dd\ k > n > 1,\ U \in \uuU_n\}$. Observe that
$\Dd$ is a semitower in $\ttT$ and thus, according to \LEM{sub}, $\Tt \df \grp{\Dd}$ is a subtower
of $\ttT$. Moreover, $(\Tt,\theta)$ is a solid pointed tower and $\aaA \df C^*_0(\Tt,\theta)$ is
singly generated, by \PRO{gener} (observe that $\core(\Tt) \cap \Tt_n = \{2^{-k} (U . X_n)\dd\
k > n,\ U \in \uuU_n\}$ for $n > 1$). We claim that $\aaA$ is the $C^*$-algebra we search for. First
of all, observe that
\begin{equation}\label{eqn:TN}
\Tt[N] = \{n \odot \theta\dd\ n > 0\} \cup \bigcup_{j=N}^{\infty} \Tt_j
\end{equation}
(see \REM{T0}) and therefore $\Tt$ is singular and, thanks to \THM{closed}, $\aaA$ is not closed.
Further, suppose, on the contrary, that there exists a closed shrinking $C^*$-algebra $\ddD$ that
contains an essential ideal $\JjJ$ of codimension $1$ such that $\aaA$ is $*$-isomorphic to $\JjJ$.
It follows from \COR{one-reg} that $\ddD$ is singular (since $\aaA$ is so). We represent $\ddD$
in the form $C^*_0(\Ss,\kappa)$ where $\Ss$ is a singular tower. Similarly as argued in the proof
of \COR{one-reg}, one deduces that then:
\begin{enumerate}[($\tau$1)]
\item $\JjJ = \{u \in C^*_0(\Ss,\kappa)\dd\ u(\mu) = 0\}$ for some $\mu \in \Ss_1$ different from
 $\kappa$; and
\item there is a morphism $\tau\dd (\Ss,\kappa) \to (\Tt,\theta)$ such that the $*$-homomorphism
 \begin{equation*}
 \Phi\dd C^*(\Tt_0,\theta) \ni v \mapsto v \circ \tau \in C^*_0(\Ss,\kappa)
 \end{equation*}
 is one-to-one and $\Phi(C^*_0(\Tt,\theta)) = \JjJ$.
\end{enumerate}
Further, we infer from the above properties that
\begin{enumerate}[($\tau$1)]\addtocounter{enumi}{2}
\item $\tau(\mu) = \tau(\kappa) = \theta$; and
\item $\tau$ establishes a one-to-one correspondence between $\core(\Tt)$ and $\core(\Ss) \setminus
 \{\kappa\}$.
\end{enumerate}
Property ($\tau$3) is readily seen. Further, if $\sS \in \core(\Ss) \setminus \{\kappa,\mu\}$, there
is $g \in \JjJ$ such that $g(\sS)$ is an irreducible matrix. Since $g = f \circ \tau$ for some $f
\in C^*_0(\Tt,\theta)$, we see that $g(\sS) \in \core(\Tt)$ (cf.\ the proof of \COR{one-reg}), and
hence $\tau(\core(\Ss)) \subset \core(\Tt)$. One concludes that therefore $\tau(\Ss)$ is a subtower
of $\Tt$ (see, for example, the proof of \PRO{sur}). So, \COR{dist}, combined with the property that
$\Phi$ is one-to-one, yields that $\tau(\Ss) = \Tt$ (consult \REM{inj}). We infer that for any $\tT
\in \core(\Tt)$ there is $\sS \in \Ss$ for which $\tau(\sS) = \tT$. But then $\stab(\sS) \subset
\stab(\tT)$ and hence \LEM{core} implies that $\sS \in \core(\Ss)$. Finally, if the above $\sS$
coincides with $\kappa$, we may replace it by $\mu$. All these remarks and arguments prove
($\tau$4).\par
Properties ($\tau$3)--($\tau$4) imply that $\Ss_1 = \{\kappa,\mu\}$ (since $\Tt_1 = \{\theta\}$) and
$\tau^{-1}(\Tt[N]) \supset \Ss[N]$ for each $N$ (because $\tau^{-1}(\Tt[N])$ is a subtower). Now
we apply \THM{closed}: since $C^*_0(\Ss,\kappa)$ is closed, each point of the closure of $\core(\Ss)
\setminus \tau^{-1}(\Tt[N])$ is disjoint from $\kappa$. But \eqref{eqn:TN} and ($\tau$3)--($\tau$4)
show that $\core(\tau^{-1}(\Tt[N])) \subset \{\theta,\kappa\} \cup \bigcup_{j=N}^{\infty} \Ss_j$.
So, for all $N > 2$, $\core(\Ss) \cap \Ss_{N-1} \subset \core(\Ss) \setminus \tau^{-1}(\Tt[N])$ and
therefore any element of $\Dd' \df \overline{\core}(\Ss) \setminus \{\kappa\}$ is disjoint from
$\kappa$ (recall that $\Ss_1 = \{\kappa,\theta\}$). Since $\Dd'$ is closed, \LEM{sub} yields that
$\Ss_0 \df \grp{\Dd'}$ is a subtower of $\Ss$. Since $\kappa \notin \Ss_0$, \COR{dist} implies that
there is $f \in C^*(\Ss)$ that vanishes at each point of $\Ss_0$, but not at $\kappa$. But then
automatically $f \in C^*_0(\Ss)$ and therefore $\kappa \notin \Ss_{(0)}$, which contradicts the fact
that $\Ss$ is singular.
\end{proof}

The above result may be interpreted in a way that certain solid towers do not have ``one-point
compactifications'' (which is in contrast to regular solid towers, see \COR{one-reg}). This is
a strange property which motivated us to call such towers singular.\par
Closed regular shrinking $C^*$-algebras may be characterized intrinsically:

\begin{pro}{clo}
For a shrinking $C^*$-algebra $\aaA$ \tfcae
\begin{enumerate}[\upshape(i)]
\item $\aaA$ is closed and regular;
\item for each $n > 0$, the space of all $n$-dimensional nondegenerate representations of $\aaA$
 is compact in the pointwise convergence topology;
\item for each $n > 0$, the closure \textup{(}in the pointwise convergence topology\textup{)}
 of all $n$-dimensional nonzero irreducible representations of $\aaA$ consists of nondegenerate
 representations;
\item for any $n > 0$, there exist $N > 0$ and $a \in \aaA$ such that $\pi(a) = I_k$ for any
 $k$-dimensional nondegenerate representation $\pi$ of $\aaA$ with $k \leqsl n$, and $\pi(a) = 0$
 for any $k$-dimensional nondegenerate representation $\pi$ of $\aaA$ with $k > N$.
\end{enumerate}
\end{pro}
\begin{proof}
We identify $\aaA$ with $C^*_0(\Ss,\theta)$ where $(\Ss,\theta)$ is a solid pointed tower. Put $\Tt
\df \{\sS \in \Ss\dd\ \sS \perp \theta\}$ and $\Dd \df \core(\Ss) \setminus \{\theta\}$. Observe
that (iii) trivially follows from (ii), and (ii) is a simple consequence of (iv); whereas (iii)
means that the closure of $\Dd$ is contained in $\Tt$, and then $\Tt$ is closed, by \LEM{sub} (since
$\Tt = \grp{\Dd}$). But when $\Tt$ is closed, the last paragraph of the proof of \PRO{reg-clo} shows
that then $\Tt$ is a solid tower and $C^*_0(\Ss,\theta)$ is $*$-isomorphic to $C^*_0(\Tt)$.
Moreover, $\Tt$ is regular because $\theta \notin \Tt$. So, (i) is implied by (iii). Finally,
if $\aaA$ is closed and regular, we may identify $\aaA$ with $C^*_0(\Tt)$ for some regular solid
tower. Then it follows from \PRO{solid} that for any $n > 0$ there is $N > 0$ with $\Tt[N] \cap
\bigcup_{k=1}^n \Tt_k = \varempty$. Noticing that $\Tt[N] \cup \bigcup_{k=1}^n \Tt_k$ is
a semitower, we infer from \THM{extend} that there is $g \in C^*(\Tt)$ that vanishes at each point
of $\Tt[N]$ and satisfies $g(\tT) = I_{d(\tT)}$ for any $\tT \in \bigcup_{k=1}^n \Tt_k$. Then $g \in
C^*_0(\Tt)$ and thus the conclusion of (iv) follows.
\end{proof}

We conclude the paper with the following

\begin{exm}{clo}
Let us give a simple example of a singly generated (and thus separable) singular shrinking
$C^*$-algebra which is closed. Denote by $\ttT \subset \mmM$ a standard tower consisting of all
matrices whose norms do not exceed $1$. Below we shall use a classical result stating that
$\core(\ttT)$ is dense in $\ttT$. We define inductively sets $\Dd_1,\Dd_2,\ldots$ such that:
\begin{enumerate}[(D1)]\addtocounter{enumi}{-1}
\item $\Dd_1 = \{\theta\}$ where, as usual, $\theta$ is the zero of $\mmM_1$;
\item $\Dd_n \subset \ttT_n$ is compact, countable and $\|X\| \leqsl 2^{-n}$ for any $X \in \Dd_n$;
\item $\Dd_n$ coincides with the closure of $\Dd_n \cap \core(\ttT)$;
\item for $n > 1$, $\Dd_n \setminus \core(\ttT)$ coincides with the set $\Dd_n'$ of all matrices
 of the form $(n/k) \odot \dD$ where $k < n$, $k$ divides $n$, $\dD \in \Dd_k$ and $\|\dD\| \leqsl
 2^{-n}$.
\end{enumerate}
The above sets $\Dd_n$ may simply be constructed: $\Dd_1$ is given in (D0). Further, if $\Dd_1,
\ldots,\Dd_{n-1}$ are defined, we may define $\Dd_n'$ as specified in (D3). Since $\Dd_n'$ is
compact and countable and $\core(\ttT)$ is dense in $\ttT$, we may find a countable set $\Dd_n''
\subset \core(\ttT) \cap \ttT_n$ whose closure (in $\ttT$) coincides with $\Dd_n \df \Dd_n'' \cup
\Dd_n'$. It is clear that, in addition, $\Dd_n''$ may be chosen so that (D1) holds for $\Dd_n$. Now
we put $\Dd \df \uuU . (\bigcup_{n=1}^{\infty} \Dd_n)$. Since $\Dd$ satisfies all assumptions
of \LEM{sub}, we infer from that result that $\Tt \df \grp{\Dd}$ is a well defined subtower
of $\ttT$. Moreover, the above construction shows that $\overline{\core}(\Tt) = \Dd$. Using
\REM{T0}, one may now check that $\Tt_{(0)} = \{n \odot \theta\dd\ n > 0\}$ (and thus $\aaA \df
C^*_0(\Tt,\theta)$ is singular) and each point of the closure of $\core(\Tt) \setminus \Tt[N]$ is
disjoint from $\theta$ (by (D2) and (D3)). So, \THM{closed} yields that $\aaA$ is closed, whereas
\PRO{gener} implies that $\aaA$ is singly generated. The details are left to the reader.
\end{exm}

\end{document}